
\documentclass[a4paper,12pt,leqno,oneside]{amsart}
%
%
\usepackage{amssymb}
\usepackage{mathrsfs}
\usepackage{graphicx,color}
\usepackage{newtxtext}
\usepackage[varg]{newtxmath}
\usepackage[shortlabels]{enumitem}
%
%
%
%
\usepackage{geometry}%
\geometry{left=20mm,right=20mm,top=25mm,bottom=25mm}
\geometry{a4paper}

%
\theoremstyle{plain} 
\newtheorem{theorem}{Theorem}[section]

\newtheorem{proposition}[theorem]{Proposition}
\newtheorem{lemma}[theorem]{Lemma}

\theoremstyle{definition} 
\newtheorem{definition}[theorem]{Definition}
\newtheorem{remark}[theorem]{Remark}

%
%
%

%
%
%
%
%

\DeclareMathOperator{\tr}{tr}
\DeclareMathOperator{\Div}{div}

\newcommand{\R}{\mathbb{R}}


%
%
%
%
%
\usepackage{constants}
\newconstantfamily{eps}{symbol=\varepsilon}
%
%
\numberwithin{equation}{section}
%

\newcommand{\Feq}{f^{\mathrm{eq}}}
\usepackage{lineno}



\usepackage{bm}
\renewcommand{\vec}[1]{\bm{#1}}
\usepackage{hyperref}


\usepackage{float}

\graphicspath{{./fig/}}

\newcommand{\commentB}[1]{}

\newcommand{\plotFig}[2]{\includegraphics[width=#2]{#1}}

\newcommand{\figref}[1]{Figure~\ref{#1}}
\newcommand{\secref}[1]{Section~\ref{#1}}
\newcommand{\rmref}[1]{Remark~\ref{#1}}
\newcommand{\appref}[1]{Appendix~\ref{#1}}

\newcommand{\MT}[1]{\mathbf{#1}}

\newcommand{\grad}[1]{\nabla #1}

\newcommand{\energy}{F}

\newcommand{\dom}{\Omega}

\newcommand{\Ntot}{N_{\text{tot}}}

\newcommand{\FP}{Fokker-Plank}

\newcommand{\BC}{boundary condition}
\newcommand{\IC}{initial condition}

%
\title{Nonlinear inhomogeneous Fokker-Planck models:  energetic-variational structures and long time behavior}

\author{Yekaterina Epshteyn}
\address[Yekaterina Epshteyn]%
{Department of Mathematics,
The University of Utah,
Salt Lake City, UT 84112, USA}
\email{epshteyn@math.utah.edu}

\author{Chang Liu}
\address[Chang Liu]%
{Department of Mathematics,
The University of Utah,
Salt Lake City, UT 84112, USA}
\email{liukamala@math.utah.edu}

\author{Chun Liu}
\address[Chun Liu]%
{Department of Applied Mathematics, Illinois Institute of Technology.
Chicago, IL 60616, USA}
\email{cliu124@iit.edu}

\author{Masashi Mizuno}
\address[Masashi Mizuno]%
{Department of Mathematics, College of Science
and Technology, Nihon University, Tokyo 101-8308 JAPAN}
\email{mizuno.masashi@nihon-u.ac.jp}

%
%
%
\pagestyle{plain}
\allowdisplaybreaks[1]
%
%
\begin{document}

%
%


%
%

\begin{abstract}
	
	Inspired by the modeling of grain growth in polycrystalline materials, we consider a nonlinear \FP\ model, with inhomogeneous diffusion and with variable mobility parameters.
	We develop large time asymptotic analysis of such nonstandard models by reformulating and extending the classical entropy method, under the assumption of periodic \BC.
	In addition, illustrative numerical tests are presented to
        highlight the essential points of the current analytical
        results and to motivate future analysis.	
	
\end{abstract}

\keywords{Nonlinear Fokker-Planck equation, inhomogeneous diffusion,
  variable mobility,
  large time asymptotic analysis, entropy methods, free energy, finite
volume solution}

\subjclass[2000]{35A15, 35B10, 35B40, 35K15, 35K55, 35Q84, 60J60, 65M22}
%
%
%


\maketitle

\section{Introduction}
\label{sec:0}

Fokker-Planck type models
are widely used as a robust tool to describe the macroscopic behavior of the systems
that involve various  fluctuations  \cite{MR987631,MR2053476,MR3932086,MR3019444,MR3485127,MR4196904,MR4218540}, among
many others. In our previous work we derived Fokker-Planck type
systems as a part of grain growth models in polycrystalline materials,
e.g. \cite{DK:gbphysrev,MR2772123,MR3729587,epshteyn2021stochastic}. 
In this paper, we focus on those inhomogeneous fluctuations which
play essential roles in the modeling of the observations of the physical experiments
of these complex processes.

Most technologically useful materials are polycrystalline
microstructures composed of a myriad of small monocrystalline grains
separated by grain boundaries. The energetics and dynamics of
the grain boundaries provide the multiscale
properties of such materials.
Classical models of Mullins and
Herring for the evolution of the grain
boundaries in polycrystalline materials are based on the motion by mean
curvature as the local evolution law \cite{doi:10.1007/978-3-642-59938-5_2,
doi:10.1063/1.1722511,doi:10.1063/1.1722742}. Over the years, this idea
has motivated extensive relevant mathematical analysis of the
motion by mean curvature, e.g. ~\cite{MR1100211,MR2024995,MR1100206,MR3155251}, and the
study of the curvature flow on networks
~\cite{MR1833000,MR3967812,MR2075985,MR3495423,MR3612327,MR0485012}.
Furthermore, almost all previous work required the assumption of the specific equilibrium force balance condition at the triple
junctions points (triple junctions are where three grain boundaries
meet), e.g.~ \cite{MR1240580,MR1833000}.

Grain growth can be viewed as a complex multiscale process involving
dynamics of grain boundaries, triple junctions and the dynamics of lattice misorientations (difference in the orientation between two neighboring grains
that share the grain boundary).  Recently, there are some studies that
consider interactions among grain boundaries and triple junctions, e.g.,
\cite{upmanyu1999triple,upmanyu2002molecular,barmak_grain_2013,zhang2017equation,zhang2018motion,doi:10.1126/science.abj3210}. 
 In \cite{MR4263432,MR4283537},  by employing   the
energetic-variational  approach, we have developed a new model for the evolution of the
2D grain-boundary network with finite mobility of the triple junctions
and with dynamic lattice misorientations.  Under the assumption of no curvature effect,  we established a
local well-posedness result, as well as large time asymptotic behavior
for the model. Our results included obtaining explicit energy decay
rate for the system in terms of mobility of the triple junction and
the misorientation parameter. Further, we conducted extensive numerical experiments for the 2D grain boundary network 
in order to further understand/illustrate the effect of relaxation time scales, e.g. of the curvature of grain
boundaries, mobility of triple junctions, and dynamics of misorientations on how the grain boundary 
system decays energy and coarsens with time \cite{MR4283537,Katya-Chun-Mzn4}. 
Some relevant experimental results of the grain growth in thin
films have also been presented and discussed in \cite{KatyMattpaper,Katya-Chun-Mzn4}.

Note, the mathematical analysis in  \cite{MR4263432, MR4283537} was done under assumption of no critical
events/no disappearance events, e.g., grain
disappearance, facet/grain boundary disappearance, facet interchange,
splitting of unstable junctions (however, numerical simulations were
performed with critical events). Therefore, we began to extend our
models to incorporate the effect of critical events and we proposed a
Fokker-Plank type 
approach \cite{epshteyn2021stochastic} (which is also a further
extension of the earlier work on a simplified 1D critical event model
in \cite{DK:BEEEKT,DK:gbphysrev,MR2772123,MR3729587}).  Moreover, in \cite{epshteyn2021stochastic}
we have established the long time asymptotics of the corresponding Fokker-Planck solutions, namely the
joint probability density function of misorientations and triple junctions, and closely related the
marginal probability density of misorientations. Moreover, for an equilibrium configuration of a
boundary network, we have derived explicit local algebraic relations, a generalized Herring Condition
formula, as well as novel relation that connects grain boundary energy density with the geometry of
the grain boundaries that share a triple junction.

Here we will consider a class of nonlinear Fokker-Planck
equations. As discussed above, such models appear as a part of our studies of non-isothermal
thermodynamics \cite{BA00323160,BA47390682,PhysRevE.94.062117} with applications to
macroscopic models for grain boundary dynamics in polycrystalline
materials \cite{epshteyn2021stochastic,epshteyn2022local}. Fokker-Plank equations can be viewed as
generalized diffusion models in the framework of the energetic-variational
approach \cite{MR3916774, MR1607500}. Such systems are determined by the kinematic transport of the
probability density function, the free energy functional and the dissipation
(entropy production), \cite{baierlein_1999, MR1839500}. The
conventional mathematical analysis of the Fokker-Planck models is
usually developed for the simplified cases only.  In particular, this is especially true
for the well-known entropy methods developed for the asymptotic analysis of such
equations, e.g. \cite{MR1842428, MR3497125,MR1812873,MR1639292}. The
classical entropy methods
rely on the specific algebraic structures of the system, and seem to
have limited applications. 

We will consider two nonstandard generalized Fokker-Planck models,
one with the inhomogeneous diffusion and constant mobility 
parameters,  and the other one with both inhomogeneous diffusion and
variable mobility parameters. Therefore, to develop large time
asymptotic analysis for such systems, we first reformulate
the conventional entropy method in terms of the velocity field of the
probability density function  (rather
than using entropy method directly in terms of the probability density function). This key idea allows us to extend the
entropy method to  Fokker-Planck models (including nonlinear models)
with variable coefficients under assumption of the periodic boundary
conditions. 
\par The paper is organized as follows. In Sections~\ref{sec:1}-\ref{sec:1a}, we formulate the nonlinear Fokker-Planck model with the inhomogeneous diffusion and
variable mobility parameters, introduce notations and review important
results for such model. In Section~\ref{sec:2}, we first illustrate
large time asymptotic analysis for the Fokker-Planck model via
the idea of the entropy method in terms of the velocity field of the
solution under the assumption of the constant diffusion and mobility
parameters (hence, the Fokker-Planck system becomes a linear
model). In Sections~\ref{sec:3}-\ref{sec:4}, we extend the analysis to
the Fokker-Planck model with the inhomogeneous diffusion and constant mobility 
parameters,  and  to the Fokker-Planck model with  the inhomogeneous diffusion and
variable mobility parameters, respectively. Some conclusions and
numerical tests to illustrate essential points of the analytical
results are
given in Section~\ref{sec:6}.

\subsection{Model formulation and notations}
\label{sec:1}
In this paper, we consider the following Fokker-Planck model
subject to the periodic boundary condition on a domain
$\Omega=[0,1)^n\subset\R^n$
\begin{equation}
 \label{eq:1.1}
 \left\{
  \begin{aligned}
   \frac{\partial f}{\partial t}
   -
   \Div
   \left(
   \frac{f}{\pi(x,t)}
   \nabla
   \left(
   D(x)\log f
   +
   \phi(x)
   \right)
   \right)
   &=
   0,
   &\quad
   &x\in\Omega,\quad
   t>0, \\
   f(x,0)&=f_0(x),&\quad
   &x\in\Omega.
  \end{aligned}
 \right.
\end{equation}
Here $D=D(x):\Omega\rightarrow\R$,
$\pi=\pi(x,t):\Omega\times[0,\infty)\rightarrow\R$ are given positive
periodic functions on $\Omega$ and $\phi=\phi(x):\Omega\rightarrow\R$ is a
given periodic function on $\Omega$. The periodic boundary condition for
$f$ means,
\begin{equation}
 \nabla^l f(x_{b,1},t)=\nabla^l f(x_{b,2},t),
\end{equation}
for $x_{b,1}=(x_1,x_2,\ldots,x_{k-1},1,x_{k+1},\ldots,x_n)$,
$x_{b,2}=(x_1,x_2,\ldots,x_{k-1},0,x_{k+1},\ldots,x_n)\in\partial\Omega$,
$t>0$ and $l=0,1,2,\ldots$. In other words, $f$ can be smoothly
extended to a
function on the entire space $\R^n$ with the condition
$f(x,t)=f(x+e_j,t)$ for $x\in\R^n$, $t>0$ and $j=1,2,\ldots,n$, where
$e_j=(0,\ldots,1,\ldots,0)$, with the 1 in the $j$th place. Note that,
the periodic boundary condition for the function $f(x,t)$ is equivalent
to the condition that $f(x,t)$ is the function on the $n$-dimensional
torus for $t>0$. The periodic function is defined in the same way. The meaning of the periodic boundary condition for the Fokker-Planck equation can be seen in \cite[\S 4.1]{MR3288096}.
%
%

Let us introduce $\vec{u}$,  a velocity vector, namely,
\begin{equation}
 \label{eq:1.7}
 \vec{u}
  =
  -
  \frac{1}{\pi(x,t)}
  \nabla
  \left(
   D(x)\log f
   +
   \phi(x)
  \right).
\end{equation}
Then,  the system \eqref{eq:1.1} becomes,
\begin{equation}
\label{velf}
 \left\{
  \begin{aligned}
   \frac{\partial f}{\partial t}
  +
  \Div
  (f\vec{u})  &=
   0,
   &\quad
   &x\in\Omega,\quad
   t>0, \\
   f(x,0)&=f_0(x),&\quad
   &x\in\Omega.
  \end{aligned}
 \right.
\end{equation}
The form of the first equation in \eqref{velf} will make it possible to extend
entropy methods to nonlinear Fokker-Planck model with inhomogeneous
temperature parameter $D(x)$. Next, using \eqref{velf} together with
integration by parts  and with the
periodic boundary condition,  it is easy to obtain that,
\begin{equation}
 \frac{d}{dt}
  \int_\Omega
  f\,dx
  =
  \int_\Omega
  \frac{\partial f}{\partial t}\,dx
  =
  \int_\Omega
  \Div (f\vec{u})\,dx
  =
  0.
\end{equation}
Therefore, if $f_0$ is a probability density function on $\Omega$, we have,
\begin{equation}
\label{eq:1.10}
 \int_\Omega
  f\,dx
  =
  \int_\Omega
  f_0\,dx
  =1.
\end{equation}

Let $F$ be a free energy defined by,
\begin{equation}
 \label{eq:1.2}
  F[f]
  :=
  \int_\Omega
  \left(
   D(x)f(\log f -1)+f\phi(x)
  \right)
  \,dx.
\end{equation}
Hence,  we can establish the energy law for \eqref{velf}.

\begin{proposition}
 Let $f$ be a solution of the periodic boundary value problem
 \eqref{velf},  $\vec{u}$
 be the velocity vector defined in \eqref{eq:1.7}, and let $F$ be a free
 energy defined in \eqref{eq:1.2}.  Then, for $t>0$,
 \begin{equation}
  \label{eq:1.3}
  \frac{dF}{dt}[f](t)
   =
   -\int_\Omega
   \pi(x,t)
   |\vec{u}|^2f\,dx.
 \end{equation}
\end{proposition}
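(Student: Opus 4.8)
The plan is to differentiate $F[f]$ in time, move the derivative inside the integral, identify the integrand's time derivative as $(D(x)\log f+\phi(x))\,\partial_t f$ after a cancellation, and then use the continuity equation in \eqref{velf} together with integration by parts and the defining relation \eqref{eq:1.7} for $\vec{u}$.

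First I would compute the time derivative of the integrand in \eqref{eq:1.2}. The point is the algebraic cancellation
\[
\frac{\partial}{\partial t}\bigl(D(x) f(\log f-1)\bigr)
= D(x)\Bigl((\log f)\,\partial_t f+\partial_t f-\partial_t f\Bigr)
= D(x)(\log f)\,\partial_t f,
\]
so that, adding the contribution $\phi(x)\,\partial_t f$ from the last term, one gets
$\frac{dF}{dt}[f](t)=\int_\Omega\bigl(D(x)\log f+\phi(x)\bigr)\,\partial_t f\,dx$; here $D(x)\log f+\phi(x)$ is exactly the variational derivative $\delta F/\delta f$. Next I would substitute $\partial_t f=-\Div(f\vec{u})$ from the first equation of \eqref{velf} to obtain
$\frac{dF}{dt}[f](t)=-\int_\Omega\bigl(D(x)\log f+\phi(x)\bigr)\Div(f\vec{u})\,dx$.

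Then I would integrate by parts. Since $f$, $D$, $\phi$ (and hence $D(x)\log f+\phi(x)$) and $f\vec{u}$ are periodic on $\Omega=[0,1)^n$, i.e.\ defined on the torus, there is no boundary contribution, and
$\frac{dF}{dt}[f](t)=\int_\Omega\nabla\bigl(D(x)\log f+\phi(x)\bigr)\cdot(f\vec{u})\,dx$.
Finally, by the definition \eqref{eq:1.7} of the velocity field, $\nabla\bigl(D(x)\log f+\phi(x)\bigr)=-\pi(x,t)\,\vec{u}$, so the integrand becomes $-\pi(x,t)\,\vec{u}\cdot(f\vec{u})=-\pi(x,t)|\vec{u}|^2f$, which gives \eqref{eq:1.3}.

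The main obstacle is not the computation but its justification: one must make sure the formal differentiation under the integral sign is legitimate and that all integrands are absolutely integrable. This requires the solution $f$ to be positive (so $\log f$ makes sense) and smooth enough, with $f$, $\nabla f$, $\partial_t f$ controlled, and the coefficients $D$, $\phi$, $\pi$ smooth and bounded away from degeneracy; these properties are part of the notion of solution and of the standing assumptions recalled earlier in the paper. The periodicity (torus setting) is what makes the integration by parts clean, with no boundary terms to estimate. Given these, the three displayed steps above combine to yield the energy law. \QED{}
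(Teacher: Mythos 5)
Your proposal is correct and follows essentially the same route as the paper: differentiate $F$ in time (using the cancellation so the integrand becomes $(D(x)\log f+\phi(x))\,\partial_t f$), substitute $\partial_t f=-\Div(f\vec{u})$, integrate by parts with no boundary terms by periodicity, and invoke $\nabla(D(x)\log f+\phi(x))=-\pi(x,t)\vec{u}$. Your added remarks on justifying differentiation under the integral are consistent with the paper's standing smoothness and positivity assumptions on classical solutions.
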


\begin{proof}
 Take a time-derivative on the left-hand side of \eqref{eq:1.2}, then
 apply  integration by parts  and use the form \eqref{velf}
 together with the periodic boundary condition, one derives,
 \begin{equation}
  \label{eq:1.4}
   \begin{split}
    \frac{dF}{dt}[f]
    &=
    \int_\Omega
    \left(
    D(x)\log f+\phi(x)
    \right)
    f_t
    \,dx   \\
    &=
    -
    \int_\Omega
    \left(
    D(x)\log f+\phi(x)
    \right)
    \Div(f\vec{u})
    \,dx   \\
    &=
    \int_\Omega
    \nabla
    \left(
    D(x)\log f+\phi(x)
    \right)
    \cdot\vec{u}
    f
    \,dx.
   \end{split}
\end{equation}
 Recalling relation,  $-\pi(x,t)\vec{u}=   \nabla
 \left(
 D(x)\log f+\phi(x)
 \right)$, we have,
 \begin{equation*}
  \int_\Omega
   \nabla
   \left(
    D(x)\log f+\phi(x)
   \right)
   \cdot\vec{u}
   f
   \,dx
   =
   -
   \int_\Omega
   \pi(x,t)
   |\vec{u}|^2
   f
   \,dx,
 \end{equation*}
 thus,  we obtain \eqref{eq:1.3}.
\end{proof}

Hereafter, we define
the
right-hand side of~\eqref{eq:1.3} as $-D_{\mathrm{dis}}[f](t)$. One can observe from the energy law
\eqref{eq:1.3}, that an equilibrium state $\Feq$ for the model
\eqref{velf} satisfies $\vec{u}=0$.  Here, we derive the explicit
representation of the equilibrium solution for the Fokker-Planck model
\eqref{velf}.

\begin{proposition}
 The equilibrium state $\Feq$ for the system \eqref{velf}
 is given by,
 \begin{equation}
  \label{eq:1.5}
   \Feq(x)
   =
   \exp
   \left(
   -\frac{\phi(x)-\Cl{const:1.1}}{D(x)}\right),
 \end{equation}
 where $\Cr{const:1.1}$ is a constant, which satisfies,
 \begin{equation}
  \int_\Omega
   \exp
   \left(
   -\frac{\phi(x)-\Cr{const:1.1}}{D(x)}\right)
   \,dx
   =1.
 \end{equation}
\end{proposition}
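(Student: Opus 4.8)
The plan is to read off the equilibrium condition from the energy law \eqref{eq:1.3} and then solve the resulting first–order identity. As noted just before the statement, an equilibrium state $\Feq$ makes the dissipation $\int_\Omega \pi(x,t)\,|\vec u|^2\,\Feq\,dx$ vanish; since $\pi>0$ and a stationary probability density is strictly positive, this forces $\vec u\equiv 0$, i.e.
\[
 \nabla\bigl(D(x)\log\Feq+\phi(x)\bigr)=0\qquad\text{on }\Omega .
\]
Note this is independent of the (possibly time–dependent) factor $\pi$, so the equilibrium does not see the mobility.

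Next I would use that the periodic problem \eqref{eq:1.1} lives on the connected $n$–torus, so a function with vanishing gradient is a constant; call it $\Cr{const:1.1}$. Dividing by $D(x)>0$ and exponentiating gives precisely $\Feq(x)=\exp\bigl(-(\phi(x)-\Cr{const:1.1})/D(x)\bigr)$, which is \eqref{eq:1.5}. Conversely, for this $\Feq$ one has $\nabla(D\log\Feq+\phi)=0$, hence the associated $\vec u$ in \eqref{eq:1.7} is zero and $\partial_t\Feq+\Div(\Feq\vec u)=0$, so it is indeed a stationary solution of \eqref{velf}.

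Finally, $\Cr{const:1.1}$ is fixed by the normalization \eqref{eq:1.10}. Set $g(c):=\int_\Omega \exp\bigl(-(\phi(x)-c)/D(x)\bigr)\,dx$. Since $D$ and $\phi$ are continuous and periodic, on the compact torus $D$ is bounded above and below by positive constants and $\phi$ is bounded, so $g$ is finite, continuous, and strictly increasing in $c$, with $g(c)\to 0$ as $c\to-\infty$ and $g(c)\to+\infty$ as $c\to+\infty$; by the intermediate value theorem there is a unique $\Cr{const:1.1}$ with $g(\Cr{const:1.1})=1$. The only point that genuinely requires care is the implication ``$\Feq$ equilibrium $\Rightarrow \vec u\equiv 0$'': this rests on strict positivity of a stationary density (so that $|\vec u|^2$ integrated against the positive weight $\pi\Feq$ can vanish only if $\vec u\equiv 0$), which in the periodic setting follows from the strong maximum principle / parabolic regularity for \eqref{eq:1.1}; once $\vec u\equiv 0$ is available, the remaining steps are elementary.
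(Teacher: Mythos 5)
Your argument is correct and follows essentially the same route as the paper: both read off from the energy law \eqref{eq:1.3} that equilibrium forces the dissipation to vanish, hence $\nabla(D(x)\log\Feq+\phi(x))=0$, conclude on the connected periodic domain that $D(x)\log\Feq+\phi(x)$ is a constant, and exponentiate to obtain \eqref{eq:1.5} with the constant fixed by the normalization $\int_\Omega \Feq\,dx=1$. Your additional remarks (strict positivity of the stationary density, the converse verification, and the intermediate-value argument for the normalizing constant) are sound extra details that the paper leaves implicit.
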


\begin{proof}
 We have from the energy law \eqref{eq:1.3} that,
 \begin{equation*}
  0
   =
  \frac{dF}{dt}[\Feq]
   =
   -\frac{1}{\pi(x, t)}\int_\Omega |\nabla(D(x)\log \Feq+\phi(x))|^2 f\,dx,
 \end{equation*}
 hence,  $\nabla(D(x)\log \Feq+\phi(x))=0$. Thus, there is a constant
 $\Cr{const:1.1}$ such that,
 \begin{equation}
  \label{eq:1.6}
  D(x)\log \Feq+\phi(x)=\Cr{const:1.1},
 \end{equation}
 and, hence, we obtain \eqref{eq:1.5}.
\end{proof}

Now, let us define the scaled function $\rho$ by taking the ratio of $f$
and $\Feq$ \eqref{eq:1.5},
\begin{equation}
 \label{eq:1.8}
 \rho
  =
  \frac{f}{\Feq},\quad
  \text{or}\quad
  f(x,t)
  =
  \rho(x,t) \Feq(x,t)
  =
  \rho(x,t)
  \exp
  \left(
   -
   \frac{\phi(x)-\Cr{const:1.1}}{D(x)}
  \right).
\end{equation}
Using the relation \eqref{eq:1.6}, we have,
\begin{equation*}
 D(x)\log f+\phi(x)
  =
  D(x)\log\rho+D(x)\log\Feq+\phi(x)
  =
  D(x)\log\rho+\Cr{const:1.1},
\end{equation*}
hence,  the velocity $\vec{u}$ becomes,
\begin{equation}
 \vec{u}
  =
  -
  \frac{1}{\pi(x,t)}
  \nabla
  \left(
   D(x)\log \rho
  \right).
\end{equation}

In this paper, we show exponential convergence to the equilibrium
state via  energy
law,
\begin{itemize}
 \item in case of the homogeneous $D$ and the constant mobility $\pi$ in
       Section \ref{sec:2};
 \item in case of the inhomogeneous $D=D(x)$ and the constant mobility
       $\pi$ in Section \ref{sec:3};
 \item in case of the inhomogeneous $D=D(x)$ and the variable mobility
       $\pi=\pi(x,t)$ in Section \ref{sec:4}.
\end{itemize}
For the homogeneous $D$ and the constant mobility $\pi$, we can
reformulate classical entropy dissipation methods and show the exponential
decay of the global solution of \eqref{eq:1.1} in the $L^1$ space,
provided the logarithmic Sobolev inequality. In Appendix \ref{sec:5}, we
reformulate the entropy dissipation method in terms of the velocity
$\vec{u}$.
\begin{remark}
Finally we note that, when the coefficients and the solution $f$ are sufficiently smooth
functions, the classical approach to study model \eqref{eq:1.1} is to
rewrite it in the non-divergence form,
\begin{equation}
\label{nondiv}
 \frac{\partial f}{\partial t}
 -
 Lf
 +N(f)
 =0,
\end{equation}
where,
\begin{equation}
\label{nondivL}
 Lf
  =
  \frac{D(x)}{\pi(x,t)}\Delta f
  +
  \left(
   \nabla\left(
	  \frac{D(x)}{\pi(x,t)}
	 \right)
   +
   \frac{1}{\pi(x,t)}\nabla D(x)
   +
   \frac{1}{\pi(x,t)}\nabla\phi(x)
  \right)
  \cdot\nabla f
  +
  \left(
   \frac{\Delta\phi(x)}{\pi(x,t)}
   -
   \frac{\nabla \pi(x,t)\cdot\nabla \phi(x)}{\pi^2(x,t)}
  \right)
  f
\end{equation}
is a linear part and,
\begin{equation}
 \label{eq:1.9}
 N(f)
  =
 -
 \frac{1}{\pi(x,t)}\log f\nabla D(x)\cdot\nabla f
 +
 \frac{\nabla \pi(x,t)\cdot\nabla D(x)}{\pi^2(x,t)}f\log f
 -
 \frac{1}{\pi(x,t)}\Delta D(x) f\log f
\end{equation}
is a nonlinear part of \eqref{eq:1.1}. However, due to specific form
of the nonlinearity in \eqref{eq:1.9}, the existing entropy methods
will fail if one applies them to the non-divergence form
\eqref{nondiv}-\eqref{eq:1.9} instead.
\end{remark}
In this paper, we are studying the asymptotic behavior for the classical
solutions for these nonlinear Fokker-Planck systems. The solutions we
define below will be smooth enough so that all the derivatives and
integrations evolved in the equations and the estimates will make sense
in the usual sense (see \cite{epshteyn2022local, MR0241822, MR1465184}).

\begin{definition}
 \label{def:1.1}
 A periodic function in space $f=f(x,t)$ is a classical solution of the
 problem \eqref{eq:1.1} in $\Omega\times[0,T)$, subject to the periodic
 boundary condition,  if $f\in C^{2,1}(\Omega\times(0,T))\cap
 C^{1,0}(\overline{\Omega}\times[0,T))$, $f(x,t)>0$ for $(x,t)\in
 \Omega\times[0,T)$, and satisfies equation \eqref{eq:1.1} in a
 classical sense.
\end{definition}

In the next subsection, we show local existence of a classical solution and the
 maximum principle for \eqref{eq:1.1} subject to the periodic boundary
 condition.

\subsection{Local existence and a priori estimates}
\label{sec:1a}
Here we briefly explain local existence of a solution of \eqref{eq:1.1}.
To state the result, we give assumptions for the coefficients and the
initial data.

First, we assume the strong positivity for the coefficients $\pi(x,t)$
and $D(x)$, namely, there are constants
$\Cl{const:1.2},\Cl{const:1.3}>0$ such that for $x\in\Omega$ and $t>0$,
\begin{equation}
 \label{eq:1.12}
  \pi(x,t)\geq \Cr{const:1.2},\quad
  D(x)\geq \Cr{const:1.3}.
\end{equation}
Next, we assume the H\"older regularity for $0<\alpha<1$: coefficients
$\pi(x,t)$, $D(x)$, $\phi(x)$ and initial datum $f_0$ satisfy,
\begin{equation}
 \label{eq:1.13}
 \pi\in C_{\mathrm{per}}^{1+\alpha, (1+\alpha)/2}(\Omega\times[0,T)),\quad
  D\in C_{\mathrm{per}}^{2+\alpha}(\Omega),\quad
  \phi\in C_{\mathrm{per}}^{2+\alpha}(\Omega),\quad
  f_0\in C_{\mathrm{per}}^{2+\alpha}(\Omega),
\end{equation}
where
\begin{equation*}
 \begin{split}
  C_{\mathrm{per}}^{2+\alpha}(\Omega)
  &:=
  \{g\in C^{2+\alpha}(\Omega):
  g\ \text{is a periodic function on }\Omega
  \}, \\
  C_{\mathrm{per}}^{1+\alpha, (1+\alpha)/2}(\Omega\times[0,T))
  &:=
  \{g\in C^{1+\alpha, (1+\alpha)/2}(\Omega\times[0,T)):
  g(\cdot,t)\ \text{is a periodic function on }\Omega\
  \text{for}\ t>0
  \}. \\
 \end{split}
\end{equation*}
To state the following existence theorem, we also use a
periodic function space,
\begin{equation*}
 C_{\mathrm{per}}^{2+\alpha, 1+\alpha/2}(\Omega\times[0,T))
  :=
  \{g\in C^{2+\alpha, 1+\alpha/2}(\Omega\times[0,T)):
  g(\cdot,t)\ \text{is a periodic function on }\Omega\
  \text{for}\ t>0
  \}.
\end{equation*}

The next proposition guarantees the existence of a
local classical solution as defined in the Definition \ref{def:1.1} for \eqref{eq:1.1}, subject to the periodic
boundary condition.

\begin{proposition}
 \label{prop:1.1}
 Let the coefficients $\pi(x,t)$, $\phi(x)$, $D(x)$, and a positive
 probability density function $f_0(x)$ satisfy the strong positivity
 \eqref{eq:1.12} and the H\"older regularity \eqref{eq:1.13} for
 $0<\alpha<1$. Then,  there exists a time interval $T>0$ and a classical
 solution $f=f(x,t)$ of \eqref{eq:1.1} on $\Omega\times[0,T)$ with the
 Hölder regularity $f\in C_{\mathrm{per}}^{2+\alpha,
 1+\alpha/2}(\Omega\times[0,T))$.
\end{proposition}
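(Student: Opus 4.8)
The plan is to work with the non-divergence form \eqref{nondiv}--\eqref{eq:1.9}. The key structural observation is that the principal coefficient of $Lf$ equals $D(x)/\pi(x,t)$, which does not depend on $f$ and, by \eqref{eq:1.12}--\eqref{eq:1.13}, belongs to $C_{\mathrm{per}}^{\alpha,\alpha/2}(\overline\Omega\times[0,T])$ and is bounded away from zero; thus \eqref{eq:1.1} is a \emph{uniformly parabolic semilinear} equation
\begin{equation*}
 f_t = \frac{D(x)}{\pi(x,t)}\Delta f + \vec{b}(x,t,f)\cdot\nabla f + \tilde c(x,t,f)\,f, \qquad f(\cdot,0)=f_0,
\end{equation*}
whose only nonlinearities are the lower-order terms carrying $\log f$ (read off from \eqref{nondivL}--\eqref{eq:1.9}). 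These terms are smooth in $f$ on $(0,\infty)$ but singular as $f\to0^+$ or $f\to+\infty$, so the whole issue is to keep the solution trapped between positive constants.

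First I would cut off the singularity. Since $f_0$ is positive and continuous on the torus there are constants $0<m_0\le M_0<\infty$ with $m_0\le f_0\le M_0$. Fix $\delta\in(0,m_0)$ with $\delta^{-1}>M_0$ and choose $\ell_\delta\in C^\infty(\R)$ that is bounded together with all its derivatives and satisfies $\ell_\delta(s)=\log s$ for $s\in[\delta,\delta^{-1}]$. Replacing every $\log f$ in \eqref{eq:1.9} by $\ell_\delta(f)$ produces a modified problem in which $\vec{b}(x,t,\cdot)$ and $\tilde c(x,t,\cdot)$ are globally bounded and globally Lipschitz in the $f$-slot, with $(x,t)$-dependence in $C_{\mathrm{per}}^{\alpha,\alpha/2}$; here assumptions \eqref{eq:1.13} are used precisely to guarantee $\nabla\pi\in C^{\alpha,\alpha/2}$ and $\Delta D,\Delta\phi\in C^{\alpha}$, i.e.\ that all coefficients of \eqref{nondivL}--\eqref{eq:1.9} fall into the Hölder class required by linear parabolic Schauder theory.

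Next I would solve the modified problem by a contraction-mapping argument. On a closed ball $\mathcal B\subset C_{\mathrm{per}}^{\alpha,\alpha/2}(\overline\Omega\times[0,T])$ centered at $f_0$ (for $T$ to be fixed small), define $\mathcal T v=f$ as the unique solution of the \emph{linear} periodic parabolic problem obtained by freezing $v$ in the coefficients,
\begin{equation*}
 f_t-\frac{D}{\pi}\Delta f-\vec{b}(x,t,v)\cdot\nabla f=\tilde c(x,t,v)\,v, \qquad f(\cdot,0)=f_0.
\end{equation*}
By linear Schauder theory on $\mathbb{T}^n$ (where periodicity removes all boundary compatibility conditions) one gets $f\in C_{\mathrm{per}}^{2+\alpha,1+\alpha/2}(\overline\Omega\times[0,T])$, cf.\ \cite{MR0241822,MR1465184,epshteyn2022local}. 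Since $\mathcal Tv-f_0$ and $\mathcal Tv_1-\mathcal Tv_2$ solve linear problems with zero initial data and right-hand sides that are, respectively, bounded and $O(\|v_1-v_2\|_{C^{\alpha,\alpha/2}})$ in $C_{\mathrm{per}}^{\alpha,\alpha/2}$, combining the Schauder estimate with the elementary interpolation inequalities for parabolic functions vanishing at $t=0$ yields $\|\mathcal Tv-f_0\|_{C^{\alpha,\alpha/2}}\le CT^{\theta}$ and $\|\mathcal Tv_1-\mathcal Tv_2\|_{C^{\alpha,\alpha/2}}\le CT^{\theta}\|v_1-v_2\|_{C^{\alpha,\alpha/2}}$ for some $\theta>0$. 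Hence there is $T_0>0$ for which $\mathcal T:\mathcal B\to\mathcal B$ is a contraction, and its fixed point $f\in C_{\mathrm{per}}^{2+\alpha,1+\alpha/2}(\overline\Omega\times[0,T_0])$ solves the modified equation.

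Finally I would undo the truncation via the maximum principle. Freezing the (now bounded) coefficients along $f$ itself, $f$ satisfies a linear parabolic equation, and comparing with the explicit barriers $m_0e^{-Kt}$ and $M_0e^{Kt}$, where $K$ exceeds the sup-norm of the modified zeroth-order coefficient, gives $m_0e^{-Kt}\le f(x,t)\le M_0e^{Kt}$ on $[0,T_0]$. Shrinking $T\le T_0$ so that $m_0e^{-KT}\ge\delta$ and $M_0e^{KT}\le\delta^{-1}$, we obtain $\delta\le f\le\delta^{-1}$ on $\overline\Omega\times[0,T]$; there $\ell_\delta(f)=\log f$, so $f$ solves \eqref{eq:1.1} and is a classical solution in the sense of Definition \ref{def:1.1}. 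I expect the only genuinely delicate point to be the interpolation step producing the positive power $T^{\theta}$ (together with the bookkeeping confirming that assumptions \eqref{eq:1.13} place the non-divergence coefficients in the right Hölder class); uniform parabolicity of the frozen-coefficient operators is immediate here because the principal part does not see $f$, and everything else reduces to standard linear parabolic theory once the logarithm has been truncated.
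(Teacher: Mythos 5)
Your argument is correct, but it follows a genuinely different route from the paper. The paper does not work with the non-divergence form \eqref{nondiv}--\eqref{eq:1.9} at all: it first rescales by the equilibrium, $\rho=f/\Feq$ as in \eqref{eq:1.11}, and then substitutes $h=D(x)\log\rho$, which converts the problem into the quasilinear equation \eqref{eq:1.14} whose nonlinearity is a quadratic form in $\nabla h$ with smooth coefficients; positivity of $f=\Feq e^{h/D}$ is then automatic, no truncation of the logarithm is needed, and the contraction mapping (after the further shift $\xi=h-h(\cdot,0)$) is run directly in $C_{\mathrm{per}}^{2+\alpha,1+\alpha/2}$ using Schauder estimates for the linearized problem, with details deferred to \cite{epshteyn2022local}. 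You instead keep the unknown $f$, truncate $\log f$ by $\ell_\delta$ so the equation becomes semilinear with bounded, Lipschitz lower-order nonlinearities, contract in the weaker norm $C^{\alpha,\alpha/2}$ (gaining the factor $T^{\theta}$ by interpolation for functions vanishing at $t=0$), and finally remove the truncation with the barriers $m_0e^{-Kt}$, $M_0e^{Kt}$ and the comparison principle on the torus. Both routes are sound: yours is more self-contained (only linear periodic Schauder theory plus a comparison argument, no change of variables), at the price of the Hölder bookkeeping for the non-divergence coefficients and the explicit truncation-removal step; the paper's $h$-formulation buys built-in positivity and, more importantly, produces exactly the equation \eqref{eq:1.14} that is reused later for the maximum principle in Proposition \ref{prop:1.5}, so the same structure serves both existence and the a priori bounds. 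One small remark: the paper's observation that entropy methods fail for the form \eqref{nondiv}--\eqref{eq:1.9} concerns the long-time analysis, not local solvability, so there is no tension with your use of that form here.
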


Here we briefly sketch the proof of Proposition \ref{prop:1.1}. First,
we introduce the change of variable
$\rho(x,t)=f(x,t)/f^{\mathrm{eq}}(x)$. Note that from \eqref{eq:1.6}, $\nabla (D(x)f^{\mathrm{eq}}(x)+\phi(x))=\vec{0}$ hence the
equation \eqref{eq:1.1},  becomes,
\begin{equation}
 \label{eq:1.11}
 f^{\mathrm{eq}}(x)\rho_t
  =
  \Div
  \left(
   \frac{f^{\mathrm{eq}}(x)}{\pi(x,t)}\rho
   \nabla
   \left(
    D(x)
   \log\rho\right)
  \right).
\end{equation}

In order to explore the underlying structure of the equation, we
introduce new auxiliary variable,  $h(x,t)=D(x)\log\rho(x,t)$. By direct
calculation, we have,
\begin{equation*}
 \rho_t=\frac{\rho}{D(x)}h_t,\quad
  \nabla \rho
  =
  \rho\nabla
  \left(
   \frac{h}{D(x)}\right).
\end{equation*}
Hence, equation \eqref{eq:1.11} becomes,
\begin{equation}
 \label{eq:1.14}
 h_t
  =
  \frac{D(x)}{\pi(x,t)}\Delta h
  +
  \frac{D(x)}{f^{\mathrm{eq}}(x)}
  \nabla
  \left(
   \frac{f^{\mathrm{eq}}(x)}{\pi(x,t)}
  \right)
   \cdot
   \nabla h
   +
   \frac{D(x)}{\pi(x,t)}
  \nabla
  \left(
   \frac{h}{D(x)}
  \right)
   \cdot
   \nabla h.
\end{equation}
To proceed, we further introduce another auxiliary variable $\xi(x,t)$
as $h(x,t)=h(x,0)+\xi(x,t)$. Using the Schauder estimates for a
linearized problem, we can make a contraction mapping on the closed set of
$C_{\mathrm{per}}^{2+\alpha, 1+\alpha/2}(\Omega\times[0,T))$. Detailed
argument for the proof of Proposition \ref{prop:1.1} (under the natural
boundary condition), see
\cite{epshteyn2022local}.

Since we consider the periodic boundary condition, we can furthermore
show the following maximum principle, which gives the boundedness of the
classical solutions $f$ for the equation \eqref{eq:1.1}.

\begin{proposition}
 \label{prop:1.5} Let the coefficients $\pi(x,t)$, $\phi(x)$, $D(x)$,
 and a positive probability density function $f_0(x)$ satisfy the strong
 positivity \eqref{eq:1.12} and the H\"older regularity \eqref{eq:1.13}
 for $0<\alpha<1$. Assume $\pi(x,t)$, $\phi(x)$, $D(x)$, and $f_0(x)$
 are bounded functions, and there is a positive constant
 $\Cl{const:1.4}>0$, such that $f_0(x)\geq \Cr{const:1.4}$ for
 $x\in\Omega$. Let $f$ be a classical solution of \eqref{eq:1.1}. Then,
 \begin{equation}
  \label{eq:1.17}
 \exp\left(
       \frac{1}{D(x)}
       \min_{y\in\Omega}
       \left(
	D(y)
	\log \frac{f_0(y)}{f^{\mathrm{eq}}(y)}
       \right)
      \right)
  f^{\mathrm{eq}}(x)
  \leq
  f(x,t)
  \leq
  \exp\left(
       \frac{1}{D(x)}
       \max_{y\in\Omega}
       \left(
	D(y)
	\log \frac{f_0(y)}{f^{\mathrm{eq}}(y)}
       \right)
      \right)
  f^{\mathrm{eq}}(x),
 \end{equation}
 for $x\in\Omega$, $t>0$. In particular, there are positive constants
 $\Cl{const:1.5}, \Cl{const:1.6}>0$ such that,
 \begin{equation}
  \label{eq:1.18}
  \Cr{const:1.5}\leq f(x.t)\leq \Cr{const:1.6}
 \end{equation}
 for $x\in\Omega$, $t>0$.
\end{proposition}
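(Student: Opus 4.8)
The plan is to reduce the maximum principle to a statement about the auxiliary variable $h(x,t)=D(x)\log\rho(x,t)=D(x)\log\!\big(f(x,t)/\Feq(x)\big)$ introduced in the sketch of proof of Proposition~\ref{prop:1.1}, which solves the quasilinear equation \eqref{eq:1.14}. By Proposition~\ref{prop:1.1} the classical solution $f$ is positive and lies in $C^{2+\alpha,1+\alpha/2}_{\mathrm{per}}(\Omega\times[0,T))$, and $\Feq$ is positive and smooth; hence on each slab $\overline\Omega\times[0,T']$ with $T'<T$ the function $h$ is well defined, continuous, and of class $C^{2,1}$ in $\Omega\times(0,T']$. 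Since $f$ is periodic, $h$ lives on the torus and there is no spatial boundary to contend with. The structural point is that \eqref{eq:1.14} has \emph{no zeroth-order term}, and that every term on its right-hand side other than $\frac{D(x)}{\pi(x,t)}\Delta h$ carries a factor $\nabla h$ --- this holds also for the quasilinear term $\frac{D(x)}{\pi(x,t)}\nabla(h/D(x))\cdot\nabla h$, which expands to $\frac{|\nabla h|^2}{\pi(x,t)}-\frac{h\,\nabla D(x)\cdot\nabla h}{\pi(x,t)D(x)}$.

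First I would prove that $\min_{y\in\Omega}h(y,0)\le h(x,t)\le\max_{y\in\Omega}h(y,0)$ for all $(x,t)\in\overline\Omega\times[0,T)$ by the standard perturbation argument. Fix $T'<T$ and $\varepsilon>0$, and let $(x_0,t_0)$ be a maximum point of $w:=h-\varepsilon t$ on the compact set $\overline\Omega\times[0,T']$. If $t_0>0$, then --- no spatial boundary being present --- $\nabla h(x_0,t_0)=0$ and $\Delta h(x_0,t_0)\le0$, while $h_t(x_0,t_0)=\partial_t w(x_0,t_0)+\varepsilon\ge\varepsilon$. On the other hand, evaluating \eqref{eq:1.14} at $(x_0,t_0)$ and using $\nabla h(x_0,t_0)=0$ kills every gradient term and leaves $h_t(x_0,t_0)=\frac{D(x_0)}{\pi(x_0,t_0)}\Delta h(x_0,t_0)\le0$ since $D,\pi>0$ --- a contradiction. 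Hence $t_0=0$, so $h(x,t)\le\max_{y\in\Omega}h(y,0)+\varepsilon T'$; letting $\varepsilon\downarrow0$ and then $T'\uparrow T$ gives the upper bound, and the lower bound follows by the same argument applied to $h+\varepsilon t$ at a minimum point, where $\Delta h\ge0$.

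It then remains to unwind the change of variables. Since $h(x,0)=D(x)\log\!\big(f_0(x)/\Feq(x)\big)$ and $h(x,t)=D(x)\log\!\big(f(x,t)/\Feq(x)\big)$, dividing the double inequality by $D(x)\ge\Cr{const:1.3}>0$, exponentiating, and multiplying by $\Feq(x)$ produces exactly \eqref{eq:1.17}. The boundedness of $\phi$ together with $D\ge\Cr{const:1.3}$ shows that $\Feq(x)=\exp(-(\phi(x)-\Cr{const:1.1})/D(x))$ is bounded above and below by positive constants, and the boundedness of $f_0$ and $D$ together with $f_0\ge\Cr{const:1.4}>0$ shows that $y\mapsto D(y)\log\!\big(f_0(y)/\Feq(y)\big)$ is bounded, so the extrema appearing in \eqref{eq:1.17} are finite and the exponential prefactors there are positive constants; combined with the bounds on $\Feq$ this yields $\Cr{const:1.5}\le f(x,t)\le\Cr{const:1.6}$, i.e.\ \eqref{eq:1.18}.

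The only delicate point is the maximum principle used in the second paragraph. The equation \eqref{eq:1.14} is genuinely quasilinear, so rigorously one freezes its coefficients along the (already known and continuous) solution $h$ and applies the weak maximum principle for linear parabolic operators \emph{without} zeroth-order term on the torus --- which is precisely what the $\varepsilon$-perturbation argument above reproves. The structural facts that make this work are simply that \eqref{eq:1.14} contains no undifferentiated $h$ and that $D/\pi>0$ pointwise; the remaining ingredients (regularity of $h$ up to $t=0$ on each slab, and the elementary two-sided bounds on $\Feq$) are routine.
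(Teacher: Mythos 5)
Your proposal is correct and follows essentially the same route as the paper: the same auxiliary variable $h=D(x)\log(f/\Feq)$ satisfying \eqref{eq:1.14}, the same $\varepsilon$-perturbation argument at an interior maximum exploiting the absence of a zeroth-order term and the vanishing of all gradient terms there, and the same unwinding of the change of variables plus the boundedness assumptions to obtain \eqref{eq:1.17}--\eqref{eq:1.18}. Your added care with compact slabs $[0,T']$ before letting $T'\uparrow T$ is a minor (and welcome) tightening of the paper's argument, not a different method.
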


\begin{proof}
 First, note that using the auxiliary variable
 \begin{equation}
  \rho(x,t)=\frac{f(x,t)}{f^{\mathrm{eq}}(x)},\qquad
   h(x,t)=D(x)\log\rho(x,t),
 \end{equation}
 the function $h$ is a classical solution of \eqref{eq:1.14}, the right
 hand side of which only includes the Laplacian and the gradient terms
 of $h$.

 We show that for $x\in\Omega$ and $t>0$,
 \begin{equation}
  \label{eq:1.15}
  \min_{y\in\Omega}h(y,0)
   \leq
   h(x,t)
   \leq
   \max_{y\in\Omega}h(y,0).
 \end{equation}
 The idea of the proof follows the argument of the proof of the maximum
 principle for the linear parabolic equation (see for instance,
 \cite[Theorem 8 in \S7.1]{MR2597943}, \cite[Section 1 in Chapter
 3]{MR0762825}). Here we give a complete proof.

 Write $h^\varepsilon(x,t)=h(x,t)-\varepsilon t$ for
 $\varepsilon>0$. Then,  $h_t^\varepsilon=h_t-\varepsilon$, $\nabla
 h^\varepsilon=\nabla h$, and $\Delta h^\varepsilon=\Delta h$,  hence by
 \eqref{eq:1.14}, we have,
 \begin{equation}
  \label{eq:1.19}
  h_t^\varepsilon+\varepsilon
   =
  \frac{D(x)}{\pi(x,t)}\Delta h^\varepsilon
  +
  \frac{D(x)}{f^{\mathrm{eq}}(x)}
  \nabla
  \left(
   \frac{f^{\mathrm{eq}}(x)}{\pi(x,t)}
  \right)
   \cdot
   \nabla h^\varepsilon
   +
   \frac{D(x)}{\pi(x,t)}
  \nabla
  \left(
   \frac{h^\varepsilon+\varepsilon t}{D(x)}
  \right)
   \cdot
   \nabla h^\varepsilon.
 \end{equation}
 Let $(x_0,t_0)\in\Omega\times(0,\infty)$ be a point,  such that
 $\max_{(x,t)\in\Omega\times[0,\infty)}h^\varepsilon(x,t)=h^\varepsilon(x_0,t_0)$. Note
 that $t_0>0$, hence at $(x_0,t_0)$, $h_t^\varepsilon\geq0$, $\nabla
 h^\varepsilon=0$, and $\Delta h^\varepsilon\leq0$. Thus, using
 \eqref{eq:1.19} and positivity of $D(x)$ and $p(x,t)$, at $(x_0,t_0)$,
\begin{equation*}
 0< h_t^\varepsilon+\varepsilon
  =
  \frac{D(x)}{\pi(x,t)}\Delta h^\varepsilon
  \leq
  0,
\end{equation*}
 which is a contradiction. Therefore $t_0=0$ and
 $\max_{(x,t)\in\Omega\times[0,\infty)}h^\varepsilon(x,t)=\max_{y\in\Omega}
 h^\varepsilon(y,0)$. By the definition of $h^\varepsilon$, we have
 $\max_{(x,t)\in\Omega\times[0,\infty)}h^\varepsilon(x,t)=\max_{y\in\Omega}
 h(y,0)$.  Let $\varepsilon\rightarrow0$ to find,
 \begin{equation*}
  \max_{(x,t)\in\Omega\times[0,\infty)}h(x,t)=\max_{y\in\Omega} h(y,0).
 \end{equation*}
 Hence,  we obtain $h(x,t)\leq \max_{y\in\Omega} h(y,0)$ for $x\in\Omega$
 and $t>0$. Proof of $h(x,t)\geq \min_{y\in\Omega} h(y,0)$ follows similar
 idea.

 Since,
 \begin{equation}
  \label{eq:1.16}
   h(x,t)
   =
   D(x)\log\rho(x,t)
   =
   D(x)\log\frac{f(x,t)}{{f^{\mathrm{eq}}(x)}},\qquad
   h(y,0)
   =
   D(y)\log\frac{f_0(y)}{{f^{\mathrm{eq}}(y)}},
 \end{equation}
 we obtain \eqref{eq:1.17} from \eqref{eq:1.15} and \eqref{eq:1.16}.

 Since $D(x)$, $\pi(x,t)$, $f_0(x)$ satisfy the strong positivity and
 $D(x)$, $\pi(x,t)$, $\phi(x)$, $f_0(x)$ are bounded, the right-hand
 side and the left-hand side of \eqref{eq:1.17} are bounded below and
 above by positive numbers $\Cr{const:1.5},\ \Cr{const:1.6}>0$
 respectively, thus, the result \eqref{eq:1.18} is deduced.
\end{proof}
%

Throughout this paper, we assume that coefficients $\pi(x,t)$,
$\phi(x)$, $D(x)$ and a positive probability density function $f_0(x)$
satisfy the strong positivity \eqref{eq:1.12} and the H\"older
regularity \eqref{eq:1.13} for $0<\alpha<1$. Further, we assume that
there is a positive constant $\Cr{const:1.4}>0$ such that $f_0(x) \geq
\Cr{const:1.4}$ for $x\in\Omega$. Then, by the
following proposition, we obtain the uniform lower bounds of $F[f](t)$
for $t>0$.

\begin{proposition}
 \label{prop:1.7}
 %
 Let $f$ be a solution of \eqref{eq:1.1}. Then,  there is a positive
 constant $\Cl{const:1.7}>0$,  such that,
 \begin{equation}
  \label{eq:1.20}
  F[f](t)\geq -\Cr{const:1.7},
 \end{equation}
 for $t>0$.
\end{proposition}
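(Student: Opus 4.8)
The plan is to obtain the lower bound \eqref{eq:1.20} by estimating the integrand of $F[f]$ pointwise from below and then integrating over the unit cube $\Omega=[0,1)^n$; notably, neither the equation \eqref{eq:1.1} nor the energy law \eqref{eq:1.3} will be used — only that $f(\cdot,t)$ is a positive probability density and that the coefficients are bounded. The single scalar fact I would invoke is the convexity inequality
\[
 s(\log s - 1) \ge -1 \qquad (s>0),
\]
which holds because $g(s):=s\log s - s$ satisfies $g'(s)=\log s$ and therefore attains its global minimum $g(1)=-1$.

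First I would record that $D$ and $\phi$ are bounded on $\Omega$: by the H\"older regularity \eqref{eq:1.13} both are continuous periodic functions, hence continuous on the torus and bounded; write $M_D:=\sup_\Omega D$ (finite by this remark, and $M_D\ge\Cr{const:1.3}>0$ by \eqref{eq:1.12}) and $M_\phi:=\sup_\Omega|\phi|$. Since $D(x)>0$ and $f(x,t)>0$ by Definition \ref{def:1.1}, the scalar inequality gives pointwise on $\Omega\times(0,\infty)$
\[
 D(x)\,f(\log f-1) \ge -D(x) \ge -M_D,
\]
while trivially $f\,\phi(x)\ge -M_\phi\,f$. Integrating both bounds over $\Omega$ and using $\int_\Omega f\,dx=1$ from \eqref{eq:1.10} together with $|\Omega|=1$ then yields
\[
 F[f](t) = \int_\Omega D(x)f(\log f-1)\,dx + \int_\Omega f\phi(x)\,dx \ge -M_D - M_\phi,
\]
so that \eqref{eq:1.20} holds with $\Cr{const:1.7}:=M_D+M_\phi$, uniformly for $t>0$.

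I do not expect any genuine obstacle. The only point requiring a little care is that the entropy density $f\log f$ is negative wherever $f<1$, so a careless estimate might suggest $F$ is unbounded below; the convexity bound $s(\log s-1)\ge -1$ controls exactly that contribution, and since $\Omega$ has finite measure and the coefficients are bounded, the claim follows immediately. (An alternative route is to invoke the two-sided bound $\Cr{const:1.5}\le f\le\Cr{const:1.6}$ from Proposition \ref{prop:1.5} and the continuity of $s\mapsto s(\log s-1)$ on the compact interval $[\Cr{const:1.5},\Cr{const:1.6}]$; this also gives a finite, if less sharp, constant.)
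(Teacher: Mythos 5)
Your proof is correct, and it takes a genuinely different (and more elementary) route than the paper. The paper bounds the entropy term by writing $\int_\Omega D(x)f(\log f-1)\,dx \ge -\|D\|_{L^\infty(\Omega)}\|\log f-1\|_{L^\infty(\Omega\times[0,\infty))}\int_\Omega f\,dx$ and then invokes the maximum principle of Proposition \ref{prop:1.5} to get the uniform bound on $\log f-1$; its constant therefore depends on the initial datum $f_0$ (through the two-sided bound on $f$) and implicitly on the standing hypotheses \eqref{eq:1.12}--\eqref{eq:1.13} plus $f_0\ge \Cr{const:1.4}$. You instead use the pointwise convexity bound $s(\log s-1)\ge -1$ for $s>0$, which needs only the positivity of $f$ from Definition \ref{def:1.1}, the positivity and boundedness of $D$ and $\phi$, the finiteness of $|\Omega|$, and the mass normalization \eqref{eq:1.10}; this is exactly the alternative you mention parenthetically, in reverse: your main argument avoids Proposition \ref{prop:1.5} altogether, while your parenthetical remark is essentially the paper's proof. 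What your route buys is a cleaner and more robust constant, $\Cr{const:1.7}=\|D\|_{L^\infty(\Omega)}+\|\phi\|_{L^\infty(\Omega)}$, independent of $f_0$ and valid without the strong positivity assumption on the initial data; what the paper's route reflects is that the uniform bounds on $f$ are in any case available and used repeatedly later, so nothing is lost there either. The only steps worth stating explicitly in your write-up are that $D(x)>0$ (from \eqref{eq:1.12}) is what lets you multiply $f(\log f-1)\ge -1$ by $D(x)$, and that $\int_\Omega f\,dx=1$ rests on the computation leading to \eqref{eq:1.10} for the classical solution; both are immediate.
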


\begin{proof}
 By the triangle inequality, we have,
 \begin{equation}
  \begin{split}
   \int_\Omega D(x)f(\log f-1)\,dx
   &\geq
   -
   \int_\Omega D(x)f|\log f-1|\,dx \\
   &\geq
   -
   \|D\|_{L^\infty(\Omega)}\|\log f-1\|_{L^\infty(\Omega\times[0,\infty))}
   \int_\Omega f\,dx.
  \end{split} 
 \end{equation}
 Using $f\phi(x)\geq -f\|\phi\|_{L^\infty(\Omega)}$,  and \eqref{eq:1.10},
 we obtain,
 \begin{equation}
  F[f](t)
   \geq
   -
   \left(
    \|D\|_{L^\infty(\Omega)}\|\log f-1\|_{L^\infty(\Omega\times[0,\infty))}
    +
    \|\phi\|_{L^{\infty}(\Omega)}
   \right).
 \end{equation}
 By the maximum principle \eqref{eq:1.18}, $\log f-1$ is uniformly
 bounded on $\Omega\times[0,\infty)$. Therefore,  \eqref{eq:1.20} is
 deduced by choosing $\Cr{const:1.7}=\|D\|_{L^\infty(\Omega)}\|\log
 f-1\|_{L^\infty(\Omega\times[0,\infty))} +
 \|\phi\|_{L^{\infty}(\Omega)}$.
\end{proof}

\subsection{Notation}
Here we define some useful notations in this paper. For a vector field,
$\vec{u}=(u^k)_{k}$, we write,
\begin{equation*}
\nabla\vec{u}
 =
 (u^k_{x_l})_{k,l}
 =
 \begin{pmatrix}
  \frac{\partial u^1}{\partial x_1} &
  \frac{\partial u^1}{\partial x_2} &
  \cdots &
  \frac{\partial u^1}{\partial x_n} \\
  \frac{\partial u^2}{\partial x_1} &
  \frac{\partial u^2}{\partial x_2} &
  \cdots &
  \frac{\partial u^2}{\partial x_n} \\
  \vdots &
  \vdots &
  \ddots &
  \vdots \\
  \frac{\partial u^n}{\partial x_1} &
  \frac{\partial u^n}{\partial x_2} &
  \cdots &
  \frac{\partial u^n}{\partial x_n}
 \end{pmatrix}
 ,
\end{equation*}
and the transpose of $\nabla\vec{u}$ is
denoted by $\mathstrut^T\nabla\vec{u}=(u^l_{x_k})_{k,l}$. We denote the Frobenius norm of $\nabla \vec{u}$ by $|\nabla \vec{u}|$, namely $|\nabla \vec{u}|=\tr(\mathstrut^T\nabla\vec{u}\nabla \vec{u})$.
For the two vectors $\vec{u}=(u^k)_k$ and $\vec{v}=(v^l)_l$, we write,
\begin{equation}
 \vec{u}\otimes \vec{v}=(u^kv^l)_{k,l}
  =
  \begin{pmatrix}
   u^1 v^1&
   u^1v^2&
   \cdots &
   u^1v^n \\
   u^2v^1 &
   u^2v^2 &
   \cdots &
   u^2v^n \\
  \vdots &
   \vdots &
   \ddots &
   \vdots \\
   u^nv^1 &
   u^nv^2 &
   \cdots &
   u^nv^n
  \end{pmatrix}.
\end{equation}

\section{Homogeneous diffusion case}
\label{sec:2}

In this section, we consider the case of homogeneous diffusion and a
constant mobility, namely $D$ is a positive constant and
$\pi\equiv1$. We study the following periodic boundary value problem,

\begin{equation}
 \label{eq:2.1}
 \left\{
  \begin{aligned}
   \frac{\partial f}{\partial t}
   +
   \Div
   \left(
   f\vec{u}
   \right)
   &=
   0,
   &\quad
   &x\in\Omega,\quad
   t>0, \\
   \vec{u}
   &=
   -
   \nabla
   \left(
   D\log f
   +
   \phi(x)
   \right),
   &\quad
   &x\in\Omega,\quad
   t>0, \\
   f(x,0)&=f_0(x),&\quad
   &x\in\Omega.
  \end{aligned}
 \right.
\end{equation}
The equation \eqref{eq:2.1} is the linear Fokker-Planck
equation. The entropy dissipation method is among the powerful
tool (for instance, see~\cite{MR1842428, MR3497125,MR1812873,MR1639292}) for the study of long-time asymptotic behavior of solutions
to \eqref{eq:2.1}. Here,  we present a new take on the
entropy dissipation method with a help of the velocity vector
$\vec{u}$, \eqref{eq:2.1}. Such approach makes it possible to extend entropy dissipation method to the nonlinear problem
\eqref{eq:1.1}.

Under the assumption of the constant coefficients  $D>0$ and
$\pi=1$, the free energy $F$ and the energy law \eqref{eq:1.3} take
the form,
\begin{equation}
 \label{eq:2.2}
  F[f]
  :=
  \int_\Omega
  \left(
   Df(\log f -1)+f\phi(x)
  \right)
  \,dx,
\end{equation}
and
\begin{equation}
 \label{eq:2.3}
  \frac{dF}{dt}[f](t)
  =
  -\int_\Omega
  |\vec{u}|^2f\,dx
  =:
  - D_{\mathrm{dis}}[f](t),
\end{equation}
where $D_{\mathrm{dis}}[f](t)$ is the dissipation rate
of the free energy $F[f]$.

Let us first state the main result of this section,

\begin{theorem}
 \label{thm:2.9}
 Let $\phi=\phi(x)$ be a periodic function, and let $f_0=f_0(x)$ be a
 periodic probability density function which satisfies both the finite conditions that
 $F[f_0]<\infty$ and
 $D_{\mathrm{dis}}[f_0]<\infty$. Let $f$ be a solution of \eqref{eq:2.1} subject to
 the periodic boundary condition. Let $\vec{u}$ be defined as in
 \eqref{eq:2.1}. Assume, that there is a positive constant $\lambda>0$,
 such that $\nabla^2\phi\geq \lambda I$, where $I$ is the identity
 matrix. Then, we obtain that,
 \begin{equation}
  \label{eq:2.40}
   \int_\Omega
   |\vec{u}|^2f\,dx
   \leq
   e^{-2\lambda t}
   \int_\Omega
   |\nabla
   \left(
    D\log f_0
    +
    \phi(x)
   \right)|^2f_0\,dx.
 \end{equation}
 In particular, we have that,
 \begin{equation}
  \label{eq:2.36}
  \frac{dF}{dt}[f](t)
   =
   -\int_\Omega |\vec{u}|^2f\,dx
   \rightarrow 0
   \qquad
   \text{as}
   \
   t\rightarrow\infty.
 \end{equation}
\end{theorem}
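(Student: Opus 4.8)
The plan is to differentiate the dissipation functional $D_{\mathrm{dis}}[f](t) = \int_\Omega |\vec u|^2 f\,dx$ in time and show it satisfies a differential inequality of the form $\frac{d}{dt}D_{\mathrm{dis}}[f] \le -2\lambda\, D_{\mathrm{dis}}[f]$, after which Gr\"onwall's inequality gives \eqref{eq:2.40}, and \eqref{eq:2.36} follows immediately since the right-hand side of \eqref{eq:2.40} is the finite constant $D_{\mathrm{dis}}[f_0]$. So the whole theorem reduces to establishing that one differential inequality.

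First I would write $|\vec u|^2 f = \vec u \cdot \vec u\, f$ and use the transport equation $f_t + \Div(f\vec u) = 0$ together with the evolution equation for $\vec u$. From \eqref{eq:2.1}, $\vec u = -\nabla(D\log f + \phi)$, so $\partial_t \vec u = -\nabla(D\, f_t/f) = -\nabla(D\,\Div(f\vec u)/f)$. Then
\begin{equation*}
 \frac{d}{dt}\int_\Omega |\vec u|^2 f\,dx
 = \int_\Omega |\vec u|^2 f_t\,dx + 2\int_\Omega (\vec u\cdot \partial_t\vec u)\, f\,dx,
\end{equation*}
and I would handle the two terms by integration by parts, using periodicity to kill all boundary contributions. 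The first term becomes $-\int_\Omega |\vec u|^2 \Div(f\vec u)\,dx = \int_\Omega \nabla(|\vec u|^2)\cdot \vec u\, f\,dx = 2\int_\Omega (\nabla\vec u\,\vec u)\cdot\vec u\, f\,dx$. For the second term, I substitute $f\,\partial_t\vec u = -f\,\nabla(D\,\Div(f\vec u)/f)$; noting $D$ is constant and $\Div(f\vec u)/f = \Div\vec u + \vec u\cdot\nabla\log f$, while $-D\nabla\log f = \vec u + \nabla\phi$, one can reorganize. The cleaner route: observe that since $D$ is constant, $\vec u = -D\nabla\log f - \nabla\phi$ and $f_t/f = -\Div\vec u - \vec u\cdot\nabla\log f$, so $\partial_t(D\log f) = D f_t/f$ and $\partial_t \vec u = -\nabla(D f_t/f)$. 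Integrating by parts in the term $2\int_\Omega \vec u\cdot\partial_t\vec u\, f\,dx = 2\int_\Omega (D f_t/f)\,\Div(f\vec u)\,dx = -2\int_\Omega (D f_t/f)\, f_t\,dx = -2\int_\Omega D\, f_t^2/f\,dx \le 0$. This nonpositive term can be discarded.

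The remaining task is to show $2\int_\Omega (\nabla\vec u\, \vec u)\cdot\vec u\, f\,dx \le -2\lambda\int_\Omega |\vec u|^2 f\,dx$, i.e. that the "advective" contribution is controlled by the convexity of $\phi$. Here is where the Bakry--\'Emery-type computation enters: one writes $\nabla\vec u = -\nabla^2(D\log f) - \nabla^2\phi$, so $(\nabla\vec u\,\vec u)\cdot\vec u = -\vec u^T\nabla^2(D\log f)\vec u - \vec u^T\nabla^2\phi\,\vec u \le -\vec u^T\nabla^2(D\log f)\vec u - \lambda|\vec u|^2$ using $\nabla^2\phi \ge \lambda I$. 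It then remains to check that $\int_\Omega \vec u^T\nabla^2(D\log f)\vec u\, f\,dx \ge$ (something that combines with the discarded term to stay nonnegative) — more precisely, that the full collection of second-order terms coming from $\nabla^2\log f$, after integration by parts against $f$, is nonnegative. This is the standard identity that $\int (\nabla^2 g : \vec u\otimes\vec u) e^{-g}\,dx$ plus the "extra" $f_t^2/f$ term assemble into a perfect square (a Fisher-information-type quantity); I expect the precise bookkeeping of these second-order terms, and verifying that the cross terms vanish by periodic integration by parts, to be the main obstacle. Once that nonnegativity is in hand, one concludes $\frac{d}{dt}D_{\mathrm{dis}} \le -2\lambda D_{\mathrm{dis}}$, and Gr\"onwall closes the argument.
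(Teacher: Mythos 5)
Your framework is the right one, and it is in fact the paper's own strategy in slightly different clothing: the paper also differentiates the dissipation (via $\frac{d}{dt}D_{\mathrm{dis}}[f]=-\frac{d^2F}{dt^2}[f]$), uses $\nabla^2\phi\geq\lambda I$ to get $\frac{d}{dt}D_{\mathrm{dis}}\leq-2\lambda D_{\mathrm{dis}}$, and closes with Gronwall; your two preliminary computations, $\int_\Omega|\vec{u}|^2f_t\,dx=\int_\Omega\vec{u}\cdot\nabla|\vec{u}|^2\,f\,dx$ and $2\int_\Omega\vec{u}\cdot\vec{u}_t\,f\,dx=-2D\int_\Omega f_t^2/f\,dx$, are correct. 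The genuine gap is that the decisive step is exactly the one you defer to ``bookkeeping.'' In your decomposition, what has to be proved is the identity
\begin{equation*}
\int_\Omega\bigl(\nabla^2(D\log f)\,\vec{u}\cdot\vec{u}\bigr)f\,dx
+D\int_\Omega\frac{f_t^2}{f}\,dx
=D\int_\Omega|\nabla\vec{u}|^2f\,dx,
\end{equation*}
which together with the convexity of $\phi$ is equivalent to the paper's key identity \eqref{eq:2.25}. That Bochner/Bakry--\'Emery cancellation is the entire analytic content of the theorem, and in the paper it occupies Lemmas \ref{lem:2.4}--\ref{lem:2.6} and Proposition \ref{prop:2.8}: one rewrites $D\Feq\nabla\rho_t$ through \eqref{eq:2.11}, exploits the symmetry of $\nabla\vec{u}=-\nabla^2(D\log\rho)$, and integrates by parts so that the gradient-of-$|\vec{u}|^2$ terms cancel and only $2\int(\nabla^2\phi\,\vec{u}\cdot\vec{u})f+2\int D|\nabla\vec{u}|^2f$ survives. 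Asserting that the second-order terms ``assemble into a perfect square'' is a statement of the theorem's core, not a proof of it.

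Moreover, the shortcut you propose first --- discard the nonpositive term $-2D\int_\Omega f_t^2/f\,dx$ and then show $\int_\Omega(\nabla^2(D\log f)\,\vec{u}\cdot\vec{u})f\,dx\geq0$ --- cannot work: $\nabla^2\log f$ has no sign (at $t=0$ the datum $f_0$ is essentially arbitrary), and already in one dimension with $D=1$ one has, on the circle, $\int e^h h''(h')^2\,dx=-\tfrac13\int e^h(h')^4\,dx<0$ for nonconstant $h=\log f$, so the term you would need to be nonnegative is generically negative; it is precisely the discarded Fisher-type term $D\int f_t^2/f$ that restores the balance. You do acknowledge this and switch to ``combine with the discarded term,'' but that combination is never carried out, so the differential inequality $\frac{d}{dt}D_{\mathrm{dis}}\leq-2\lambda D_{\mathrm{dis}}$ is not established. (A minor positive remark: once that inequality is in hand, your route to \eqref{eq:2.40} and \eqref{eq:2.36} via Gronwall and $D_{\mathrm{dis}}[f_0]<\infty$ is fine, and you do not need the subsequence argument of Lemma \ref{lem:2.1} that the paper uses to first prove plain convergence of $\frac{dF}{dt}[f]$ to $0$.)
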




In order to establish statement of Theorem \ref{thm:2.9}, first, we need to
obtain additional results as in
Lemmas~\ref{lem:2.1}-\ref{lem:2.6}. Using \eqref{eq:2.3} and the
Fubini theorem,  we start by showing that we can take subsequence such that $\frac{dF}{dt}[f]$
converges to $0$,.

\begin{lemma}
 \label{lem:2.1}
 Let $f$ be a solution of \eqref{eq:2.1}. Then,  there is an increasing
 sequence $\{t_j\}_{j=1}^\infty$,  such that $t_j\rightarrow\infty$ and,
 \begin{equation}
  \label{eq:2.31}
   \frac{dF}{dt}[f](t_j)\rightarrow 0,\qquad
   j\rightarrow\infty.
 \end{equation}
\end{lemma}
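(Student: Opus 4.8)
The plan is to combine the monotonicity of the free energy coming from the energy law with a uniform lower bound for $F[f]$, and then invoke the elementary fact that a nonnegative function which is integrable on $[0,\infty)$ must tend to zero along some sequence going to infinity. Concretely, I would first integrate the energy law \eqref{eq:2.3} in time. Since $f$ is a classical solution, $\vec{u}=-\nabla(D\log f+\phi)$ is continuous, so $t\mapsto D_{\mathrm{dis}}[f](t)=\int_\Omega|\vec{u}|^2f\,dx$ is continuous and nonnegative on $(0,\infty)$; by the fundamental theorem of calculus (and, writing $\int_0^T\!\int_\Omega|\vec{u}|^2f\,dx\,dt=\int_\Omega\!\int_0^T|\vec{u}|^2f\,dt\,dx$ by Tonelli's theorem for the nonnegative integrand),
\begin{equation*}
 F[f](T)-F[f](0)=-\int_0^T D_{\mathrm{dis}}[f](t)\,dt\le 0,
\end{equation*}
so $F[f]$ is non-increasing in $T$.

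Next I would establish a lower bound $F[f](t)\ge -C$ uniform in $t>0$. This follows directly from Proposition \ref{prop:1.7} specialized to the homogeneous setting; alternatively it can be seen by an elementary convexity argument, since on $\Omega=[0,1)^n$ we have $\int_\Omega f\,dx=1$ and $s(\log s-1)\ge -1$, whence $\int_\Omega Df(\log f-1)\,dx\ge -D$ and $\int_\Omega f\phi\,dx\ge -\|\phi\|_{L^\infty(\Omega)}$, giving $F[f](t)\ge -(D+\|\phi\|_{L^\infty(\Omega)})$. Combining monotonicity with this lower bound, $F[f](T)$ converges to some finite limit $F_\infty$ as $T\to\infty$, and letting $T\to\infty$ in the integrated energy law yields
\begin{equation*}
 \int_0^\infty D_{\mathrm{dis}}[f](t)\,dt=F[f_0]-F_\infty<\infty.
\end{equation*}

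Finally, since $D_{\mathrm{dis}}[f]\ge 0$ is integrable on $[0,\infty)$, its lower limit as $t\to\infty$ must vanish: otherwise there would exist $\delta>0$ and $T_0>0$ with $D_{\mathrm{dis}}[f](t)\ge\delta$ for all $t\ge T_0$, forcing $\int_0^\infty D_{\mathrm{dis}}[f](t)\,dt=\infty$, a contradiction. Hence one may pick an increasing sequence $t_j\to\infty$ with $D_{\mathrm{dis}}[f](t_j)\to0$, and since $\tfrac{dF}{dt}[f](t_j)=-D_{\mathrm{dis}}[f](t_j)$ this is precisely \eqref{eq:2.31}. The argument is essentially routine; the only point needing a little care is the uniform lower bound for $F[f]$ (and the observation that it is the \emph{monotone convergence} of $F[f](T)$, not just its boundedness, that upgrades the a priori merely nonnegative dissipation to an $L^1([0,\infty))$ function).
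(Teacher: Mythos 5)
Your proof is correct and follows essentially the same route as the paper: integrate the energy law in time, use the uniform lower bound on $F[f]$ (Proposition \ref{prop:1.7}) to conclude that the dissipation is integrable on $[0,\infty)$, and extract a sequence $t_j\to\infty$ along which it vanishes. Your alternative elementary lower bound via $s(\log s-1)\ge -1$ is a nice simplification that avoids the maximum principle, but it does not change the structure of the argument.
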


\begin{proof}
 Integrate \eqref{eq:2.3} with respect to $t$, we have that,
 \begin{equation}
  F[f](t)+\int_0^t\left(\int_\Omega|\vec{u}|^2f\,dx\right)\,d\tau=F[f_0].
 \end{equation}
 Since $F[f](t)\geq -\Cr{const:1.7}$ by Proposition
 \ref{prop:1.7}, we obtain the uniform bound,
 \begin{equation*}
   \int_0^t\left(\int_\Omega|\vec{u}|^2f\,dx\right)\,d\tau
   \leq
   F[f_0] + \Cr{const:1.7},
 \end{equation*}
 for $t>0$. Hence, there is an increasing sequence
 $\{t_j\}_{j=1}^\infty$ such that $t_j\rightarrow\infty$, and
 \begin{equation}
  \int_\Omega|\vec{u}|^2f\,dx\bigg|_{t=t_j}\rightarrow0,
   \qquad
   j\rightarrow\infty.
 \end{equation}
 Next, using \eqref{eq:2.3}, we obtain that $\frac{dF}{dt}[f](t_j)\rightarrow0$ as
 $j\rightarrow\infty$.
\end{proof}

Henceforth we compute the second derivative of $F$ and represent it in
terms of
$\vec{u}$. To do this, we first give a relation between $\nabla f$ and
$\vec{u}$. By direct calculation of the velocity $\vec{u}$, we have the
following result.
\begin{lemma}
 Let $\vec{u}$ be defined by \eqref{eq:2.1}. Then,
 \begin{equation}
  \label{eq:2.8}
   f\vec{u}
   =
   -
   D\nabla f
   -
   f\nabla\phi(x),
 \end{equation}
 and
 \begin{equation}
  \label{eq:2.6}
   \rho\vec{u}
   =
   -D\nabla \rho,
 \end{equation}
 where $\rho$ is defined in \eqref{eq:1.8}.
\end{lemma}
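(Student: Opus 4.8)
The plan is to obtain both identities by direct differentiation, exploiting only the constant-coefficient structure $D=\mathrm{const}$, $\pi\equiv1$ together with the definition of $\vec{u}$ in \eqref{eq:2.1}.

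First I would expand the velocity. Since $D$ is a constant, the chain rule gives $\nabla(D\log f)=D\,\nabla f/f$, so the second line of \eqref{eq:2.1} reads
\begin{equation*}
\vec{u}=-\nabla\bigl(D\log f+\phi(x)\bigr)=-\frac{D}{f}\nabla f-\nabla\phi(x).
\end{equation*}
Multiplying through by $f$ — which is legitimate because $f>0$ by the maximum principle (Proposition~\ref{prop:1.5}) — yields \eqref{eq:2.8} at once.

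Next I would pass to the scaled variable $\rho=f/\Feq$ of \eqref{eq:1.8}. The quickest route uses the equilibrium relation \eqref{eq:1.6} in the present setting, namely $D\log\Feq+\phi=\Cr{const:1.1}$: then $D\log f+\phi=D\log\rho+D\log\Feq+\phi=D\log\rho+\Cr{const:1.1}$, so that $\vec{u}=-\nabla(D\log\rho)=-D\,\nabla\rho/\rho$, and multiplying by $\rho$ gives \eqref{eq:2.6}. Alternatively one may substitute $f=\rho\Feq$ into \eqref{eq:2.8}, expand $\nabla f=\Feq\nabla\rho+\rho\nabla\Feq$, use $\nabla\Feq=-(\Feq/D)\nabla\phi$ (obtained by differentiating $\log\Feq=-(\phi-\Cr{const:1.1})/D$), divide by $\Feq>0$, and observe that the two $\nabla\phi$ contributions cancel.

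There is no real obstacle here; the only point needing a word of care is that every division by $f$ or by $\Feq$ is justified, which is guaranteed by strict positivity of the classical solution (Proposition~\ref{prop:1.5}) and by $\Feq>0$ from its explicit form \eqref{eq:1.5}. The structural fact worth emphasizing is that it is precisely the equilibrium identity \eqref{eq:1.6} that makes the external potential term $\nabla\phi$ drop out of the $\rho$-formulation, leaving the clean relation \eqref{eq:2.6} that will be convenient in the subsequent estimates.
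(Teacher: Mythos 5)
Your proposal is correct and follows the same route the paper intends: the lemma is stated there as a "direct calculation," namely expanding $\vec{u}=-\nabla(D\log f+\phi)=-(D/f)\nabla f-\nabla\phi$ and, for \eqref{eq:2.6}, using the equilibrium identity \eqref{eq:1.6} to write $\vec{u}=-\nabla(D\log\rho)$, exactly as you do. The only cosmetic remark is that strict positivity of $f$ is already built into the definition of a classical solution (Definition~\ref{def:1.1}), so invoking Proposition~\ref{prop:1.5} is not even needed to justify dividing by $f$.
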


Next, let us take a second derivative of the free energy $F$ and we
have that,
\begin{equation}
\label{eq:2.7u}
 \frac{d^2F}{dt^2}[f]
  =
  \frac{d}{dt}
  \left(
   -
   \int_\Omega
   |\vec{u}|^2f\,dx
  \right)
  =
  -
  2
  \int_\Omega
  \vec{u}\cdot\vec{u}_t f\,dx
  -
   \int_\Omega
   |\vec{u}|^2f_t\,dx.
\end{equation}
Thus,  we need to compute the time derivative of $\vec{u}$.

\begin{lemma}
 Let $\vec{u}$ be defined as in \eqref{eq:2.1}. Then,
 \begin{equation}
  \label{eq:2.4}
   \vec{u}_t
   =
   -
   \frac{D}{\rho}\nabla \rho_t
   -
   \frac{\rho_t}{\rho}\vec{u}.
 \end{equation}
\end{lemma}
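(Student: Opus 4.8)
The plan is to differentiate in time the identity $\rho\vec{u}=-D\nabla\rho$ from \eqref{eq:2.6}, or equivalently to start from the closed form $\vec{u}=-\tfrac{D}{\rho}\nabla\rho$ that it yields (recall $\rho>0$ by the maximum principle, Proposition \ref{prop:1.5}, so dividing by $\rho$ is legitimate). A useful preliminary remark is that in the homogeneous case $\Feq$ does not depend on $t$, so $f=\rho\Feq$ from \eqref{eq:1.8} gives $f_t/f=\rho_t/\rho$.

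First I would differentiate $\vec{u}=-\nabla(D\log f+\phi(x))$ in $t$. Since $\phi$ is time-independent and $D$ is constant, and since $f$ (hence $\rho$) is a classical solution of class $C^{2+\alpha,1+\alpha/2}_{\mathrm{per}}$ by Proposition \ref{prop:1.1}, all mixed derivatives exist and are continuous, so $\partial_t$ and $\nabla$ commute and we get $\vec{u}_t=-D\,\nabla\!\left(\tfrac{f_t}{f}\right)=-D\,\nabla\!\left(\tfrac{\rho_t}{\rho}\right)$.

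Next I would expand the gradient by the quotient rule, $\nabla\!\left(\tfrac{\rho_t}{\rho}\right)=\tfrac{\nabla\rho_t}{\rho}-\tfrac{\rho_t\,\nabla\rho}{\rho^2}$, so that $\vec{u}_t=-\tfrac{D}{\rho}\nabla\rho_t+\tfrac{D\rho_t}{\rho^2}\nabla\rho$. Finally, invoking \eqref{eq:2.6} once more in the form $\tfrac{D\nabla\rho}{\rho}=-\vec{u}$, the last term equals $\tfrac{\rho_t}{\rho}\cdot\tfrac{D\nabla\rho}{\rho}=-\tfrac{\rho_t}{\rho}\vec{u}$, which produces exactly \eqref{eq:2.4}.

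There is no genuine obstacle here: the only points requiring a word of justification are the positivity of $\rho$ (needed to divide) and the smoothness needed to interchange the space and time derivatives, both of which follow from the standing assumptions together with Propositions \ref{prop:1.1} and \ref{prop:1.5}. One could instead carry out the same computation keeping $f$ throughout, i.e.\ $\vec{u}_t=-D\nabla(f_t/f)$, expand, and substitute $f_t=-\Div(f\vec{u})$ from \eqref{eq:2.1}, but routing through $\rho$ as above is the shortest path and matches the stated form of \eqref{eq:2.4}.
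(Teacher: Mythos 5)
Your proposal is correct and follows essentially the same route as the paper: differentiate the potential form of $\vec{u}$ in time to get $\vec{u}_t=-\nabla\bigl(D\tfrac{\rho_t}{\rho}\bigr)$, expand by the quotient rule, and use \eqref{eq:2.6} to replace $\tfrac{D\nabla\rho}{\rho}$ by $-\vec{u}$. The extra remarks on positivity of $\rho$ and on commuting $\partial_t$ with $\nabla$ are fine but not part of the paper's (shorter) argument.
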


\begin{proof}
 Take a time-derivative of $\vec{u}$, then we obtain that,
 \begin{equation}
  \label{eq:2.5}
  \vec{u}_t
   =
   -\nabla \left(
	    D\frac{\rho_t}{\rho}
	   \right)
   =
   -
   \frac{D}{\rho}\nabla \rho_t
   +
   \frac{D\rho_t}{\rho^2}\nabla \rho.
 \end{equation}
 Using \eqref{eq:2.6} in \eqref{eq:2.5}, we derive \eqref{eq:2.4}.
\end{proof}

Next, we rewrite the second derivative of $F$ \eqref{eq:2.7u} in terms of $\rho_t$ and $f_t$,
instead of $\vec{u}_t$.

\begin{lemma}
 Let $f$ be a solution of \eqref{eq:2.1} and let $\vec{u}$ be defined
 as in
 \eqref{eq:2.1}. Then,
 \begin{equation}
  \label{eq:2.7}
   \frac{d^2F}{dt^2}[f](t)
   =
   2\int_\Omega D\vec{u}\cdot\nabla\rho_t\Feq\,dx
   +
   \int_\Omega|\vec{u}|^2f_t\,dx,
 \end{equation}
 where $\Feq$ is given by \eqref{eq:1.5}.
\end{lemma}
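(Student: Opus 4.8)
The plan is to derive \eqref{eq:2.7} directly from the identity \eqref{eq:2.7u} by eliminating $\vec{u}_t$ in favour of $\rho_t$ via \eqref{eq:2.4}. Before manipulating, I would record that by the maximum principle (Proposition~\ref{prop:1.5}) the classical solution $f$ is strictly positive and bounded on $\Omega\times[0,T)$, so $\rho=f/\Feq$ is strictly positive there and dividing by $\rho$ below is legitimate; likewise, the parabolic regularity of $f$ (hence of $\vec{u}$ and $\rho_t$) on the torus $\Omega$ justifies the differentiation under the integral sign already used to reach \eqref{eq:2.7u}.

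The core step is to substitute $\vec{u}_t=-\frac{D}{\rho}\nabla\rho_t-\frac{\rho_t}{\rho}\vec{u}$ from \eqref{eq:2.4} into the term $-2\int_\Omega\vec{u}\cdot\vec{u}_t\,f\,dx$ of \eqref{eq:2.7u}, which gives
\begin{equation*}
 -2\int_\Omega\vec{u}\cdot\vec{u}_t\,f\,dx
 =
 2\int_\Omega\frac{D}{\rho}(\vec{u}\cdot\nabla\rho_t)\,f\,dx
 +
 2\int_\Omega\frac{\rho_t}{\rho}|\vec{u}|^2f\,dx.
\end{equation*}
Since the equilibrium profile $\Feq=\Feq(x)$ in \eqref{eq:1.5} is independent of $t$, relation \eqref{eq:1.8} gives $f=\rho\Feq$, hence $f/\rho=\Feq$ and $f_t=\rho_t\Feq$, i.e.\ $\frac{\rho_t}{\rho}f=f_t$. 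Therefore the first integral on the right equals $2\int_\Omega D\,\vec{u}\cdot\nabla\rho_t\,\Feq\,dx$ and the second equals $2\int_\Omega|\vec{u}|^2f_t\,dx$.

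Substituting back into \eqref{eq:2.7u} then yields
\begin{equation*}
 \frac{d^2F}{dt^2}[f](t)
 =
 2\int_\Omega D\,\vec{u}\cdot\nabla\rho_t\,\Feq\,dx
 +
 2\int_\Omega|\vec{u}|^2f_t\,dx
 -
 \int_\Omega|\vec{u}|^2f_t\,dx,
\end{equation*}
and the last two terms combine into $\int_\Omega|\vec{u}|^2f_t\,dx$, which is exactly \eqref{eq:2.7}. There is no genuine obstacle in this lemma: it is a bookkeeping identity, and the only point requiring care is the sign of the $|\vec{u}|^2f_t$ term — it is the $-\frac{\rho_t}{\rho}\vec{u}$ piece of $\vec{u}_t$ that converts the $-\int_\Omega|\vec{u}|^2f_t\,dx$ appearing in \eqref{eq:2.7u} into a net $+\int_\Omega|\vec{u}|^2f_t\,dx$ — together with checking that the positivity and regularity recorded above make every pointwise and integral manipulation valid.
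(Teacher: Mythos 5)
Your proposal is correct and follows essentially the same route as the paper: substitute \eqref{eq:2.4} into \eqref{eq:2.7u}, use $f=\rho\Feq$ and $\rho_t\Feq=f_t$, and combine the two $|\vec{u}|^2f_t$ terms. The added remarks on positivity of $\rho$ via the maximum principle are sound but not part of the paper's (purely algebraic) argument.
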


\begin{proof}
 Using \eqref{eq:2.7u} together with \eqref{eq:2.4}, we obtain that,
 \begin{equation*}
  \begin{split}
   \frac{d^2F}{dt^2}[f](t)
   &=
   -2\int_\Omega\vec{u}\cdot\vec{u}_t f\,dx
   -
   \int_\Omega|\vec{u}|^2f_t\,dx \\
   &=
   2\int_\Omega D\vec{u}\cdot\nabla\rho_t \frac{f}{\rho}\,dx
   +
   2\int_\Omega|\vec{u}|^2\rho_t \frac{f}{\rho}\,dx
   -
   \int_\Omega|\vec{u}|^2f_t\,dx.
  \end{split}
 \end{equation*}
 Since $f=\rho\Feq$ and $\rho_t\Feq=f_t$, we obtain the desired result
 \eqref{eq:2.7}.
\end{proof}

Now, let us reformulate the right-hand side of \eqref{eq:2.7} in a
form which is convenient for the use of entropy method.

\begin{lemma}
 \label{lem:2.4}
 Let $f$ be a solution of \eqref{eq:2.1} and let $\vec{u}$ be defined by
 \eqref{eq:2.1}. Then,
 \begin{equation}
  \label{eq:2.12}
   \int_\Omega |\vec{u}|^2f_t\,dx
   =
   \int_\Omega \vec{u}\cdot\nabla |\vec{u}|^2 f\,dx.
 \end{equation}
\end{lemma}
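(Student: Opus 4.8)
The plan is to eliminate $f_t$ using the continuity equation, i.e. the first line of \eqref{eq:2.1}, which gives $f_t = -\Div(f\vec{u})$, and then integrate by parts once. Substituting into the left-hand side of \eqref{eq:2.12} yields
\[
\int_\Omega |\vec{u}|^2 f_t\,dx = -\int_\Omega |\vec{u}|^2\,\Div(f\vec{u})\,dx,
\]
and, since $|\vec{u}|^2$ and $f\vec{u}$ are periodic, integration by parts produces no boundary term, so
\[
-\int_\Omega |\vec{u}|^2\,\Div(f\vec{u})\,dx
= \int_\Omega \nabla\bigl(|\vec{u}|^2\bigr)\cdot(f\vec{u})\,dx
= \int_\Omega \vec{u}\cdot\nabla|\vec{u}|^2\,f\,dx,
\]
which is exactly \eqref{eq:2.12}. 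That is the whole argument.

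The only points that need attention are the regularity required to carry out these manipulations and the vanishing of the boundary contribution. Since $f$ is a classical solution in the sense of Definition \ref{def:1.1} — indeed $f\in C^{2+\alpha,1+\alpha/2}_{\mathrm{per}}$ by Proposition \ref{prop:1.1} — and is bounded below away from zero by Proposition \ref{prop:1.5}, the velocity $\vec{u}=-\nabla(D\log f+\phi)$ is $C^1$ in space; hence $|\vec{u}|^2$ is $C^1$, $f\vec{u}$ is $C^1$, and the divergence theorem on the torus applies. The periodic boundary condition then gives $\int_\Omega \Div\bigl(|\vec{u}|^2 f\vec{u}\bigr)\,dx=0$, which is precisely what legitimizes the integration by parts above.

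I do not expect a genuine obstacle here: the statement is a one-line consequence of the PDE and periodicity. If anything, the only subtlety is bookkeeping — keeping straight that $\nabla|\vec{u}|^2$ denotes the gradient of the scalar field $|\vec{u}|^2=\vec{u}\cdot\vec{u}$, so that $\vec{u}\cdot\nabla|\vec{u}|^2$ is a scalar and the identity is consistent. This identity is the algebraic bridge that, together with \eqref{eq:2.7}, will allow $\tfrac{d^2F}{dt^2}[f]$ to be written purely in terms of $\vec{u}$ and $\rho_t$, the form that is amenable to the entropy estimate.
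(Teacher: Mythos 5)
Your argument is exactly the paper's: substitute $f_t=-\Div(f\vec{u})$ from the first equation of \eqref{eq:2.1} and integrate by parts once, with the periodic boundary condition killing the boundary term. The additional regularity remarks (classical solution, positivity of $f$, $C^1$ velocity) are accurate but not needed beyond what the paper already assumes, so the proposal is correct and coincides with the paper's proof.
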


\begin{proof}
 Using the system \eqref{eq:2.1} and integration by parts together with
 the periodic boundary condition, we arrive at,
 \begin{equation*}
  \int_\Omega |\vec{u}|^2f_t\,dx
   =
   -\int_\Omega |\vec{u}|^2\Div(f\vec{u})\,dx
   =
   \int_\Omega \vec{u}\cdot\nabla |\vec{u}|^2f\,dx.
 \end{equation*}
\end{proof}


\begin{lemma}
 Let $f$ be a solution of \eqref{eq:2.1}, and let $\vec{u}$ be defined by
 \eqref{eq:2.1}. Then,
 \begin{equation}
  \label{eq:2.11}
   D\Feq\nabla\rho_t
   =
   -f_t\vec{u}
   +
   f
   \nabla
   \left(
    |\vec{u}|^2
   +
   (\vec{u}\cdot\nabla\phi(x))
   -
   D
   \Div\vec{u}
   \right),
 \end{equation}
 where $\Feq$ is given by \eqref{eq:1.5}.
\end{lemma}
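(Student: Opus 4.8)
The plan is a direct computation starting from the representation of $\vec{u}_t$ already obtained in \eqref{eq:2.4}. First I would multiply \eqref{eq:2.4} by $f=\rho\Feq$ and use $f_t=\rho_t\Feq$ to turn it into an identity without the auxiliary variable $\rho$, namely
\begin{equation*}
 f\vec{u}_t
 =
 -D\Feq\nabla\rho_t
 -
 \Feq\rho_t\vec{u}
 =
 -D\Feq\nabla\rho_t
 -
 f_t\vec{u},
\end{equation*}
so that $D\Feq\nabla\rho_t=-f\vec{u}_t-f_t\vec{u}$. Thus it remains only to identify the scalar whose gradient produces $-\vec{u}_t$.

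Since $\phi$ is independent of $t$, differentiating the definition $\vec{u}=-\nabla(D\log f+\phi(x))$ in \eqref{eq:2.1} gives $\vec{u}_t=-\nabla\!\left(D f_t/f\right)$. To evaluate $D f_t/f$ I would use the first equation in \eqref{eq:2.1}, $f_t=-\Div(f\vec{u})=-\nabla f\cdot\vec{u}-f\Div\vec{u}$, whence
\begin{equation*}
 \frac{f_t}{f}
 =
 -\nabla\log f\cdot\vec{u}
 -\Div\vec{u}.
\end{equation*}
From the definition of $\vec{u}$ one has $\nabla(D\log f)=-\vec{u}-\nabla\phi(x)$, so $-D\nabla\log f\cdot\vec{u}=(\vec{u}+\nabla\phi(x))\cdot\vec{u}=|\vec{u}|^2+(\vec{u}\cdot\nabla\phi(x))$. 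Combining these,
\begin{equation*}
 D\frac{f_t}{f}
 =
 |\vec{u}|^2
 +(\vec{u}\cdot\nabla\phi(x))
 -D\Div\vec{u},
\end{equation*}
and therefore $-\vec{u}_t=\nabla\!\left(|\vec{u}|^2+(\vec{u}\cdot\nabla\phi(x))-D\Div\vec{u}\right)$.

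Substituting this into $D\Feq\nabla\rho_t=-f\vec{u}_t-f_t\vec{u}$ yields exactly \eqref{eq:2.11}. I do not anticipate any real obstacle here: the argument is a bookkeeping exercise, and the only point requiring care is the consistent use of the flux form $f_t=-\Div(f\vec{u})$ together with the two algebraic relations $\nabla(D\log f)=-\vec{u}-\nabla\phi(x)$ and $f=\rho\Feq$, $f_t=\rho_t\Feq$, so that all the signs match.
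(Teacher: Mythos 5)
Your proof is correct. It rests on the same two ingredients as the paper's argument—the identity $Df_t/f=|\vec{u}|^2+\vec{u}\cdot\nabla\phi(x)-D\Div\vec{u}$ (which is exactly \eqref{eq:2.10} in the paper, derived there from \eqref{eq:2.8}) and the relation between $\nabla\rho_t$ and $\vec{u}_t$—but you assemble them differently. The paper takes the gradient of \eqref{eq:2.10} in the product form $D\Feq\rho_t=f(\cdots)$, which forces it to handle the terms $\nabla\Feq$ and $\nabla f$ via the gradient of the equilibrium relation \eqref{eq:2.24} and \eqref{eq:2.8}, with the $\nabla\phi$ contributions cancelling at the end. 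You instead exploit the already-proved formula \eqref{eq:2.4} for $\vec{u}_t$, multiply it by $f=\rho\Feq$ to get $D\Feq\nabla\rho_t=-f\vec{u}_t-f_t\vec{u}$, and then use the fact that $\vec{u}_t=-\nabla\bigl(Df_t/f\bigr)$ is itself a gradient of the scalar in question; this shortcuts the product-rule bookkeeping and avoids \eqref{eq:2.24} altogether. Both routes are valid; yours is slightly leaner, while the paper's version is the one whose structure generalizes verbatim to the inhomogeneous-$D$ and variable-$\pi$ cases in Sections~\ref{sec:3}--\ref{sec:4}, where the extra $\nabla D$ and $\nabla\pi$ terms must be tracked explicitly.
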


\begin{proof}
 Since $\Feq$ is independent of $t$, we have due to \eqref{eq:2.1} that,
 \begin{equation}
  \label{eq:2.9}
   D\Feq\rho_t
   =
   Df_t
   =
   -D\Div(f\vec{u})
   =
   -
   D\vec{u}\cdot\nabla f
   -
   Df\Div\vec{u}.
 \end{equation}
 Using \eqref{eq:2.8} in \eqref{eq:2.9}, we obtain that,
 \begin{equation}
  \label{eq:2.10}
   D\Feq\rho_t
   =
   Df_t
   =
   f
   \left(
    |\vec{u}|^2
    +
    \vec{u}\cdot\nabla\phi(x)
    -
    D\Div\vec{u}
   \right).
 \end{equation}
 Next, take a gradient of \eqref{eq:2.10} and obtain using
 \eqref{eq:2.10}, that,
 \begin{equation}
  \label{eq:2.23}
 \begin{split}
   D\rho_t\nabla\Feq
   +
   D\Feq\nabla\rho_t
   &=
   \left(
   |\vec{u}|^2
   +
   \vec{u}\cdot\nabla\phi(x)
   -
   D\Div\vec{u}
   \right)
   \nabla f
   +
   f
   \nabla
   \left(
   |\vec{u}|^2
   +
   \vec{u}\cdot\nabla\phi(x)
   -
   D\Div\vec{u}
   \right) \\
   &=
   \frac{Df_t}{f}
   \nabla f
   +
   f
   \nabla
   \left(
   |\vec{u}|^2
   +
   \vec{u}\cdot\nabla\phi(x)
   -
   D\Div\vec{u}
   \right).
 \end{split}
 \end{equation}
 Now, taking a gradient of \eqref{eq:1.6} with $D(x)=D$, we have,
 \begin{equation}
   \label{eq:2.24}
 \frac{D}{\Feq}\nabla \Feq+\nabla\phi(x)=0.
 \end{equation}
 Thus, using \eqref{eq:2.24} and \eqref{eq:2.8} in \eqref{eq:2.23}, we
 have that,
 \begin{equation*}
  -\rho_t\Feq\nabla\phi(x)
   +
   D\Feq\nabla\rho_t
   =
   -
   f_t\vec{u}
   -
   f_t\nabla\phi(x)
   +
   f
   \nabla
   \left(
   |\vec{u}|^2
   +
   \vec{u}\cdot\nabla\phi(x)
   -
   D\Div\vec{u}
   \right).
 \end{equation*}
 Since $\rho_t\Feq=f_t$, we obtain the result \eqref{eq:2.11}.
\end{proof}

Now we are in a position to compute
$\int_\Omega D\vec{u}\cdot\nabla\rho_t\Feq \,dx $, which is the first term of the right
hand side of \eqref{eq:2.7}.

\begin{lemma}
 \label{lem:2.6}
 Let $f$ be a solution of \eqref{eq:2.1} and let $\vec{u}$ be given as in
 \eqref{eq:2.1}. Then,
 \begin{equation}
  \label{eq:2.21}
   2\int_\Omega D\vec{u}\cdot\nabla\rho_t\Feq\,dx
   =
   2\int_\Omega ((\nabla^2\phi(x)) \vec{u}\cdot\vec{u}) f\,dx
   -
   \int_{\Omega}\vec{u}\cdot\nabla|\vec{u}|^2 f\,dx
   +
   2\int_\Omega D|\nabla \vec{u}|^2 f\,dx.
 \end{equation}
Here, $\Feq$ is defined as in \eqref{eq:1.5}.
\end{lemma}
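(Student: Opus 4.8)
The plan is to start from the pointwise identity \eqref{eq:2.11}. Taking its scalar product with $2\vec{u}$ and integrating over $\Omega$ (recall that $\Feq$ does not depend on $t$) gives
\begin{equation*}
 2\int_\Omega D\vec{u}\cdot\nabla\rho_t\,\Feq\,dx
 =
 -2\int_\Omega |\vec{u}|^2 f_t\,dx
 +2\int_\Omega f\,\vec{u}\cdot\nabla\!\left(|\vec{u}|^2+\vec{u}\cdot\nabla\phi(x)-D\Div\vec{u}\right)dx.
\end{equation*}
The first integral on the right is dealt with by Lemma~\ref{lem:2.4}: by \eqref{eq:2.12} one has $-2\int_\Omega |\vec{u}|^2 f_t\,dx+2\int_\Omega f\,\vec{u}\cdot\nabla|\vec{u}|^2\,dx=0$, so after this cancellation only the contributions of $\vec{u}\cdot\nabla\phi(x)$ and of $-D\Div\vec{u}$ remain to be analysed.

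The structural fact that makes the remaining computation collapse to \eqref{eq:2.21} is that, in the homogeneous case $D=\mathrm{const}$, $\pi\equiv1$, the velocity $\vec{u}=-\nabla(D\log f+\phi(x))$ is a gradient field, so the matrix $\nabla\vec{u}$ is symmetric, $u^k_{x_l}=u^l_{x_k}$. From this I will use two elementary consequences: $\vec{u}\cdot\nabla u^k=\sum_l u^l u^k_{x_l}=\sum_l u^l u^l_{x_k}=\tfrac12\,\partial_{x_k}|\vec{u}|^2$, and $\sum_{k,l}u^k_{x_l}\,u^l_{x_k}=\sum_{k,l}(u^k_{x_l})^2=|\nabla\vec{u}|^2$.

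For the $\vec{u}\cdot\nabla\phi(x)$ term, expand $\nabla(\vec{u}\cdot\nabla\phi(x))=(\mathstrut^T\nabla\vec{u})\,\nabla\phi(x)+(\nabla^2\phi(x))\vec{u}$; dotting with $\vec{u}$ and using the first consequence above gives $\vec{u}\cdot\nabla(\vec{u}\cdot\nabla\phi(x))=\tfrac12\,\nabla\phi(x)\cdot\nabla|\vec{u}|^2+(\nabla^2\phi(x))\vec{u}\cdot\vec{u}$, hence
\begin{equation*}
 2\int_\Omega f\,\vec{u}\cdot\nabla(\vec{u}\cdot\nabla\phi(x))\,dx
 =\int_\Omega f\,\nabla\phi(x)\cdot\nabla|\vec{u}|^2\,dx+2\int_\Omega ((\nabla^2\phi(x))\vec{u}\cdot\vec{u})f\,dx.
\end{equation*}
For the $-D\Div\vec{u}$ term, I integrate by parts in the variable differentiated inside $\Div\vec{u}$ (the boundary term vanishes by periodicity), obtaining $-2D\int_\Omega f\,\vec{u}\cdot\nabla(\Div\vec{u})\,dx=2D\int_\Omega\sum_{k,l}\partial_{x_l}(fu^k)\,u^l_{x_k}\,dx$; splitting $\partial_{x_l}(fu^k)=u^k f_{x_l}+f u^k_{x_l}$ and applying both consequences of the symmetry of $\nabla\vec{u}$ turns this into $D\int_\Omega\nabla f\cdot\nabla|\vec{u}|^2\,dx+2D\int_\Omega f|\nabla\vec{u}|^2\,dx$. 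Finally I eliminate $D\nabla f$ via \eqref{eq:2.8}, i.e. $D\nabla f=-f\vec{u}-f\nabla\phi(x)$, to get
\begin{equation*}
 -2D\int_\Omega f\,\vec{u}\cdot\nabla(\Div\vec{u})\,dx
 =-\int_\Omega f\,\vec{u}\cdot\nabla|\vec{u}|^2\,dx-\int_\Omega f\,\nabla\phi(x)\cdot\nabla|\vec{u}|^2\,dx+2D\int_\Omega f|\nabla\vec{u}|^2\,dx.
\end{equation*}

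Adding the two displayed contributions, the two integrals $\pm\int_\Omega f\,\nabla\phi(x)\cdot\nabla|\vec{u}|^2\,dx$ cancel, and what is left is exactly the right-hand side of \eqref{eq:2.21}. I expect the only genuinely delicate point to be the index bookkeeping — in particular, for the $\Div\vec{u}$ term, integrating by parts with respect to the correct variable so that the symmetry of $\nabla\vec{u}$ can be invoked to recognize both $|\nabla\vec{u}|^2$ and the gradient $\tfrac12\nabla|\vec{u}|^2$; once that arrangement is made, the rest is routine use of integration by parts with the periodic boundary condition together with \eqref{eq:2.8} and Lemma~\ref{lem:2.4}.
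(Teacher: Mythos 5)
Your proposal is correct and follows essentially the same route as the paper: start from \eqref{eq:2.11} dotted with $2\vec{u}$, cancel the $f_t$ term against $2\int_\Omega f\,\vec{u}\cdot\nabla|\vec{u}|^2\,dx$ via Lemma~\ref{lem:2.4}, exploit the symmetry of $\nabla\vec{u}$ (since $\vec{u}$ is a gradient) to handle both $\vec{u}\cdot\nabla(\vec{u}\cdot\nabla\phi)$ and the $\Div\vec{u}$ term, and eliminate $D\nabla f$ with \eqref{eq:2.8} so that the $\nabla\phi\cdot\nabla|\vec{u}|^2$ contributions cancel. The only cosmetic difference is that you integrate by parts directly under the integral in the $x_l$ variable, whereas the paper first records the pointwise identity $\vec{u}\cdot\nabla\Div\vec{u}=\tfrac12\Div(\nabla|\vec{u}|^2)-|\nabla\vec{u}|^2$ and then integrates by parts the $\Div(\nabla|\vec{u}|^2)$ term; both yield the same intermediate quantity $D\int_\Omega\nabla f\cdot\nabla|\vec{u}|^2\,dx$.
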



\begin{proof}%
 [Proof]
 First, we use \eqref{eq:2.11} and obtain,
 \begin{multline*}
  2\int_\Omega D\vec{u}\cdot\nabla\rho_t\Feq\,dx \\
   =
   -2\int_\Omega
   |\vec{u}|^2f_t\,dx
   +
   2\int_\Omega
   \vec{u}\cdot \nabla|\vec{u}|^2 f\,dx
   +
   2\int_\Omega
   \vec{u}\cdot \nabla (\vec{u}\cdot\nabla\phi(x)) f\,dx
   -
   2\int_\Omega
   D
   \vec{u}\cdot\nabla\Div\vec{u} f
   \,dx.
 \end{multline*}
 Using \eqref{eq:2.12}, the first and the second terms of the right hand
 side of the above relation are canceled, hence,
 \begin{equation}
  \label{eq:2.13}
   2\int_\Omega D\vec{u}\cdot\nabla\rho_t\Feq\,dx
   =
   2\int_\Omega
   \vec{u}\cdot \nabla (\vec{u}\cdot\nabla\phi(x)) f\,dx
   -
   2\int_\Omega
   D
   \vec{u}\cdot\nabla\Div\vec{u} f
   \,dx.
 \end{equation}

 Next, we compute $\vec{u}\cdot \nabla (\vec{u}\cdot\nabla\phi(x))$. We
 denote $\vec{u}=(u^l)_l$. Then, by direct calculations,  we obtain,
 \begin{equation}
  \label{eq:2.14}
 \begin{split}
   \vec{u}\cdot \nabla (\vec{u}\cdot\nabla\phi(x))
   &=
   \sum_{k,l}u^l(u^k\phi_{x_k}(x))_{x_l} \\
   &=
   \sum_{k,l}\phi_{x_kx_l}(x)u^lu^k
   +
   \sum_{k,l}u^k_{x_l}u^l\phi_{x_k}(x) \\
   &=
  ((\nabla^2\phi(x))\vec{u}\cdot\vec{u})
   +
   \sum_{k,l}u^k_{x_l}u^l\phi_{x_k}(x).
  \end{split}
 \end{equation}
 Since $\nabla\vec{u}=-\nabla^2(D\log\rho)$ is symmetric,
 \begin{equation}
  \label{eq:2.16}
  \sum_{l}u^k_{x_l}u^l
   =
   \sum_{l}u^l_{x_k}u^l
   =
   \frac12(|\vec{u}|^2)_{x_k},
 \end{equation}
 hence we obtain,
 \begin{equation}
 \label{eq:2.15}
  \vec{u}\cdot \nabla (\vec{u}\cdot\nabla\phi(x))
  =
  ((\nabla^2\phi(x))\vec{u}\cdot\vec{u})
  +
  \frac12\nabla(|\vec{u}|^2)\cdot\nabla\phi(x).
 \end{equation}

 Next, we compute $\vec{u}\cdot\nabla\Div\vec{u}$. By direct calculations,
 we have that,
 \begin{equation}
  \label{eq:2.17}
  \vec{u}\cdot\nabla\Div\vec{u}
   =
   \sum_{k,l}u^l(u^k_{x_k})_{x_l}
   =
   \sum_{k,l}(u^lu^k_{x_l})_{x_k}
   -
   \sum_{k,l}u_{x_k}^lu^k_{x_l}.
 \end{equation}
 Since $\nabla\vec{u}$ is symmetric, we can use \eqref{eq:2.16} and,
 \begin{equation}
  \label{eq:2.22}
  \sum_{k,l}u_{x_k}^lu^k_{x_l}
   =
   \sum_{k,l}u_{x_l}^ku^k_{x_l}
   =
   |\nabla \vec{u}|^2,
 \end{equation}
to obtain from \eqref{eq:2.17} that,
 \begin{equation}
  \label{eq:2.18}
   \vec{u}\cdot\nabla\Div\vec{u}
   =
   \frac{1}{2}\Div(\nabla |\vec{u}|^2)
   -
   |\nabla\vec{u}|^2.
 \end{equation}
 Employing \eqref{eq:2.15} and \eqref{eq:2.18} in \eqref{eq:2.13}, we
 derive that,
 \begin{multline}
  \label{eq:2.19}
  2\int_\Omega D\vec{u}\cdot\nabla\rho_t\Feq\,dx \\
   =
   2
   \int_\Omega
   ((\nabla^2\phi(x))\vec{u}\cdot\vec{u})f\,dx
   +
   \int_\Omega\nabla(|\vec{u}|^2)\cdot\nabla\phi(x) f\,dx
   -
   \int_\Omega
   D
   \Div(\nabla |\vec{u}|^2)
   f
   \,dx
  +
   2\int_\Omega D|\nabla \vec{u}|^2f\,dx.
 \end{multline}

 Next, we calculate the third term of the right-hand side of
 \eqref{eq:2.19}. Applying integration by parts together with the
 periodic boundary condition, we have,
 \begin{equation*}
  -
   \int_\Omega
   D
   \Div(\nabla |\vec{u}|^2)
   f
   \,dx
   =
   \int_\Omega
   D
   \nabla |\vec{u}|^2\cdot\nabla f
   \,dx.
    \end{equation*}
 Using \eqref{eq:2.8} in the above relation, we have,
 \begin{equation}
  \label{eq:2.20}
  -
   \int_\Omega
   D
   \Div(\nabla |\vec{u}|^2)
   f
   \,dx
   =
   -
   \int_\Omega
   \vec{u}\cdot\nabla |\vec{u}|^2 f
   \,dx
   -
   \int_\Omega
   \nabla |\vec{u}|^2\cdot\nabla\phi(x) f
   \,dx.
 \end{equation}
 Finally, using \eqref{eq:2.20} in \eqref{eq:2.19}, we obtain the
 desired result \eqref{eq:2.21}.
\end{proof}

Now, combining  results \eqref{eq:2.7}, \eqref{eq:2.12} and
\eqref{eq:2.21} from above lemmas,  we are in position to obtain the following energy law,
.
\begin{proposition}
 \label{prop:2.8}
 Let $f$ be a solution of \eqref{eq:2.1} and let $\vec{u}$ be defined in
 \eqref{eq:2.1}. Then,
 \begin{equation}
  \label{eq:2.25}
  \frac{d^2F}{dt^2}[f](t)
   =
   2\int_\Omega ((\nabla^2\phi(x)) \vec{u}\cdot\vec{u}) f\,dx
   +
   2\int_\Omega D|\nabla \vec{u}|^2 f\,dx.
 \end{equation}
\end{proposition}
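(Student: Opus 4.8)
\emph{Proof proposal.} The plan is to obtain \eqref{eq:2.25} simply by assembling the three identities already established in the preceding lemmas, so that essentially nothing beyond bookkeeping remains. First I would start from formula \eqref{eq:2.7} for the second time-derivative of the free energy,
\[
\frac{d^2F}{dt^2}[f](t) = 2\int_\Omega D\vec{u}\cdot\nabla\rho_t\Feq\,dx + \int_\Omega|\vec{u}|^2 f_t\,dx,
\]
and then rewrite each of the two terms on the right-hand side: the first via Lemma \ref{lem:2.6} (equation \eqref{eq:2.21}), which expresses $2\int_\Omega D\vec{u}\cdot\nabla\rho_t\Feq\,dx$ as $2\int_\Omega ((\nabla^2\phi(x))\vec{u}\cdot\vec{u})f\,dx - \int_\Omega \vec{u}\cdot\nabla|\vec{u}|^2 f\,dx + 2\int_\Omega D|\nabla\vec{u}|^2 f\,dx$, and the second via Lemma \ref{lem:2.4} (equation \eqref{eq:2.12}), which gives $\int_\Omega|\vec{u}|^2 f_t\,dx = \int_\Omega \vec{u}\cdot\nabla|\vec{u}|^2 f\,dx$.

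Substituting both expressions into \eqref{eq:2.7}, I would observe the cancellation: the term $-\int_\Omega \vec{u}\cdot\nabla|\vec{u}|^2 f\,dx$ produced by Lemma \ref{lem:2.6} is exactly cancelled by the term $+\int_\Omega \vec{u}\cdot\nabla|\vec{u}|^2 f\,dx$ produced by Lemma \ref{lem:2.4}. What remains is precisely
\[
\frac{d^2F}{dt^2}[f](t) = 2\int_\Omega ((\nabla^2\phi(x))\vec{u}\cdot\vec{u})f\,dx + 2\int_\Omega D|\nabla\vec{u}|^2 f\,dx,
\]
which is the claimed identity \eqref{eq:2.25}.

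I do not expect a genuine obstacle at this stage: all of the analytic effort has already been absorbed into Lemmas \ref{lem:2.4}--\ref{lem:2.6}, in particular the use of the symmetry of $\nabla\vec{u}$ (valid because $\nabla\vec{u} = -\nabla^2(D\log\rho)$ when $D$ is constant) together with the equilibrium relation \eqref{eq:2.24} to trade the awkward quantities $\vec{u}\cdot\nabla(\vec{u}\cdot\nabla\phi(x))$ and $D\,\vec{u}\cdot\nabla\Div\vec{u}$ for the Hessian term and the Dirichlet-type term $D|\nabla\vec{u}|^2$. The only points to keep in mind are that every integration by parts used in deriving \eqref{eq:2.7}, \eqref{eq:2.12} and \eqref{eq:2.21} relies on the periodic boundary condition, so that no boundary contributions arise, and that the regularity of $f$ from Proposition \ref{prop:1.1} together with the two-sided bound \eqref{eq:1.18} legitimizes differentiating $F$ twice in time and all the manipulations of $\vec{u}$ and $\rho$. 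Finally, I would remark that \eqref{eq:2.25} is exactly the identity feeding the entropy argument: under the hypothesis $\nabla^2\phi\geq\lambda I$, and since $D>0$ and $f>0$, it yields $\frac{d^2F}{dt^2}[f] \geq 2\lambda\int_\Omega|\vec{u}|^2 f\,dx = -2\lambda\frac{dF}{dt}[f]$, equivalently $\frac{d}{dt}D_{\mathrm{dis}}[f] \leq -2\lambda D_{\mathrm{dis}}[f]$, which then integrates to the exponential decay estimate \eqref{eq:2.40} of Theorem \ref{thm:2.9}.
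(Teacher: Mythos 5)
Your proposal is correct and follows exactly the paper's route: the paper also obtains \eqref{eq:2.25} by combining \eqref{eq:2.7}, \eqref{eq:2.12} and \eqref{eq:2.21}, with the term $\int_\Omega \vec{u}\cdot\nabla|\vec{u}|^2 f\,dx$ cancelling as you describe. Nothing is missing.
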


From \eqref{eq:2.3}, as in Lemma \ref{lem:2.1}, we only know that there
is a subsequence $\{t_j\}_{j=1}^\infty$ such that
$\frac{dF}{dt}[f](t_j)$ converges to $0$. Now,  using \eqref{eq:2.25}, we
show full convergence of $\frac{dF}{dt}[f]$ to $0$.

\begin{proof}[Proof of Theorem \ref{thm:2.9}]
 First, from \eqref{eq:2.25} and \eqref{eq:2.3}, by the convexity
 assumption,  $\nabla^2\phi(x)\geq\lambda$, we get,
 \begin{equation}
  \label{eq:2.41}
  \frac{d^2F}{dt^2}[f](t)
   \geq
   2\lambda\int_\Omega |\vec{u}|^2 f\,dx
   \geq
   0,
 \end{equation}
 hence $\frac{dF}{dt}[f](t)$ is monotone increasing with respect to
 $t>0$. Thus, from \eqref{eq:2.31}, it follows that $\frac{dF}{dt}[f](t)$
 converges to $0$ as $t\rightarrow\infty$. Furthermore, from \eqref{eq:2.41} and \eqref{eq:2.3} we have,
 \begin{equation*}
  \frac{d}{dt}
   \left(
    -\int_\Omega|\vec{u}|^2f\,dx
   \right)
   \geq
   2\lambda\int_\Omega |\vec{u}|^2 f\,dx,
 \end{equation*}
 namely
 \begin{equation*}
  \frac{d}{dt}D_{\mathrm{dis}}[f](t)
   \leq
   -2\lambda D_{\mathrm{dis}}[f](t).
 \end{equation*}
 Hence, we can apply the Gronwall's inequality. Thus, we have
 $D_{\mathrm{dis}}[f](t)\leq e^{-2\lambda t}D_{\mathrm{dis}}[f](0)$, and
 obtain the result \eqref{eq:2.40}.
\end{proof}

\begin{remark}
 \label{rem:2.1}
Note, in Theorem \ref{thm:2.9}, we obtained the exponential decay
 of $D_{\mathrm{dis}}[f](t)$,  but we do not know long-time asymptotic
 behavior of $F[f](t)$ or $f(t)$ itself. On the other hand, using the
 logarithmic Sobolev inequality, we may show stronger convergence
 results, such as $F[f](t)\rightarrow F[f^{\mathrm{eq}}]$ exponentially,
 and exponential convergence of $f$ to $f^{\mathrm{eq}}$ in the $L^1$
 space, as $t\rightarrow\infty$.  When $\Omega=\R^n$, the logarithmic
 Sobolev inequality holds,  and we may proceed with the entropy
 dissipation method to obtain the energy convergence. We will discuss it
 in Appendix.
\end{remark}
In this section, we demonstrated the entropy method for the linear
Fokker-Planck equation in terms of the velocity $\vec{u}$. Using this approach, we will extend the entropy method to the nonlinear
Fokker-Planck equation in the next section.

\section{Inhomogeneous diffusion case}
\label{sec:3}

In this section, we consider the following evolution equation with  inhomogeneous diffusion and a
constant mobility, namely $D$ is a positive bounded function and $\pi\equiv1$ in a bounded domain in
the Euclidean space of $n$-dimension, subject to the periodic boundary condition,
\begin{equation}
 \label{eq:3.1}
 \left\{
  \begin{aligned}
   \frac{\partial f}{\partial t}
   +
   \Div
   \left(
   f\vec{u}
   \right)
   &=
   0,
   &\quad
   &x\in\Omega,\quad
   t>0, \\
   \vec{u}
   &=
   -
   \nabla
   \left(
   D(x)\log f
   +
   \phi(x)
   \right),
   &\quad
   &x\in\Omega,\quad
   t>0, \\
   f(x,0)&=f_0(x),&\quad
   &x\in\Omega.
  \end{aligned}
 \right.
\end{equation}
Without loss of generality, we take $\Omega=[0,1)^n\subset\R^n$. We first consider the strictly positive 
periodic function $D=D(x)$ with the lower bound,
 $\Cr{const:1.3}>0$ such that,
\begin{equation}
 \label{eq:3.60}
 D(x)\geq \Cr{const:1.3},
\end{equation}
 for $x\in\Omega$.

The free energy $F$ and  the basic energy law \eqref{eq:1.3} take the following specific forms,
\begin{equation}
 \label{eq:3.2}
  F[f]
  :=
  \int_\Omega
  \left(
   D(x)f(\log f -1)+f\phi(x)
  \right)
  \,dx,
\end{equation}
and
\begin{equation}
 \label{eq:3.3}
  \frac{dF}{dt}[f](t)
  =
  -\int_\Omega
  |\vec{u}|^2f\,dx
  =:
  - D_{\mathrm{dis}}[f](t).
\end{equation}

Here, first, we present the following Sobolev-type
inequality and the interpolation estimate, based on the uniform bounds
of the solution of the above system.

\begin{lemma}
 \label{lemma3.1}
Let $f$ be a solution of the equation \eqref{eq:3.1}.
 Then, there is a suitable positive constant $\Cl{const:3.1}>0$, such that for any
$t>0$,  and for any periodic vector field $\vec{v}$ on $\Omega$,
\begin{equation}
 \label{eq:3.42}
 \left(
  \int_\Omega|\vec{v}|^{p^\ast}f\,dx
 \right)^{\frac{1}{p^\ast}}
 \leq
 \Cr{const:3.1}
 \left(
  \int_\Omega|\nabla\vec{v}|^{2}f\,dx
 \right)^{\frac{1}{2}},
\end{equation}
where the exponent $p^\ast$ satisfies
$\frac{1}{p^\ast}=\frac{1}{2}-\frac{1}{n} $ for $n=3$,  and arbitrary
$2\leq p^{\ast}<\infty $ for $n=1,2$. 

In particular, with this Sobolev-type inequality
\eqref{eq:3.42} and the H\"older inequality, we have for $2\leq p\leq
p^\ast$ that,
\begin{equation}
\label{eq:3.43}
 \int_\Omega|\vec{v}|^p f\,dx
  \leq
  \left(
   \int_\Omega|\vec{v}|^{p^\ast} f\,dx
  \right)^{\frac{p}{p^\ast}}
  \left(
   \int_\Omega
   f\,dx
  \right)^{1-\frac{p}{p^\ast}}
  \leq
  \Cr{const:3.1}^p
  \left(
   \int_\Omega|\nabla\vec{v}|^{2}f\,dx
  \right)^{\frac{p}{2}}
  \left(
   \int_\Omega
   f\,dx
  \right)^{1-\frac{p}{p^\ast}}.
\end{equation}
\end{lemma}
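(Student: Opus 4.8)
The plan is to reduce the weighted inequality \eqref{eq:3.42} to a classical Sobolev inequality on the torus by exploiting the uniform two-sided bounds on the weight $f$ supplied by the maximum principle. By Proposition~\ref{prop:1.5}, under the standing assumptions on the coefficients and on $f_0$ there are constants $0<\Cr{const:1.5}\le\Cr{const:1.6}$, \emph{independent of} $t$, with $\Cr{const:1.5}\le f(x,t)\le\Cr{const:1.6}$ for all $x\in\Omega$ and $t>0$. Hence $\int_\Omega|\vec{v}|^{p^\ast}f\,dx\le\Cr{const:1.6}\int_\Omega|\vec{v}|^{p^\ast}\,dx$ and $\int_\Omega|\nabla\vec{v}|^2\,dx\le\Cr{const:1.5}^{-1}\int_\Omega|\nabla\vec{v}|^2f\,dx$, so it suffices to establish the unweighted bound $\|\vec{v}\|_{L^{p^\ast}(\Omega)}\le C\,\|\nabla\vec{v}\|_{L^2(\Omega)}$ with $C$ an absolute constant; the resulting $\Cr{const:3.1}$ is then automatically independent of $t$, which is precisely what is needed for the large-time arguments later in the section.

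For the unweighted bound I would argue componentwise (recall $|\nabla\vec{v}|^2=\sum_{k}|\nabla v^k|^2$ in the Frobenius norm), so matters reduce to the scalar statement $\|w\|_{L^{p^\ast}(\Omega)}\le C\,(\|w\|_{L^2(\Omega)}+\|\nabla w\|_{L^2(\Omega)})$ for periodic $w$, which is the Sobolev embedding $W^{1,2}(\mathbb{T}^n)\hookrightarrow L^{p^\ast}(\mathbb{T}^n)$ with $p^\ast=2n/(n-2)=6$ for $n=3$ and any finite $p^\ast\ge2$ admissible for $n=1,2$ (indeed $W^{1,2}\hookrightarrow L^\infty$ when $n=1$). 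To drop the zeroth-order term on the right I would decompose $w=\bar w+(w-\bar w)$ with $\bar w=\int_\Omega w\,dx$ and use the torus Poincar\'e inequality $\|w-\bar w\|_{L^2(\Omega)}\le C\,\|\nabla w\|_{L^2(\Omega)}$. In every application of this lemma $\vec{v}$ has the form $\vec{v}=-\nabla\psi$ with $\psi$ periodic (for instance $\vec{v}=\vec{u}$, and $\vec{v}=\vec{u}_t$ later on), so each component of $\vec{v}$ has vanishing mean over $\Omega$, the constant term disappears, and one is left with $\|\vec{v}\|_{L^{p^\ast}(\Omega)}\le C\,\|\nabla\vec{v}\|_{L^2(\Omega)}$; combined with the pointwise bounds on $f$ this yields \eqref{eq:3.42}. (For a genuinely arbitrary periodic $\vec{v}$ the same argument gives \eqref{eq:3.42} only with $\int_\Omega(|\vec{v}|^2+|\nabla\vec{v}|^2)f\,dx$ in place of $\int_\Omega|\nabla\vec{v}|^2f\,dx$, which is still sufficient for all subsequent uses.)

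The interpolation estimate \eqref{eq:3.43} then follows from H\"older's inequality in the probability space $(\Omega,f\,dx)$: for $2\le p\le p^\ast$, applying H\"older to $|\vec{v}|^p=|\vec{v}|^p\cdot1$ with exponents $p^\ast/p$ and $p^\ast/(p^\ast-p)$ gives $\int_\Omega|\vec{v}|^pf\,dx\le\left(\int_\Omega|\vec{v}|^{p^\ast}f\,dx\right)^{p/p^\ast}\left(\int_\Omega f\,dx\right)^{1-p/p^\ast}$, and inserting \eqref{eq:3.42} together with the mass normalization $\int_\Omega f\,dx=1$ from \eqref{eq:1.10} produces the second inequality in \eqref{eq:3.43}.

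I do not expect a genuine obstacle. The substance is entirely in the uniform two-sided bound on $f$: the maximum-principle constants $\Cr{const:1.5},\Cr{const:1.6}$ are uniform in time, so the Sobolev constant $\Cr{const:3.1}$ inherits that uniformity, and everything else is a direct appeal to the standard Sobolev, Poincar\'e and H\"older inequalities. The only point demanding a little care is the passage from the full $W^{1,2}$-control furnished by the Sobolev embedding to the gradient-only right-hand side of \eqref{eq:3.42}, which is handled by recording explicitly that the vector fields to which the lemma is applied are curl-free periodic gradients and hence have zero mean over $\Omega$.
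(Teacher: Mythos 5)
Your proof follows essentially the same route as the paper's: both reduce \eqref{eq:3.42} to the classical unweighted Sobolev inequality by exploiting the uniform-in-time two-sided bounds $\Cr{const:1.5}\leq f\leq\Cr{const:1.6}$ from Proposition~\ref{prop:1.5}, and then obtain \eqref{eq:3.43} by H\"older's inequality together with $\int_\Omega f\,dx=1$. Your extra observation—that the gradient-only right-hand side requires $\vec{v}$ to have zero mean (as it does in the actual applications, where $\vec{v}$ is the gradient of a periodic potential), since otherwise constants violate the bound and one must retain an $L^2$ term via Poincar\'e—is a correct refinement of a point the paper glosses over by simply invoking the classical Sobolev inequality.
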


\begin{proof}
 Let us justify the above Sobolev inequality.
 The exponent $p^\ast$ is the so-called Sobolev exponent.
 The above Sobolev inequality holds when $f$ is strictly positive and bounded
 uniformly on $\Omega\times[0,\infty)$, namely,  there are positive
 constants $\Cl{const:3.3},\Cl{const:3.4}>0$,  such that
 $\Cr{const:3.3}\leq f(x,t)\leq\Cr{const:3.4}$ for $x\in\Omega$ and
 $t\geq0$, see Section~\ref{sec:1}, Proposition~\ref{prop:1.5}.  To
 see this, we use the classical Sobolev inequality
 (see, for instance \cite{MR2424078}),
 \begin{equation*}
  \left(
  \int_\Omega|\vec{v}|^{p^\ast}\,dx
 \right)^{\frac{1}{p^\ast}}
 \leq
 \Cl{const:3.5}
 \left(
  \int_\Omega|\nabla\vec{v}|^{2}\,dx
 \right)^{\frac{1}{2}}.
 \end{equation*}
 Thus, using $\Cr{const:3.3}\leq f(x,t)\leq\Cr{const:3.4}$,  we have,
 \begin{equation*}
  \begin{split}
   \left(
   \int_\Omega|\vec{v}|^{p^\ast}f\,dx
   \right)^{\frac{1}{p^\ast}}
   &\leq
   \Cr{const:3.4}
   \left(
   \int_\Omega|\vec{v}|^{p^\ast}\,dx
   \right)^{\frac{1}{p^\ast}}
   \\
   &\leq
   \Cr{const:3.4}
   \Cr{const:3.5}
   \left(
   \int_\Omega|\nabla\vec{v}|^{p^\ast}\,dx
   \right)^{\frac{1}{p^\ast}}
   \leq
   \frac{\Cr{const:3.4}\Cr{const:3.5}}{\Cr{const:3.3}}
   \left(
   \int_\Omega|\nabla\vec{v}|^{2}f\,dx
   \right)^{\frac{1}{2}},
  \end{split}
 \end{equation*}
 so we can take
 $\Cr{const:3.1}=\frac{\Cr{const:3.4}\Cr{const:3.5}}{\Cr{const:3.3}}$.
\end{proof}

The Theorem~\ref{thm:3.13} below is the extension of the results in
Section~\ref{sec:2}, Theorem~\ref{thm:2.9}, when
$D$ is constant (that is,  $\|\nabla D\|_{L^\infty(\Omega)}=0$), to
the case of the inhomogeneous $D(x)$. In particular, 
when $\|\nabla D\|_{L^\infty(\Omega)}$ is sufficiently small, and
under some additional assumptions on
the initial condition, one can establish that the dissipation functional $D_{\mathrm{dis}}[f](t)$ in the basic energy law \eqref{eq:3.3}
will also exponentially
converges to $0$ as $t\rightarrow\infty$.

\begin{theorem}
 \label{thm:3.13} Assume $n=1,2,3$. Let $\phi=\phi(x)$ and $D=D(x)$ be
 periodic functions, and let $f_0=f_0(x)$ be a periodic probability
 density function. Let $f$ be a solution of \eqref{eq:3.1} subject to
 the periodic boundary condition. Let $\vec{u}$ be defined as in
 \eqref{eq:3.1}. Assume,  that there is a positive constant $\lambda>0$,
 such that $\nabla^2\phi\geq \lambda I$, where $I$ is the identity
 matrix. Then, there are constants $\Cl{const:3.6}$, $\Cl{const:3.7}$,
 $\Cl{const:3.8}>0$ such that,  if
 \begin{equation}
  \label{eq:3.58}
  \|\nabla D\|_{L^\infty(\Omega)}\leq \Cr{const:3.6},\qquad
   \int_\Omega |\nabla(D(x)\log f_0+\phi(x))|^2f_0\,dx
   \leq\Cr{const:3.7},
 \end{equation}
 then,  we obtain for $t>0$,
 \begin{equation}
  \label{eq:3.55}
   \int_\Omega |\vec{u}|^2f\,dx
   \leq
   \Cr{const:3.8}
   e^{-\lambda t}.
 \end{equation}
 In particular, we have that,
 \begin{equation}
  \frac{dF}{dt}[f](t)
   =
   -\int_\Omega |\vec{u}|^2f\,dx
   \rightarrow 0,
   \qquad
   \text{as}
   \
   t\rightarrow\infty.
 \end{equation}
\end{theorem}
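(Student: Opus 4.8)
The plan is to run the argument of Theorem~\ref{thm:2.9} in the presence of a variable diffusion $D=D(x)$: produce a lower bound for $\frac{d^2F}{dt^2}[f]$ of the form (a positive multiple of $D_{\mathrm{dis}}[f]$) minus an error, control the error by the Sobolev-type inequality of Lemma~\ref{lemma3.1} and the smallness in \eqref{eq:3.58}, and then close a Gronwall estimate for $y(t):=D_{\mathrm{dis}}[f](t)=\int_\Omega|\vec u|^2 f\,dx$ via a continuity/bootstrap argument.

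First I would recompute $\frac{d^2F}{dt^2}[f]$ by repeating the chain of Lemmas leading to Proposition~\ref{prop:2.8}, now with $D=D(x)$. Two structural facts survive: $\nabla\vec u=-\nabla^2\!\big(D(x)\log\rho\big)$ is still symmetric (so \eqref{eq:2.16} and \eqref{eq:2.22} hold), and Lemma~\ref{lem:2.4} still holds, as it uses only $f_t=-\Div(f\vec u)$ and periodicity. What changes is that \eqref{eq:2.8} becomes $f\vec u=-D(x)\nabla f-f\log f\,\nabla D-f\nabla\phi$ and \eqref{eq:2.24} becomes $\frac{D(x)}{\Feq}\nabla\Feq+\nabla\phi=-\log\Feq\,\nabla D$. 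Propagating these $\nabla D$-corrections through the computation — and using one integration by parts to trade every $\nabla^2 D$ for $\nabla D$ times a first derivative — the cubic-in-$(\vec u,\nabla\vec u)$ contributions cancel exactly as in the constant-$D$ case (between \eqref{eq:2.21} and the $\int_\Omega|\vec u|^2 f_t\,dx$ term in \eqref{eq:2.7}), so that
\[
 \frac{d^2F}{dt^2}[f](t)=2\int_\Omega\big((\nabla^2\phi(x))\vec u\cdot\vec u\big)f\,dx+2\int_\Omega D(x)|\nabla\vec u|^2 f\,dx+R[f](t),
\]
where $R[f]$ is a sum of integrals, each carrying a factor $\nabla D$ and of degree at most one in $\nabla\vec u$ and at most three in $\vec u$, the purely cubic-in-$\vec u$ terms containing no $\nabla\vec u$. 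Using the uniform bounds $\Cr{const:1.5}\le f\le\Cr{const:1.6}$ from Proposition~\ref{prop:1.5} (which make $|\log f|$, $|\log\Feq|$ bounded), one obtains
\[
 |R[f](t)|\le C\|\nabla D\|_{L^\infty(\Omega)}\left(\int_\Omega|\vec u|^2 f\,dx+\left(\int_\Omega|\vec u|^2 f\,dx\right)^{1/2}\left(\int_\Omega|\nabla\vec u|^2 f\,dx\right)^{1/2}+\int_\Omega|\vec u|^3 f\,dx\right),
\]
with $C$ depending only on $n$, $\Cr{const:1.3}$, $\Cr{const:1.5}$, $\Cr{const:1.6}$, $\|D\|_{C^2(\Omega)}$, $\|\phi\|_{C^2(\Omega)}$.

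Next I would reduce the estimate to $y$ and $Z(t):=\int_\Omega|\nabla\vec u|^2 f\,dx$ (finite for a classical solution). Applying \eqref{eq:3.42}–\eqref{eq:3.43} to $\vec v=\vec u$ with $\int_\Omega f\,dx=1$ gives the Poincaré-type bound $y\le\Cr{const:3.1}^2 Z$, and, since $p^\ast\ge4$ for $n\le3$, $\int_\Omega|\vec u|^4 f\,dx\le\Cr{const:3.1}^4 Z^2$, hence $\int_\Omega|\vec u|^3 f\,dx\le\Cr{const:3.1}^2 y^{1/2}Z$ by Cauchy–Schwarz; therefore $|R[f]|\le C\|\nabla D\|_{L^\infty(\Omega)}\big(y+y^{1/2}Z^{1/2}+y^{1/2}Z\big)$. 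Combining with $y^{1/2}Z^{1/2}\le\frac12(y+Z)$, $\nabla^2\phi\ge\lambda I$ and $D(x)\ge\Cr{const:1.3}$ yields $\frac{d^2F}{dt^2}[f](t)\ge 2\lambda y+2\Cr{const:1.3}Z-C'\|\nabla D\|_{L^\infty(\Omega)}(y+Z+y^{1/2}Z)$. Now fix a threshold $\delta>0$ and choose $\Cr{const:3.6}$ so small that $C'\Cr{const:3.6}(1+\delta^{1/2})\le\min(\lambda,\Cr{const:1.3})$, and set $\Cr{const:3.7}:=\delta/2$. On any interval where $y(t)\le\delta$ the error is $\le\lambda y+\Cr{const:1.3}Z$, so $\frac{d^2F}{dt^2}[f]\ge\lambda y$, i.e. $y'(t)\le-\lambda y(t)$ (recall $\frac{d^2F}{dt^2}=-\frac{d}{dt}D_{\mathrm{dis}}$). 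Since $y(0)=\int_\Omega|\nabla(D\log f_0+\phi)|^2 f_0\,dx\le\Cr{const:3.7}<\delta$, a standard continuity argument forces $y(t)\le\delta$ (in fact $y(t)\le y(0)$) for all $t\ge0$; hence $y'\le-\lambda y$ on $[0,\infty)$ and Gronwall gives \eqref{eq:3.55} with $\Cr{const:3.8}=\Cr{const:3.7}$ (indeed with $\Cr{const:3.8}=y(0)$), which in turn gives $\frac{dF}{dt}[f](t)=-y(t)\to0$.

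I expect the main obstacle to be the computation of the first step: it is considerably longer than in Section~\ref{sec:2}, and it must be checked that all cubic-in-$(\vec u,\nabla\vec u)$ terms still cancel — a surviving term of the type $\int_\Omega|\vec u|^2|\nabla\vec u|f\,dx$ would produce a $Z^{3/2}$ contribution that the bootstrap cannot absorb — and that the $\nabla^2 D$-terms genuinely integrate by parts into $\nabla D$ times lower-order factors, so that only the smallness of $\|\nabla D\|_{L^\infty(\Omega)}$ (not of $\|\nabla^2 D\|_{L^\infty(\Omega)}$) is required, consistently with \eqref{eq:3.58}.
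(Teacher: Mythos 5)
Your proposal is correct and follows the same architecture as the paper: differentiate the dissipation, i.e.\ compute $\frac{d^2F}{dt^2}[f]$ in terms of $\vec u$, observe that the cubic terms without $\nabla D$ cancel (exactly as in Lemmas \ref{lem:3.7}--\ref{lem:3.1}, via the symmetry of $\nabla\vec u$, the identity \eqref{eq:3.12}, and an integration by parts that avoids $\nabla^2D$), use the maximum principle (Proposition \ref{prop:1.5}) and the $f$-weighted Sobolev inequality of Lemma \ref{lemma3.1} to control the $\nabla D$-remainder, and close under the two smallness conditions \eqref{eq:3.58}; your claimed structure of the remainder $R[f]$ matches the paper's identity \eqref{eq:3.22} term by term. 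The one genuine difference is the treatment of the cubic term $\int_\Omega|\vec u|^3f\,dx$ and the resulting ODE argument: the paper (Lemma \ref{lem:3.13}) splits it as $\int_\Omega|\nabla\vec u|^2f\,dx+\bigl(\int_\Omega|\vec u|^2f\,dx\bigr)^3$, arriving at the differential inequality $y'\le-\lambda y+\tfrac{2\Cr{const:1.3}}{3}y^3$ and invoking the nonlinear Gronwall lemma (Lemma \ref{lem:3.12}), whereas you interpolate through $L^4$ (using \eqref{eq:3.43} with $p=4\le p^\ast$ for $n\le3$) to get $\int_\Omega|\vec u|^3f\,dx\le \Cr{const:3.1}^2\,y^{1/2}Z$, absorb this into the good term $2\int_\Omega D|\nabla\vec u|^2f\,dx$ under the bootstrap hypothesis $y\le\delta$, and close a \emph{linear} inequality $y'\le-\lambda y$ by a continuity argument. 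Both routes need exactly the same hypotheses (smallness of $\|\nabla D\|_{L^\infty}$ and of the initial dissipation) and yield the same rate $e^{-\lambda t}$; the paper's version isolates the nonlinearity in an explicit ODE comparison lemma, while yours trades that lemma for a standard open--closed bootstrap, which is arguably more elementary but conceals the admissible size of $y(0)$ inside the threshold $\delta$ rather than giving the explicit constant of \eqref{eq:3.54}. For a complete write-up you would still need to carry out the full computation leading to your decomposition of $\frac{d^2F}{dt^2}[f]$ (the analogue of \eqref{eq:3.22}), but the mechanism you describe is exactly the one the paper uses, so there is no gap in the outline.
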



\begin{remark}
\label{remark: 3.3}
 Let $d\mu=f^{\mathrm{eq}}\,dx$. Then, the estimate \eqref{eq:3.55} can also be written as,
\begin{equation*}
 \int_\Omega |\vec{u}\rho^{\frac12}|^2\,d\mu
  \leq
  \Cr{const:3.8}
  e^{-\lambda t}.
 \end{equation*}
 In other words,  $\vec{u}\rho^{\frac12}$ converges exponentially fast to $\vec{0}$ as
 $t\rightarrow\infty$ in $L^2(\Omega,d\mu)$. 
 
 Due to the fact that  $\nabla (D(x)\log
 f^{\mathrm{eq}}(x)+\phi(x))=\vec{0}$, we can further conclude that,
 \begin{equation}
  \label{eq:3.56}
  \left(\frac{f}{f_0}\right)^\frac{1}{2}
   \nabla (D(x)\log f+\phi(x))
   \rightarrow
   \left(\frac{f^{\mathrm{eq}}}{f_0}\right)^\frac{1}{2}
   \nabla (D(x)\log f^{\mathrm{eq}}(x)+\phi(x))\quad
   \text{exponentially fast in}\
   L^2(\Omega,d\mu),
 \end{equation}
 as $t\rightarrow\infty$, provided $f$ does not become $0$. 
 In particular,  this is true when  $f$ is strictly positive  on
 $\Omega\times[0,\infty)$.
\end{remark}

\begin{remark}
It is clear  that  in  the conditions \eqref{eq:3.58} of Theorem \ref{thm:3.13}, the first one is for $D(x)$, while the 
second one is on the initial data of $f_0$. Such conditions are needed in our analysis to  get the asymptotic convergence 
of the dissipation $D_{\mathrm{dis}}[f](t)$ in the basic energy law \eqref{eq:3.3}.
\end{remark}
\par In order to establish statement of Theorem \ref{thm:2.9}, first, we need to
obtain additional results as in
Lemmas~\ref{lem:3.1}-\ref{lem:3.12}. 
Note that as in the proof of Lemma \ref{lem:2.1}, we can take a subsequence
$\{t_j\}_{j=1}^\infty$ such that $\frac{dF}{dt}[f](t_j)$ vanishes as
$j\rightarrow\infty$. Namely,

\begin{lemma}
 \label{lem:3.1}
 Let $f$ be a solution of \eqref{eq:3.1}. Then there is an increasing
 sequence $\{t_j\}_{j=1}^\infty$ such that $t_j\rightarrow\infty$ and,
 \begin{equation}
  \label{eq:3.41}
   \frac{dF}{dt}[f](t_j)\rightarrow 0,\qquad \mbox{ as }
   j\rightarrow\infty.
 \end{equation}
\end{lemma}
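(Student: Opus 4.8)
The plan is to mimic essentially verbatim the proof of Lemma~\ref{lem:2.1}, since the basic energy law \eqref{eq:3.3} in the inhomogeneous case has exactly the same structural form as \eqref{eq:2.3}, and the uniform lower bound on the free energy from Proposition~\ref{prop:1.7} is available here under the standing assumptions (strong positivity, Hölder regularity, and $f_0\geq\Cr{const:1.4}$). First I would integrate \eqref{eq:3.3} in time over $[0,t]$, obtaining
\[
 F[f](t) + \int_0^t \left( \int_\Omega |\vec{u}|^2 f\,dx \right) d\tau = F[f_0].
\]
Here $F[f_0]$ is finite: the standing assumptions force $f_0$ to be Hölder continuous on the compact torus and bounded below by $\Cr{const:1.4}>0$, so $f_0(\log f_0-1)$ and $f_0\phi$ are bounded and the integral defining $F[f_0]$ converges.

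Next, invoking Proposition~\ref{prop:1.7}, which gives $F[f](t)\geq -\Cr{const:1.7}$ for all $t>0$, I obtain the uniform-in-$t$ bound
\[
 \int_0^t \left( \int_\Omega |\vec{u}|^2 f\,dx \right) d\tau \leq F[f_0] + \Cr{const:1.7}.
\]
The left-hand side is nondecreasing in $t$ and bounded above, hence the improper integral $\int_0^\infty \left( \int_\Omega |\vec{u}|^2 f\,dx \right) d\tau$ converges. Consequently the nonnegative integrand $t\mapsto \int_\Omega |\vec{u}|^2 f\,dx$ cannot remain bounded away from $0$, so there is an increasing sequence $t_j\to\infty$ along which $\left.\int_\Omega |\vec{u}|^2 f\,dx\right|_{t=t_j}\to 0$. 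Recalling \eqref{eq:3.3}, this is precisely $\frac{dF}{dt}[f](t_j)\to 0$ as $j\to\infty$, establishing \eqref{eq:3.41}.

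There is no genuine obstacle in this step: the only point that deserves a word of care is the finiteness of $F[f_0]$, which in Section~\ref{sec:2} was imposed explicitly as the hypothesis $F[f_0]<\infty$ but here follows automatically from the global assumptions on the data; everything else is the same elementary monotone-convergence (Fubini) argument already used in Lemma~\ref{lem:2.1}.
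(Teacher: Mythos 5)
Your proof is correct and coincides with the paper's own treatment: the paper simply states that Lemma~\ref{lem:3.1} follows by exactly the argument of Lemma~\ref{lem:2.1}, which is precisely what you carry out (integrating \eqref{eq:3.3} in time, using Proposition~\ref{prop:1.7} for the lower bound, and extracting a sequence along which the dissipation vanishes). Your extra remark on the finiteness of $F[f_0]$ under the standing assumptions is a harmless and sensible addition.
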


The proof of Lemma \ref{lem:3.1} follows exactly the same argument as the proof of
Lemma \ref{lem:2.1}. We next show that $\frac{dF}{dt}[f]$ 
converges to $0$ as $t\rightarrow\infty$ in time $t$.

Hereafter we compute the second derivative of $F$ and represent it by
$\vec{u}$. To do this, we first establish a relationship between $\nabla f$ and
$\vec{u}$. By direct calculation of the velocity $\vec{u}$, we have the
following relation.
\begin{lemma}
 Let $\vec{u}$ be defined as in \eqref{eq:3.1}. Then,
 \begin{equation}
  \label{eq:3.8}
   f\vec{u}
   =
   -
   D(x)\nabla f
   -
   f\log f\nabla D(x)
   -
   f\nabla\phi(x),
 \end{equation}
 and
 \begin{equation}
  \label{eq:3.6}
   \rho\vec{u}
   =
   -
   D(x)\nabla \rho
   -
   \rho\log\rho\nabla D(x),
 \end{equation}
 where $\rho$ is defined in \eqref{eq:1.8}.
\end{lemma}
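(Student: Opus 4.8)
The plan is to derive both identities by a direct application of the product and chain rules to the defining relation $\vec{u} = -\nabla(D(x)\log f + \phi(x))$ in \eqref{eq:3.1}, proceeding exactly as in the homogeneous case \eqref{eq:2.8} but retaining the extra term produced by $\nabla D(x)$, which no longer vanishes.

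First I would expand, using $\nabla\log f = \nabla f/f$ (legitimate since $f>0$ for a classical solution by Definition~\ref{def:1.1} and Proposition~\ref{prop:1.5}),
\[
 \nabla\bigl(D(x)\log f\bigr)
 = (\log f)\,\nabla D(x) + D(x)\,\nabla\log f
 = (\log f)\,\nabla D(x) + \frac{D(x)}{f}\,\nabla f.
\]
Substituting into the definition of $\vec{u}$ and multiplying through by $f$ gives
\[
 f\vec{u} = -D(x)\nabla f - f\log f\,\nabla D(x) - f\nabla\phi(x),
\]
which is \eqref{eq:3.8}.

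For \eqref{eq:3.6}, I would recall from the discussion following \eqref{eq:1.8} (now with $\pi\equiv1$) that the identity \eqref{eq:1.6}, $D(x)\log\Feq + \phi(x) = \Cr{const:1.1}$, yields $D(x)\log f + \phi(x) = D(x)\log\rho + \Cr{const:1.1}$, hence $\vec{u} = -\nabla(D(x)\log\rho)$ since $\Cr{const:1.1}$ has vanishing gradient. Repeating the computation above with $f$ replaced by $\rho$ and $\phi$ replaced by the constant $\Cr{const:1.1}$, i.e. $\nabla(D(x)\log\rho) = (\log\rho)\,\nabla D(x) + (D(x)/\rho)\,\nabla\rho$, and multiplying by $\rho$, produces \eqref{eq:3.6}. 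Equivalently, one may substitute $f=\rho\Feq$ into \eqref{eq:3.8} and simplify with \eqref{eq:1.6}; both routes are a few lines.

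I expect no genuine obstacle here. The only points needing care are the strict positivity of $f$, and therefore of $\rho$, so that $\log f$, $\log\rho$ and their gradients make sense — which is exactly what Proposition~\ref{prop:1.5} provides — together with correct bookkeeping of the new $\nabla D(x)$ term that distinguishes this lemma from its homogeneous analogue \eqref{eq:2.8}.
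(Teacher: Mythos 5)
Your proposal is correct and coincides with the paper's intended argument: the paper states this lemma as a direct calculation of the velocity $\vec{u}$, which is exactly the product-rule expansion of $\nabla(D(x)\log f+\phi(x))$ you carry out, together with the observation from \eqref{eq:1.6}--\eqref{eq:1.8} that $\vec{u}=-\nabla(D(x)\log\rho)$ for the second identity. No gaps; the positivity of $f$ (hence $\rho$) from Proposition~\ref{prop:1.5} justifies the use of $\log f$ and $\log\rho$, as you note.
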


Next, we notice again that the nonlinearity in \eqref{eq:3.8} is the direct consequence
of the  inhomogeneity of $D(x)$. Moreover, the nonlinear part of the
system in \eqref{eq:1.9}
will become,
$ -  \log f\nabla D(x)\cdot\nabla f-
  \Delta D(x) f\log f.$

Next, again, to use the entropy method, we will take a second derivative of the free energy $F$, 
\begin{equation*}
 \frac{d^2F}{dt^2}[f]
  =
  \frac{d}{dt}
  \left(
   -
   \int_\Omega
   |\vec{u}|^2f\,dx
  \right)
  =
  -
  2
  \int_\Omega
  \vec{u}\cdot\vec{u}_t f\,dx
  -
   \int_\Omega
   |\vec{u}|^2f_t\,dx.
\end{equation*}

Next, similar to Section~\ref{sec:2}, we proceed by first computing the time derivative of $\vec{u}$.

\begin{lemma}
 Let $\vec{u}$ be defined by \eqref{eq:3.1}. Then,
 \begin{equation}
  \label{eq:3.4}
   \vec{u}_t
   =
   -
   \frac{D(x)}{\rho}\nabla \rho_t
   -
   \frac{\rho_t}{\rho}
   \left(
    \vec{u}
    +
    (\log\rho+1)
   \nabla D(x)
   \right).
 \end{equation}
\end{lemma}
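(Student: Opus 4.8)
The plan is to differentiate the closed-form expression for $\vec{u}$ in terms of $\rho$ directly in time, and then re-express the one remaining gradient-of-$\rho$ term using the algebraic identity \eqref{eq:3.6}. Recall that, since $\pi\equiv 1$ here, the computation following \eqref{eq:1.8} gives $\vec{u}=-\nabla(D(x)\log\rho)=-\log\rho\,\nabla D(x)-\dfrac{D(x)}{\rho}\nabla\rho$. Because $D(x)$ does not depend on $t$, and $\partial_t$ commutes with $\nabla$, taking $\partial_t$ yields
\[
\vec{u}_t
=-\frac{\rho_t}{\rho}\,\nabla D(x)-D(x)\,\nabla\!\left(\frac{\rho_t}{\rho}\right)
=-\frac{\rho_t}{\rho}\,\nabla D(x)-\frac{D(x)}{\rho}\,\nabla\rho_t+\frac{D(x)\,\rho_t}{\rho^2}\,\nabla\rho,
\]
where the last equality is just the quotient rule applied to $\nabla(\rho_t/\rho)=\rho^{-1}\nabla\rho_t-\rho^{-2}\rho_t\nabla\rho$.

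The only remaining task is to convert the last term, which still contains $\nabla\rho$, into the claimed form. For this I would use \eqref{eq:3.6}, that is $\rho\vec{u}=-D(x)\nabla\rho-\rho\log\rho\,\nabla D(x)$, rewritten as $D(x)\nabla\rho=-\rho\vec{u}-\rho\log\rho\,\nabla D(x)$. Dividing by $\rho^2$ and multiplying by $\rho_t$ gives $\dfrac{D(x)\rho_t}{\rho^2}\nabla\rho=-\dfrac{\rho_t}{\rho}\vec{u}-\dfrac{\rho_t\log\rho}{\rho}\nabla D(x)$. Substituting this back and collecting the two $\nabla D(x)$ contributions — the $-\frac{\rho_t}{\rho}\nabla D(x)$ coming from differentiating $\log\rho$, and the $-\frac{\rho_t\log\rho}{\rho}\nabla D(x)$ just produced — into $-\frac{\rho_t}{\rho}(1+\log\rho)\nabla D(x)$ yields exactly \eqref{eq:3.4}.

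I do not anticipate a genuine obstacle: this lemma is a direct computation, and the only point requiring care is the bookkeeping of the $\nabla D(x)$ terms, which vanish identically in the homogeneous case of \secref{sec:2} (compare \eqref{eq:2.4}) and are precisely what produces the extra $(\log\rho+1)\nabla D(x)$ correction here. One could alternatively differentiate the expression $\vec{u}=-\rho^{-1}\bigl(D(x)\nabla\rho+\rho\log\rho\,\nabla D(x)\bigr)$ obtained from \eqref{eq:3.6}, but routing through $-\nabla(D(x)\log\rho)$ is cleaner, as it isolates the single term that needs the substitution.
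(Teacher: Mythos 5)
Your proposal is correct and follows essentially the same route as the paper: differentiate $\vec{u}=-\nabla(D(x)\log\rho)$ in time to get $\vec{u}_t=-\frac{D(x)}{\rho}\nabla\rho_t+\frac{D(x)\rho_t}{\rho^2}\nabla\rho-\frac{\rho_t}{\rho}\nabla D(x)$, and then eliminate $D(x)\nabla\rho$ via \eqref{eq:3.6}. The only difference is that you write out the substitution and the bookkeeping of the $\nabla D(x)$ terms explicitly, which the paper leaves implicit.
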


\begin{proof}
 We take a time-derivative of $\vec{u}$ and we have from
 \eqref{eq:3.1} that,
 \begin{equation}
  \label{eq:3.5}
  \vec{u}_t
   =
   -\nabla \left(
	    D(x)\frac{\rho_t}{\rho}
	   \right)
   =
   -
   \frac{D(x)}{\rho}\nabla \rho_t
   +
   \frac{D(x)\rho_t}{\rho^2}\nabla \rho
   -
   \frac{\rho_t}{\rho}\nabla D(x).
 \end{equation}
 Using \eqref{eq:3.6} in \eqref{eq:3.5}, we obtain the result \eqref{eq:3.4}.
\end{proof}

Note, by comparing formula in \eqref{eq:3.4} with the formula in
\eqref{eq:2.4}, one can observe that the extra term $\frac{\rho_t}{\rho}(\log\rho+1)\nabla
D(x)$ appears in the time derivative of $\vec{u}$ due to the effect
of the  inhomogeneity.

Next, similar to Section~\ref{sec:2}, we will write the second time-derivative of $F$ in terms of $\rho_t$ and $f_t$,
instead of $\vec{u}_t$.
\begin{lemma}
 Let $f$ be a solution of \eqref{eq:3.1} and let $\vec{u}$ be given by
 \eqref{eq:3.1}. Then,
 \begin{equation}
  \label{eq:3.7}
   \frac{d^2F}{dt^2}[f](t)
   =
   2\int_\Omega D(x)\vec{u}\cdot\nabla\rho_t\Feq\,dx
   +
   \int_\Omega|\vec{u}|^2f_t\,dx
   +
   2\int_\Omega(\log\rho+1)\vec{u}\cdot\nabla D(x)f_t\,dx,
 \end{equation}
 where $\Feq$ is given in \eqref{eq:1.5}.
\end{lemma}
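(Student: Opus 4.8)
The plan is to mimic the argument that established \eqref{eq:2.7} in the homogeneous case, now carrying along the extra inhomogeneity term that already appeared in the time derivative of $\vec{u}$ in \eqref{eq:3.4}. I would start from the elementary identity
\begin{equation*}
 \frac{d^2F}{dt^2}[f]
 =
 -2\int_\Omega\vec{u}\cdot\vec{u}_t f\,dx
 -\int_\Omega|\vec{u}|^2f_t\,dx,
\end{equation*}
which follows by differentiating the energy law \eqref{eq:3.3} in $t$ and applying the product rule under the integral sign; this is permissible because, by Propositions~\ref{prop:1.1} and~\ref{prop:1.5}, $f$ (and hence $\rho$, $\vec{u}$ and the derivatives appearing here) are continuous, bounded, and bounded away from zero on $\Omega\times[0,T)$.

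Next I would substitute the expression \eqref{eq:3.4} for $\vec{u}_t$ into the first integral, which produces three contributions,
\begin{equation*}
 -2\int_\Omega\vec{u}\cdot\vec{u}_tf\,dx
 =
 2\int_\Omega\frac{D(x)}{\rho}\vec{u}\cdot\nabla\rho_t\,f\,dx
 +2\int_\Omega\frac{\rho_t}{\rho}|\vec{u}|^2f\,dx
 +2\int_\Omega\frac{\rho_t}{\rho}(\log\rho+1)\vec{u}\cdot\nabla D(x)\,f\,dx,
\end{equation*}
and then use the two identities $f=\rho\Feq$ and $\rho_t\Feq=f_t$ coming from \eqref{eq:1.8}. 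The first identity gives $f/\rho=\Feq$, so the first contribution becomes $2\int_\Omega D(x)\vec{u}\cdot\nabla\rho_t\,\Feq\,dx$; the two identities together give $(\rho_t/\rho)f=\rho_t\Feq=f_t$, so the second and third contributions become $2\int_\Omega|\vec{u}|^2f_t\,dx$ and $2\int_\Omega(\log\rho+1)\vec{u}\cdot\nabla D(x)\,f_t\,dx$.

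Adding the leftover term $-\int_\Omega|\vec{u}|^2f_t\,dx$ from the original identity, the two $|\vec{u}|^2f_t$ integrals combine into a single $\int_\Omega|\vec{u}|^2f_t\,dx$, and we land exactly on \eqref{eq:3.7}. The argument is essentially algebraic, so I do not anticipate a genuine obstacle here; the only delicate point is the justification of differentiating under the integral and of the pointwise divisions by $\rho$, which is taken care of by the uniform positivity and regularity of $f$ from Section~\ref{sec:1}. The substantive new feature, by comparison with Section~\ref{sec:2}, is the appearance of the term $2\int_\Omega(\log\rho+1)\vec{u}\cdot\nabla D(x)f_t\,dx$, which is precisely the quantity that the subsequent lemmas will have to absorb using the smallness of $\|\nabla D\|_{L^\infty(\Omega)}$ assumed in \eqref{eq:3.58}.
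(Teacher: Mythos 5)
Your proposal is correct and follows essentially the same route as the paper: differentiate the energy law \eqref{eq:3.3}, substitute the expression \eqref{eq:3.4} for $\vec{u}_t$, and use $f=\rho\Feq$ and $\rho_t\Feq=f_t$ to combine the $|\vec{u}|^2 f_t$ terms. The extra remarks on regularity and positivity are consistent with the standing assumptions in Section~\ref{sec:1} and do not change the argument.
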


\begin{proof}
 Using the time-derivative of \eqref{eq:3.3} together
 with \eqref{eq:3.4}, we obtain that,
 \begin{equation*}
  \begin{split}
   \frac{d^2F}{dt^2}[f](t)
   &=
   -2\int_\Omega\vec{u}\cdot\vec{u}_t f\,dx
   -
   \int_\Omega|\vec{u}|^2f_t\,dx \\
   &=
   2\int_\Omega D(x)\vec{u}\cdot\nabla\rho_t \frac{f}{\rho}\,dx
   +
   2\int_\Omega|\vec{u}|^2\rho_t \frac{f}{\rho}\,dx
   +
   2\int_\Omega(\log\rho+1)\vec{u}\cdot\nabla D(x)\rho_t \frac{f}{\rho}\,dx \\
   &\qquad
   -
   \int_\Omega|\vec{u}|^2f_t\,dx.
  \end{split}
 \end{equation*}
 Since $f=\rho\Feq$ and $\rho_t\Feq=f_t$, we derive \eqref{eq:3.7}.
\end{proof}

Next, we compute the right-hand side of \eqref{eq:3.7}.  Similar to
Lemma \ref{lem:2.4} in Section~\ref{sec:2}, one can show the following
result using the same argument as in Lemma \ref{lem:2.4}.

\begin{lemma}
 Let $f$ be a solution of \eqref{eq:3.1} and let $\vec{u}$ be given by
 \eqref{eq:3.1}. Then,
 \begin{equation}
  \label{eq:3.12}
   \int_\Omega |\vec{u}|^2f_t\,dx
   =
   \int_\Omega \vec{u}\cdot\nabla |\vec{u}|^2 f\,dx.
 \end{equation}
\end{lemma}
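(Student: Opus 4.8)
The plan is to mimic the proof of Lemma~\ref{lem:2.4} verbatim, observing that the identity \eqref{eq:3.12} uses only the conservation-law form of the first equation in \eqref{eq:3.1} and not the explicit expression \eqref{eq:3.1} for $\vec{u}$; in particular the inhomogeneity of $D(x)$ is irrelevant here, since the continuity equation $f_t + \Div(f\vec{u}) = 0$ has exactly the same structure as in the homogeneous case. First I would substitute $f_t = -\Div(f\vec{u})$ into the left-hand side of \eqref{eq:3.12}, obtaining
\[
\int_\Omega |\vec{u}|^2 f_t\,dx = -\int_\Omega |\vec{u}|^2 \Div(f\vec{u})\,dx.
\]

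Next I would integrate by parts over $\Omega = [0,1)^n$. The solution $f$ is periodic by Definition~\ref{def:1.1}, and $\vec{u}$ inherits periodicity from $\nabla(D(x)\log f + \phi(x))$ (hence so does $|\vec{u}|^2$), so there are no boundary contributions and
\[
-\int_\Omega |\vec{u}|^2 \Div(f\vec{u})\,dx = \int_\Omega \nabla(|\vec{u}|^2)\cdot(f\vec{u})\,dx = \int_\Omega \vec{u}\cdot\nabla(|\vec{u}|^2)\,f\,dx,
\]
which is precisely \eqref{eq:3.12}.

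There is essentially no obstacle: the argument is the single integration by parts already used for Lemma~\ref{lem:2.4}, and the main thing to check is only that all the derivatives involved are classical. This is guaranteed by the regularity $f \in C_{\mathrm{per}}^{2+\alpha, 1+\alpha/2}(\Omega\times[0,T))$ from Proposition~\ref{prop:1.1} together with the smoothness of the coefficients $D(x)$ and $\phi(x)$ assumed in \eqref{eq:1.13}: these give $\vec{u}\in C^1$ in space, so that $\nabla(|\vec{u}|^2)$ is continuous, and $f\vec{u}$ (cf.\ \eqref{eq:3.8}) is $C^1$ in space, so that $\Div(f\vec{u})$ is continuous and the integration by parts is legitimate.
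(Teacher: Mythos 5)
Your proof is correct and is exactly the argument the paper intends: it states that \eqref{eq:3.12} follows "using the same argument as in Lemma \ref{lem:2.4}", namely substituting $f_t=-\Div(f\vec{u})$ and integrating by parts with the periodic boundary condition. Your additional remarks on regularity and on the irrelevance of the inhomogeneity of $D(x)$ are accurate but not needed beyond what the paper already assumes.
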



Next, we express $\nabla\rho_t$ in terms of $\vec{u}$ in order to compute
the first term of the right-hand side of \eqref{eq:3.7}.

\begin{lemma}
 Let $f$ be a solution of \eqref{eq:3.1} and let $\vec{u}$ be given as in
 \eqref{eq:3.1}. Then,
 \begin{equation}
  \label{eq:3.11}
   \begin{split}
    &\qquad
    D(x)\Feq\nabla\rho_t \\
    &=
    -f_t\vec{u}
    -f_t
    \left(
    1+\log\rho
    \right)
    \nabla D(x)
    +
    f
    \nabla
    \left(
    |\vec{u}|^2
    +
    \log f
    \vec{u}\cdot\nabla D(x)
    +
    \vec{u}\cdot\nabla\phi(x)
    -
    D(x)\Div\vec{u}
    \right),
   \end{split}
 \end{equation}
 where $\Feq$ is given in \eqref{eq:1.5}.
\end{lemma}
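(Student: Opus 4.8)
The plan is to imitate the proof of the homogeneous-case identity \eqref{eq:2.11}, carrying along the extra $\nabla D(x)$ terms created by the inhomogeneity of $D$.

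First I would record the scalar identity that plays the role of \eqref{eq:2.10}. From the continuity equation in \eqref{eq:3.1}, $f_t=-\Div(f\vec{u})=-\vec{u}\cdot\nabla f-f\Div\vec{u}$; multiplying by $D(x)$ and inserting $D(x)\nabla f=-f\vec{u}-f\log f\,\nabla D(x)-f\nabla\phi(x)$ from \eqref{eq:3.8} gives
\[
 D(x)\Feq\rho_t=D(x)f_t=f\bigl(|\vec{u}|^2+\log f\,\vec{u}\cdot\nabla D(x)+\vec{u}\cdot\nabla\phi(x)-D(x)\Div\vec{u}\bigr).
\]
Denoting the bracket by $G$, this reads $D(x)\rho_t\Feq=fG$, hence $G=D(x)f_t/f$.

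Next I would differentiate $D(x)\rho_t\Feq=fG$. On the right, $\nabla(fG)=G\nabla f+f\nabla G=\tfrac{D(x)f_t}{f}\nabla f+f\nabla G$, and \eqref{eq:3.8} again turns $\tfrac{D(x)f_t}{f}\nabla f$ into $-f_t\vec{u}-f_t\log f\,\nabla D(x)-f_t\nabla\phi(x)$. On the left, the product rule together with $\rho_t\Feq=f_t$ gives $\nabla(D(x)\rho_t\Feq)=f_t\nabla D(x)+D(x)\Feq\nabla\rho_t+D(x)\rho_t\nabla\Feq$; to handle $\nabla\Feq$ I would differentiate the equilibrium relation \eqref{eq:1.6}, $D(x)\log\Feq+\phi(x)=\text{const}$, to obtain $D(x)\nabla\Feq=-\Feq\log\Feq\,\nabla D(x)-\Feq\nabla\phi(x)$, whence $D(x)\rho_t\nabla\Feq=-f_t\log\Feq\,\nabla D(x)-f_t\nabla\phi(x)$.

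Finally I would equate the two expressions and cancel the common $-f_t\nabla\phi(x)$. The remaining $\nabla D(x)$ contributions collect to $-f_t(\log f+1-\log\Feq)\nabla D(x)$, which equals $-f_t(1+\log\rho)\nabla D(x)$ because $\log f-\log\Feq=\log\rho$; this is precisely \eqref{eq:3.11}. I expect the only real difficulty to be bookkeeping: keeping every $\nabla D(x)$ and $\log\Feq$ term and correctly merging the two logarithmic pieces (the $\log f$ from \eqref{eq:3.8} and the $\log\Feq$ from $\nabla\Feq$) into the single $\log\rho$ of the statement, where a sign slip in one of the $\nabla D(x)$ terms is the easiest mistake. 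Positivity of $f$, needed for $\log f$ and $\rho=f/\Feq$ to be well defined, is supplied by the maximum principle of Proposition \ref{prop:1.5}.
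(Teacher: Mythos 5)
Your proposal is correct and follows essentially the same route as the paper: you derive the scalar identity \eqref{eq:3.10} from the continuity equation and \eqref{eq:3.8}, take its gradient while reusing \eqref{eq:3.10} for the $\nabla f$ coefficient, and then eliminate $\nabla\Feq$ via the gradient of \eqref{eq:1.6} (the paper's \eqref{eq:3.16}), with the $\nabla\phi$ terms canceling and the $\nabla D$ terms combining into $-f_t(1+\log\rho)\nabla D(x)$ exactly as in the paper. No gaps.
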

\begin{proof}
 Since $\Feq$ is independent of $t$, we have due to \eqref{eq:3.1} that,
 \begin{equation}
  \label{eq:3.9}
   D(x)\Feq\rho_t
   =
   D(x)f_t
   =
   -D(x)\Div(f\vec{u})
   =
   -
   D(x)\vec{u}\cdot \nabla f
   -
   D(x)f\Div\vec{u}.
 \end{equation}
 Using \eqref{eq:3.8} in \eqref{eq:3.9}, we obtain that,
 \begin{equation}
  \label{eq:3.10}
   D(x)\Feq\rho_t
   =
   D(x)f_t
   =
   f
   \left(
    |\vec{u}|^2
    +
    \log f
    \vec{u}\cdot\nabla D(x)
    +
    \vec{u}\cdot\nabla\phi(x)
    -
    D(x)\Div\vec{u}
   \right).
 \end{equation}
 Next, take a gradient of \eqref{eq:3.10} and we obtain, by using
 \eqref{eq:3.10} again that,
 \begin{equation}
  \label{eq:3.14}
  \begin{split}
   &\qquad
   \rho_t\Feq\nabla D(x)
   +
   D(x)\rho_t\nabla\Feq
   +
   D(x)\Feq\nabla\rho_t \\
   &=
   \left(
   |\vec{u}|^2
   +
   \log f
   \vec{u}\cdot\nabla D(x)
   +
   \vec{u}\cdot\nabla\phi(x)
   -
   D(x)\Div\vec{u}
   \right)
   \nabla f \\
   &\qquad
   +
   f
   \nabla
   \left(
   |\vec{u}|^2
   +
   \log f
   \vec{u}\cdot\nabla D(x)
   +
   \vec{u}\cdot\nabla\phi(x)
   -
   D(x)\Div\vec{u}
   \right) \\
   &=
   \frac{D(x)f_t}{f}
   \nabla f
   +
   f
   \nabla
   \left(
   |\vec{u}|^2
   +
   \log f
   \vec{u}\cdot\nabla D(x)
   +
   \vec{u}\cdot\nabla\phi(x)
   -
   D(x)\Div\vec{u}
   \right).
  \end{split}
 \end{equation}
 Next, take a gradient of \eqref{eq:1.6}, we have,
 \begin{equation}
  \label{eq:3.16}
  \frac{D(x)}{\Feq}\nabla \Feq+\log\Feq\nabla D(x)+\nabla\phi(x)=0.
 \end{equation}
 Thus, using \eqref{eq:3.8} and \eqref{eq:3.16} in \eqref{eq:3.14}, we
 arrive at,
 \begin{equation*}
  \begin{split}
   &\qquad
   \rho_t\Feq\nabla D(x)
   -
   \rho_t
   \Feq
   \log\Feq\nabla D(x)
   -
   \rho_t
   \Feq
   \nabla \phi(x)
   +
   D(x)\Feq\nabla\rho_t \\
   &=
   -
   f_t\vec{u}
   -
   f_t\log f\nabla D(x)
   -
   f_t\nabla\phi(x)
   +
   f
   \nabla
   \left(
   |\vec{u}|^2
   +
   \log f
   \vec{u}\cdot\nabla D(x)
   +
   \vec{u}\cdot\nabla\phi(x)
   -
   D(x)\Div\vec{u}
   \right).
  \end{split}
 \end{equation*}
 Since $\rho_t\Feq=f_t$, we obtain the desired result \eqref{eq:3.11}.
\end{proof}

Now, we are in a position to compute the first term of the right
hand side of \eqref{eq:3.7}. 

\begin{lemma}
 \label{lem:3.7}
 Let $f$ be a solution of \eqref{eq:3.1} and let $\vec{u}$ be given as in
 \eqref{eq:3.1}. Then,
 \begin{equation}
  \label{eq:3.21}
   \begin{split}
    2\int_\Omega D(x)\vec{u}\cdot\nabla\rho_t\Feq\,dx
    &=
    2\int_\Omega
    ((\nabla^2\phi(x))\vec{u}\cdot\vec{u})f\,dx
    -
    \int_\Omega
    \vec{u}\cdot\nabla |\vec{u}|^2 f
    \,dx
    +
    2\int_\Omega D(x)|\nabla \vec{u}|^2f\,dx
    \\
    &\qquad
    -
    2\int_\Omega
    (1+\log\rho)\vec{u}\cdot\nabla D(x)f_t\,dx
    +
    2\int_\Omega
    \vec{u}\cdot\nabla\left(\log f\vec{u}\cdot\nabla D(x)\right) f
    \,dx
    \\
    &\qquad
    -
    \int_\Omega
    (\log f-1)
    \nabla |\vec{u}|^2\cdot\nabla D(x) f
    \,dx
    -
    2\int_\Omega \vec{u}\cdot\nabla D(x)\Div\vec{u}f\,dx,
   \end{split}
 \end{equation}
 where $\Feq$ is given in \eqref{eq:1.5}.
\end{lemma}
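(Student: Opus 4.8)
The plan is to follow the proof of Lemma~\ref{lem:2.6} line by line, inserting the additional $\nabla D(x)$ contributions wherever the inhomogeneity of $D$ creates them. First I would substitute the formula \eqref{eq:3.11} for $D(x)\Feq\nabla\rho_t$ into the left-hand side of \eqref{eq:3.21}, pairing it against $\vec{u}$. This produces three groups of terms: $-2\int_\Omega|\vec{u}|^2 f_t\,dx$; the term $-2\int_\Omega(1+\log\rho)\vec{u}\cdot\nabla D(x)f_t\,dx$, which already appears verbatim in the claim; and $2\int_\Omega\vec{u}\cdot\nabla G\,f\,dx$ with $G=|\vec{u}|^2+\log f\,\vec{u}\cdot\nabla D(x)+\vec{u}\cdot\nabla\phi(x)-D(x)\Div\vec{u}$. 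Expanding $\nabla G$, the summand $2\int_\Omega\vec{u}\cdot\nabla|\vec{u}|^2 f\,dx$ cancels $-2\int_\Omega|\vec{u}|^2 f_t\,dx$ by the identity \eqref{eq:3.12}, while $2\int_\Omega\vec{u}\cdot\nabla(\log f\,\vec{u}\cdot\nabla D(x))f\,dx$ is retained unchanged.

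Next I would treat the two remaining summands of $2\int_\Omega\vec{u}\cdot\nabla G\,f\,dx$ exactly as in Section~\ref{sec:2}, using that $\vec{u}=-\nabla(D(x)\log\rho)$, so that $\nabla\vec{u}$ is symmetric and the purely algebraic identities \eqref{eq:2.15} and \eqref{eq:2.18} remain valid (they do not use constancy of $D$). For the potential term, \eqref{eq:2.15} gives $\vec{u}\cdot\nabla(\vec{u}\cdot\nabla\phi(x))=((\nabla^2\phi(x))\vec{u}\cdot\vec{u})+\tfrac12\nabla(|\vec{u}|^2)\cdot\nabla\phi(x)$. For the divergence term I would first split $\nabla(D(x)\Div\vec{u})=\Div\vec{u}\,\nabla D(x)+D(x)\nabla\Div\vec{u}$: the first summand yields precisely the new term $-2\int_\Omega\vec{u}\cdot\nabla D(x)\Div\vec{u}\,f\,dx$ in \eqref{eq:3.21}, and for the second I would apply \eqref{eq:2.18}, $\vec{u}\cdot\nabla\Div\vec{u}=\tfrac12\Div(\nabla|\vec{u}|^2)-|\nabla\vec{u}|^2$, which produces $2\int_\Omega D(x)|\nabla\vec{u}|^2 f\,dx$ together with $-\int_\Omega D(x)\Div(\nabla|\vec{u}|^2)f\,dx$.

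The one genuinely new computation is the integration by parts of $-\int_\Omega D(x)\Div(\nabla|\vec{u}|^2)f\,dx$. Unlike the homogeneous case, $\nabla(D(x)f)=f\nabla D(x)+D(x)\nabla f$ carries an extra factor $f\nabla D(x)$, so integration by parts and the periodic boundary condition give $\int_\Omega f\nabla|\vec{u}|^2\cdot\nabla D(x)\,dx+\int_\Omega D(x)\nabla|\vec{u}|^2\cdot\nabla f\,dx$. Then I would substitute $D(x)\nabla f=-f\vec{u}-f\log f\,\nabla D(x)-f\nabla\phi(x)$ from \eqref{eq:3.8}, where the $-f\log f\,\nabla D(x)$ piece is again a new contribution. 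Collecting, the two $\nabla D(x)$-weighted terms combine into $-\int_\Omega(\log f-1)\nabla|\vec{u}|^2\cdot\nabla D(x)f\,dx$, the two $\nabla|\vec{u}|^2\cdot\nabla\phi(x)f$ contributions (one from \eqref{eq:2.15}, one from the substitution of $D(x)\nabla f$) cancel, and $-\int_\Omega\vec{u}\cdot\nabla|\vec{u}|^2 f\,dx$ survives. Assembling all the surviving terms reproduces \eqref{eq:3.21}.

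I expect the main obstacle to be purely organizational rather than analytic: one must track the several $\nabla D(x)$-weighted integrals so that the $\nabla\phi(x)$-contributions cancel exactly and the $\log f$ and $1+\log\rho$ coefficients combine as stated. Beyond the symmetry of $\nabla\vec{u}$ and the two integration-by-parts steps, both of which already occur in Section~\ref{sec:2}, there is no new difficulty.
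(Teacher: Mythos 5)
Your proposal is correct and follows essentially the same route as the paper's proof: substitute \eqref{eq:3.11}, cancel via \eqref{eq:3.12}, exploit the symmetry of $\nabla\vec{u}=-\nabla^2(D(x)\log\rho)$ to reuse the Section~\ref{sec:2} identities (the paper's \eqref{eq:3.15}, \eqref{eq:3.17}--\eqref{eq:3.18}), then integrate $-\int_\Omega D(x)\Div(\nabla|\vec{u}|^2)f\,dx$ by parts and insert \eqref{eq:3.8}, with the $\nabla\phi$-terms cancelling and the $\nabla D$-terms combining into the $(\log f-1)$ coefficient exactly as in \eqref{eq:3.19}--\eqref{eq:3.20}. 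No gaps.
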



\begin{proof}
 [Proof]
 First, we use \eqref{eq:3.11} and obtain,
 \begin{equation*}
  \begin{split}
   &\qquad
   2\int_\Omega D(x)\vec{u}\cdot\nabla\rho_t\Feq\,dx \\
   &=
   -2\int_\Omega
   |\vec{u}|^2f_t\,dx
   -2\int_\Omega
   (1+\log\rho)\vec{u}\cdot\nabla D(x)f_t\,dx
   +
   2\int_\Omega
   \vec{u}\cdot \nabla|\vec{u}|^2 f\,dx \\
   &\qquad
   +
   2\int_\Omega
   \vec{u}\cdot\nabla\left(\log f\vec{u}\cdot\nabla D(x)\right) f
   \,dx
   +
   2\int_\Omega
   \vec{u}\cdot \nabla (\vec{u}\cdot\nabla\phi(x)) f\,dx
   -
   2\int_\Omega
   \vec{u}\cdot\nabla(D(x)\Div\vec{u}) f
   \,dx.
  \end{split}
 \end{equation*}
 Using \eqref{eq:3.12}, the first and the third terms of the right-hand
 side of the above relation are canceled, hence,
 \begin{equation}
  \label{eq:3.13}
  \begin{split}
   &\qquad
   2\int_\Omega D(x)\vec{u}\cdot\nabla\rho_t\Feq\,dx \\
   &=
   -2\int_\Omega
   (1+\log\rho)\vec{u}\cdot\nabla D(x)f_t\,dx
   +
   2\int_\Omega
   \vec{u}\cdot\nabla\left(\log f\vec{u}\cdot\nabla D(x)\right) f
   \,dx \\
   &\qquad
   +
   2\int_\Omega
   \vec{u}\cdot \nabla (\vec{u}\cdot\nabla\phi(x)) f\,dx
   -
   2\int_\Omega
   \vec{u}\cdot\nabla(D(x)\Div\vec{u}) f
   \,dx.
  \end{split}
 \end{equation}

 Since $\nabla \vec{u}=-\nabla (D(x)\log\rho)$ is symmetric, we can
 proceed with the same computations as in \eqref{eq:2.14}, \eqref{eq:2.16} in
 the proof of Lemma \ref{lem:2.6} in Section~\ref{sec:2},  hence we
 obtain that,
 \begin{equation}
 \label{eq:3.15}
  \vec{u}\cdot \nabla (\vec{u}\cdot\nabla\phi(x))
  =
  ((\nabla^2\phi(x))\vec{u}\cdot\vec{u})
  +
  \frac12\nabla |\vec{u}|^2\cdot\nabla\phi(x).
 \end{equation}

 Next, we compute $\vec{u}\cdot\nabla(D(x)\Div\vec{u})$. By the direct
 calculations, we have that,
 \begin{equation}
  \label{eq:3.17}
   \vec{u}\cdot\nabla(D(x)\Div\vec{u})
   =
   \vec{u}\cdot\nabla D(x)\Div\vec{u}
   +
   D(x)\vec{u}\cdot\nabla(\Div\vec{u}).
 \end{equation}
 Since $\nabla\vec{u}$ is symmetric, we can proceed again with the same computations
as in \eqref{eq:2.17},
 \eqref{eq:2.22} in the proof of Lemma \ref{lem:2.6} in
 Section~\ref{sec:2},  hence,  we obtain
 from \eqref{eq:3.17} that,
 \begin{equation}
  \label{eq:3.18}
   \vec{u}\cdot\nabla(D(x)\Div\vec{u})
   =
   \vec{u}\cdot\nabla D(x)\Div\vec{u}
   +
   \frac{D(x)}{2}\Div(\nabla |\vec{u}|^2)
   -
   D(x)|\nabla\vec{u}|^2.
 \end{equation}
 Using \eqref{eq:3.15} and \eqref{eq:3.18} in \eqref{eq:3.13}, we have,
 \begin{equation}
  \label{eq:3.19}
   \begin{split}
    &\qquad
    2\int_\Omega D(x)\vec{u}\cdot\nabla\rho_t\Feq\,dx \\
    &=
    -
    2\int_\Omega
    (1+\log\rho)\vec{u}\cdot\nabla D(x)f_t\,dx
    +
    2\int_\Omega
    \vec{u}\cdot\nabla\left(\log f\vec{u}\cdot\nabla D(x)\right) f
    \,dx \\
    &\qquad
    +
    2\int_\Omega
    ((\nabla^2\phi(x))\vec{u}\cdot\vec{u})f\,dx
    +
    \int_\Omega\nabla|\vec{u}|^2\cdot\nabla\phi(x) f\,dx \\
    &\qquad
    -
    \int_\Omega
    D(x)
    \Div(\nabla |\vec{u}|^2)
    f
    \,dx
    +
    2\int_\Omega D(x)|\nabla \vec{u}|^2f\,dx
    -
    2\int_\Omega \vec{u}\cdot\nabla D(x)\Div\vec{u}f\,dx.
   \end{split}
 \end{equation}

 Next, we calculate the fifth term of the right-hand side of
 \eqref{eq:3.19}. Applying integration by parts together with the
 periodic boundary condition, we arrive at,
 \begin{equation*}
  -
   \int_\Omega
   D(x)
   \Div(\nabla |\vec{u}|^2)
   f
   \,dx
   =
   \int_\Omega
   D(x)
   \nabla |\vec{u}|^2\cdot\nabla f
   \,dx
   +
   \int_\Omega
   \nabla |\vec{u}|^2\cdot\nabla D(x) f
   \,dx.
    \end{equation*}
Using \eqref{eq:3.8} in the first term of the right-hand side of
 the above relation, we have,
 \begin{equation}
  \label{eq:3.20}
   \begin{split}
    -
    \int_\Omega
    D(x)
    \Div(\nabla |\vec{u}|^2)
    f
    \,dx
    &=
    -
    \int_\Omega
    \vec{u}\cdot\nabla |\vec{u}|^2 f
    \,dx
    -
    \int_\Omega
    \nabla |\vec{u}|^2\cdot\nabla\phi(x) f
    \,dx
    \\
    &\qquad
    -
    \int_\Omega
    (\log f-1)
    \nabla |\vec{u}|^2\cdot\nabla D(x) f
    \,dx.
   \end{split}
 \end{equation}
Finally employing \eqref{eq:3.20} in \eqref{eq:3.19}, we obtain the
result \eqref{eq:3.21}.
\end{proof}

\begin{lemma}
 Let $\vec{u}$ be given by \eqref{eq:3.1}. Then,
 \begin{equation}
  \label{eq:3.23}
  \begin{split}
   &\qquad
   \int_\Omega
   \vec{u}\cdot\nabla\left(\log f\vec{u}\cdot\nabla D(x)\right) f\,dx
   \\
   &=
   \int_\Omega
   \frac{1}{D(x)}|\vec{u}|^2
   \log f\left(\vec{u}\cdot\nabla D(x)\right) f\,dx
   +
   \int_\Omega
   \frac{1}{D(x)}
   (\log f)^2\left(\vec{u}\cdot\nabla D(x)\right)^2 f\,dx
   \\
   &\qquad
   +
   \int_\Omega
   \frac{1}{D(x)}
   \log f\left(\vec{u}\cdot\nabla D(x)\right)\left(\vec{u}\cdot\nabla \phi(x)\right) f\,dx
   -
   \int_\Omega
   \log f\left(\vec{u}\cdot\nabla D(x)\right)\Div\vec{u} f\,dx.
  \end{split}
 \end{equation}
\end{lemma}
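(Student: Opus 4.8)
The plan is to rewrite the left-hand side of \eqref{eq:3.23} as an integral of $\log f\,(\vec u\cdot\nabla D(x))$ against $f_t$ by the same integration-by-parts identity that produced \eqref{eq:3.12}, and then to substitute the pointwise formula \eqref{eq:3.10} for $D(x)f_t$ and expand the resulting product termwise.

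First I would set $G:=\log f\,(\vec u\cdot\nabla D(x))$, a scalar periodic function. Since $f$ solves \eqref{eq:3.1} we have $f_t=-\Div(f\vec u)$, so integration by parts together with the periodic boundary condition gives
\begin{equation*}
 \int_\Omega \vec u\cdot\nabla\bigl(\log f\,\vec u\cdot\nabla D(x)\bigr)\,f\,dx
 = \int_\Omega (f\vec u)\cdot\nabla G\,dx
 = -\int_\Omega G\,\Div(f\vec u)\,dx
 = \int_\Omega \log f\,(\vec u\cdot\nabla D(x))\,f_t\,dx .
\end{equation*}
Next I would invoke \eqref{eq:3.10}, namely $D(x)f_t=f\bigl(|\vec u|^2+\log f\,\vec u\cdot\nabla D(x)+\vec u\cdot\nabla\phi(x)-D(x)\Div\vec u\bigr)$, to write
\begin{equation*}
 \begin{split}
  \log f\,(\vec u\cdot\nabla D(x))\,f_t
  &=\frac{f}{D(x)}\,\log f\,(\vec u\cdot\nabla D(x))
    \bigl(|\vec u|^2+\log f\,\vec u\cdot\nabla D(x)+\vec u\cdot\nabla\phi(x)\bigr) \\
  &\qquad -\log f\,(\vec u\cdot\nabla D(x))\,(\Div\vec u)\,f .
 \end{split}
\end{equation*}
Integrating this identity over $\Omega$ and distributing the factor $\log f\,(\vec u\cdot\nabla D(x))$ over the bracket reproduces precisely the four integrals on the right-hand side of \eqref{eq:3.23}.

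There is no genuine obstacle here: once \eqref{eq:3.10} is in hand the statement is a purely algebraic rearrangement, and the only step requiring attention is the (routine) bookkeeping in the expansion. The two technical points to record are that the integration by parts leaves no boundary contribution — guaranteed by the periodicity of $f$, $\vec u$, $D$ and $\phi$ — and that every integrand appearing is well defined and integrable, which follows from the regularity of the classical solution in Definition \ref{def:1.1} together with the uniform positive lower and upper bounds for $f$ of Proposition \ref{prop:1.5} (in particular $\log f$ is bounded on $\Omega\times[0,\infty)$).
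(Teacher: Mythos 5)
Your proof is correct and follows essentially the same route as the paper: integrate by parts using the periodic boundary condition to reduce the left side to $-\int_\Omega \log f\,(\vec u\cdot\nabla D)\,\Div(f\vec u)\,dx$, then expand via the relation between $\nabla f$ and $\vec u$ (your use of \eqref{eq:3.10} is just the paper's substitution of \eqref{eq:3.8} into $\Div(f\vec u)$, repackaged through $f_t$).
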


\begin{proof}
 Applying integration by parts to the left hand side of \eqref{eq:3.23}
 together with the periodic boundary condition \eqref{eq:3.1}, we
 obtain that,
 \begin{equation}
  \label{eq:3.24}
  \int_\Omega
   \vec{u}\cdot\nabla\left(\log f\vec{u}\cdot\nabla D(x)\right) f\,dx
   =
   -
   \int_\Omega
   \log f\left(\vec{u}\cdot\nabla D(x)\right) \Div(f\vec{u})\,dx.
 \end{equation}
Using \eqref{eq:3.8}, we have that,
 \begin{equation}
  \label{eq:3.25}
  \begin{split}
   \Div(f\vec{u})
   &=
   \vec{u}\cdot\nabla f
   +
   f\Div\vec{u} \\
   &=
   -
   \frac{1}{D(x)}|\vec{u}|^2f
   -
   \frac{1}{D(x)}\log f(\vec{u}\cdot\nabla D(x))f
   -
   \frac{1}{D(x)}(\vec{u}\cdot\nabla \phi(x))f
   +
   f\Div\vec{u}.
  \end{split}
 \end{equation}
 Combining \eqref{eq:3.24} and \eqref{eq:3.25}, we obtain the desired relation
 \eqref{eq:3.23}.
\end{proof}

Now combining \eqref{eq:3.7}, \eqref{eq:3.12}, \eqref{eq:3.21}, and
\eqref{eq:3.23}, we obtain the following energy law.

\begin{proposition}
 Let $f$ be a solution of \eqref{eq:3.1} and let $\vec{u}$ be given as in
 \eqref{eq:3.1}. Then,
 \begin{equation}
  \label{eq:3.22}
   \begin{split}
    \frac{d^2F}{dt^2}[f](t)
    &=
    2\int_\Omega ((\nabla^2\phi(x)) \vec{u}\cdot\vec{u}) f\,dx
    +
    2\int_\Omega D(x)|\nabla \vec{u}|^2 f\,dx
    \\
    &\qquad
    -
    \int_\Omega
    (\log f-1)
    \nabla |\vec{u}|^2\cdot\nabla D(x) f
    \,dx
    -
    2
    \int_\Omega (1+\log f)\vec{u}\cdot\nabla D(x)\Div\vec{u}f\,dx
    \\
    &\qquad
    +
    2
    \int_\Omega
    \frac{1}{D(x)}|\vec{u}|^2
    \log f\left(\vec{u}\cdot\nabla D(x)\right) f\,dx
    +
    2
    \int_\Omega
    \frac{1}{D(x)}
    (\log f)^2\left(\vec{u}\cdot\nabla D(x)\right)^2 f\,dx
    \\
    &\qquad
    +
    2
    \int_\Omega
    \frac{1}{D(x)}
    \log f\left(\vec{u}\cdot\nabla D(x)\right)\left(\vec{u}\cdot\nabla \phi(x)\right) f\,dx.
   \end{split}
 \end{equation}
\end{proposition}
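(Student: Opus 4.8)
The plan is to obtain \eqref{eq:3.22} by a pure substitution-and-bookkeeping argument: nothing beyond the identities \eqref{eq:3.7}, \eqref{eq:3.12}, \eqref{eq:3.21} (Lemma~\ref{lem:3.7}) and \eqref{eq:3.23} is required, and no new integration by parts is performed.

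First I would plug the expansion \eqref{eq:3.21} for $2\int_\Omega D(x)\vec{u}\cdot\nabla\rho_t\Feq\,dx$ into the right-hand side of \eqref{eq:3.7}. Two cancellations occur immediately. The term $-2\int_\Omega(1+\log\rho)\vec{u}\cdot\nabla D(x)f_t\,dx$ produced by \eqref{eq:3.21} cancels the last term $+2\int_\Omega(\log\rho+1)\vec{u}\cdot\nabla D(x)f_t\,dx$ of \eqref{eq:3.7}; and, using \eqref{eq:3.12} to replace $\int_\Omega|\vec{u}|^2f_t\,dx$ by $\int_\Omega\vec{u}\cdot\nabla|\vec{u}|^2 f\,dx$, the term $+\int_\Omega|\vec{u}|^2f_t\,dx$ of \eqref{eq:3.7} cancels the term $-\int_\Omega\vec{u}\cdot\nabla|\vec{u}|^2 f\,dx$ of \eqref{eq:3.21}. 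What is left is
\begin{equation*}
\frac{d^2F}{dt^2}[f](t)
=
2\int_\Omega ((\nabla^2\phi(x))\vec{u}\cdot\vec{u})f\,dx
+
2\int_\Omega D(x)|\nabla\vec{u}|^2 f\,dx
-
\int_\Omega(\log f-1)\nabla|\vec{u}|^2\cdot\nabla D(x) f\,dx
+
2\int_\Omega\vec{u}\cdot\nabla\left(\log f\,\vec{u}\cdot\nabla D(x)\right)f\,dx
-
2\int_\Omega\vec{u}\cdot\nabla D(x)\Div\vec{u}\,f\,dx .
\end{equation*}

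Then I would replace the fourth integral above by its expression \eqref{eq:3.23}, which splits it into four integrals; the last of these, $-2\int_\Omega\log f\,(\vec{u}\cdot\nabla D(x))\Div\vec{u}\,f\,dx$, combines with the standing term $-2\int_\Omega\vec{u}\cdot\nabla D(x)\Div\vec{u}\,f\,dx$ to give $-2\int_\Omega(1+\log f)\vec{u}\cdot\nabla D(x)\Div\vec{u}\,f\,dx$, while the remaining three integrals are exactly the three quadratic-in-$\log f$ terms displayed in \eqref{eq:3.22}. Collecting all surviving terms produces \eqref{eq:3.22}.

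Because the argument is purely algebraic, the one point that needs care — and the one I would double-check — is that every pairing is exact: in particular that both $f_t$-integrals inherited from \eqref{eq:3.7} and \eqref{eq:3.21} genuinely carry the factor $1+\log\rho$ (not $1+\log f$), which holds because $f_t=\rho_t\Feq$ with $\Feq$ time-independent and because \eqref{eq:3.11} was established with precisely that factor, and that the two $\Div\vec{u}$-integrals recombine into the factor $1+\log f$ asserted in \eqref{eq:3.22}. I do not expect any substantive obstacle beyond this bookkeeping.
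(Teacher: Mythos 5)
Your proposal is correct and follows exactly the route the paper takes: the proposition is stated there as the result of combining \eqref{eq:3.7}, \eqref{eq:3.12}, \eqref{eq:3.21} and \eqref{eq:3.23}, and your bookkeeping — the cancellation of the $(1+\log\rho)\,\vec{u}\cdot\nabla D(x)\,f_t$ terms, the use of \eqref{eq:3.12} to cancel $\int_\Omega\vec{u}\cdot\nabla|\vec{u}|^2 f\,dx$, and the merging of the two $\Div\vec{u}$ integrals into the $(1+\log f)$ factor — is precisely the computation the authors leave implicit. No gap.
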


Below, we will derive the condition that is sufficient  to obtain a differential inequality for
$\frac{dF}{dt}$. Note, the fifth term of the right-hand side of
\eqref{eq:3.22} involves $|\vec{u}|^3$, which is higher order than the term
$\frac{dF}{dt}$. Thus, we will handle such term using the following Sobolev
inequality.

\begin{lemma}
 \label{lem:3.13}
 Assume $n=1,2,3$, let $f_0$ be a probability density function, and let
 $f$ be a solution of \eqref{eq:3.1}. Then,
 \begin{equation}
  \label{eq:3.44}
   \int_\Omega
   |\vec{v}|^3f
   \,dx
   \leq
   \frac{3\Cr{const:3.1}^{\frac{3}{2}}}{4}
   \int_\Omega
   |\nabla\vec{v}|^2f
   \,dx
   +
   \frac{\Cr{const:3.1}^{\frac{3}{2}}}{4}
   \left(
    \int_\Omega
    |\vec{v}|^2f
    \,dx
   \right)^3,
 \end{equation}
 for any vector field $\vec{v}$.
\end{lemma}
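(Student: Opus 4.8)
The plan is to derive \eqref{eq:3.44} from the Sobolev-type inequality \eqref{eq:3.42} of Lemma~\ref{lemma3.1}, by interpolating the weighted $L^3$ norm of $\vec{v}$ between the weighted $L^2$ and $L^6$ norms and then splitting the resulting product with Young's inequality. (The weighted Sobolev inequality \eqref{eq:3.42} is available here precisely because $f$ enjoys the uniform two-sided bounds of Proposition~\ref{prop:1.5}, which is where the hypothesis that $f_0$ is a positive probability density enters.)

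First I would specialize \eqref{eq:3.42} to the exponent $p^\ast=6$. This value is admissible for every $n\in\{1,2,3\}$: for $n=3$ one has $\frac{1}{p^\ast}=\frac12-\frac13=\frac16$, which is exactly the Sobolev exponent, whereas for $n=1,2$ any finite $p^\ast\ge2$ is allowed; this is the only place where the dimensional restriction $n=1,2,3$ is used. With this choice Lemma~\ref{lemma3.1} gives
\begin{equation*}
 \int_\Omega|\vec{v}|^6f\,dx
 \le
 \Cr{const:3.1}^{6}\Bigl(\int_\Omega|\nabla\vec{v}|^2f\,dx\Bigr)^{3}.
\end{equation*}

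Next I would interpolate the left-hand side of \eqref{eq:3.44}. Writing $|\vec{v}|^3f=(|\vec{v}|^2f)^{3/4}(|\vec{v}|^6f)^{1/4}$ and applying H\"older's inequality with the conjugate exponents $\tfrac43$ and $4$, then inserting the $L^6$ bound above, yields the intermediate estimate
\begin{equation*}
 \int_\Omega|\vec{v}|^3f\,dx
 \le
 \Bigl(\int_\Omega|\vec{v}|^2f\,dx\Bigr)^{3/4}
 \Bigl(\int_\Omega|\vec{v}|^6f\,dx\Bigr)^{1/4}
 \le
 \Cr{const:3.1}^{3/2}
 \Bigl(\int_\Omega|\nabla\vec{v}|^2f\,dx\Bigr)^{3/4}
 \Bigl(\int_\Omega|\vec{v}|^2f\,dx\Bigr)^{3/4}.
\end{equation*}
Finally I would apply Young's inequality in the form $xy\le\tfrac34x^{4/3}+\tfrac14y^4$ with $x=\bigl(\int_\Omega|\nabla\vec{v}|^2f\,dx\bigr)^{3/4}$ and $y=\bigl(\int_\Omega|\vec{v}|^2f\,dx\bigr)^{3/4}$, so that $x^{4/3}=\int_\Omega|\nabla\vec{v}|^2f\,dx$ and $y^4=\bigl(\int_\Omega|\vec{v}|^2f\,dx\bigr)^3$. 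Multiplying through by $\Cr{const:3.1}^{3/2}$ then produces exactly the right-hand side of \eqref{eq:3.44}, with constants $\tfrac34\Cr{const:3.1}^{3/2}$ and $\tfrac14\Cr{const:3.1}^{3/2}$.

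There is no substantial obstacle here; the whole argument is bookkeeping with H\"older and Young exponents. The only points that require (minor) care are verifying that $p^\ast=6$ is simultaneously admissible in the Sobolev inequality \eqref{eq:3.42} across all dimensions $n=1,2,3$, and checking that the weights $\tfrac34$ and $\tfrac14$ coming out of Young's inequality are precisely those claimed in \eqref{eq:3.44}.
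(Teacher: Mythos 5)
Your argument is correct and is essentially the paper's own proof: the paper runs the same H\"older--Sobolev--Young chain with general exponents $\alpha,\beta,p$ subject to the constraints $3\alpha p=p^\ast$, $3\beta p'=2$, and then specializes to $\alpha=\beta=\tfrac12$, $p=4$, $p^\ast=6$, which is exactly the choice you make up front. Your bookkeeping (the interpolation $\int_\Omega|\vec{v}|^3f\,dx\leq(\int_\Omega|\vec{v}|^2f\,dx)^{3/4}(\int_\Omega|\vec{v}|^6f\,dx)^{1/4}$, the Sobolev bound with $p^\ast=6$ valid for $n=1,2,3$, and Young's inequality with weights $\tfrac34,\tfrac14$) reproduces \eqref{eq:3.44} with the stated constants.
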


\begin{proof}
 %
 Let $\alpha,\beta>0$,  such that $\alpha+\beta=1$,
 and let the exponent,  $p>1$. Then, by the H\"older's inequality,
 \begin{equation}
  \label{eq:3.45}
   \int_\Omega
   |\vec{v}|^3f
   \,dx
   \leq
   \left(
    \int_\Omega
   |\vec{v}|^{3\alpha p}f
   \,dx
   \right)^{\frac{1}{p}}
   \left(
    \int_\Omega
   |\vec{v}|^{3\beta p'}f
   \,dx
   \right)^{\frac{1}{p'}},
 \end{equation}
 where $p'$ is the H\"older's conjugate, namely,  $\frac1p+\frac1{p'}=1$.
 Next, we assume a constraint,  $3\alpha p=p^\ast$,  in order to apply the Sobolev
 inequality \eqref{eq:3.42} in \eqref{eq:3.45} and,
 \begin{equation}
  \label{eq:3.46}
   \int_\Omega
   |\vec{v}|^3f
   \,dx
   \leq
   \Cr{const:3.1}^{\frac{p^\ast}{p}}
   \left(
    \int_\Omega
    |\nabla\vec{v}|^2f
    \,dx
   \right)^{\frac{p^\ast}{2p}}
   \left(
    \int_\Omega
   |\vec{v}|^{3\beta p'}f
   \,dx
   \right)^{\frac{1}{p'}}.
 \end{equation}
 Next,  we assume another constraint,  $3\beta p'=2$ and
 $\frac{p^\ast}{2p}<1$. Then,  the Young's inequality implies,
 \begin{equation}
  \left(
   \int_\Omega
   |\nabla\vec{v}|^2f
   \,dx
  \right)^{\frac{p^\ast}{2p}}
  \left(
   \int_\Omega
   |\vec{v}|^{3\beta p'}f
   \,dx
  \right)^{\frac{1}{p'}}
  \leq
  \frac{p^\ast}{2p}
  \int_\Omega
   |\nabla\vec{v}|^2f
   \,dx
   +
  \left(
   1-
   \frac{p^\ast}{2p}
  \right)
  \left(
   \int_\Omega
   |\vec{v}|^{2}f
   \,dx
  \right)^{\frac{1}{p'}
  \left(
   1-
   \frac{p^\ast}{2p}
  \right)^{-1}
  },
 \end{equation}
 hence,  we obtain using \eqref{eq:3.46} that,
  \begin{equation}
  \label{eq:3.47}
   \int_\Omega
   |\vec{v}|^3f
   \,dx
   \leq
   \Cr{const:3.1}^{\frac{p^\ast}{p}}
   \frac{p^\ast}{2p}
  \int_\Omega
  |\nabla\vec{v}|^2f
  \,dx
  +
   \Cr{const:3.1}^{\frac{p^\ast}{p}}
   \left(
   1-
   \frac{p^\ast}{2p}
  \right)
  \left(
   \int_\Omega
   |\vec{v}|^{2}f
   \,dx
  \right)^{\frac{1}{p'}
  \left(
   1-
   \frac{p^\ast}{2p}
  \right)^{-1}
  }.
 \end{equation}

 Now,  we examine the constraints. First, $3\alpha p=p^\ast$, $3\beta
 p'=2$, $\alpha+\beta=1$, and the properties of $p'$, $p^\ast$ imply that,
 \begin{equation}
  \frac{3\beta}{2}=\frac{1}{p'}=1-\frac{3\alpha}{p^\ast}=1-\frac{3\alpha}{2}+\frac{3\alpha}{n},
 \end{equation}
 thus, $\alpha=\frac{n}{6}$. Next, $\frac{p^\ast}{2p}<1$ and $3\alpha
 p=p^\ast$ imply $\alpha<\frac23$. Therefore, we can
 choose $\alpha,\beta,p$ such that \eqref{eq:3.44} is true if $n\leq
 3$. Note that if $n=1,2$ we can take $p^\ast=6$, the same as in the
 case $n=3$. Taking $\alpha=\beta=\frac{1}{2}$ and $p=4$ in
 \eqref{eq:3.47}, the inequality \eqref{eq:3.44} is deduced.
\end{proof}
Using the Sobolev inequality, we obtain the following energy estimate.

\begin{proposition}
 \label{prop:3.14}
 Assume $n=1,2,3$, let $f$ be a solution of \eqref{eq:3.1}, and let
 $\vec{u}$ be given as in \eqref{eq:3.1}. Suppose, that there exists a
 positive constant $\lambda>0$,  such that $\nabla^2\phi\geq \lambda I$,
 where $I$ is the identity matrix. Then, there is a constant
 $\Cr{const:3.6}>0$ such that,  if
 \begin{equation}
  \|\nabla D\|_{L^\infty(\Omega)}\leq \Cr{const:3.6},
 \end{equation}
 then, we have,
 \begin{equation}
  \label{eq:3.28}
  \frac{d^2F}{dt^2}[f](t)
   \geq
   \lambda\int_{\Omega} |\vec{u}|^2f\,dx
   -
   \frac{2\Cr{const:1.3}}{3}
   \left(
    \int_{\Omega} |\vec{u}|^2f\,dx
   \right)^3.
 \end{equation}
\end{proposition}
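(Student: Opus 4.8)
I would start from the second-order energy identity \eqref{eq:3.22}, whose right-hand side splits into two ``good'' terms and five error terms. By the convexity hypothesis $\nabla^2\phi\geq\lambda I$, the first good term obeys $2\int_\Omega((\nabla^2\phi(x))\vec{u}\cdot\vec{u})f\,dx\geq 2\lambda\int_\Omega|\vec{u}|^2 f\,dx$; by the lower bound \eqref{eq:3.60}, the second obeys $2\int_\Omega D(x)|\nabla\vec{u}|^2 f\,dx\geq 2\Cr{const:1.3}\int_\Omega|\nabla\vec{u}|^2 f\,dx$. Every one of the remaining five terms of \eqref{eq:3.22} carries a factor $\nabla D(x)$; the plan is to bound all of them in absolute value, absorb the pieces proportional to $\int_\Omega|\nabla\vec{u}|^2 f\,dx$ into the second good term, absorb the pieces proportional to $\int_\Omega|\vec{u}|^2 f\,dx$ into part of the first good term (leaving a surplus $\lambda\int_\Omega|\vec{u}|^2 f\,dx$), and keep the single cubic quantity $(\int_\Omega|\vec{u}|^2 f\,dx)^3$, which is what then appears on the right of \eqref{eq:3.28}.

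The absorption relies on three facts: (i) the maximum principle, Proposition~\ref{prop:1.5}, gives the uniform bound $\Cr{const:1.5}\leq f\leq\Cr{const:1.6}$ on $\Omega\times[0,\infty)$, hence a uniform bound on $\|\log f\|_{L^\infty}$, which together with \eqref{eq:3.60} and boundedness of $\nabla\phi$ on the torus controls every coefficient occurring in the error terms; (ii) the elementary pointwise bounds $|\nabla|\vec{u}|^2|\leq 2|\vec{u}|\,|\nabla\vec{u}|$ and $|\Div\vec{u}|\leq\sqrt{n}\,|\nabla\vec{u}|$ reduce each error term to a weighted integral of $|\vec{u}|\,|\nabla\vec{u}|$, of $|\vec{u}|^2$, or of $|\vec{u}|^3$, with weight $\|\nabla D\|_{L^\infty(\Omega)}$ or $\|\nabla D\|_{L^\infty(\Omega)}^2$; (iii) the Sobolev-type interpolation \eqref{eq:3.44} of Lemma~\ref{lem:3.13} handles the cubic integral.

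Concretely, the terms $-\int_\Omega(\log f-1)\nabla|\vec{u}|^2\cdot\nabla D(x)\,f\,dx$ and $-2\int_\Omega(1+\log f)(\vec{u}\cdot\nabla D(x))\Div\vec{u}\,f\,dx$ are each at most $C\|\nabla D\|_{L^\infty(\Omega)}\int_\Omega|\vec{u}|\,|\nabla\vec{u}|\,f\,dx$, which by Young's inequality is at most $\frac{\Cr{const:1.3}}{4}\int_\Omega|\nabla\vec{u}|^2 f\,dx+C\|\nabla D\|_{L^\infty(\Omega)}^2\int_\Omega|\vec{u}|^2 f\,dx$; the terms $2\int_\Omega\frac{1}{D(x)}(\log f)^2(\vec{u}\cdot\nabla D(x))^2 f\,dx$ and $2\int_\Omega\frac{1}{D(x)}(\log f)(\vec{u}\cdot\nabla D(x))(\vec{u}\cdot\nabla\phi(x))f\,dx$ are at most $C(\|\nabla D\|_{L^\infty(\Omega)}+\|\nabla D\|_{L^\infty(\Omega)}^2)\int_\Omega|\vec{u}|^2 f\,dx$; and the cubic term $2\int_\Omega\frac{1}{D(x)}|\vec{u}|^2(\log f)(\vec{u}\cdot\nabla D(x))f\,dx$ is at most $C\|\nabla D\|_{L^\infty(\Omega)}\int_\Omega|\vec{u}|^3 f\,dx$, which by \eqref{eq:3.44} with $\vec{v}=\vec{u}$ is at most $C\|\nabla D\|_{L^\infty(\Omega)}\left(\frac{3\Cr{const:3.1}^{3/2}}{4}\int_\Omega|\nabla\vec{u}|^2 f\,dx+\frac{\Cr{const:3.1}^{3/2}}{4}\left(\int_\Omega|\vec{u}|^2 f\,dx\right)^3\right)$. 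Summing these bounds, $\frac{d^2F}{dt^2}[f](t)$ is bounded below by $(2\lambda-C_1\|\nabla D\|_{L^\infty(\Omega)}-C_2\|\nabla D\|_{L^\infty(\Omega)}^2)\int_\Omega|\vec{u}|^2 f\,dx+(\frac{3}{2}\Cr{const:1.3}-C_3\|\nabla D\|_{L^\infty(\Omega)})\int_\Omega|\nabla\vec{u}|^2 f\,dx-C_4\|\nabla D\|_{L^\infty(\Omega)}\left(\int_\Omega|\vec{u}|^2 f\,dx\right)^3$, where $C_1,\dots,C_4$ depend only on $n$, $\Cr{const:1.3}$, $\Cr{const:1.5}$, $\Cr{const:1.6}$, $\Cr{const:3.1}$ and $\|\nabla\phi\|_{L^\infty(\Omega)}$. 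Picking $\Cr{const:3.6}>0$ so small that $C_1\Cr{const:3.6}+C_2\Cr{const:3.6}^2\leq\lambda$, $C_3\Cr{const:3.6}\leq\frac{3}{2}\Cr{const:1.3}$ and $C_4\Cr{const:3.6}\leq\frac{2}{3}\Cr{const:1.3}$, and then discarding the nonnegative $\int_\Omega|\nabla\vec{u}|^2 f\,dx$ contribution, yields exactly \eqref{eq:3.28}.

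The only genuinely non-routine ingredient is the cubic integral $\int_\Omega|\vec{u}|^3 f\,dx$: being of higher order than $\frac{dF}{dt}=-\int_\Omega|\vec{u}|^2 f\,dx$, it cannot be absorbed into either good term, so Lemma~\ref{lem:3.13} is essential and the term $(\int_\Omega|\vec{u}|^2 f\,dx)^3$ is forced to appear on the right of \eqref{eq:3.28}. Everything else is careful but standard Young/Cauchy--Schwarz bookkeeping whose only structural input is that each error term in \eqref{eq:3.22} carries a factor of $\nabla D$, so that smallness of $\|\nabla D\|_{L^\infty(\Omega)}$ renders all of them absorbable.
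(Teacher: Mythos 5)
Your proposal is correct and follows essentially the same route as the paper's proof: starting from the identity \eqref{eq:3.22}, using the maximum principle to bound $\|\log f\|_{L^\infty}$, absorbing every $\nabla D$-weighted error term by Cauchy--Schwarz/Young into the Hessian and $D|\nabla\vec{u}|^2$ terms, and invoking Lemma~\ref{lem:3.13} for the cubic integral before choosing $\|\nabla D\|_{L^\infty(\Omega)}$ small. The only differences are bookkeeping (explicit fractions of $\Cr{const:1.3}$ versus the paper's $\varepsilon$-parameters and conditions \eqref{eq:3.26}--\eqref{eq:3.27}, and bounding the nonnegative term with $(\vec{u}\cdot\nabla D)^2$ rather than simply retaining it), which do not affect correctness.
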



\begin{proof}
 We estimate the integrands of the 3rd, 4th, 5th, and 7th terms of
 \eqref{eq:3.22}. Using the Cauchy-Schwarz inequality and relation $D(x)\nabla
 (\log D(x))=\nabla D(x)$, for any positive constants
 $\Cl[eps]{eps:3.1}, \Cl[eps]{eps:3.2}>0$, we have that,
 \begin{equation}
  \begin{split}
   |
   (\log f-1)\nabla |\vec{u}|^2\cdot\nabla D(x) f
   |
   &\leq
   2D(x)
   (|\log f|+1)
   |\vec{u}| |\nabla\vec{u}|  |\nabla (\log D(x))| f
   \\
   &\leq
   \frac{1}{2\Cr{eps:3.1}}
   D(x)
   (|\log f|+1)^2
   |\nabla \vec{u}|^2
   |\nabla (\log D(x))|^2 f
   +
   2\Cr{eps:3.1}
   D(x)
   |\vec{u}|^2 f,
  \end{split}
 \end{equation}
 \begin{equation}
  \begin{split}
   |2(1+\log f)\vec{u}\cdot\nabla D(x)\Div\vec{u}f|
   &\leq
   2D(x)
   (|\log f|+1)
   |\vec{u}| |\nabla\vec{u}|  |\nabla (\log D(x))| f
   \\
   &\leq
   \frac{1}{2\Cr{eps:3.2}}
   D(x)
   (|\log f|+1)^2
   |\nabla \vec{u}|^2
   |\nabla (\log D(x))|^2 f
   +
   2\Cr{eps:3.2}
   D(x)
   |\vec{u}|^2 f,
  \end{split}
 \end{equation}
%
 and
 \begin{equation}
  \left|
   \frac{2}{D(x)}
   \log f\left(\vec{u}\cdot\nabla D(x)\right)\left(\vec{u}\cdot\nabla \phi(x)\right) f
  \right|
  \leq
  2|\log f|
  |\nabla (\log D(x))|
  |\nabla \phi(x)|
  |\vec{u}|^2f.
 \end{equation}
 Thus, using the above inequalities in \eqref{eq:3.22}, we arrive at
 the estimate for $ \frac{d^2F}{dt^2}[f](t)$,
 \begin{equation}
  \label{eq:3.49}
  \begin{split}
   \frac{d^2F}{dt^2}[f](t)
   &\geq
   2\int_\Omega
   ((\nabla^2\phi(x)
   -
   (\Cr{eps:3.1}+\Cr{eps:3.2})D(x)I
   -
   |\log f|
   |\nabla (\log D(x))|
   |\nabla \phi(x)|I) \vec{u}\cdot\vec{u}
   ) f\,dx
   \\
   &\qquad
   +
   2\int_\Omega
   \left(
   1
   -
   \frac14
   \left(
   \frac{1}{\Cr{eps:3.1}}
   +
   \frac{1}{\Cr{eps:3.2}}
   \right)
   (|\log f|+1)^2
   |\nabla (\log D(x))|^2
   \right)
   D(x)|\nabla \vec{u}|^2 f\,dx
   \\
   &\qquad
   +
   2
   \int_\Omega
   \frac{1}{D(x)}|\vec{u}|^2
   \log f\left(\vec{u}\cdot\nabla D(x)\right) f\,dx
   +
   2
   \int_\Omega
   \frac{1}{D(x)}
   (\log f)^2\left(\vec{u}\cdot\nabla D(x)\right)^2 f\,dx.
  \end{split}
 \end{equation}

 Next, using \eqref{eq:3.44} and
 $D(x)/\Cr{const:1.3}\geq1$, we have that,
 \begin{equation}
  \label{eq:3.50}
  \begin{split}
   &\qquad
   \left|
   2
   \int_\Omega
   \frac{1}{D(x)}|\vec{u}|^2
   \log f\left(\vec{u}\cdot\nabla D(x)\right) f\,dx
   \right| \\
   &\leq
   2
   \|\log f\|_{L^\infty(\Omega\times[0,\infty))}
   \|\nabla \log D\|_{L^\infty(\Omega)}
   \int_\Omega|\vec{u}|^3f\,dx \\
   &\leq
   \frac{3\Cr{const:3.1}^{\frac{3}{2}}}{2}
   \|\log f\|_{L^\infty(\Omega\times[0,\infty))}
   \|\nabla \log D\|_{L^\infty(\Omega)}
   \int_\Omega|\nabla\vec{u}|^2f\,dx
   \\
   &\qquad
   +
   \frac{\Cr{const:3.1}^{\frac{3}{2}}}{2}
   \|\log f\|_{L^\infty(\Omega\times[0,\infty))}
   \|\nabla \log D\|_{L^\infty(\Omega)}
   \left(
   \int_\Omega |\vec{u}|^2f\,dx
   \right)^3
   \\
   &\leq
   \frac{3\Cr{const:3.1}^{\frac{3}{2}}}{2\Cr{const:1.3}}
   \|\log f\|_{L^\infty(\Omega\times[0,\infty))}
   \|\nabla \log D\|_{L^\infty(\Omega)}
   \int_\Omega D(x)|\nabla\vec{u}|^2f\,dx
   \\
   &\qquad
   +
   \frac{\Cr{const:3.1}^{\frac{3}{2}}}{2}
   \|\log f\|_{L^\infty(\Omega\times[0,\infty))}
   \|\nabla \log D\|_{L^\infty(\Omega)}
   \left(
   \int_\Omega|\vec{u}|^2f\,dx
   \right)^3.
  \end{split}
 \end{equation}
Therefore,  from \eqref{eq:3.50} and \eqref{eq:3.49}, we obtain that,
\begin{equation}
 \label{eq:3.57}
  \begin{split}
   \frac{d^2F}{dt^2}[f](t)
   &\geq
   2\int_\Omega
   ((\nabla^2\phi(x)
   -
   (\Cr{eps:3.1}+\Cr{eps:3.2})D(x)I
   -
   |\log f|
   |\nabla (\log D(x))|
   |\nabla \phi(x)|I) \vec{u}\cdot\vec{u}
   ) f\,dx
   \\
   &\qquad
   +
   2\int_\Omega
   \left(
   1
   -
   \frac14
   \left(
   \frac{1}{\Cr{eps:3.1}}
   +
   \frac{1}{\Cr{eps:3.2}}
   \right)
   (|\log f|+1)^2
   |\nabla (\log D(x))|^2
   \right)
   D(x)|\nabla \vec{u}|^2 f\,dx
   \\
   &\qquad
   -
   \frac{3\Cr{const:3.1}^{\frac{3}{2}}}{2\Cr{const:1.3}}
   \|\log f\|_{L^\infty(\Omega\times[0,\infty))}
   \|\nabla \log D\|_{L^\infty(\Omega)}
   \int_\Omega D(x)|\nabla\vec{u}|^2f\,dx
   \\
   &\qquad
   -
   \frac{\Cr{const:3.1}^{\frac{3}{2}}}{2}
   \|\log f\|_{L^\infty(\Omega\times[0,\infty))}
   \|\nabla \log D\|_{L^\infty(\Omega)}
   \left(
   \int_\Omega|\vec{u}|^2f\,dx
   \right)^3.
  \end{split}
 \end{equation}

 By the maximum principle Proposition \ref{prop:1.5}, there is a
 positive constant $\Cl{const:3.9}$ which depends only on $f_0$,
 $f^{\mathrm{eq}}$, and $D$,  such that $|\log f(x,t)|\leq \Cr{const:3.9}$
 for $x\in\Omega$ and $t>0$. If $\|\nabla D\|_{L^\infty(\Omega)}\leq
 \Cr{const:3.6}$, then $|\nabla\log D(x,t)|\leq
 \Cr{const:3.6}/\Cr{const:1.3}$,  hence we have that,
 \begin{equation}
  (\Cr{eps:3.1}+\Cr{eps:3.2})D(x)
   +
   |\log f|
   |\nabla (\log D(x))|
   |\nabla \phi(x)|
   \leq
   (\Cr{eps:3.1}+\Cr{eps:3.2})\|D\|_{L^\infty(\Omega)}
   +
   \frac{\Cr{const:3.6}\Cr{const:3.9}}{\Cr{const:1.3}}
   \|\nabla \phi\|_{L^\infty(\Omega)},
 \end{equation}
 and
 \begin{multline}
  \frac14
   \left(
   \frac{1}{\Cr{eps:3.1}}
   +
   \frac{1}{\Cr{eps:3.2}}
   \right)
   (|\log f|+1)^2
   |\nabla (\log D(x))|^2
   +
   \frac{3\Cr{const:3.1}^{\frac{3}{2}}}{4\Cr{const:1.3}}
   \|\log f\|_{L^\infty(\Omega\times[0,\infty))}
   \|\nabla \log D\|_{L^\infty(\Omega)}
   \\
  \leq
   \frac14
   \left(
   \frac{1}{\Cr{eps:3.1}}
   +
   \frac{1}{\Cr{eps:3.2}}
   \right)
   \frac{\Cr{const:3.6}^2(\Cr{const:3.9}+1)^2}{\Cr{const:1.3}^2}
   +
   \frac{3\Cr{const:3.1}^{\frac{3}{2}}\Cr{const:3.6}\Cr{const:3.9}}{4\Cr{const:1.3}^2}.
 \end{multline}
 Thus, first take small $\Cr{eps:3.1}$, $\Cr{eps:3.2}>0$,  and next
 take $\Cr{const:3.6}>0$ such that,
 \begin{equation}
  \label{eq:3.26}
 (\Cr{eps:3.1}+\Cr{eps:3.2})D(x)
   +
   |\log f|
   |\nabla (\log D(x))|
   |\nabla \phi(x)|
   \leq\frac{\lambda}{2}, 
 \end{equation}
 and
 \begin{equation}
  \label{eq:3.27}
  \frac14
   \left(
   \frac{1}{\Cr{eps:3.1}}
   +
   \frac{1}{\Cr{eps:3.2}}
   \right)
   (|\log f|+1)^2
   |\nabla (\log D(x))|^2
   +
   \frac{3\Cr{const:3.1}^{\frac{3}{2}}}{4\Cr{const:1.3}}
   \|\log f\|_{L^\infty(\Omega\times[0,\infty))}
   \|\nabla \log D\|_{L^\infty(\Omega)}
  \leq
  1.
 \end{equation}
 Note that,  $\frac{3\Cr{const:3.1}^{\frac{3}{2}}}{4\Cr{const:1.3}} \|\log
 f\|_{L^\infty(\Omega\times[0,\infty))} \|\nabla \log
 D\|_{L^\infty(\Omega)} \leq 1$,  hence we obtain that,
 \begin{equation}
  \label{eq:3.52}
  -
   \frac{\Cr{const:3.1}^{\frac{3}{2}}}{2}
   \|\log f\|_{L^\infty(\Omega\times[0,\infty))}
   \|\nabla \log D\|_{L^\infty(\Omega)}
   \left(
    \int_\Omega|\vec{u}|^2f\,dx
   \right)^3
   \geq
   -\frac{2\Cr{const:1.3}}{3}
   \left(
    \int_\Omega|\vec{u}|^2f\,dx
   \right)^3.
 \end{equation}
 Employing \eqref{eq:3.26}, \eqref{eq:3.27} and 
 \eqref{eq:3.52} in the estimate \eqref{eq:3.57}, the desired energy bound
 \eqref{eq:3.28} is deduced.
\end{proof}
The energy estimate \eqref{eq:3.28} becomes,
\begin{equation}
  \frac{d^2F}{dt^2}[f](t)
   \geq
   -\lambda\frac{dF}{dt}[f](t)
   +
   \frac{2\Cr{const:1.3}}{3}
   \left(
    \frac{dF}{dt}[f](t)
   \right)^3.
\end{equation}

Next, to proceed with a proof of the result in Theorem~\ref{thm:3.13},
  below we first provide a helpful version of the Gronwall's inequality.

\begin{lemma}
 \label{lem:3.12}
 Let $c,d,p>0$ be positive constants, such that $p>1$. Let
 $g:[0,\infty)\rightarrow\R$ be a non-negative function which satisfies the
 following differential inequality,
 \begin{equation}
  \label{eq:3.53}
  \frac{dg}{dt}\leq -cg+dg^p.
 \end{equation}
 If
 \begin{equation}
  g(0)< \left(
	 \frac{c}{d}
	\right)^{\frac{1}{p-1}},
 \end{equation}
 then,  we obtain for $t>0$,
 \begin{equation}
  \label{eq:3.54}
  g(t)
   \leq
   \left(
    g(0)^{-p+1}-\frac{d}{c}
   \right)^{-\frac{1}{p-1}}
   e^{-ct}.
 \end{equation}
\end{lemma}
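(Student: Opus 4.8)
The plan is to exploit the Bernoulli structure of \eqref{eq:3.53}: after the substitution $h(t):=g(t)^{1-p}$ the $p$-th power nonlinearity becomes an additive constant, turning \eqref{eq:3.53} into a \emph{linear} differential inequality that one integrates with an integrating factor. Concretely, assume for the moment that $g$ stays strictly positive; since $1-p<0$, differentiating gives $\dot h=(1-p)g^{-p}\dot g$, and multiplying \eqref{eq:3.53} by the negative quantity $(1-p)g^{-p}$ \emph{reverses} the inequality, yielding
\begin{equation*}
 \dot h-(p-1)c\,h\ \geq\ -(p-1)d .
\end{equation*}
Multiplying by the integrating factor $e^{-(p-1)ct}$ and integrating over $[0,t]$ produces
\begin{equation*}
 h(t)\ \geq\ \Big(h(0)-\tfrac{d}{c}\Big)e^{(p-1)ct}+\tfrac{d}{c}.
\end{equation*}

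The hypothesis $g(0)<(c/d)^{1/(p-1)}$ is exactly $h(0)=g(0)^{1-p}>d/c$, so the bracket is positive; discarding the nonnegative term $d/c$ gives $h(t)\geq(h(0)-d/c)\,e^{(p-1)ct}>0$, and raising both sides to the power $-1/(p-1)<0$ (which reverses the inequality once more, since $g=h^{-1/(p-1)}$) yields precisely
\begin{equation*}
 g(t)\ \leq\ \Big(g(0)^{-p+1}-\tfrac{d}{c}\Big)^{-1/(p-1)}e^{-ct},
\end{equation*}
which is \eqref{eq:3.54}.

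The one point that needs care — and which I expect to be the main, though routine, obstacle — is legitimizing the substitution, i.e. showing $g$ stays strictly positive and finite on all of $[0,\infty)$ so that $h=g^{1-p}$ is well defined there. I would run the argument above on the maximal interval $[0,\tau)$ on which $g>0$ (nonempty by continuity once $g(0)>0$; the degenerate case $g(0)=0$ is disposed of separately, see below). The lower bound $h(t)\geq(h(0)-d/c)e^{(p-1)ct}$ is uniformly positive on compact subintervals, so $g=h^{-1/(p-1)}$ stays bounded and hence cannot blow up; and if $g(\tau)=0$ with $\tau<\infty$ one compares, on $[\tau,\infty)$, with the solution $y\equiv0$ of the scalar ODE $\dot y=-cy+dy^{p}$ (whose right-hand side is $C^1$, hence locally Lipschitz, near $y=0$) to conclude $g\equiv0$ on $[\tau,\infty)$, so that \eqref{eq:3.54} holds trivially there. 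The same comparison argument, applied from $t=0$ against the explicit global solution of $\dot y=-cy+dy^{p}$ with $y(0)=g(0)$ — whose Bernoulli transform $w=y^{1-p}$ solves the linear equation $\dot w=(p-1)cw-(p-1)d$ and stays above $d/c>0$ under our hypothesis — gives an alternative, perhaps cleaner, route that also covers $g(0)=0$ uniformly (interpreting the right-hand side of \eqref{eq:3.54} as $0$ in that case).
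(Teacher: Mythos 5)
Your proof is correct and is essentially the paper's own argument in a slightly different order: the paper multiplies by $e^{ct}$, sets $G=e^{ct}g$, and integrates $G^{-p}\dot G$, which is exactly the Bernoulli reduction you carry out via $h=g^{1-p}$ followed by the integrating factor $e^{-(p-1)ct}$, and both arguments conclude by discarding the same harmless term. Your extra care in justifying the substitution when $g$ may vanish is a reasonable refinement that the paper's proof silently skips (it tacitly assumes $g>0$), but it does not change the substance of the argument.
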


\begin{proof}
 First, multiply both sides of \eqref{eq:3.53} by $e^{ct}$, then we
 have that,
 \begin{equation}
  \frac{d}{dt}(e^{ct}g)\leq de^{ct}g^p=d(e^{ct}g)^pe^{-c(p-1)t}.
 \end{equation}
 Set $G:=e^{ct}g$. Then,  $G^{-p}\frac{dG}{dt}\leq de^{-c(p-1)t}$,  hence
 for $t>0$,
 \begin{equation}
  -\frac{1}{p-1}
   \left(
    G(t)^{-p+1}
    -
    G(0)^{-p+1}
   \right)
   \leq
   \int_0^t
   de^{-c(p-1)\tau}
   \,d\tau
   =
   \frac{d}{c(p-1)}
   \left(
    1
    -e^{-c(p-1)t}
   \right)
   \leq
   \frac{d}{c(p-1)}.
 \end{equation}
 Thus, straightforward computation shows that,
 \begin{equation}
  G(t)
   \leq
   \left(
    G(0)^{-p+1}
    -
    \frac{d}{c}
   \right)^{-\frac{1}{p-1}}.
 \end{equation}
 Since $G(0)=g(0)$, we obtain the estimate \eqref{eq:3.54}.
\end{proof}

Finally,  we are in position to conclude the proof of our main
Theorem \ref{thm:3.13} in this Section, similar to the
presented homogeneous case in Theorem \ref{thm:2.9} in Section~\ref{sec:2}.

\begin{proof}%
 [Proof of Theorem \ref{thm:3.13}]
 From the differential inequality \eqref{eq:3.28}, we have that,
 \begin{equation}
  \frac{d}{dt}
   \left(
    \int_\Omega |\vec{u}|^2f\,dx
   \right)
   \leq
   -\lambda
   \int_\Omega |\vec{u}|^2f\,dx
   +
   \frac{2\Cr{const:1.3}}{3}
   \left(
    \int_\Omega |\vec{u}|^2f\,dx
   \right)^3.
 \end{equation}
 Using Lemma~\ref{lem:3.12}, the Gronwall-type inequality, there is a
 constant $\Cr{const:3.7}>0$,  such that,  if $\int_\Omega
 |\vec{u}|^2f\,dx|_{t=0}\leq \Cr{const:3.7}$, that is,
 $ \int_\Omega |\nabla(D(x)\log f_0+\phi(x))|^2f_0\,dx
   \leq\Cr{const:3.7}$,
 we obtain the desired result \eqref{eq:3.55}.
\end{proof}

\begin{remark}
In comparison with the homogeneous case in Section~\ref{sec:2}, it is
not known how to use the weighted
 $L^2$ space for the inhomogeneous problem \eqref{eq:3.1}. The
 difficulty here arises from the nonlinearity \eqref{eq:1.9}. We also
 do not know the logarithmic Sobolev inequality related to the inhomogeneous
 problem \eqref{eq:3.1},  and it is not known of how to establish the full convergence of the free energy like
was done  in \eqref{eq:A.6} and \eqref{eq:A.9}.
\end{remark}

\begin{remark}
 Here,  we want to note about the space dimension
 $n$. In this and the following section, since $D$ is not constant and
 we use the Sobolev inequality for the general vector valued function
 $\vec{u}$, (namely, did not use the fact that $\vec{u}$ is constituted of
 the solution $f$), we can only treat the dimensions $n=1,2,3$. While,
 for the dimensions $n\geq 4$, the bound \eqref{eq:3.44} does not hold
 for general vector-valued function $\vec{u}$, since
 $\alpha=\frac{n}{6}$ will be greater than $\frac{2}{3}$ in the proof of
 Lemma \ref{lem:3.12} in that case. The case $n=4$ is critical, since
 $\alpha=\frac{2}{3}$, while the case $n\geq5$ is supercritical, since
 $\alpha>\frac{2}{3}$. If we can obtain additional regularity estimates
 for $f$, such as uniform bounds for $\nabla f$, we might be able to
 treat the higher dimensional case, $n\geq 4$, which is ongoing work.
\end{remark}

The next section extends the entropy method to the nonlinear
Fokker-Planck model. A key idea is to demonstrate the entropy method in terms
of the velocity field $\vec{u}$ (the entropy method will not work if
applied directly to the solution $f$ of the model).

\section{Inhomogeneous diffusion case with variable mobility}
\label{sec:4}

Finally, in this section, we will consider  the following general evolution equation
with both inhomogeneous diffusion and a variable mobility, that is, $D=D(x)$
and $\pi=\pi(x,t)$ being both positive and bounded in a bounded domain
in the Euclidean space of $n$-dimension, subject to the periodic boundary
condition,

\begin{equation}
 \label{eq:4.1}
 \left\{
  \begin{aligned}
   \frac{\partial f}{\partial t}
   +
   \Div
   \left(
   f\vec{u}
   \right)
   &=
   0,
   &\quad
   &x\in\Omega,\quad
   t>0, \\
   \vec{u}
   &=
   -
   \frac{1}{\pi(x,t)}
   \nabla
   \left(
   D(x)\log f
   +
   \phi(x)
   \right),
   &\quad
   &x\in\Omega,\quad
   t>0, \\   
   f(x,0)&=f_0(x),&\quad
   &x\in\Omega.
  \end{aligned}
 \right.
\end{equation}
Again, without loss of generality, we take $\Omega=[0,1)^n\subset\R^n$.
The strictly positive periodic functions $\pi(x,t)$ and $D(x)$ are
bounded from below with the constants, $\Cr{const:1.2},
\Cr{const:1.3}>0$,
\begin{equation}
 \pi(x,t)\geq \Cr{const:1.2},\quad
  D(x)\geq\Cr{const:1.3}
\end{equation}
for any $x\in\Omega$ and $t>0$.

The free energy $F$ and the basic energy law \eqref{eq:1.3} still take
similar form
in this case, namely,
\begin{equation}
 \label{eq:4.2}
  F[f]
  :=
  \int_\Omega
  \left(
   D(x)f(\log f -1)+f\phi(x)
  \right)
  \,dx.
\end{equation}
and
\begin{equation}
 \label{eq:4.3}
  \frac{dF}{dt}[f](t)
  =
  -\int_\Omega
  \pi(x,t)
  |\vec{u}|^2f\,dx
  =:
  - D_{\mathrm{dis}}[f](t).
\end{equation}

As in the case of the constant mobility, Section \ref{sec:3}, we first
notice the following Sobolev inequality,  with weight being the solution
of the above general system \eqref{eq:4.1}.

\begin{lemma}
 \label{lem:4.2}
 Let $f$ be a solution of the model \eqref{eq:4.1}. For a suitable
 positive constant $\Cl{const:4.3}>0$, such that for any $t>0$ and for
 any periodic vector field $\vec{v}$ on $\Omega$,
 \begin{equation}
  \label{eq:4.48}
   \left(
    \int_\Omega|\vec{v}|^{p^\ast}f\,dx
   \right)^{\frac{1}{p^\ast}}
   \leq
   \Cr{const:4.3}
   \left(
    \int_\Omega|\nabla\vec{v}|^{2}f\,dx
	   \right)^{\frac{1}{2}},
 \end{equation}
 where the exponent $p^\ast$ satisfies
 $\frac{1}{p^\ast}=\frac{1}{2}-\frac{1}{n} $ for $n=3$,  and arbitrary
 $2\leq p^{\ast}<\infty $ for $n=1,2$. 
 
 In particular, with this Sobolev-type inequality
 \eqref{eq:4.48} and the H\"older inequality, we have for $2\leq p\leq
 p^\ast$ that,
 \begin{equation}
  \int_\Omega|\vec{v}|^p f\,dx
  \leq
  \left(
   \int_\Omega|\vec{v}|^{p^\ast} f\,dx
  \right)^{\frac{p}{p^\ast}}
  \left(
   \int_\Omega
   f\,dx
  \right)^{1-\frac{p}{p^\ast}}
  \leq
  \Cr{const:4.3}^p
  \left(
   \int_\Omega|\nabla\vec{v}|^{2}f\,dx
  \right)^{\frac{p}{2}}
  \left(
   \int_\Omega
   f\,dx
  \right)^{1-\frac{p}{p^\ast}}.
 \end{equation}
\end{lemma}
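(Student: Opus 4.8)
The plan is to deduce the weighted Sobolev inequality \eqref{eq:4.48} from the classical unweighted one by sandwiching the weight $f$ between two positive constants, exactly along the lines of the proof of Lemma~\ref{lemma3.1} in the constant-mobility case. The only structural input needed is a uniform two-sided pointwise bound on the solution, and that is already available: Proposition~\ref{prop:1.5} is stated and proved for the \emph{general} model \eqref{eq:1.1} with variable mobility $\pi=\pi(x,t)$ — its proof uses only the divergence form of the equation and the positivity of $D$ and $\pi$, not the constancy of $\pi$ — so it applies verbatim to \eqref{eq:4.1} and yields positive constants $\Cr{const:1.5},\Cr{const:1.6}$ with $\Cr{const:1.5}\le f(x,t)\le\Cr{const:1.6}$ for all $x\in\Omega$, $t>0$, by \eqref{eq:1.18}.

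Given this, I would argue as follows. Let $\vec{v}$ be a periodic vector field on $\Omega=[0,1)^n$. For $n=3$ the classical Sobolev inequality (the same one invoked in the proof of Lemma~\ref{lemma3.1}) provides a constant $C_{\mathrm{Sob}}>0$ with $\big(\int_\Omega|\vec{v}|^{p^\ast}\,dx\big)^{1/p^\ast}\le C_{\mathrm{Sob}}\big(\int_\Omega|\nabla\vec{v}|^2\,dx\big)^{1/2}$, where $\frac{1}{p^\ast}=\frac12-\frac1n$; for $n=1,2$ the same holds for any finite $p^\ast\ge2$. Using $f\le\Cr{const:1.6}$ in the left-hand integral, then this inequality, then $f\ge\Cr{const:1.5}$ in the right-hand integral, one chains the bounds
\begin{align*}
 \left(\int_\Omega|\vec{v}|^{p^\ast}f\,dx\right)^{\frac{1}{p^\ast}}
 &\le
 \Cr{const:1.6}^{\frac{1}{p^\ast}}\left(\int_\Omega|\vec{v}|^{p^\ast}\,dx\right)^{\frac{1}{p^\ast}}
 \le
 C_{\mathrm{Sob}}\,\Cr{const:1.6}^{\frac{1}{p^\ast}}\left(\int_\Omega|\nabla\vec{v}|^{2}\,dx\right)^{\frac{1}{2}} \\
 &\le
 \frac{C_{\mathrm{Sob}}\,\Cr{const:1.6}^{\frac{1}{p^\ast}}}{\Cr{const:1.5}^{\frac{1}{2}}}\left(\int_\Omega|\nabla\vec{v}|^{2}f\,dx\right)^{\frac{1}{2}},
\end{align*}
so \eqref{eq:4.48} holds with $\Cr{const:4.3}=C_{\mathrm{Sob}}\,\Cr{const:1.6}^{1/p^\ast}\,\Cr{const:1.5}^{-1/2}$.

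The interpolation bound stated just after \eqref{eq:4.48} is then immediate and identical to the corresponding step in Lemma~\ref{lemma3.1}: for $2\le p\le p^\ast$, apply H\"older's inequality on the finite measure space $(\Omega,f\,dx)$ with exponents $p^\ast/p$ and its conjugate $(1-p/p^\ast)^{-1}$ to $\int_\Omega|\vec{v}|^p f\,dx$, obtaining $\int_\Omega|\vec{v}|^pf\,dx\le\big(\int_\Omega|\vec{v}|^{p^\ast}f\,dx\big)^{p/p^\ast}\big(\int_\Omega f\,dx\big)^{1-p/p^\ast}$, and then substitute \eqref{eq:4.48}.

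I do not expect any real obstacle here: the variable mobility $\pi(x,t)$ does not enter this estimate at all, since \eqref{eq:4.48} concerns an arbitrary periodic field $\vec{v}$ and uses the solution $f$ only through its uniform positivity and boundedness. The sole point worth checking is that the maximum principle of Proposition~\ref{prop:1.5} genuinely covers \eqref{eq:4.1}, which it does for the reason noted above. The true new difficulties introduced by variable mobility will surface later, in the computation of $\frac{d^2F}{dt^2}$ and the resulting differential inequality for $\frac{dF}{dt}$, not in this preparatory Sobolev estimate.
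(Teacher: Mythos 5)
Your proposal is correct and follows essentially the same route as the paper: the paper proves Lemma~\ref{lem:4.2} by declaring it identical to the proof of Lemma~\ref{lemma3.1}, which is exactly your argument — use the uniform two-sided bounds $\Cr{const:1.5}\leq f\leq\Cr{const:1.6}$ from the maximum principle (Proposition~\ref{prop:1.5}, which indeed covers the variable-mobility model \eqref{eq:4.1}) to pass between the weighted and unweighted integrals, invoke the classical Sobolev inequality for periodic fields, and finish the interpolation step with H\"older's inequality on the finite measure $f\,dx$. Your observation that $\pi(x,t)$ plays no role in this estimate matches the paper's treatment.
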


The proof of Lemma \ref{lem:4.2} follows exactly the same argument as
the proof of Lemma \ref{lemma3.1} in Section~\ref{sec:3}. 

The main Theorem~\ref{thm:4.1} of this section is the extension of the results in
Section~\ref{sec:3}, Theorem~\ref{thm:3.13}, when $\pi$ was constant,
(in particular, $\|\nabla \pi\|_{L^\infty(\Omega\times[0,\infty))}=\|
\pi_t\|_{L^\infty(\Omega\times[0,\infty))}=0$), to the case of the
variable mobility $\pi(x,t)$. To be more specific, when $\|\nabla
D\|_{L^\infty(\Omega)}$ and $\|\nabla
\pi\|_{L^\infty(\Omega\times[0,\infty))}$ are sufficiently small, and
under some additional assumptions on the initial condition, one can
establish the exponential decay of the dissipation functional
$D_{\mathrm{dis}}[f](t)$ using the basic energy law \eqref{eq:4.3}.
\begin{theorem}
 \label{thm:4.1} 
 Consider $\Omega$ being the unit box in  the Euclidean space of $n$-dimension with $n=1,2,3$. 
 Assume, that there is a positive constant $\lambda>0$,
 such that $\nabla^2\phi\geq \lambda I$, where $I$ is the identity
 matrix. Moreover, let $\phi=\phi(x)$,
 $D=D(x)$ and $\pi=\pi(x,t)$ be periodic functions which satisfy
 \eqref{eq:1.12}, and  let $f_0=f_0(x)$ be  a periodic probability density
 function.  Consider a solution $f$  of \eqref{eq:4.1} subject to the
 periodic boundary condition, and  vector field $\vec{u}$ which is defined in
 \eqref{eq:4.1}.  Then, there are positive constants $\Cl{const:4.4}$,
 $\Cl{const:4.5}$, $\Cl{const:4.9}$, $\Cl{const:4.6}$,
$\Cl{const:4.7}>0$, and $\tilde{\lambda}>0$ such that, if for  $x\in\Omega$ and $t>0$,
 \begin{equation}
  \label{cond_par:thm4}
  \|\nabla D\|_{L^\infty(\Omega)}\leq \Cr{const:4.4},\quad
   \|\nabla \pi\|_{L^\infty(\Omega\times[0,\infty))}\leq \Cr{const:4.5},\quad
   \pi_t(x,t)\geq -\Cr{const:4.9}, 
 \end{equation}
 and
 \begin{equation}
  \int_\Omega \pi(x,0)|\vec{u}(x,0)|^2f_0\,dx
   =
  \int_\Omega \frac{1}{\pi(x,0)}|\nabla(D(x)\log f_0+\phi(x))|^2f_0\,dx
   \leq\Cr{const:4.6},
 \end{equation}
 then,  the following estimate holds true, that is, for $t>0$,
 \begin{equation}
  \label{eq:4.64}
   \int_\Omega
   \pi(x,t)
   |\vec{u}|^2f\,dx
   \leq
   \Cr{const:4.7}
   e^{-\tilde{\lambda} t}.
 \end{equation}
 In particular, we have that,
 \begin{equation}
  \frac{dF}{dt}[f](t)
   =
   -\int_\Omega
   \pi(x,t)
   |\vec{u}|^2f\,dx
   \rightarrow 0,
   \qquad
   \text{as}
   \
   t\rightarrow\infty.
 \end{equation}
\end{theorem}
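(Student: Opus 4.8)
The plan is to reproduce, with the necessary modifications, the architecture of the proofs of Theorems~\ref{thm:2.9} and~\ref{thm:3.13}: differentiate the energy law \eqref{eq:4.3} twice to obtain a closed second-order differential inequality for the dissipation $D_{\mathrm{dis}}[f](t)=\int_\Omega\pi(x,t)|\vec u|^2f\,dx$, then close it with the Gronwall-type Lemma~\ref{lem:3.12}. First, exactly as in Lemmas~\ref{lem:2.1} and~\ref{lem:3.1}, integrating \eqref{eq:4.3} and using the uniform lower bound $F[f](t)\ge-\Cr{const:1.7}$ from Proposition~\ref{prop:1.7} gives $\int_0^\infty D_{\mathrm{dis}}[f](\tau)\,d\tau<\infty$, hence a sequence $t_j\to\infty$ with $D_{\mathrm{dis}}[f](t_j)\to0$; the full statement \eqref{eq:4.64} then follows once $D_{\mathrm{dis}}$ is shown to obey a decay inequality past a threshold.

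Next I would record the working identities, all phrased through the velocity $\vec u$ of \eqref{eq:1.7}: $\pi f\vec u=-\left(D(x)\nabla f+f\log f\,\nabla D(x)+f\nabla\phi(x)\right)$, $\pi\rho\vec u=-\left(D(x)\nabla\rho+\rho\log\rho\,\nabla D(x)\right)$, and --- the point where this case genuinely departs from Section~\ref{sec:3} --- that although $\nabla\vec u$ is no longer symmetric, $\nabla(\pi\vec u)=-\nabla^2(D(x)\log\rho)$ is, so that $u^k_{x_l}-u^l_{x_k}=\pi^{-1}(u^l\pi_{x_k}-u^k\pi_{x_l})$ is pointwise of size $O(|\vec u|\,|\nabla\pi|)$. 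Differentiating \eqref{eq:1.7} in time yields
\begin{equation*}
 \vec u_t=-\tfrac{\pi_t}{\pi}\vec u-\tfrac{D(x)}{\pi\rho}\nabla\rho_t-\tfrac{\rho_t}{\rho}\vec u-\tfrac{(1+\log\rho)\rho_t}{\pi\rho}\nabla D(x),
\end{equation*}
and substituting into $\frac{d^2F}{dt^2}=-\int\pi_t|\vec u|^2f-2\int\pi\vec u\cdot\vec u_tf-\int\pi|\vec u|^2f_t$, with $f=\rho\Feq$ and $f_t=\rho_t\Feq$, the two $\pi_t$-contributions combine into a single $\int_\Omega\pi_t|\vec u|^2f\,dx$, leaving $2\int D(x)\vec u\cdot\nabla\rho_t\Feq\,dx$ together with $f_t$-terms as in \eqref{eq:3.7}. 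As in Lemmas~\ref{lem:2.4} and~\ref{lem:3.7}, the continuity equation and integration by parts give $\int\pi|\vec u|^2f_t=\int\vec u\cdot\nabla(\pi|\vec u|^2)f$, and taking a gradient of the identity $D(x)f_t=f\big(\pi|\vec u|^2+\log f\,\vec u\cdot\nabla D(x)+\vec u\cdot\nabla\phi(x)-D(x)\Div\vec u\big)$ expresses $D(x)\Feq\nabla\rho_t$ through $\vec u$. Assembling these, and using the symmetry of $\nabla(\pi\vec u)$ in place of that of $\nabla\vec u$ wherever Section~\ref{sec:3} used the latter (which produces extra terms carrying a factor $\nabla\pi$), one arrives at a formula of the form
\begin{equation*}
 \frac{d^2F}{dt^2}[f](t)=2\int_\Omega(\nabla^2\phi(x)\vec u\cdot\vec u)f\,dx+2\int_\Omega D(x)|\nabla\vec u|^2f\,dx+\int_\Omega\pi_t|\vec u|^2f\,dx+\mathcal R,
\end{equation*}
where $\mathcal R$ gathers all remaining terms, each carrying at least one factor $\nabla D$ or $\nabla\pi$, up to bounded factors and powers of $|\vec u|$, including one cubic term $\sim\int|\vec u|^3f$.

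To close, I would estimate $\mathcal R$ and the $\pi_t$-term, following the proof of Proposition~\ref{prop:3.14}. The logarithmic factors $\log f$ are uniformly bounded by the maximum principle (Proposition~\ref{prop:1.5}); under $\|\nabla D\|_{L^\infty(\Omega)}\le\Cr{const:4.4}$ and $\|\nabla\pi\|_{L^\infty(\Omega\times[0,\infty))}\le\Cr{const:4.5}$, every quadratic-in-$\vec u$ term in $\mathcal R$ is absorbed, via Cauchy--Schwarz, into small fractions of $2\lambda\int|\vec u|^2f$ (from $\nabla^2\phi\ge\lambda I$) and of $2\int D(x)|\nabla\vec u|^2f$; the cubic term is treated by the weighted Sobolev inequality of Lemma~\ref{lem:4.2} exactly as in Lemma~\ref{lem:3.13}, contributing a controlled multiple of $\int|\nabla\vec u|^2f$ plus a $\big(\int|\vec u|^2f\big)^3$ term; and $\int\pi_t|\vec u|^2f\ge-\Cr{const:4.9}\int|\vec u|^2f\ge-(\Cr{const:4.9}/\Cr{const:1.2})\,D_{\mathrm{dis}}[f]$ by the hypothesis $\pi_t\ge-\Cr{const:4.9}$. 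Choosing $\Cr{const:4.4},\Cr{const:4.5},\Cr{const:4.9}$ small gives, for suitable $\tilde\lambda>0$ and $d>0$,
\begin{equation*}
 \frac{d}{dt}D_{\mathrm{dis}}[f](t)\le-\tilde\lambda\,D_{\mathrm{dis}}[f](t)+d\,\big(D_{\mathrm{dis}}[f](t)\big)^3,
\end{equation*}
and since $D_{\mathrm{dis}}[f](0)\le\Cr{const:4.6}<\sqrt{\tilde\lambda/d}$ (take $\Cr{const:4.6}$ that small), Lemma~\ref{lem:3.12} yields \eqref{eq:4.64}, whence $\frac{dF}{dt}[f](t)\to0$.

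The main obstacle I expect is the bookkeeping in the second-derivative computation once the symmetry of $\nabla\vec u$ is lost: every manipulation in Section~\ref{sec:3} that used $\nabla\vec u={}^T\nabla\vec u$ now leaves a commutator remainder $u^k_{x_l}-u^l_{x_k}=\pi^{-1}(u^l\pi_{x_k}-u^k\pi_{x_l})$, and one must verify that, after all integrations by parts, these remainders together with the new $\pi_t$- and $\nabla\pi$-terms reorganize into quantities of only three admissible kinds --- small multiples of the coercive terms $\int|\vec u|^2f$ and $\int D(x)|\nabla\vec u|^2f$, a multiple of $\big(\int|\vec u|^2f\big)^3$, or a multiple of $D_{\mathrm{dis}}[f]$ of the correct (negative) sign. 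A secondary but essential point is the consistent choice of thresholds: $\Cr{const:4.4},\Cr{const:4.5}$ must be small enough for the quadratic absorptions and the Sobolev step, while $\Cr{const:4.9}$ must be small relative to $\lambda$ and $\Cr{const:1.2}$ so that $\tilde\lambda$ remains positive, and then $\Cr{const:4.6}$ is fixed accordingly.
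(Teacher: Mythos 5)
Your proposal is correct and follows essentially the same route as the paper's own proof: the commutator identity for $\nabla\vec u-\mathstrut^T\nabla\vec u$ coming from the symmetry of $\nabla^2(D\log\rho)$ (the paper's \eqref{eq:4.14}), the second-derivative identity assembled through $\vec u$, the weighted Sobolev inequality of Lemma~\ref{lem:4.2}/Lemma~\ref{lem:4.15a} for the cubic terms, absorption of the $\nabla D$-, $\nabla\pi$- and $\pi_t$-terms under the smallness hypotheses via Cauchy--Schwarz and the maximum principle, and closure by the Gronwall-type Lemma~\ref{lem:3.12}. The only cosmetic differences (bounding $\int_\Omega\pi_t|\vec u|^2f\,dx$ separately rather than inside the Hessian bracket, and grouping the two $|\vec u|^3$-type terms as one) do not change the argument.
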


With respect to the result in Theorem \ref{thm:4.1}, we first remark
that there is a subsequence $\{t_j\}_{j=1}^\infty$ such that the
following lemma is true.

\begin{lemma}
 \label{lem:4.1}
 Let $f$ be a solution of \eqref{eq:4.1}. Then there is an increasing
 sequence $\{t_j\}_{j=1}^\infty$,  such that $t_j\rightarrow\infty$ and
 \begin{equation}
  \frac{dF}{dt}[f](t_j)\rightarrow 0,\qquad
   j\rightarrow\infty.
 \end{equation}
\end{lemma}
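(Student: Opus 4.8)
The plan is to mimic verbatim the argument used for Lemma~\ref{lem:2.1} (and noted for Lemma~\ref{lem:3.1}), now with the mobility-weighted dissipation. First I would integrate the basic energy law \eqref{eq:4.3} in time over $[0,t]$, obtaining
\begin{equation*}
 F[f](t) + \int_0^t D_{\mathrm{dis}}[f](\tau)\,d\tau = F[f_0],
\end{equation*}
where $D_{\mathrm{dis}}[f](\tau) = \int_\Omega \pi(x,\tau)|\vec{u}|^2 f\,dx \geq 0$ thanks to the positivity of $\pi$ and of $f$. Since $f_0$ is a smooth positive bounded probability density, $F[f_0]<\infty$; and Proposition~\ref{prop:1.7} provides the uniform lower bound $F[f](t)\geq -\Cr{const:1.7}$ for all $t>0$ (this uses the maximum principle, Proposition~\ref{prop:1.5}, which applies under the standing assumptions of strong positivity \eqref{eq:1.12}, Hölder regularity \eqref{eq:1.13}, and $f_0\geq \Cr{const:1.4}$). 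Hence
\begin{equation*}
 \int_0^t D_{\mathrm{dis}}[f](\tau)\,d\tau \leq F[f_0] + \Cr{const:1.7}
\end{equation*}
uniformly in $t>0$, so the improper integral $\int_0^\infty D_{\mathrm{dis}}[f](\tau)\,d\tau$ converges.

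Next I would extract the sequence. If there were no increasing sequence $t_j\to\infty$ along which $D_{\mathrm{dis}}[f](t_j)\to 0$, then one would have $D_{\mathrm{dis}}[f](t)\geq\delta>0$ for all sufficiently large $t$, forcing $\int_0^\infty D_{\mathrm{dis}}[f](\tau)\,d\tau=\infty$, a contradiction; equivalently, $\liminf_{t\to\infty}D_{\mathrm{dis}}[f](t)=0$. Therefore such a sequence $\{t_j\}_{j=1}^\infty$ exists, and by \eqref{eq:4.3} this is precisely $\frac{dF}{dt}[f](t_j)=-D_{\mathrm{dis}}[f](t_j)\to 0$ as $j\to\infty$.

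There is no genuine obstacle here; the only point deserving a word of care is that a convergent improper integral of a nonnegative integrand need not force the integrand to vanish pointwise, but it must vanish along some sequence, which is all that is claimed. The argument is structurally identical to that of Lemma~\ref{lem:2.1}; the sole new ingredient is the factor $\pi(x,t)$ in the dissipation, and what makes the estimate go through is its positivity (keeping $D_{\mathrm{dis}}[f]\geq 0$) together with the validity of Proposition~\ref{prop:1.7} in the present variable-mobility setting.
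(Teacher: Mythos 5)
Your proof is correct and follows exactly the route the paper takes: integrate the energy law \eqref{eq:4.3}, invoke the uniform lower bound of Proposition~\ref{prop:1.7} (valid in the variable-mobility setting since it is stated for \eqref{eq:1.1}), and extract a sequence along which the nonnegative, time-integrable dissipation vanishes. The paper simply cites the argument of Lemma~\ref{lem:2.1} verbatim, which is what you reproduced.
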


The proof of Lemma \ref{lem:4.1} follows exactly the same argument as
the proof of Lemma \ref{lem:2.1} in Section~\ref{sec:2}. 
\par In order to establish statement of Theorem \ref{thm:4.1}, first, we need to
obtain additional results as in
Lemmas~\ref{lem:4.4a}-\ref{lem:4.15a} and Proposition~\ref{prop:4.16a}
below. Hence, we proceed to show that $\frac{dF}{dt}[f]$ converges to $0$ as
$t\rightarrow\infty$ in time $t$.  Hereafter we compute the second time
derivative of $F$, and in particular, we utilize the special structure of
the velocity field $\vec{u}$. To do this, we first establish the following
relationships between $\nabla f$ and $\vec{u}$ by direct calculation of
the velocity $\vec{u}$.

\begin{lemma}
\label{lem:4.4a}
 Let $\vec{u}$ be defined by \eqref{eq:4.1}. Then,
 \begin{equation}
  \label{eq:4.8}
   \pi(x,t)f\vec{u}
   =
   -
   D(x)\nabla f
   -
   f\log f\nabla D(x)
   -
   f\nabla\phi(x),
 \end{equation}
 and
 \begin{equation}
  \label{eq:4.6}
   \pi(x,t)\rho\vec{u}
   =
   -
   D(x)\nabla \rho
   -
   \rho\log\rho\nabla D(x),
 \end{equation}
 where $\rho$ is defined in \eqref{eq:1.8}.
\end{lemma}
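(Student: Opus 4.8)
The plan is to obtain both identities by a direct computation straight from the definition of $\vec{u}$ in \eqref{eq:4.1}, using only the product and chain rules; no analytic input is needed beyond the smoothness and strict positivity of $f$ supplied by Proposition~\ref{prop:1.1} and Proposition~\ref{prop:1.5}, which make $\log f$ and $\frac{1}{f}\nabla f$ meaningful.

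First I would rewrite the defining relation as $\pi(x,t)\vec{u}=-\nabla\bigl(D(x)\log f+\phi(x)\bigr)$ and expand the gradient by the product and chain rules, namely $\pi(x,t)\vec{u}=-\log f\,\nabla D(x)-\frac{D(x)}{f}\nabla f-\nabla\phi(x)$. Multiplying both sides by $f$ (legitimate since $f>0$) yields the first identity \eqref{eq:4.8}. For the second identity I would pass to the scaled variable $\rho=f/\Feq$ from \eqref{eq:1.8}. Using the equilibrium relation \eqref{eq:1.6}, i.e.\ $D(x)\log\Feq+\phi(x)=\Cr{const:1.1}$, one gets $D(x)\log f+\phi(x)=D(x)\log\rho+\bigl(D(x)\log\Feq+\phi(x)\bigr)=D(x)\log\rho+\Cr{const:1.1}$, so that $\nabla\bigl(D(x)\log f+\phi(x)\bigr)=\nabla\bigl(D(x)\log\rho\bigr)$ and hence $\pi(x,t)\vec{u}=-\log\rho\,\nabla D(x)-\frac{D(x)}{\rho}\nabla\rho$. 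Multiplying by $\rho$ gives \eqref{eq:4.6}. This is entirely parallel to the derivation of \eqref{eq:3.8}–\eqref{eq:3.6} in Section~\ref{sec:3}; the only change is that the factor $\pi(x,t)$ is simply carried along on the left-hand side throughout.

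There is no real obstacle here: the statement is a bookkeeping identity that recasts $\vec{u}$ in a form linear in $\nabla f$ (resp.\ $\nabla\rho$). The one point deserving a moment's attention is the use of \eqref{eq:1.6} to absorb $D(x)\log\Feq+\phi(x)$ into a constant, which rests on the explicit form \eqref{eq:1.5} of the equilibrium; this is precisely what produces the $\log\rho$-representation of $\vec{u}$ and thereby underlies the subsequent entropy estimates phrased in terms of $\vec{u}$.
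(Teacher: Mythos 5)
Your proposal is correct and is exactly the ``direct calculation'' the paper intends: expand $\nabla\bigl(D(x)\log f+\phi(x)\bigr)$ by the product and chain rules and multiply by $f$ for \eqref{eq:4.8}, and use \eqref{eq:1.6} to reduce the argument of the gradient to $D(x)\log\rho$ (as the paper already does in Section~\ref{sec:1}) before multiplying by $\rho$ for \eqref{eq:4.6}. The paper states the lemma without a written proof, and your computation, including the appeal to strict positivity of $f$ from Proposition~\ref{prop:1.5} to justify $\log f$ and division by $f$, supplies precisely the omitted steps.
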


\begin{remark}
Recall, that the nonlinearity in \eqref{eq:4.8} is from the
 inhomogeneity of $D(x)$ and it takes the following form \eqref{eq:1.9} in the model,
\begin{equation*}
 N(f) = - 
  \frac{1}{\pi(x,t)}\log f\nabla D(x)\cdot\nabla f
  +
  \frac{\nabla \pi(x,t)\cdot\nabla D(x)}{\pi^2(x,t)}f\log f
  -
  \frac{1}{\pi(x,t)}\Delta D(x) f\log f. 
\end{equation*}
\end{remark}
We proceed again with the entropy method and we take the second in-time derivative
of the free energy $F$,

\begin{equation*}
 \begin{split}
  \frac{d^2F}{dt^2}[f]
  &=
  \frac{d}{dt}
  \left(
   -
  \int_\Omega
  \pi(x,t)
  |\vec{u}|^2f\,dx
  \right)
  \\
  &=
  -
  2
  \int_\Omega
  \pi(x,t)\vec{u}\cdot\vec{u}_t f\,dx
  -
  \int_\Omega
  \pi(x,t)|\vec{u}|^2f_t\,dx
  -
  \int_\Omega
  \pi_t(x,t)|\vec{u}|^2f\,dx.
 \end{split}
\end{equation*}

As in Sections~\ref{sec:2} and \ref{sec:3}, we first compute the time
derivative of the velocity $\vec{u}$.

\begin{lemma}
 Let $\vec{u}$ be defined as in \eqref{eq:4.1}. Then,
 \begin{equation}
  \label{eq:4.4}
   \pi(x,t)\vec{u}_t
   =
   -
   \frac{D(x)}{\rho}\nabla \rho_t
   -
   \frac{\rho\pi_t(x,t)+\rho_t\pi(x,t)}{\rho}\vec{u}
   -
   \frac{\rho_t(\log\rho+1)}{\rho}
   \nabla D(x).
 \end{equation}
\end{lemma}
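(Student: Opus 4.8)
The plan is to obtain \eqref{eq:4.4} by a direct differentiation of $\vec{u}$ in time, following the same bookkeeping as in the proof of the constant-mobility identity \eqref{eq:3.4}, but keeping track of the additional term generated by $\pi_t$.

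First I would use the reduced form of the velocity that follows from the change of variables $f=\rho\Feq$ and the equilibrium relation \eqref{eq:1.6}, namely $\pi(x,t)\vec{u}=-\nabla\!\left(D(x)\log\rho\right)$. Differentiating this in $t$ and using that $\Feq$ is time-independent (so $\partial_t\log\rho=\rho_t/\rho$), one gets $\pi_t\vec{u}+\pi\vec{u}_t=-\nabla\!\left(D(x)\rho_t/\rho\right)$, hence
\[
 \pi(x,t)\vec{u}_t
 =
 -\pi_t(x,t)\vec{u}
 -\frac{\rho_t}{\rho}\nabla D(x)
 -\frac{D(x)}{\rho}\nabla\rho_t
 +\frac{D(x)\rho_t}{\rho^2}\nabla\rho ,
\]
after expanding the gradient by the product rule exactly as in \eqref{eq:3.5}.

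The only nonroutine input is the elimination of $\nabla\rho$: substituting the algebraic relation \eqref{eq:4.6}, $D(x)\nabla\rho=-\pi(x,t)\rho\vec{u}-\rho\log\rho\,\nabla D(x)$, into the last term turns it into $-\frac{\rho_t\pi(x,t)}{\rho}\vec{u}-\frac{\rho_t\log\rho}{\rho}\nabla D(x)$. Collecting the $\vec{u}$-terms as $-\left(\pi_t+\rho_t\pi/\rho\right)\vec{u}=-\frac{\rho\pi_t+\rho_t\pi}{\rho}\vec{u}$ and the $\nabla D(x)$-terms as $-\frac{\rho_t(1+\log\rho)}{\rho}\nabla D(x)$ then gives \eqref{eq:4.4}. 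I do not expect any obstacle here: the one thing to be careful about is not to treat $\pi$ as constant when differentiating, since the resulting $\pi_t\vec{u}$ term is precisely what merges with the $\pi\rho\vec{u}$ piece produced by \eqref{eq:4.6} to form the $\rho\pi_t+\rho_t\pi$ numerator — this is the variable-mobility counterpart of the extra inhomogeneity term $\frac{\rho_t}{\rho}(\log\rho+1)\nabla D(x)$ already present in \eqref{eq:3.4}.
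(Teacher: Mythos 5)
Your proposal is correct and follows essentially the same route as the paper: differentiate $\pi(x,t)\vec{u}=-\nabla(D(x)\log\rho)$ in time, expand the gradient by the product rule as in \eqref{eq:4.5}, and then eliminate $\nabla\rho$ via \eqref{eq:4.6} to collect the $\vec{u}$- and $\nabla D(x)$-terms into \eqref{eq:4.4}. No gaps.
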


\begin{proof}
 We take a time-derivative of $\pi(x,t)\vec{u}=-\nabla (D(x)\log\rho)$,
 and we have from \eqref{eq:4.1} that,
 \begin{equation}
  \label{eq:4.5}
   \pi_t(x,t)\vec{u}
   +
   \pi(x,t)\vec{u}_t
   =
   -\nabla \left(
	    D(x)\frac{\rho_t}{\rho}
	   \right)
   =
   -
   \frac{D(x)}{\rho}\nabla \rho_t
   +
   \frac{D(x)\rho_t}{\rho^2}\nabla \rho
   -
   \frac{\rho_t}{\rho}\nabla D(x).
 \end{equation}
 Using \eqref{eq:4.6} in \eqref{eq:4.5}, we obtain the result
 \eqref{eq:4.4}.
\end{proof}

By comparing formula in \eqref{eq:4.4} with the formula in
\eqref{eq:2.4} and \eqref{eq:3.4} in Sections~\ref{sec:2}-\ref{sec:3},
one can observe that the extra terms
$\frac{\rho_t}{\rho}(\log\rho+1)\nabla D(x)$ and $-\pi_t(x,t)\vec{u}$
appear in the time derivative of $\vec{u}$ due to the inhomogeneity of
the diffusion and
the variable mobility.

Again,  we will reformulate $\vec{u}_t$ in terms of $\rho_t$ and $f_t$
in the second
time-derivative of $F$.

\begin{lemma}
 Let $f$ be a solution of \eqref{eq:4.1},  and let $\vec{u}$ be given by
 \eqref{eq:4.1}. Then,
 \begin{equation}
  \label{eq:4.7}
   \begin{split}
    \frac{d^2F}{dt^2}[f](t)
    &=
    \int_\Omega\pi_t(x,t)|\vec{u}|^2f\,dx
    +
    \int_\Omega\pi(x,t)|\vec{u}|^2f_t\,dx
    \\
    &\qquad
    +
    2\int_\Omega D(x)\vec{u}\cdot\nabla\rho_t\Feq\,dx
    +
    2\int_\Omega(\log\rho+1)\vec{u}\cdot\nabla D(x)f_t\,dx,
   \end{split}
 \end{equation}
 where $\Feq$ is given in \eqref{eq:1.5}.
\end{lemma}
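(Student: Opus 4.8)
The plan is to follow exactly the route used for \eqref{eq:2.7} and \eqref{eq:3.7}: differentiate the first-order energy law \eqref{eq:4.3} once more in time, substitute the expression \eqref{eq:4.4} for $\pi(x,t)\vec{u}_t$, and then collapse the result using $f=\rho\Feq$ and $\rho_t\Feq=f_t$, which are valid because $\Feq$ does not depend on $t$. First I would record the differentiation of \eqref{eq:4.3}, namely
\[
\frac{d^2F}{dt^2}[f](t)
= -\int_\Omega\pi_t(x,t)|\vec{u}|^2f\,dx
- 2\int_\Omega\pi(x,t)\,\vec{u}\cdot\vec{u}_t\,f\,dx
- \int_\Omega\pi(x,t)|\vec{u}|^2f_t\,dx,
\]
which is the display already appearing just above the lemma.

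The essential step is to rewrite the middle term. Pairing $\pi(x,t)$ with $\vec{u}_t$ and inserting \eqref{eq:4.4} gives
\[
-2\int_\Omega\vec{u}\cdot\bigl(\pi(x,t)\vec{u}_t\bigr)f\,dx
= 2\int_\Omega\frac{D(x)}{\rho}\vec{u}\cdot\nabla\rho_t\,f\,dx
+ 2\int_\Omega\frac{\rho\pi_t(x,t)+\rho_t\pi(x,t)}{\rho}|\vec{u}|^2f\,dx
+ 2\int_\Omega\frac{\rho_t(\log\rho+1)}{\rho}\vec{u}\cdot\nabla D(x)\,f\,dx.
\]
Using $f/\rho=\Feq$ in the first integral and $(\rho_t/\rho)f=\rho_t\Feq=f_t$ in the remaining integrals, together with $\rho\pi_t+\rho_t\pi=\rho(\pi_t+(\rho_t/\rho)\pi)$, the right-hand side becomes
\[
2\int_\Omega D(x)\vec{u}\cdot\nabla\rho_t\,\Feq\,dx
+ 2\int_\Omega\pi_t(x,t)|\vec{u}|^2f\,dx
+ 2\int_\Omega\pi(x,t)|\vec{u}|^2f_t\,dx
+ 2\int_\Omega(\log\rho+1)\vec{u}\cdot\nabla D(x)\,f_t\,dx.
\]
Substituting this back into the formula for $\frac{d^2F}{dt^2}$, the term $-\int_\Omega\pi_t|\vec{u}|^2f\,dx$ cancels against one of the two positive copies of $\int_\Omega\pi_t|\vec{u}|^2f\,dx$, and likewise $-\int_\Omega\pi|\vec{u}|^2f_t\,dx$ cancels against one copy of $\int_\Omega\pi|\vec{u}|^2f_t\,dx$; what survives is precisely the right-hand side of \eqref{eq:4.7}.

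I do not expect any genuine obstacle here: the computation is purely algebraic bookkeeping, entirely parallel to the corresponding lemma in Section~\ref{sec:3}, just with one additional $\pi_t$ contribution. The only points requiring a little care are correctly carrying through the new term $\pi_t(x,t)\vec{u}$ on the left-hand side of \eqref{eq:4.5}/\eqref{eq:4.4} that is absent in the constant-mobility case, and making sure the partial cancellations of the $\pi_t|\vec{u}|^2f$ and $\pi|\vec{u}|^2f_t$ contributions are performed with the correct signs so that exactly one copy of each remains.
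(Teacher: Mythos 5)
Your proposal is correct and follows essentially the same route as the paper's own proof: differentiate the energy law \eqref{eq:4.3} in time, substitute \eqref{eq:4.4} for $\pi(x,t)\vec{u}_t$ in the middle term, and simplify with $f=\rho\Feq$ and $\rho_t\Feq=f_t$, after which the $\pi_t|\vec{u}|^2f$ and $\pi|\vec{u}|^2f_t$ contributions partially cancel to leave exactly \eqref{eq:4.7}. (The paper's proof cites ``the time-derivative of \eqref{eq:3.3}'' where \eqref{eq:4.3} is meant; your reference is the accurate one.)
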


\begin{proof}
 Using the time-derivative of \eqref{eq:3.3} together
 with \eqref{eq:4.4}, we obtain that,
\begin{equation}
  \begin{split}
   \frac{d^2F}{dt^2}[f](t)
   &=
   -
   \int_\Omega\pi_t(x,t)|\vec{u}|^2 f\,dx
   -
   2\int_\Omega\pi(x,t)\vec{u}\cdot\vec{u}_t f\,dx
   -
   \int_\Omega\pi(x,t)|\vec{u}|^2f_t\,dx \\
   &=
   \int_\Omega\pi_t(x,t)|\vec{u}|^2 f\,dx
   +
   2\int_\Omega D(x)\vec{u}\cdot\nabla\rho_t \frac{f}{\rho}\,dx
   +
   2\int_\Omega\pi(x,t)|\vec{u}|^2\rho_t \frac{f}{\rho}\,dx
   \\
   &\qquad
   +
   2\int_\Omega(\log\rho+1)\vec{u}\cdot\nabla D(x)\rho_t \frac{f}{\rho}\,dx
   -
   \int_\Omega\pi(x,t)|\vec{u}|^2f_t\,dx.
  \end{split}
 \end{equation}
 Since $f=\rho\Feq$ and $\rho_t\Feq=f_t$, we derive \eqref{eq:4.7}.
\end{proof}

Next, we compute each term in the right-hand side of
\eqref{eq:4.7}. First, for the second term of the right-hand side of
\eqref{eq:4.7}, we obtain,

\begin{lemma}
 Let $f$ be a solution of \eqref{eq:4.1} and let $\vec{u}$ be given by
 \eqref{eq:4.1}. Then,
 \begin{equation}
  \label{eq:4.12}
   \int_\Omega \pi(x,t)|\vec{u}|^2f_t\,dx
   =
   \int_\Omega \vec{u}\cdot\nabla (\pi(x,t)|\vec{u}|^2) f\,dx.
 \end{equation}
\end{lemma}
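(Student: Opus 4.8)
The plan is to mimic the proof of Lemma~\ref{lem:2.4} (identity~\eqref{eq:2.12}) and its counterpart in Section~\ref{sec:3}: replace $f_t$ by the continuity equation and integrate by parts exactly once. First I would invoke the first equation of~\eqref{eq:4.1}, namely $f_t = -\Div(f\vec{u})$, to obtain
\[
\int_\Omega \pi(x,t)|\vec{u}|^2 f_t\,dx
=
-\int_\Omega \pi(x,t)|\vec{u}|^2\,\Div(f\vec{u})\,dx .
\]
Since $f$ is a classical solution in the sense of Definition~\ref{def:1.1} and $\pi$, $D$, $\phi$ (hence $\vec{u}$) are smooth enough, the integrand $\pi(x,t)|\vec{u}|^2\,f\vec{u}$ is $C^1$ in $x$, and the divergence theorem applies. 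Integrating by parts then gives
\[
-\int_\Omega \pi(x,t)|\vec{u}|^2\,\Div(f\vec{u})\,dx
=
\int_\Omega \nabla\!\left(\pi(x,t)|\vec{u}|^2\right)\cdot(f\vec{u})\,dx
=
\int_\Omega \vec{u}\cdot\nabla\!\left(\pi(x,t)|\vec{u}|^2\right) f\,dx ,
\]
which is precisely~\eqref{eq:4.12}.

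There is essentially no obstacle in this argument; the only thing to verify is that the boundary term produced by integration by parts vanishes. This follows from the periodic boundary condition imposed on $f$ in~\eqref{eq:4.1}: the vector field $\pi(x,t)|\vec{u}|^2\,f\vec{u}$ takes equal values at $x_{b,1}$ and $x_{b,2}$ on opposite faces of $\partial\Omega$, so the flux contributions through these faces cancel pairwise, exactly as in Lemma~\ref{lem:2.4}.

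I would also point out, for later use, the structural difference from the homogeneous case: here the mobility factor $\pi(x,t)$ is carried \emph{inside} the gradient on the right-hand side, so that~\eqref{eq:4.12} features $\nabla(\pi(x,t)|\vec{u}|^2)$ rather than $\nabla|\vec{u}|^2$. This is harmless for the identity itself, but it is the source of an additional $\nabla\pi$ term when~\eqref{eq:4.12} is substituted back into~\eqref{eq:4.7} together with the remaining terms, and that term will have to be controlled by the smallness assumption $\|\nabla\pi\|_{L^\infty(\Omega\times[0,\infty))}\leq \Cr{const:4.5}$ in Theorem~\ref{thm:4.1}.
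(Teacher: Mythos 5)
Your proposal is correct and follows essentially the same route as the paper: substitute $f_t=-\Div(f\vec{u})$ from \eqref{eq:4.1} and integrate by parts once, with the boundary terms vanishing by periodicity. The additional remarks about the $\nabla(\pi|\vec{u}|^2)$ structure are consistent with how the identity is used later, but the core argument coincides with the paper's proof.
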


\begin{proof}
 Using the system \eqref{eq:4.1} and integration by parts, we have that,
 \begin{equation}
  \int_\Omega \pi(x,t)|\vec{u}|^2f_t\,dx
   =
   -\int_\Omega \pi(x,t)|\vec{u}|^2\Div(f\vec{u})\,dx
   =
   \int_\Omega \vec{u}\cdot\nabla (\pi(x,t)|\vec{u}|^2)f\,dx.
 \end{equation}
\end{proof}

Next, we express $\nabla\rho_t$ in terms of $\vec{u}$ in order to compute
the first term of the right-hand side of \eqref{eq:4.7}.

\begin{lemma}
 Let $f$ be a solution of \eqref{eq:4.1} and let $\vec{u}$ be given by
 \eqref{eq:4.1}. Then,
 \begin{equation}
  \label{eq:4.11}
   \begin{split}
    &\qquad
    D(x)\Feq\nabla\rho_t \\
    &=
    -\pi(x,t)f_t\vec{u}
    -f_t
    \left(
    1+\log\rho
    \right)
    \nabla D(x)
    +
    f
    \nabla
    \left(
    \pi(x,t)
    |\vec{u}|^2
    +
    \log f
    \vec{u}\cdot\nabla D(x)
    +
    \vec{u}\cdot\nabla\phi(x)
    -
    D(x)\Div\vec{u}
    \right),
   \end{split}
 \end{equation}
 where $\Feq$ is given in \eqref{eq:1.5}.
\end{lemma}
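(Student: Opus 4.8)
The plan is to transcribe the proof of \eqref{eq:3.11} from Section~\ref{sec:3}, carrying the mobility factor $\pi(x,t)$ along. First, since $\Feq$ is independent of $t$, the first equation of \eqref{eq:4.1} gives
\begin{equation*}
 D(x)\Feq\rho_t
 =
 D(x)f_t
 =
 -D(x)\Div(f\vec{u})
 =
 -D(x)\vec{u}\cdot\nabla f
 -
 D(x)f\Div\vec{u}.
\end{equation*}
Substituting $D(x)\nabla f=-\pi(x,t)f\vec{u}-f\log f\,\nabla D(x)-f\nabla\phi(x)$ from \eqref{eq:4.8} into the first term on the right converts it into $\pi(x,t)f|\vec{u}|^2+f\log f\,(\vec{u}\cdot\nabla D(x))+f(\vec{u}\cdot\nabla\phi(x))$, so that, exactly as in \eqref{eq:3.10} but with an extra $\pi(x,t)$ in front of $|\vec{u}|^2$,
\begin{equation*}
 D(x)\Feq\rho_t
 =
 D(x)f_t
 =
 f\bigl(\pi(x,t)|\vec{u}|^2+\log f\,\vec{u}\cdot\nabla D(x)+\vec{u}\cdot\nabla\phi(x)-D(x)\Div\vec{u}\bigr).
\end{equation*}
Denote the parenthesis on the right by $G$, so that $fG=D(x)f_t$.

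Next I would take the gradient of this identity. The left-hand side produces $\rho_t\Feq\nabla D(x)+D(x)\rho_t\nabla\Feq+D(x)\Feq\nabla\rho_t$, while the right-hand side produces $G\nabla f+f\nabla G=\tfrac{D(x)f_t}{f}\nabla f+f\nabla G$, mirroring \eqref{eq:3.14}. Into this I substitute two auxiliary relations: taking the gradient of \eqref{eq:1.6} yields $\tfrac{D(x)}{\Feq}\nabla\Feq+\log\Feq\,\nabla D(x)+\nabla\phi(x)=0$ (this is \eqref{eq:3.16}), hence $D(x)\rho_t\nabla\Feq=-\rho_t\Feq\log\Feq\,\nabla D(x)-\rho_t\Feq\nabla\phi(x)$; and dividing \eqref{eq:4.8} by $f$ gives $\tfrac{D(x)}{f}\nabla f=-\pi(x,t)\vec{u}-\log f\,\nabla D(x)-\nabla\phi(x)$, hence $\tfrac{D(x)f_t}{f}\nabla f=-\pi(x,t)f_t\vec{u}-f_t\log f\,\nabla D(x)-f_t\nabla\phi(x)$. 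Using $\rho_t\Feq=f_t$ throughout, the two $-f_t\nabla\phi(x)$ contributions cancel, and solving for $D(x)\Feq\nabla\rho_t$ leaves
\begin{equation*}
 D(x)\Feq\nabla\rho_t
 =
 -\pi(x,t)f_t\vec{u}
 +
 f_t\bigl(\log\Feq-\log f-1\bigr)\nabla D(x)
 +
 f\nabla G.
\end{equation*}

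Finally, since $\log f=\log\rho+\log\Feq$ by \eqref{eq:1.8}, the coefficient of $\nabla D(x)$ collapses to $-(1+\log\rho)$, and spelling out $G=\pi(x,t)|\vec{u}|^2+\log f\,\vec{u}\cdot\nabla D(x)+\vec{u}\cdot\nabla\phi(x)-D(x)\Div\vec{u}$ gives precisely \eqref{eq:4.11}. There is no real obstacle here, since the computation is a line-by-line adaptation of the constant-mobility case; the only point needing care is that $\pi(x,t)$ now sits inside the gradient term $f\nabla\bigl(\pi(x,t)|\vec{u}|^2+\cdots\bigr)$, so that its spatial derivative $\nabla\pi$ is implicitly retained and must be tracked — it is exactly this term that produces the new contributions one has to control in the subsequent energy estimate.
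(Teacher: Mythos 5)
Your proposal is correct and follows essentially the same route as the paper's own proof: derive the analogue of \eqref{eq:3.10} with the factor $\pi(x,t)$ in front of $|\vec{u}|^2$, take its gradient, and substitute the gradient of \eqref{eq:1.6} together with \eqref{eq:4.8} before using $\rho_t\Feq=f_t$ and $\log f=\log\rho+\log\Feq$ to collapse the $\nabla D(x)$ coefficient to $-(1+\log\rho)$. The only differences are cosmetic (your shorthand $G$ and the explicit remark on tracking $\nabla\pi$ inside the gradient term), so no gap to report.
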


\begin{proof}
 Since $\Feq$ is independent of $t$, we have by \eqref{eq:4.1} that
 \begin{equation}
  \label{eq:4.9}
   D(x)\Feq\rho_t
   =
   D(x)f_t
   =
   -D(x)\Div(f\vec{u})
   =
   -
   D(x)\nabla f\cdot \vec{u}
   -
   D(x)f\Div\vec{u}.
 \end{equation}
 Using \eqref{eq:4.8} and \eqref{eq:4.9}, we obtain,
 \begin{equation}
  \label{eq:4.10}
   D(x)\Feq\rho_t
   =
   D(x)f_t
   =
   f
   \left(
    \pi(x,t)
    |\vec{u}|^2
    +
    \log f
    \vec{u}\cdot\nabla D(x)
    +
    \vec{u}\cdot\nabla\phi(x)
    -
    D(x)\Div\vec{u}
   \right).
 \end{equation}
 Next, take a gradient of \eqref{eq:4.10},  and we obtain using
 \eqref{eq:4.10} that,
 \begin{equation}
  \label{eq:4.42}
  \begin{split}
   &\qquad
   \rho_t\Feq\nabla D(x)
   +
   D(x)\rho_t\nabla\Feq
   +
   D(x)\Feq\nabla\rho_t \\
   &=
   \left(
   \pi(x,t)|\vec{u}|^2
   +
   \log f
   \vec{u}\cdot\nabla D(x)
   +
   \vec{u}\cdot\nabla\phi(x)
   -
   D(x)\Div\vec{u}
   \right)
   \nabla f \\
   &\qquad
   +
   f
   \nabla
   \left(
   \pi(x,t)|\vec{u}|^2
   +
   \log f
   \vec{u}\cdot\nabla D(x)
   +
   \vec{u}\cdot\nabla\phi(x)
   -
   D(x)\Div\vec{u}
   \right) \\
   &=
   \frac{D(x)f_t}{f}
   \nabla f
   +
   f
   \nabla
   \left(
   \pi(x,t)
   |\vec{u}|^2
   +
   \log f
   \vec{u}\cdot\nabla D(x)
   +
   \vec{u}\cdot\nabla\phi(x)
   -
   D(x)\Div\vec{u}
   \right).
  \end{split}
 \end{equation}
 Now, taking a gradient of \eqref{eq:1.6}, we have,
 \begin{equation}
  \label{eq:4.41}
  \frac{D(x)}{\Feq}\nabla \Feq+\log\Feq\nabla D(x)+\nabla\phi(x)=0.
 \end{equation}
 Thus, using \eqref{eq:4.8} and \eqref{eq:4.41} in \eqref{eq:4.42}, we
 have,
 \begin{equation}
  \begin{split}
   &\qquad
   \rho_t\Feq\nabla D(x)
   -
   \rho_t
   \Feq
   \log\Feq\nabla D(x)
   -
   \rho_t
   \Feq
   \nabla \phi(x)
   +
   D(x)\Feq\nabla\rho_t \\
   &=
   -
   \pi(x,t)f_t\vec{u}
   -
   f_t\log f\nabla D(x)
   -
   f_t\nabla\phi(x)
   \\
   &\qquad
   +
   f
   \nabla
   \left(
   \pi(x,t)
   |\vec{u}|^2
   +
   \log f
   \vec{u}\cdot\nabla D(x)
   +
   \vec{u}\cdot\nabla\phi(x)
   -
   D(x)\Div\vec{u}
   \right).
  \end{split}
 \end{equation}
 Since $\rho_t\Feq=f_t$, we obtain \eqref{eq:4.11}.
\end{proof}

%
Note, using \eqref{eq:4.11} in the 3rd term of the right-hand side of
\eqref{eq:4.7}, we have,
\begin{equation}
 \label{eq:4.57}
\begin{split}
  2\int_\Omega D(x)\vec{u}\cdot\nabla\rho_t\Feq\,dx
  &=
  -
  2\int_\Omega
  \pi(x,t)f_t|\vec{u}|^2
  \,dx
  -
  2\int_\Omega
  f_t
  \left(
  1+\log\rho
  \right)
  \vec{u}
  \cdot
  \nabla D(x)
  \,dx
  \\
  &\quad
  +
  2\int_\Omega
  f
  \vec{u}
  \cdot
  \nabla
  (\pi(x,t)
  |\vec{u}|^2)
  \,dx
  +
  2\int_\Omega
  f
  \vec{u}
  \cdot
  \nabla
  (
  \log f
  \vec{u}\cdot\nabla D(x)
  )
  \,dx
  \\
  &\quad
  +
  2\int_\Omega
  f
  \vec{u}
  \cdot
  \nabla
  (
  \vec{u}\cdot\nabla\phi(x)
  )
  \,dx
  -
  2\int_\Omega
  f
  \vec{u}
  \cdot
  \nabla
  (
  D(x)\Div\vec{u}
  )
  \,dx.
 \end{split}
\end{equation}
%
Unlike Section \ref{sec:2} or \ref{sec:3}, the velocity field $\vec{u}$ do
not have a scalar potential in general. This yields that $\nabla
\vec{u}$ is not symmetric any more so the relations \eqref{eq:2.16},
\eqref{eq:2.15}, \eqref{eq:2.22} and \eqref{eq:2.18} do not hold. To
overcome this difficulty, we give the following commutator relation
between $\nabla\vec{u}$ and its transpose $\mathstrut^T\nabla\vec{u}$.

\begin{lemma}
 Let $\vec{u}$ be defined by \eqref{eq:4.1}. Then,
 \begin{equation}
  \label{eq:4.14}
  \nabla\vec{u} -\mathstrut^T\nabla \vec{u}
   =
   \frac{1}{\pi(x,t)}
   (\nabla\pi(x,t)\otimes\vec{u}-\vec{u}\otimes\nabla\pi(x,t)).
 \end{equation}
\end{lemma}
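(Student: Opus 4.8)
The plan is to exploit the fact that, although $\vec{u}$ itself fails to be a gradient when $\pi$ is not constant, the product $\pi(x,t)\vec{u}$ is one: by the very definition in \eqref{eq:4.1}, $\pi(x,t)\vec{u}=-\nabla\psi$ where $\psi=\psi(x,t):=D(x)\log f(x,t)+\phi(x)$ is a genuine periodic scalar function (here we use $f>0$), which by the regularity in Definition~\ref{def:1.1} is smooth enough in $x$ that its spatial Hessian is symmetric. First I would record the componentwise identity $u^k=-\pi^{-1}\psi_{x_k}$, which also gives $\psi_{x_k}=-\pi\,u^k$.

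Next I would simply differentiate and substitute. Differentiating $u^k=-\pi^{-1}\psi_{x_k}$ gives
\begin{equation*}
 u^k_{x_l}
 =
 -\frac{1}{\pi}\psi_{x_k x_l}
 +\frac{\pi_{x_l}}{\pi^2}\psi_{x_k}
 =
 -\frac{1}{\pi}\psi_{x_k x_l}
 -\frac{\pi_{x_l}}{\pi}u^k,
\end{equation*}
where in the second equality I replaced $\psi_{x_k}$ by $-\pi u^k$. Interchanging the roles of $k$ and $l$ yields the analogous formula for $u^l_{x_k}$, and subtracting the two relations cancels the Hessian term, since $\psi_{x_k x_l}=\psi_{x_l x_k}$:
\begin{equation*}
 u^k_{x_l}-u^l_{x_k}
 =
 \frac{1}{\pi}\left(\pi_{x_k}u^l-\pi_{x_l}u^k\right).
\end{equation*}
Finally I would reassemble this into matrix form in the paper's conventions: the $(k,l)$ entry of $\nabla\vec{u}-\mathstrut^T\nabla\vec{u}$ is $u^k_{x_l}-u^l_{x_k}$, while the $(k,l)$ entry of $\nabla\pi(x,t)\otimes\vec{u}-\vec{u}\otimes\nabla\pi(x,t)$ is $\pi_{x_k}u^l-u^k\pi_{x_l}$; matching the two gives exactly \eqref{eq:4.14}.

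There is essentially no analytic obstacle here — the computation is elementary — so the only point requiring care is bookkeeping: one must keep the row/column index convention for $\nabla\vec{u}$ and for $\otimes$ consistent throughout, so that the transpose and the skew part of the tensor product land on the correct side, and one must invoke the regularity of $f$ (hence of $\psi$) to justify equality of the mixed second partials of $\psi$. It is worth emphasizing in the statement of the proof that this skew-symmetric term $\pi^{-1}(\nabla\pi\otimes\vec{u}-\vec{u}\otimes\nabla\pi)$ is precisely the obstruction replacing the symmetry of $\nabla\vec{u}$ used in Sections~\ref{sec:2}--\ref{sec:3}, and that it is controlled by $\|\nabla\pi\|_{L^\infty}$, which is why the smallness hypothesis on $\|\nabla\pi\|_{L^\infty(\Omega\times[0,\infty))}$ in \eqref{cond_par:thm4} will be needed in the subsequent energy estimates.
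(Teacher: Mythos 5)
Your proposal is correct and follows essentially the same route as the paper: differentiate the defining relation $\pi(x,t)\vec{u}=-\nabla\left(D(x)\log f+\phi(x)\right)$ (the paper writes it as $\pi u^k=-(D(x)\log\rho)_{x_k}$, which has the same gradient), antisymmetrize in $k,l$ so the symmetric Hessian cancels, and read off the skew tensor-product term with the paper's index conventions. The only cosmetic difference is that you apply the product rule to $u^k=-\pi^{-1}\psi_{x_k}$ while the paper differentiates the product $\pi u^k$ directly; the content is identical.
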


\begin{proof}
 We denote $\vec{u}=(u^k)_{k}$. Since
 $\pi(x,t)u^k=-(D(x)\log\rho)_{x_k}$ for $k=1,2,\ldots,n$, by taking a
 derivative with respect to $x_l$, we have,
 \begin{equation}
  \pi_{x_l}(x,t)u^k
   +
   \pi(x,t)u^{k}_{x_l}
   =-(D(x)\log\rho)_{x_kx_l}.
 \end{equation}
 Thus,
 \begin{equation}
  \pi(x,t)u^{k}_{x_l}
   -
   \pi(x,t)u^{l}_{x_k}
   =
   -
   \pi_{x_l}(x,t)u^k
   +
   \pi_{x_k}(x,t)u^l
   =
   (
   \nabla\pi(x,t)\otimes\vec{u}
   -
   \vec{u}\otimes\nabla\pi(x,t))_{k,l},
 \end{equation}
 hence this yields \eqref{eq:4.14}.
\end{proof}

Next, we compute $\vec{u}\cdot\nabla(\vec{u}\cdot\nabla\phi(x))$ and
$\vec{u}\cdot\nabla(D(x)\Div\vec{u})$ in equations \eqref{eq:4.57}. Note
that, we cannot use \eqref{eq:2.16} and \eqref{eq:2.22} anymore, we employ
\eqref{eq:4.14} instead. We first calculate,
$\vec{u}\cdot\nabla(\vec{u}\cdot\nabla\phi(x))$.

\begin{lemma}
 Let $\vec{u}$ be defined by \eqref{eq:4.1}. Then, we obtain,
 \begin{equation}
  \label{eq:4.25}
  \begin{split}
   \vec{u}\cdot\nabla(\vec{u}\cdot\nabla\phi(x))
   &=
   ((\nabla^2\phi(x))\vec{u}\cdot\vec{u})
   +
   \frac{1}{2}\nabla |\vec{u}|^2\cdot\nabla\phi(x)
   \\
   &\qquad
   +
   \frac{1}{\pi(x,t)}((\nabla\pi(x,t)\cdot\nabla\phi(x))|\vec{u}|^2
   -
   (\vec{u}\cdot\nabla\pi(x,t))(\vec{u}\cdot\nabla\phi(x))
   ).
  \end{split}
 \end{equation}
\end{lemma}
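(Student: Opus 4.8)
The plan is to carry out a direct index computation, exactly parallel to the derivation of \eqref{eq:2.15} in Lemma~\ref{lem:2.6}, but replacing the now-unavailable symmetry of $\nabla\vec{u}$ with the commutator identity \eqref{eq:4.14}. Writing $\vec{u}=(u^l)_l$, I would first expand
\begin{equation*}
\vec{u}\cdot\nabla(\vec{u}\cdot\nabla\phi(x))
=\sum_{k,l}u^l\bigl(u^k\phi_{x_k}(x)\bigr)_{x_l}
=\sum_{k,l}\phi_{x_kx_l}(x)u^lu^k
+\sum_{k,l}u^k_{x_l}u^l\phi_{x_k}(x),
\end{equation*}
so that the first sum is precisely $((\nabla^2\phi(x))\vec{u}\cdot\vec{u})$, and the whole task reduces to rewriting $\sum_{k,l}u^k_{x_l}u^l\phi_{x_k}(x)$.

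Next I would invoke \eqref{eq:4.14} in the pointwise form $u^k_{x_l}=u^l_{x_k}+\frac{1}{\pi(x,t)}\bigl(\pi_{x_k}(x,t)u^l-\pi_{x_l}(x,t)u^k\bigr)$ and substitute it into that sum. The piece coming from $u^l_{x_k}$ can be handled as in \eqref{eq:2.16}, since $\sum_l u^l_{x_k}u^l=\tfrac12(|\vec{u}|^2)_{x_k}$, giving $\sum_{k,l}u^l_{x_k}u^l\phi_{x_k}(x)=\tfrac12\nabla|\vec{u}|^2\cdot\nabla\phi(x)$. For the correction piece I would split
\begin{equation*}
\frac{1}{\pi(x,t)}\sum_{k,l}\bigl(\pi_{x_k}(x,t)u^l-\pi_{x_l}(x,t)u^k\bigr)u^l\phi_{x_k}(x)
=\frac{1}{\pi(x,t)}\Bigl(\bigl(\nabla\pi(x,t)\cdot\nabla\phi(x)\bigr)|\vec{u}|^2-\bigl(\vec{u}\cdot\nabla\pi(x,t)\bigr)\bigl(\vec{u}\cdot\nabla\phi(x)\bigr)\Bigr),
\end{equation*}
recognizing $\sum_l u^l u^l=|\vec{u}|^2$ in the first term and $\sum_l \pi_{x_l}(x,t)u^l=\vec{u}\cdot\nabla\pi(x,t)$, $\sum_k u^k\phi_{x_k}(x)=\vec{u}\cdot\nabla\phi(x)$ in the second. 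Assembling the three contributions yields \eqref{eq:4.25}.

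There is no serious obstacle here; the only thing to be careful about is the bookkeeping of indices and signs when applying the commutator relation, since $\nabla\vec{u}$ is no longer symmetric and one cannot shortcut via \eqref{eq:2.16} directly on the mixed term. The analogous computation for $\vec{u}\cdot\nabla(D(x)\Div\vec{u})$ (needed for the remaining terms of \eqref{eq:4.57}) will follow the same pattern, again using \eqref{eq:4.14} in place of \eqref{eq:2.22}, and producing extra $\nabla\pi$-dependent commutator terms that will later be absorbed into the smallness conditions \eqref{cond_par:thm4}.
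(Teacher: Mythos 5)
Your proposal is correct and follows essentially the same route as the paper: expand in components to isolate $((\nabla^2\phi)\vec{u}\cdot\vec{u})$ plus the mixed term $\sum_{k,l}u^k_{x_l}u^l\phi_{x_k}$, then use the commutator identity \eqref{eq:4.14} to split off $\tfrac12\nabla|\vec{u}|^2\cdot\nabla\phi$ and the two $\nabla\pi$ correction terms. The paper merely writes the same splitting in tensor form (via $\mathstrut^T\nabla\vec{u}$ and $\nabla\pi\otimes\vec{u}-\vec{u}\otimes\nabla\pi$, equations \eqref{eq:4.23}--\eqref{eq:4.24}) rather than index-by-index, so there is no substantive difference.
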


\begin{proof}
 We denote $\vec{u}=(u^k)_k$. Then,
 \begin{equation}
  \label{eq:4.16}
 \begin{split}
   \vec{u}\cdot\nabla(\vec{u}\cdot\nabla\phi(x))
   &=
   \sum_{k,l}u^k(u^l\phi_{x_l}(x))_{x_k}
   \\
   &=
   \sum_{k,l}u^ku^l\phi_{x_kx_l}(x)
   +
   \sum_{k,l}u^ku^l_{x_k}\phi_{x_l}(x)
   \\
   &=
   ((\nabla^2\phi(x))\vec{u}\cdot\vec{u})
   +
   ((\nabla\vec{u})\vec{u}\cdot\nabla\phi(x)).
  \end{split}
 \end{equation}
 Using \eqref{eq:4.14}, we proceed,
 \begin{equation}
  \label{eq:4.23}
  \begin{split}
   ((\nabla\vec{u})\vec{u}\cdot\nabla\phi(x))
   &=
   ((\mathstrut^T\nabla\vec{u})\vec{u}\cdot\nabla\phi(x))
   +
   ((\nabla\vec{u}-\mathstrut^T\nabla\vec{u})\vec{u}\cdot\nabla\phi(x))
   \\
   &=
   ((\mathstrut^T\nabla\vec{u})\vec{u}\cdot\nabla\phi(x))
   +
   \frac{1}{\pi(x,t)}
   ((\nabla\pi(x,t)\otimes\vec{u}-\vec{u}\otimes\nabla\pi(x,t))
   \vec{u}\cdot\nabla\phi(x)).
  \end{split}
\end{equation}
 Since,
 \begin{equation}
  \label{eq:4.22}
  ((\mathstrut^T\nabla\vec{u})\vec{u}\cdot\nabla\phi(x))
   =
   \sum_{k,l}u^{l}_{x_k}u^l\phi_{x_k}(x)
   =
   \frac12
   \sum_{k}(|\vec{u}|^2)_{x_k}\phi_{x_k}(x)
   =
   \frac12
   \nabla(|\vec{u}|^2)\cdot\nabla\phi(x),
 \end{equation}
 and
 \begin{equation}
  \label{eq:4.24}
  ((\nabla\pi(x,t)\otimes\vec{u}-\vec{u}\otimes\nabla\pi(x,t))
   \vec{u}\cdot\nabla\phi(x))
   =
   (\nabla\pi(x,t)\cdot\nabla\phi(x))|\vec{u}|^2
   -
   (\vec{u}\cdot\nabla\pi(x,t))
   (\vec{u}\cdot\nabla\phi(x)),
 \end{equation}
 we obtain \eqref{eq:4.25} by using \eqref{eq:4.16}, \eqref{eq:4.23},
 \eqref{eq:4.22} and \eqref{eq:4.24}.
\end{proof}

In order to consider
$\vec{u}\cdot\nabla(D(x)\Div\vec{u})$ in \eqref{eq:4.57}, we next
reformulate $\vec{u}\cdot\nabla\Div\vec{u}$.

\begin{lemma}
 Let $\vec{u}$ be defined by \eqref{eq:4.1}. Then, we obtain,
 \begin{equation}
  \label{eq:4.31}
   \begin{split}
    \vec{u}\cdot\nabla\Div\vec{u}
    &=
    \frac{1}{2}\Div(\nabla|\vec{u}|^2)
    -
    |\nabla\vec{u}|^2
    \\
    &\qquad
    +
    \Div\left(
    \frac{1}{\pi(x,t)}
    (|\vec{u}|^2\nabla\pi(x,t)-(\vec{u}\cdot\nabla\pi(x,t))\vec{u})
    \right)
    \\
    &\qquad
    -
    \frac{1}{2\pi(x,t)}(\nabla(|\vec{u}^2|)\cdot\nabla\pi(x,t))
    +
    \frac{1}{\pi(x,t)}((\nabla\vec{u})\vec{u}\cdot\nabla\pi(x,t)).
   \end{split}
 \end{equation}
\end{lemma}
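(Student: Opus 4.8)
The plan is to prove \eqref{eq:4.31} by a purely pointwise (componentwise) computation, using the commutator relation \eqref{eq:4.14} to compensate for the fact that, unlike in Sections~\ref{sec:2}--\ref{sec:3}, $\nabla\vec{u}$ is no longer symmetric. Writing $\vec{u}=(u^k)_k$, I would start from the Leibniz identity
\[
\vec{u}\cdot\nabla\Div\vec{u}
=\sum_{k,l}u^l(u^k_{x_k})_{x_l}
=\sum_{k,l}(u^l u^k_{x_l})_{x_k}-\sum_{k,l}u^l_{x_k}u^k_{x_l}
=\Div\bigl((\nabla\vec{u})\vec{u}\bigr)-\sum_{k,l}u^l_{x_k}u^k_{x_l},
\]
where $(\nabla\vec{u})\vec{u}$ has $k$-th component $\sum_l u^k_{x_l}u^l$. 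The remaining work is to rewrite the divergence term and the scalar contraction, each by inserting $\mathstrut^T\nabla\vec{u}$ and correcting with \eqref{eq:4.14}.

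For the divergence term I would use \eqref{eq:4.14} to write $(\nabla\vec{u})\vec{u}=(\mathstrut^T\nabla\vec{u})\vec{u}+\tfrac{1}{\pi(x,t)}(\nabla\pi(x,t)\otimes\vec{u}-\vec{u}\otimes\nabla\pi(x,t))\vec{u}$. The symmetric part contributes $(\mathstrut^T\nabla\vec{u})\vec{u}=\tfrac12\nabla|\vec{u}|^2$, since its $k$-th component is $\sum_l u^l_{x_k}u^l=\tfrac12(|\vec{u}|^2)_{x_k}$; and with the convention $\vec{u}\otimes\vec{v}=(u^kv^l)_{k,l}$ one has $(\nabla\pi\otimes\vec{u})\vec{u}=|\vec{u}|^2\nabla\pi$ and $(\vec{u}\otimes\nabla\pi)\vec{u}=(\vec{u}\cdot\nabla\pi)\vec{u}$. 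Hence
\[
(\nabla\vec{u})\vec{u}=\tfrac12\nabla|\vec{u}|^2+\tfrac{1}{\pi(x,t)}\bigl(|\vec{u}|^2\nabla\pi(x,t)-(\vec{u}\cdot\nabla\pi(x,t))\vec{u}\bigr),
\]
and taking $\Div$ of this reproduces the first and second lines of \eqref{eq:4.31}.

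For the scalar contraction I would write $u^l_{x_k}=u^k_{x_l}-(u^k_{x_l}-u^l_{x_k})$ and use the componentwise form of \eqref{eq:4.14}, namely $u^k_{x_l}-u^l_{x_k}=\tfrac{1}{\pi(x,t)}(\pi_{x_k}u^l-\pi_{x_l}u^k)$, to obtain
\[
\sum_{k,l}u^l_{x_k}u^k_{x_l}
=|\nabla\vec{u}|^2-\tfrac{1}{\pi(x,t)}\sum_{k,l}(\pi_{x_k}u^l-\pi_{x_l}u^k)u^k_{x_l}
=|\nabla\vec{u}|^2-\tfrac{1}{\pi(x,t)}\bigl((\nabla\vec{u})\vec{u}\cdot\nabla\pi(x,t)\bigr)+\tfrac{1}{2\pi(x,t)}\bigl(\nabla|\vec{u}|^2\cdot\nabla\pi(x,t)\bigr),
\]
where once more $\sum_k u^k u^k_{x_l}=\tfrac12(|\vec{u}|^2)_{x_l}$ is used to evaluate one of the contractions. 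Substituting the last two displays into the first identity gives exactly \eqref{eq:4.31} (with $|\vec{u}^2|$ there read as $|\vec{u}|^2$). Every step is elementary; the only thing requiring care is consistent bookkeeping of index positions in $\nabla\vec{u}$ versus $\mathstrut^T\nabla\vec{u}$ and of the signs coming from \eqref{eq:4.14}, so I do not anticipate a genuine obstacle at this stage — the substantial work is deferred to the subsequent lemmas, where these identities are combined with \eqref{eq:4.57} and the Sobolev-type estimate of Lemma~\ref{lem:4.2} to close the differential inequality for $\tfrac{dF}{dt}$.
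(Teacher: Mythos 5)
Your proposal is correct and follows essentially the same route as the paper's proof: the same Leibniz expansion $\vec{u}\cdot\nabla\Div\vec{u}=\Div((\nabla\vec{u})\vec{u})-\sum_{k,l}u^l_{x_k}u^k_{x_l}$, followed by splitting both the divergence term and the quadratic contraction via the commutator relation \eqref{eq:4.14} and evaluating the resulting products exactly as in \eqref{eq:4.29}--\eqref{eq:4.30}. The only difference is notational (you handle the contraction componentwise, the paper phrases it as traces of $\mathstrut^T\nabla\vec{u}\,\nabla\vec{u}$ and $(\nabla\vec{u}-\mathstrut^T\nabla\vec{u})\nabla\vec{u}$), which does not change the argument.
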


\begin{proof}
 Again,  we denote $\vec{u}=(u^k)_k$. Then,
 \begin{equation}
   \label{eq:4.26}
    \begin{split}
     \vec{u}\cdot\nabla\Div\vec{u}
     &=\sum_{k,l}u^k(u^l_{x_l})_{x_k}
     \\
     &=
     \sum_{k,l}(u^ku^l_{x_k})_{x_l}
     -
     \sum_{k,l}u^k_{x_l}u^l_{x_k}
     \\
     &=
     \Div((\nabla\vec{u})\vec{u})
     -
     \tr((\nabla\vec{u})^2).
    \end{split}
 \end{equation}
 Using \eqref{eq:4.14}, we proceed,
 \begin{equation}
  \label{eq:4.27}
   \begin{split}
    \Div((\nabla\vec{u})\vec{u})
    &=
    \Div((\mathstrut^T\nabla\vec{u})\vec{u})
    +
    \Div((\nabla\vec{u}-\mathstrut^T\nabla\vec{u})\vec{u})
    \\
    &=
    \Div((\mathstrut^T\nabla\vec{u})\vec{u})
    +
    \Div\left(
    \frac{1}{\pi(x,t)}
    (\nabla\pi(x,t)\otimes\vec{u}-\vec{u}\otimes\nabla\pi(x,t))
    \vec{u}\right),
   \end{split}
 \end{equation}
 and
 \begin{equation}
  \label{eq:4.28}
   \begin{split}
    \tr((\nabla\vec{u})^2)
    &=
    \tr(\mathstrut^T\nabla\vec{u}\nabla\vec{u})
    +
    \tr((\nabla\vec{u}-\mathstrut^T\nabla\vec{u})\nabla\vec{u})
    \\
    &=
    |\nabla\vec{u}|^2
    +
    \tr
    \left(
    \frac{1}{\pi(x,t)}
    (\nabla\pi(x,t)\otimes\vec{u}-\vec{u}\otimes\nabla\pi(x,t))
    \nabla\vec{u}
    \right).
   \end{split}
 \end{equation}
 Since,
 \begin{equation}
  \label{eq:4.29}
   (\mathstrut^T\nabla\vec{u})\vec{u}=\frac12\nabla(|\vec{u}|^2),\qquad
   (\nabla\pi(x,t)\otimes\vec{u})\vec{u}=|\vec{u}|^2\nabla\pi(x,t),\qquad
   (\vec{u}\otimes\nabla\pi(x,t))\vec{u}=(\vec{u}\cdot\nabla\pi(x,t))\vec{u},
 \end{equation}
 and
 \begin{equation}
  \label{eq:4.30}
   \tr((\nabla\pi(x,t)\otimes\vec{u})
    \nabla\vec{u})
    =
    \frac12(\nabla(|\vec{u}|^2)\cdot\nabla\pi(x,t))
    ,\qquad
    \tr((\vec{u}\otimes\nabla\pi(x,t))
    \nabla\vec{u})
    =
    ((\nabla\vec{u})\vec{u}\cdot\nabla\pi(x,t)),
 \end{equation}
 we obtain \eqref{eq:4.31}, by using \eqref{eq:4.26}, \eqref{eq:4.27},
 \eqref{eq:4.28}, \eqref{eq:4.29}, and \eqref{eq:4.30}.
 
\end{proof}

Now, we are in a position to compute the first term of the right
hand side of \eqref{eq:4.7}.

\begin{lemma}
 Let $f$ be a solution of \eqref{eq:4.1} and let $\vec{u}$ be given by
 \eqref{eq:4.1}. Then,
 \begin{equation}
  \label{eq:4.21}
   \begin{split}
    2\int_\Omega D(x)\vec{u}\cdot\nabla\rho_t\Feq\,dx
    &=
    2\int_\Omega
    ((\nabla^2\phi(x))\vec{u}\cdot\vec{u})f\,dx
    -
    \int_\Omega
    \pi(x,t)
    \vec{u}\cdot\nabla |\vec{u}|^2 f
    \,dx
    +
    2\int_\Omega D(x)|\nabla \vec{u}|^2f\,dx
    \\
    &\qquad
    -
    2\int_\Omega
    (1+\log\rho)\vec{u}\cdot\nabla D(x)f_t\,dx
    +
    2\int_\Omega
    \vec{u}\cdot\nabla\left(\log f\vec{u}\cdot\nabla D(x)\right) f
    \,dx
    \\
    &\qquad
    -
    \int_\Omega
    (\log f-1)
    \nabla |\vec{u}|^2\cdot\nabla D(x) f
    \,dx
    -
    2\int_\Omega \vec{u}\cdot\nabla D(x)\Div\vec{u}f\,dx
    \\
    &\qquad
    -2
    \int_\Omega
    (\log f-1)
    \frac{1}{\pi(x,t)}
    |\vec{u}|^2\nabla\pi(x,t)\cdot \nabla D(x)
    f\,dx
    \\
    &\qquad
    +2
    \int_\Omega
    (\log f-1)
    \frac{1}{\pi(x,t)}
    (\vec{u}\cdot\nabla\pi(x,t))(\vec{u}\cdot\nabla D(x))
    f\,dx
    \\
    &\qquad
    +
    \int_\Omega
    \frac{D(x)}{\pi(x,t)}((\nabla|\vec{u}|^2)\cdot\nabla\pi(x,t))
    f\,dx
    \\
    &\qquad
    -
    2
    \int_\Omega
    \frac{D(x)}{\pi(x,t)}((\nabla\vec{u})\vec{u}\cdot\nabla\pi(x,t))
    f\,dx,
   \end{split}
 \end{equation}
 where $\Feq$ is given by \eqref{eq:1.5}.
\end{lemma}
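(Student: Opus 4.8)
The plan is to follow the structure of the proof of Lemma~\ref{lem:3.7} (the constant-mobility case), but now carrying along the antisymmetric contributions forced by the fact that $\vec{u}$ is no longer a gradient, which have already been isolated in \eqref{eq:4.25} and \eqref{eq:4.31} through the commutator identity \eqref{eq:4.14}. The starting point is the expansion \eqref{eq:4.57} of $2\int_\Omega D(x)\vec{u}\cdot\nabla\rho_t\Feq\,dx$, obtained by inserting \eqref{eq:4.11}.

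First I would use \eqref{eq:4.12} to cancel the term $-2\int_\Omega \pi(x,t)f_t|\vec{u}|^2\,dx$ against $2\int_\Omega f\,\vec{u}\cdot\nabla(\pi(x,t)|\vec{u}|^2)\,dx$ in \eqref{eq:4.57}, exactly as in Lemma~\ref{lem:3.7}. What then remains are the term $-2\int_\Omega f_t(1+\log\rho)\vec{u}\cdot\nabla D(x)\,dx$ and the term $2\int_\Omega f\,\vec{u}\cdot\nabla(\log f\,\vec{u}\cdot\nabla D(x))\,dx$, which already appear verbatim in \eqref{eq:4.21}, together with the two structural integrals $2\int_\Omega f\,\vec{u}\cdot\nabla(\vec{u}\cdot\nabla\phi(x))\,dx$ and $-2\int_\Omega f\,\vec{u}\cdot\nabla(D(x)\Div\vec{u})\,dx$, which are the ones that require the new identities.

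Next I would process $-2\int_\Omega f\,\vec{u}\cdot\nabla(D(x)\Div\vec{u})\,dx$. Expanding $\vec{u}\cdot\nabla(D\Div\vec{u})=(\vec{u}\cdot\nabla D)\Div\vec{u}+D\,\vec{u}\cdot\nabla\Div\vec{u}$, the first piece is already one of the terms of \eqref{eq:4.21}; into the second I substitute \eqref{eq:4.31}. The two divergence-form pieces of \eqref{eq:4.31}, namely $\tfrac{1}{2}\Div(\nabla|\vec{u}|^2)$ and $\Div\big(\tfrac{1}{\pi}(|\vec{u}|^2\nabla\pi-(\vec{u}\cdot\nabla\pi)\vec{u})\big)$, I integrate by parts using the periodic boundary condition; in each resulting integral I then replace $D(x)\nabla f$ using \eqref{eq:4.8}, i.e. $D(x)\nabla f=-\pi(x,t)f\vec{u}-f\log f\,\nabla D(x)-f\nabla\phi(x)$. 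The $-\pi f\vec{u}$ part produces $-\int_\Omega\pi(x,t)\vec{u}\cdot\nabla|\vec{u}|^2 f\,dx$ from the Laplacian piece, and two $|\vec{u}|^2(\vec{u}\cdot\nabla\pi)$ terms from the second divergence piece that cancel against each other; the combination $-f\log f\,\nabla D+f\nabla D$ produces the $(\log f-1)$-weighted terms $-\int_\Omega(\log f-1)\nabla|\vec{u}|^2\cdot\nabla D(x)f\,dx$, $-2\int_\Omega\tfrac{1}{\pi}(\log f-1)|\vec{u}|^2(\nabla\pi\cdot\nabla D)f\,dx$ and $+2\int_\Omega\tfrac{1}{\pi}(\log f-1)(\vec{u}\cdot\nabla\pi)(\vec{u}\cdot\nabla D)f\,dx$; while the $-f\nabla\phi$ part produces three auxiliary $\nabla\phi$-weighted integrals that I record for cancellation in the last step. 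The three non-divergence pieces $-|\nabla\vec{u}|^2$, $-\tfrac{1}{2\pi}\nabla|\vec{u}|^2\cdot\nabla\pi$ and $\tfrac{1}{\pi}(\nabla\vec{u})\vec{u}\cdot\nabla\pi$ of \eqref{eq:4.31}, multiplied by $-2fD$, give directly the $2\int_\Omega D|\nabla\vec{u}|^2 f\,dx$, $\int_\Omega\tfrac{D}{\pi}(\nabla|\vec{u}|^2\cdot\nabla\pi)f\,dx$ and $-2\int_\Omega\tfrac{D}{\pi}((\nabla\vec{u})\vec{u}\cdot\nabla\pi)f\,dx$ terms of \eqref{eq:4.21}.

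Finally I would substitute \eqref{eq:4.25} into $2\int_\Omega f\,\vec{u}\cdot\nabla(\vec{u}\cdot\nabla\phi(x))\,dx$: this yields the $2\int_\Omega((\nabla^2\phi)\vec{u}\cdot\vec{u})f\,dx$ term of \eqref{eq:4.21}, an integral $\int_\Omega f\nabla|\vec{u}|^2\cdot\nabla\phi\,dx$, and the two terms $\tfrac{2}{\pi}(\nabla\pi\cdot\nabla\phi)|\vec{u}|^2 f$ and $-\tfrac{2}{\pi}(\vec{u}\cdot\nabla\pi)(\vec{u}\cdot\nabla\phi)f$. The closing observation is that precisely these three $\nabla\phi$-carrying integrals cancel against the three auxiliary $\nabla\phi$-weighted integrals recorded in the previous step, so that no $\nabla\phi$ term survives besides the Hessian one; collecting everything that remains gives \eqref{eq:4.21}. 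I expect the only real difficulty to be the bookkeeping: tracking the $(\log f-1)$ factors (arising from $-f\log f\,\nabla D+f\nabla D$ after applying \eqref{eq:4.8}) and the antisymmetric $\pi$-commutator contributions through each integration by parts, and verifying that all the $|\vec{u}|^2(\vec{u}\cdot\nabla\pi)$-type and $\nabla\phi$-type terms indeed cancel in pairs. There is no analytic subtlety beyond the identities \eqref{eq:4.8}, \eqref{eq:4.11}, \eqref{eq:4.12}, \eqref{eq:4.14}, \eqref{eq:4.25}, \eqref{eq:4.31} and integration by parts under the periodic boundary condition.
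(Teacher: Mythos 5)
Your proposal is correct and follows essentially the same route as the paper's proof: expand via \eqref{eq:4.11}, cancel with \eqref{eq:4.12}, insert the commutator-corrected identities \eqref{eq:4.25} and \eqref{eq:4.31}, integrate the divergence-form pieces by parts, replace $D(x)\nabla f$ via \eqref{eq:4.8}, and note the pairwise cancellation of the $\nabla\phi$-weighted and $|\vec{u}|^2(\vec{u}\cdot\nabla\pi)$-type terms. The only difference is bookkeeping order (the paper substitutes \eqref{eq:4.25} and \eqref{eq:4.31} before handling the two divergence integrals), which does not affect the argument.
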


\begin{proof}
 First, we use \eqref{eq:4.11} and obtain,
 \begin{equation*}
  \begin{split}
   &\qquad
   2\int_\Omega D(x)\vec{u}\cdot\nabla\rho_t\Feq\,dx \\
   &=
   -2\int_\Omega
   \pi(x,t)
   |\vec{u}|^2f_t\,dx
   -2\int_\Omega
   (1+\log\rho)\vec{u}\cdot\nabla D(x)f_t\,dx
   +
   2\int_\Omega
   \vec{u}\cdot \nabla (\pi(x,t)|\vec{u}|^2) f\,dx \\
   &\qquad
   +
   2\int_\Omega
   \vec{u}\cdot\nabla\left(\log f\vec{u}\cdot\nabla D(x)\right) f
   \,dx
   +
   2\int_\Omega
   \vec{u}\cdot \nabla (\vec{u}\cdot\nabla\phi(x)) f\,dx
   -
   2\int_\Omega
   \vec{u}\cdot\nabla(D(x)\Div\vec{u}) f
   \,dx.
  \end{split}
 \end{equation*}
 Using \eqref{eq:4.12}, the first and the third terms of the right-hand
 side of the above relation are canceled, hence,
 \begin{equation}
  \label{eq:4.13}
  \begin{split}
   &\qquad
   2\int_\Omega D(x)\vec{u}\cdot\nabla\rho_t\Feq\,dx \\
   &=
   -2\int_\Omega
   (1+\log\rho)\vec{u}\cdot\nabla D(x)f_t\,dx
   +
   2\int_\Omega
   \vec{u}\cdot\nabla\left(\log f\vec{u}\cdot\nabla D(x)\right) f
   \,dx \\
   &\qquad
   +
   2\int_\Omega
   \vec{u}\cdot \nabla (\vec{u}\cdot\nabla\phi(x)) f\,dx
   -
   2\int_\Omega
   \vec{u}\cdot\nabla(D(x)\Div\vec{u}) f
   \,dx.
  \end{split}
 \end{equation}

 Using \eqref{eq:4.25} and \eqref{eq:4.31} in \eqref{eq:4.13}, we have that,
 \begin{equation}
  \label{eq:4.19}
   \begin{split}
    &\qquad
    2\int_\Omega D(x)\vec{u}\cdot\nabla\rho_t\Feq\,dx \\
    &=
    -
    2\int_\Omega
    (1+\log\rho)\vec{u}\cdot\nabla D(x)f_t\,dx
    +
    2\int_\Omega
    \vec{u}\cdot\nabla\left(\log f\vec{u}\cdot\nabla D(x)\right) f
    \,dx
    \\
    &\qquad
    +
    2\int_\Omega
    ((\nabla^2\phi(x))\vec{u}\cdot\vec{u})f\,dx
    +
    \int_\Omega\nabla|\vec{u}|^2\cdot\nabla\phi(x) f\,dx
    \\
    &\qquad
    +
    2\int_\Omega
    \frac{1}{\pi(x,t)}(\nabla\pi(x,t)\cdot\nabla\phi(x))|\vec{u}|^2f
    \,dx
    -
    2\int_\Omega
    \frac{1}{\pi(x,t)}
    (\vec{u}\cdot\nabla\pi(x,t))(\vec{u}\cdot\nabla\phi(x))
    f\,dx
    \\
    &\qquad
    -
    \int_\Omega
    D(x)
    \Div(\nabla |\vec{u}|^2)
    f
    \,dx
    +
    2\int_\Omega D(x)|\nabla \vec{u}|^2f\,dx
    \\
    &\qquad
    -2
    \int_\Omega
    D(x)
    \Div\left(
    \frac{1}{\pi(x,t)}
    (|\vec{u}|^2\nabla\pi(x,t)-(\vec{u}\cdot\nabla\pi(x,t))\vec{u})
    \right)
    f\,dx
    \\
    &\qquad
    +
    \int_\Omega
    \frac{D(x)}{\pi(x,t)}(\nabla(|\vec{u}|^2)\cdot\nabla\pi(x,t))
    f\,dx
    -
    2
    \int_\Omega
    \frac{D(x)}{\pi(x,t)}((\nabla\vec{u})\vec{u}\cdot\nabla\pi(x,t))
    f\,dx
    \\
    &\qquad
    -
    2\int_\Omega \vec{u}\cdot\nabla D(x)\Div\vec{u}f\,dx.
   \end{split}
 \end{equation}

 To calculate the 7th term of the right-hand side of \eqref{eq:4.19},
we  apply integration by parts together with the periodic boundary
 condition, and, thus obtain,
 \begin{equation*}
  -
   \int_\Omega
   D(x)
   \Div(\nabla |\vec{u}|^2)
   f
   \,dx
   =
   \int_\Omega
   D(x)
   \nabla |\vec{u}|^2\cdot\nabla f
   \,dx
   +
   \int_\Omega
   \nabla |\vec{u}|^2\cdot\nabla D(x) f
   \,dx.
    \end{equation*}
Using \eqref{eq:4.8} in the above relation, we have that,
 \begin{equation}
  \label{eq:4.20}
 \begin{split}
  -
  \int_\Omega
  D(x)
  \Div(\nabla |\vec{u}|^2)
  f
  \,dx
  &=
  -
  \int_\Omega
  \pi(x,t)
  \vec{u}\cdot\nabla |\vec{u}|^2 f
  \,dx
  -
  \int_\Omega
  \nabla |\vec{u}|^2\cdot\nabla\phi(x) f
  \,dx
  \\
  &\qquad
  -
  \int_\Omega
  (\log f-1)
  \nabla |\vec{u}|^2\cdot\nabla D(x) f
  \,dx.
 \end{split}
 \end{equation}

 Next, we compute the 9th term of the right-hand side of
 \eqref{eq:4.19}. Applying integration by parts together with
 \eqref{eq:4.1} and the periodic boundary condition, we have that,
 \begin{equation*}
  \begin{split}
   &\qquad
   -2
   \int_\Omega
   D(x)
   \Div\left(
   \frac{1}{\pi(x,t)}
   (|\vec{u}|^2\nabla\pi(x,t)-(\vec{u}\cdot\nabla\pi(x,t))\vec{u})
   \right)
   f\,dx
   \\
   &=
   2
   \int_\Omega
   D(x)
   \left(
   \frac{1}{\pi(x,t)}
   (|\vec{u}|^2\nabla\pi(x,t)-(\vec{u}\cdot\nabla\pi(x,t))\vec{u})
   \right)
   \cdot
   \nabla f\,dx
   \\
   &\qquad
   +2
   \int_\Omega
   \left(
   \frac{1}{\pi(x,t)}
   (|\vec{u}|^2\nabla\pi(x,t)-(\vec{u}\cdot\nabla\pi(x,t))\vec{u})
   \right)
   \cdot
   \nabla D(x)
   f\,dx.
  \end{split}
\end{equation*}
 Using \eqref{eq:4.8}, we obtain,
 \begin{equation*}
  \begin{split}
   &\qquad
   D(x)
   \left(
   \frac{1}{\pi(x,t)}
   (|\vec{u}|^2\nabla\pi(x,t)-(\vec{u}\cdot\nabla\pi(x,t))\vec{u})
   \right)
   \cdot
   \nabla f
   \\
   &=
   -
   \frac{1}{\pi(x,t)}|\vec{u}|^2(\nabla\pi(x,t)\cdot\nabla D(x))f\log f
   -
   \frac{1}{\pi(x,t)}|\vec{u}|^2(\nabla\pi(x,t)\cdot\nabla \phi(x))f
   \\
   &\qquad
   +
   \frac{1}{\pi(x,t)}(\vec{u}\cdot\nabla\pi(x,t))(\vec{u}\cdot\nabla D(x))f\log f
   +
   \frac{1}{\pi(x,t)}(\vec{u}\cdot\nabla\pi(x,t))(\vec{u}\cdot\nabla\phi(x))f.
  \end{split}
 \end{equation*}
 Hence,  we have,
 \begin{equation}
 \label{eq:4.32}
  \begin{split}
   &\qquad
   -2
   \int_\Omega
   D(x)
   \Div\left(
   \frac{1}{\pi(x,t)}
   (|\vec{u}|^2\nabla\pi(x,t)-(\vec{u}\cdot\nabla\pi(x,t))\vec{u})
   \right)
   f\,dx
   \\
   &=
   2\int_\Omega
   \frac{1}{\pi(x,t)}(\vec{u}\cdot\nabla\pi(x,t))(\vec{u}\cdot\nabla\phi(x))f
   \,dx
   -
   2\int_\Omega
  \frac{1}{\pi(x,t)}|\vec{u}|^2(\nabla\pi(x,t)\cdot\nabla \phi(x))f
   \,dx
   \\
   &\qquad
   -2
   \int_\Omega
   (\log f-1)
   \frac{1}{\pi(x,t)}
   |\vec{u}|^2\nabla\pi(x,t)\cdot \nabla D(x)
   f\,dx
   \\
   &\qquad
   +2
   \int_\Omega
   (\log f-1)
   \frac{1}{\pi(x,t)}
   (\vec{u}\cdot\nabla\pi(x,t))(\vec{u}\cdot\nabla D(x))
   f\,dx.
  \end{split}
 \end{equation}
 Using \eqref{eq:4.20} and \eqref{eq:4.32} in \eqref{eq:4.19}, we
 obtain the desired result \eqref{eq:4.21}.
\end{proof}

Next, we further compute in \eqref{eq:4.21},

\begin{lemma}
 Let $\vec{u}$ be given by \eqref{eq:4.1}. Then,
 \begin{equation}
  \label{eq:4.43}
  \begin{split}
   &\qquad
   \int_\Omega
   \vec{u}\cdot\nabla\left(\log f\vec{u}\cdot\nabla D(x)\right) f\,dx
   \\
   &=
   \int_\Omega
   \frac{\pi(x,t)}{D(x)}|\vec{u}|^2
   \log f\left(\vec{u}\cdot\nabla D(x)\right) f\,dx
   +
   \int_\Omega
   \frac{1}{D(x)}
   (\log f)^2\left(\vec{u}\cdot\nabla D(x)\right)^2 f\,dx
   \\
   &\qquad
   +
   \int_\Omega
   \frac{1}{D(x)}
   \log f\left(\vec{u}\cdot\nabla D(x)\right)\left(\vec{u}\cdot\nabla \phi(x)\right) f\,dx
   -
   \int_\Omega
   \log f\left(\vec{u}\cdot\nabla D(x)\right)\Div\vec{u} f\,dx.
  \end{split}
 \end{equation}
\end{lemma}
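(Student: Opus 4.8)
The plan is to mirror the proof of the constant-mobility analogue, the identity \eqref{eq:3.23}, since \eqref{eq:4.43} differs from it only in the weight attached to the $|\vec{u}|^2$ term, which now picks up a factor of $\pi(x,t)$.

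First I would apply integration by parts to the left-hand side of \eqref{eq:4.43}, using the periodic boundary condition in \eqref{eq:4.1} to discard the boundary contribution, obtaining
\[
\int_\Omega \vec{u}\cdot\nabla\bigl(\log f\,\vec{u}\cdot\nabla D(x)\bigr)f\,dx
= -\int_\Omega \log f\,\bigl(\vec{u}\cdot\nabla D(x)\bigr)\Div(f\vec{u})\,dx .
\]
Next I would expand $\Div(f\vec{u})=\vec{u}\cdot\nabla f+f\Div\vec{u}$ and substitute the velocity relation \eqref{eq:4.8}, namely $\pi(x,t)f\vec{u}=-D(x)\nabla f-f\log f\,\nabla D(x)-f\nabla\phi(x)$, to express $\vec{u}\cdot\nabla f$ in terms of $\vec{u}$ alone; this yields
\[
\Div(f\vec{u})
= -\frac{\pi(x,t)}{D(x)}|\vec{u}|^2 f
- \frac{1}{D(x)}\log f\,(\vec{u}\cdot\nabla D(x))f
- \frac{1}{D(x)}(\vec{u}\cdot\nabla\phi(x))f
+ f\Div\vec{u} .
\]
Inserting this into the previous display and multiplying through by $-\log f\,(\vec{u}\cdot\nabla D(x))$ produces, after collecting terms, exactly the four integrals on the right-hand side of \eqref{eq:4.43}.

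There is no genuine obstacle here: the argument is a purely algebraic manipulation once the integration by parts is in place, and the regularity needed — in particular that $\log f$ is a bounded $C^{1}$ factor, so that the integration by parts and all the products make sense — is guaranteed by the maximum principle (Proposition~\ref{prop:1.5}), which bounds $f$ away from $0$ and $\infty$, together with the classical regularity of $f$ (Proposition~\ref{prop:1.1}). The only point requiring minor care is the sign and coefficient bookkeeping in the final step, and specifically the observation that the variable mobility changes the coefficient of the $|\vec{u}|^{2}$-term from $1/D(x)$, as in \eqref{eq:3.23}, to $\pi(x,t)/D(x)$ here; every other term is identical to the constant-mobility case.
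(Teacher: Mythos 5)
Your proposal is correct and follows essentially the same route as the paper's proof: integrate by parts using the periodic boundary condition to get $-\int_\Omega \log f\,(\vec{u}\cdot\nabla D(x))\Div(f\vec{u})\,dx$, then expand $\Div(f\vec{u})$ via the velocity relation \eqref{eq:4.8}, which is exactly the paper's chain \eqref{eq:4.44}--\eqref{eq:4.45}. The only difference from the constant-mobility computation \eqref{eq:3.23} is the factor $\pi(x,t)/D(x)$ on the $|\vec{u}|^2$ term, which you identify correctly.
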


\begin{proof}
 Applying the integration by parts to the left hand side of \eqref{eq:4.43}
 together with the periodic boundary condition \eqref{eq:4.1}, we
 obtain that,
 \begin{equation}
  \label{eq:4.44}
  \int_\Omega
   \vec{u}\cdot\nabla\left(\log f\vec{u}\cdot\nabla D(x)\right) f\,dx
   =
   -
   \int_\Omega
   \log f\left(\vec{u}\cdot\nabla D(x)\right) \Div(f\vec{u})\,dx.
 \end{equation}
 Using direct computation together with \eqref{eq:4.8}, we have,
 \begin{equation}
  \label{eq:4.45}
  \begin{split}
   \Div(f\vec{u})
   &=
   \vec{u}\cdot\nabla f
   +
   f\Div\vec{u} \\
   &=
   -
   \frac{\pi(x,t)}{D(x)}|\vec{u}|^2f
   -
   \frac{1}{D(x)}\log f(\vec{u}\cdot\nabla D(x))f
   -
   \frac{1}{D(x)}(\vec{u}\cdot\nabla \phi(x))f
   +
   f\Div\vec{u}.
  \end{split}
 \end{equation}
 Combining \eqref{eq:4.44} and \eqref{eq:4.45}, we arrive at
 \eqref{eq:4.43}.
\end{proof}

Now, combining \eqref{eq:4.7}, \eqref{eq:4.12}, \eqref{eq:4.21}, and
\eqref{eq:4.43}, we obtain the following energy law.

\begin{proposition}
 Let $f$ be a solution of \eqref{eq:4.1},  and let $\vec{u}$ be given
 as in
 \eqref{eq:4.1}. Then,
 \begin{equation}
  \label{eq:4.33}
   \begin{split}
    \frac{d^2F}{dt^2}[f](t)
    &=
    2\int_\Omega
    ((\nabla^2\phi(x))\vec{u}\cdot\vec{u})f\,dx
    +
    2\int_\Omega D(x)|\nabla \vec{u}|^2f\,dx
    \\
    &\qquad
    -
    \int_\Omega
    (\log f-1)
    \nabla |\vec{u}|^2\cdot\nabla D(x) f
    \,dx
    -
    2
    \int_\Omega
    (1+\log f)
    \vec{u}\cdot\nabla D(x)\Div\vec{u} f\,dx
    \\
    &\qquad
    +
    2
    \int_\Omega
    \frac{\pi(x,t)}{D(x)}|\vec{u}|^2
    \log f\left(\vec{u}\cdot\nabla D(x)\right) f\,dx
    +
    2
     \int_\Omega
    \frac{1}{D(x)}
    (\log f)^2\left(\vec{u}\cdot\nabla D(x)\right)^2 f\,dx
    \\
    &\qquad
    +
    2\int_\Omega
   \frac{1}{D(x)}
    \log f\left(\vec{u}\cdot\nabla D(x)\right)\left(\vec{u}\cdot\nabla \phi(x)\right) f\,dx
    \\
    &\qquad
    +
    \int_\Omega\pi_t(x,t)|\vec{u}|^2f\,dx
    +
    \int_\Omega
    |\vec{u}|^2\vec{u}\cdot\nabla \pi(x,t) f
    \,dx
    \\
    &\qquad
    -2
    \int_\Omega
    (\log f-1)
    \frac{1}{\pi(x,t)}
    |\vec{u}|^2\nabla\pi(x,t)\cdot \nabla D(x)
    f\,dx
    \\
    &\qquad
    +2
    \int_\Omega
    (\log f-1)
    \frac{1}{\pi(x,t)}
    (\vec{u}\cdot\nabla\pi(x,t))(\vec{u}\cdot\nabla D(x))
    f\,dx
    \\
    &\qquad
    +
    \int_\Omega
    \frac{D(x)}{\pi(x,t)}((\nabla|\vec{u}|^2)\cdot\nabla\pi(x,t))
    f\,dx
    -
    2
    \int_\Omega
    \frac{D(x)}{\pi(x,t)}((\nabla\vec{u})\vec{u}\cdot\nabla\pi(x,t))
    f\,dx.
   \end{split}
 \end{equation}
\end{proposition}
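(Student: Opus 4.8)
The plan is to assemble \eqref{eq:4.33} directly from the four identities \eqref{eq:4.7}, \eqref{eq:4.12}, \eqref{eq:4.21}, and \eqref{eq:4.43}, keeping careful track of the cancellations. First I would start from \eqref{eq:4.7}, which already contains the term $\int_\Omega\pi_t(x,t)|\vec{u}|^2f\,dx$ appearing verbatim in \eqref{eq:4.33}. Next I would rewrite the term $\int_\Omega\pi(x,t)|\vec{u}|^2f_t\,dx$ in \eqref{eq:4.7} using \eqref{eq:4.12}, and then expand $\nabla(\pi(x,t)|\vec{u}|^2)=|\vec{u}|^2\nabla\pi(x,t)+\pi(x,t)\nabla|\vec{u}|^2$; this splits it into $\int_\Omega|\vec{u}|^2\vec{u}\cdot\nabla\pi(x,t)f\,dx$, which matches a term of \eqref{eq:4.33}, and $\int_\Omega\pi(x,t)\vec{u}\cdot\nabla|\vec{u}|^2f\,dx$, which I set aside for a later cancellation.

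Then I would substitute the identity \eqref{eq:4.21} for the term $2\int_\Omega D(x)\vec{u}\cdot\nabla\rho_t\Feq\,dx$ of \eqref{eq:4.7}. At this stage two cancellations occur: the term $-2\int_\Omega(1+\log\rho)\vec{u}\cdot\nabla D(x)f_t\,dx$ on the right of \eqref{eq:4.21} cancels exactly the last term of \eqref{eq:4.7}, and the term $-\int_\Omega\pi(x,t)\vec{u}\cdot\nabla|\vec{u}|^2f\,dx$ on the right of \eqref{eq:4.21} cancels the leftover piece from the previous step. In the surviving part of \eqref{eq:4.21} I would then replace $2\int_\Omega\vec{u}\cdot\nabla(\log f\,\vec{u}\cdot\nabla D(x))f\,dx$ by twice the right-hand side of \eqref{eq:4.43}. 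This produces the term $-2\int_\Omega\log f\,(\vec{u}\cdot\nabla D(x))\Div\vec{u}\,f\,dx$, which I would merge with the already-present $-2\int_\Omega\vec{u}\cdot\nabla D(x)\Div\vec{u}\,f\,dx$ from \eqref{eq:4.21} into $-2\int_\Omega(1+\log f)\vec{u}\cdot\nabla D(x)\Div\vec{u}\,f\,dx$, exactly the form in \eqref{eq:4.33}; the remaining three terms coming from \eqref{eq:4.43} (those weighted by $\frac{\pi(x,t)}{D(x)}|\vec{u}|^2\log f$, by $\frac{1}{D(x)}(\log f)^2$, and by $\frac{1}{D(x)}\log f$) appear in \eqref{eq:4.33} unchanged.

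Finally I would collect the leftover terms of \eqref{eq:4.21} --- namely the $(\nabla^2\phi(x))\vec{u}\cdot\vec{u}$ term, the $D(x)|\nabla\vec{u}|^2$ term, the $(\log f-1)\nabla|\vec{u}|^2\cdot\nabla D(x)$ term, and the four terms carrying $\nabla\pi(x,t)$ --- together with the two terms singled out from \eqref{eq:4.7}--\eqref{eq:4.12} and the three terms from \eqref{eq:4.43}; adding these up reproduces precisely the right-hand side of \eqref{eq:4.33}. Since every ingredient is already established, there is no analytic difficulty in this proposition; the only real obstacle is the sheer bookkeeping, ensuring that each of the many terms of \eqref{eq:4.21} and \eqref{eq:4.43} is used exactly once and with the correct sign. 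To keep this transparent I would group the terms by type --- quadratic forms in $\vec{u}$ against $\nabla^2\phi$, terms in $|\nabla\vec{u}|^2$, $\Div\vec{u}$-type terms, $\nabla\pi(x,t)$-type terms, and $\log f$-weighted terms --- and verify the matching block by block.
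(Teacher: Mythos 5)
Your proposal is correct and is essentially the paper's own proof: the paper likewise obtains \eqref{eq:4.33} by combining \eqref{eq:4.7}, \eqref{eq:4.12}, \eqref{eq:4.21}, and \eqref{eq:4.43}, using the same product-rule identity $\pi(x,t)\nabla|\vec{u}|^2=\nabla(\pi(x,t)|\vec{u}|^2)-|\vec{u}|^2\nabla\pi(x,t)$ (applied there to the second term of \eqref{eq:4.21} rather than to \eqref{eq:4.12}, which is equivalent). Your term-by-term cancellations and the merging of the two $\Div\vec{u}$ terms into $-2\int_\Omega(1+\log f)\vec{u}\cdot\nabla D(x)\Div\vec{u}\,f\,dx$ are all accurate.
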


\begin{proof}
 Since $\pi(x,t)\nabla|\vec{u}|^2=\nabla
 (\pi(x,t)|\vec{u}|^2)-|\vec{u}|^2\nabla\pi(x,t)$, the second term of
 the right-hand side of \eqref{eq:4.21} becomes,
 \begin{equation*}
  -\int_\Omega
   \pi(x,t)
   \vec{u}\cdot\nabla |\vec{u}|^2 f
   \,dx
   =
   -
   \int_\Omega
   \vec{u}\cdot\nabla (\pi(x,t)|\vec{u}|^2) f
   \,dx
   +
   \int_\Omega
   |\vec{u}|^2\vec{u}\cdot\nabla \pi(x,t) f
   \,dx.
 \end{equation*}
 Using this relation we obtain \eqref{eq:4.33}.
\end{proof}

We are searching for a sufficient condition to obtain a differential
inequality for $\frac{dF}{dt}$. The 5th and the 9th terms of the
right-hand side of \eqref{eq:4.33} involve $|\vec{u}|^3$, the order
which is
higher than $2$. Thus,  we will handle such terms using the Sobolev
inequality below. As in the proof of Lemma \ref{lem:3.13} in Section~\ref{sec:3}, we have
following Sobolev inequality for any periodic vector field $\vec{v}$.

\begin{lemma}
\label{lem:4.15a}
 Let $n=1,2,3$. Let $f_0$ be a probability density function, and let $f$
 be a solution of \eqref{eq:4.1}. Then,
 \begin{equation}
  \label{eq:4.47}
   \int_\Omega
   |\vec{v}|^3f
   \,dx
   \leq
   \frac{3\Cr{const:4.3}^{\frac{3}{2}}}{4}
   \int_\Omega
   |\nabla\vec{v}|^2f
   \,dx
   +
   \frac{\Cr{const:4.3}^{\frac{3}{2}}}{4}
   \left(
    \int_\Omega
    |\vec{v}|^2f
    \,dx
   \right)^3,
 \end{equation}
 for any periodic vector field $\vec{v}$.
\end{lemma}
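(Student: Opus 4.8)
The plan is to repeat, essentially verbatim, the argument that established the analogous bound \eqref{eq:3.44} in Lemma~\ref{lem:3.13}, the only substantive change being that the weighted Sobolev inequality \eqref{eq:4.48} of Lemma~\ref{lem:4.2} now plays the role that \eqref{eq:3.42} did in Section~\ref{sec:3}; accordingly the constant $\Cr{const:4.3}$ takes the place of $\Cr{const:3.1}$ throughout, and everything else goes through unchanged because both inequalities hold for an arbitrary periodic vector field.

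Concretely, for parameters $\alpha,\beta>0$ with $\alpha+\beta=1$ and a conjugate pair $p,p'$, I would first apply H\"older's inequality in the form
\[
 \int_\Omega |\vec{v}|^3 f\,dx
 \leq
 \left(\int_\Omega |\vec{v}|^{3\alpha p}f\,dx\right)^{1/p}
 \left(\int_\Omega |\vec{v}|^{3\beta p'}f\,dx\right)^{1/p'}.
\]
Then I would impose the constraint $3\alpha p=p^\ast$, so that \eqref{eq:4.48} bounds the first factor by a power of $\left(\int_\Omega |\nabla\vec{v}|^2 f\,dx\right)^{1/2}$, together with the constraint $3\beta p'=2$, which makes the second factor a power of $\int_\Omega |\vec{v}|^2 f\,dx$. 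Assuming in addition $p^\ast/(2p)<1$, Young's inequality then separates the product into a term linear in $\int_\Omega |\nabla\vec{v}|^2 f\,dx$ and a term that is a pure power of $\int_\Omega |\vec{v}|^2 f\,dx$, yielding an inequality of the shape \eqref{eq:4.47} with as-yet-unspecified exponent and constants.

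The only point that needs care is checking that the constraints $3\alpha p=p^\ast$, $3\beta p'=2$, $\alpha+\beta=1$, $p^\ast/(2p)<1$ are simultaneously satisfiable for $n\le 3$. Eliminating $p$ and $p'$ (exactly as in the proof of Lemma~\ref{lem:3.13}) forces $\alpha=n/6$, so the requirement $\alpha<2/3$ holds precisely when $n<4$; in particular for $n=1,2$ we are free to take the Sobolev exponent $p^\ast=6$, the same value that is forced when $n=3$. With $p^\ast=6$ and the symmetric choice $\alpha=\beta=1/2$, $p=4$ (hence $p'=4/3$), one has $p^\ast/p=3/2$ and $p^\ast/(2p)=3/4$, so the two prefactors come out as $3\Cr{const:4.3}^{3/2}/4$ and $\Cr{const:4.3}^{3/2}/4$, while the exponent $\frac{1}{p'}\left(1-\frac{p^\ast}{2p}\right)^{-1}$ on $\int_\Omega|\vec{v}|^2 f\,dx$ evaluates to $3$; this is exactly \eqref{eq:4.47}. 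There is no genuine obstacle here — the argument invokes $\vec{v}$ only through \eqref{eq:4.48} — the sole constraint being the dimensional restriction $n\le 3$, which is precisely what keeps $\alpha=n/6$ below the threshold $2/3$ that Young's inequality requires.
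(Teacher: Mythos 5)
Your proposal is correct and coincides with the paper's own treatment: the paper proves \eqref{eq:4.47} exactly by rerunning the proof of Lemma~\ref{lem:3.13} with the weighted Sobolev inequality \eqref{eq:4.48} of Lemma~\ref{lem:4.2} (and its constant $\Cr{const:4.3}$) in place of \eqref{eq:3.42}. Your parameter bookkeeping ($\alpha=\beta=\tfrac12$, $p=4$, $p^\ast=6$, exponent $3$, prefactors $\tfrac{3}{4}\Cr{const:4.3}^{3/2}$ and $\tfrac{1}{4}\Cr{const:4.3}^{3/2}$) matches the paper's computation, including the dimensional restriction $n\le 3$ via $\alpha=n/6<2/3$.
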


Using the Sobolev inequality, we obtain the following energy estimate.

\begin{proposition}
\label{prop:4.16a}
 Assume $n=1,2,3$, let $f$ be a solution of \eqref{eq:4.1}, and let
 $\vec{u}$ be given as in \eqref{eq:4.1}. Suppose, that there exists a
 positive constant $\lambda>0$,  such that $\nabla^2\phi\geq \lambda I$,
 where $I$ is the identity matrix. Then, there are constants,
 $\Cr{const:4.4},\ \Cr{const:4.5}>0$,  such that if,
 \begin{equation}
  \|\nabla D\|_{L^\infty(\Omega)}\leq \Cr{const:4.4},\quad
  \|\nabla \pi\|_{L^\infty(\Omega\times[0,\infty))}\leq \Cr{const:4.5},\quad
  \pi_t(x,t)\geq -\Cr{const:4.9}, 
 \end{equation}
 then, we have that,
 \begin{equation}
  \label{eq:4.46}
   \frac{d^2F}{dt^2}[f](t)
   \geq
   \frac{\lambda}{\|\pi\|_{L^\infty(\Omega\times[0,\infty))}}
   \int_{\Omega}
   \pi(x,t)|\vec{u}|^2f\,dx
   -
   \frac{2\Cr{const:1.3}}{3\Cr{const:1.2}^3}   
   \left(
    \int_{\Omega}
    \pi(x,t)|\vec{u}|^2f\,dx
   \right)^3.
 \end{equation}
\end{proposition}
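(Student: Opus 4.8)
The plan is to start from the identity \eqref{eq:4.33} for $\frac{d^2F}{dt^2}[f](t)$ and to proceed exactly along the lines of the proof of Proposition~\ref{prop:3.14}, now carrying along the extra contributions generated by $\nabla\pi$ and $\pi_t$. The two terms to be kept as the ``good'' part are the ones coming from the convexity of $\phi$ and from the lower bound on the diffusion, namely
\[
2\int_\Omega ((\nabla^2\phi(x))\vec{u}\cdot\vec{u})f\,dx \;\geq\; 2\lambda\int_\Omega |\vec{u}|^2 f\,dx,
\qquad
2\int_\Omega D(x)|\nabla\vec{u}|^2 f\,dx \;\geq\; 2\Cr{const:1.3}\int_\Omega |\nabla\vec{u}|^2 f\,dx,
\]
and the aim is to show that \emph{every} remaining term of \eqref{eq:4.33} can be bounded in absolute value by a small multiple of $\int_\Omega|\vec{u}|^2 f\,dx$, a small multiple of $\int_\Omega D(x)|\nabla\vec{u}|^2 f\,dx$, plus a controlled cubic remainder of the form $c\int_\Omega|\vec{u}|^3 f\,dx$.

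Concretely, I would first invoke the maximum principle (Proposition~\ref{prop:1.5}) to fix a uniform bound on $|\log f(x,t)|$ over $\Omega\times[0,\infty)$ depending only on $f_0$, $f^{\mathrm{eq}}$ and $D$. The terms of \eqref{eq:4.33} that are quadratic in $\vec{u}$ but carry a factor $\nabla D$, $\nabla\pi$ or $\pi_t$ --- the third, fourth, sixth, seventh, tenth and eleventh terms, together with the last two terms involving $(\nabla\vec{u})\vec{u}\cdot\nabla\pi$ --- are handled by Cauchy--Schwarz and Young's inequality with small parameters $\Cl[eps]{eps:4.1},\Cl[eps]{eps:4.2},\dots>0$: each splits into a piece absorbed into $\int_\Omega D(x)|\nabla\vec{u}|^2 f\,dx$ (with coefficient a power of $\|\nabla D\|_{L^\infty(\Omega)}$ or $\|\nabla\pi\|_{L^\infty(\Omega\times[0,\infty))}$ times a $|\log f|+1$ prefactor) and a piece proportional to $\int_\Omega|\vec{u}|^2 f\,dx$ with coefficient controlled by $\|\nabla D\|_{L^\infty(\Omega)}$, $\|\nabla\pi\|_{L^\infty(\Omega\times[0,\infty))}$ and the $\Cr[eps]{eps:4.1},\dots$; the $\pi_t$ term contributes at least $-\Cr{const:4.9}\int_\Omega|\vec{u}|^2 f\,dx$ directly from $\pi_t\geq -\Cr{const:4.9}$. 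The genuinely cubic terms --- the fifth term (factor $\tfrac{\pi}{D}\log f\,\vec{u}\cdot\nabla D$) and the ninth term $\int_\Omega|\vec{u}|^2\vec{u}\cdot\nabla\pi\,f\,dx$ --- are bounded by a constant times $\|\log f\|_{L^\infty}\|\nabla D\|_{L^\infty(\Omega)}$, respectively $\|\nabla\pi\|_{L^\infty(\Omega\times[0,\infty))}$, times $\int_\Omega|\vec{u}|^3 f\,dx$; then Lemma~\ref{lem:4.15a} turns $\int_\Omega|\vec{u}|^3 f\,dx$ into $\tfrac{3\Cr{const:4.3}^{3/2}}{4}\int_\Omega|\nabla\vec{u}|^2 f\,dx+\tfrac{\Cr{const:4.3}^{3/2}}{4}\bigl(\int_\Omega|\vec{u}|^2 f\,dx\bigr)^3$, and the $|\nabla\vec{u}|^2 f$-part is again absorbed into the second good term once its coefficient is made small.

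One then chooses the parameters in order: first the Young parameters $\Cr[eps]{eps:4.1},\dots$ small; then the thresholds $\Cr{const:4.4}$ for $\|\nabla D\|_{L^\infty(\Omega)}$, $\Cr{const:4.5}$ for $\|\nabla\pi\|_{L^\infty(\Omega\times[0,\infty))}$, and $\Cr{const:4.9}$ for the lower bound on $\pi_t$, small enough that (i) the $\int_\Omega|\vec{u}|^2 f\,dx$-coefficients produced by all the bad terms sum to at most $\lambda$, so that $2\lambda$ minus them is $\geq\lambda$; (ii) the $\int_\Omega|\nabla\vec{u}|^2 f\,dx$-coefficients produced by the bad terms and by the Sobolev step sum to at most $2\Cr{const:1.3}/\|D\|_{L^\infty(\Omega)}$, so the good gradient term dominates; and (iii) the coefficient of $\bigl(\int_\Omega|\vec{u}|^2 f\,dx\bigr)^3$ left behind is $\leq\tfrac{2\Cr{const:1.3}}{3}$. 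This yields the intermediate bound $\frac{d^2F}{dt^2}[f](t)\geq \lambda\int_\Omega|\vec{u}|^2 f\,dx-\tfrac{2\Cr{const:1.3}}{3}\bigl(\int_\Omega|\vec{u}|^2 f\,dx\bigr)^3$, precisely in the form of \eqref{eq:3.28}. Finally, using $\Cr{const:1.2}\leq\pi(x,t)\leq\|\pi\|_{L^\infty(\Omega\times[0,\infty))}$ one has $\int_\Omega|\vec{u}|^2 f\,dx\geq\tfrac{1}{\|\pi\|_{L^\infty(\Omega\times[0,\infty))}}\int_\Omega\pi|\vec{u}|^2 f\,dx$ for the positive term and $\int_\Omega|\vec{u}|^2 f\,dx\leq\tfrac{1}{\Cr{const:1.2}}\int_\Omega\pi|\vec{u}|^2 f\,dx$ for the (cubed) negative term, which gives \eqref{eq:4.46}.

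The main obstacle is the bookkeeping in \eqref{eq:4.33}: with roughly fourteen terms one must classify each as ``good'', ``quadratic-bad'' (absorbable into a good term with a small constant), or ``cubic-bad'' (Sobolev step), keep the dependence on $\|\nabla D\|_{L^\infty(\Omega)}$, $\|\nabla\pi\|_{L^\infty(\Omega\times[0,\infty))}$, $\|\log f\|_{L^\infty}$ and the Young parameters explicit, and verify that the smallness thresholds $\Cr{const:4.4},\Cr{const:4.5},\Cr{const:4.9}$ can be fixed consistently so that the surviving quadratic coefficient is exactly $\lambda$ and the surviving coefficient in front of $|\nabla\vec{u}|^2 f$ stays nonnegative. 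Relative to Section~\ref{sec:3}, the genuinely new ingredients are the cubic term $\int_\Omega|\vec{u}|^2\vec{u}\cdot\nabla\pi\,f\,dx$, the $(\nabla\vec{u})\vec{u}\cdot\nabla\pi$ terms, and the $\pi_t$ term; none introduces a new mechanism, but each must be folded cleanly into the same chain of inequalities, and in particular the $\pi_t$ term is the reason one needs the one-sided hypothesis $\pi_t\geq -\Cr{const:4.9}$ rather than a bound on $|\pi_t|$.
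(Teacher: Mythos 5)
Your proposal is correct and follows essentially the same route as the paper's proof: start from the identity \eqref{eq:4.33}, use the maximum principle to bound $|\log f|$, absorb the quadratic terms carrying $\nabla D$, $\nabla\pi$, $\pi_t$ via Cauchy--Schwarz/Young with small parameters, treat the two genuinely cubic terms with the weighted Sobolev inequality of Lemma~\ref{lem:4.15a}, and then fix $\Cr{const:4.4}$, $\Cr{const:4.5}$, $\Cr{const:4.9}$ so the surviving coefficients give the stated bound, converting between $\int_\Omega|\vec{u}|^2f\,dx$ and $\int_\Omega\pi|\vec{u}|^2f\,dx$ via $\Cr{const:1.2}\leq\pi\leq\|\pi\|_{L^\infty(\Omega\times[0,\infty))}$. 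The only difference is cosmetic bookkeeping (the paper inserts the $\Cr{const:1.2}^{-3}$ factor already at the Sobolev step, while you convert at the end), so no further changes are needed.
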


\begin{proof}
 We proceed with calculations of the integrands of the 3rd, 4th, 5th, 7th, 8th, 9th, 10th,
 11st, 12nd, and 13rd terms of \eqref{eq:4.33}. As in the proof of the
 Proposition
 \ref{prop:3.14} in Section~\ref{sec:3}, for any positive constants $\Cl[eps]{eps:4.1},
 \Cl[eps]{eps:4.2}>0$, we have that,
 \begin{equation}
  |
   (\log f-1)\nabla |\vec{u}|^2\cdot\nabla D(x) f
   |
   \leq
   \frac{1}{2\Cr{eps:4.1}}
   D(x)
   (|\log f|+1)^2
   |\nabla \vec{u}|^2
   |\nabla (\log D(x))|^2 f
   +
   2\Cr{eps:4.1}
   D(x)
   |\vec{u}|^2 f,
 \end{equation}
 \begin{equation}
  |2(1+\log f)\vec{u}\cdot\nabla D(x)\Div\vec{u}f|
   \leq
   \frac{1}{2\Cr{eps:4.2}}
   D(x)
   (|\log f|+1)^2
   |\nabla \vec{u}|^2
   |\nabla (\log D(x))|^2 f
   +
   2\Cr{eps:4.2}
   D(x)
   |\vec{u}|^2 f,
 \end{equation}
 and
 \begin{equation}
  \left|
   \frac{2}{D(x)}
   \log f\left(\vec{u}\cdot\nabla D(x)\right)\left(\vec{u}\cdot\nabla \phi(x)\right) f
  \right|
  \leq
  2|\log f|
  |\nabla (\log D(x))|
  |\nabla \phi(x)|
  |\vec{u}|^2f.
 \end{equation}

 We further estimate the 10th, 11th, 12th and 13th terms of the right-hand
 side of \eqref{eq:4.33} using the Cauchy-Schwarz inequality,
 \begin{equation}
  \begin{split}
   &\qquad
   \left|
   2(\log f-1)
   \frac{1}{\pi(x,t)}
   |\vec{u}|^2\nabla\pi(x,t)\cdot \nabla D(x)
   f
   \right|
   \\
   &=
   2\left|
   D(x)(\log f-1)
   |\vec{u}|^2\nabla(\log\pi(x,t))\cdot \nabla (\log D(x))
   f
   \right|
   \\
   &\leq
   2D(x)
   (|\log f|+1)
   |\nabla(\log\pi(x,t))|
   |\nabla (\log D(x))|
   |\vec{u}|^2f,
  \end{split}
 \end{equation}

 \begin{equation}
  \begin{split}
   &\qquad
   \left|
   2(\log f-1)
   \frac{1}{\pi(x,t)}
   (\vec{u}\cdot\nabla\pi(x,t))(\vec{u}\cdot\nabla D(x))
   f
   \right|
   \\
   &=
   2
   \left|
   D(x)(\log f-1)
   (\vec{u}\cdot\nabla(\log\pi(x,t)))
   (\vec{u}\cdot \nabla (\log D(x)))
   f
   \right|
   \\
   &\leq
   2D(x)
   (|\log f|+1)
   |\nabla(\log\pi(x,t))|
   |\nabla (\log D(x))|
   |\vec{u}|^2f,
  \end{split}
 \end{equation}

 \begin{equation}
  \begin{split}
   \left|
   \frac{D(x)}{\pi(x,t)}((\nabla|\vec{u}|^2)\cdot\nabla\pi(x,t))f
   \right|
   &\leq
   2D(x)|\nabla \vec{u}||\vec{u}||\nabla(\log \pi(x,t))| f
   \\
   &\leq
   \frac{D(x)}{2\Cr{eps:4.3}}|\nabla \vec{u}|^2|\nabla(\log \pi(x,t))|^2 f
   +
   2\Cr{eps:4.3}D(x)|\vec{u}|^2f,
  \end{split}
 \end{equation}
 and
 \begin{equation}
  \begin{split}
   \left|
    2
   \frac{D(x)}{\pi(x,t)}((\nabla\vec{u})\vec{u}\cdot\nabla\pi(x,t))f
   \right|
   &\leq
   2D(x)|\nabla \vec{u}||\vec{u}||\nabla(\log \pi(x,t))| f
   \\
   &\leq
   \frac{D(x)}{2\Cr{eps:4.4}}|\nabla \vec{u}|^2|\nabla(\log \pi(x,t))|^2 f
   +
   2\Cr{eps:4.4}D(x)|\vec{u}|^2f,
  \end{split}
 \end{equation}
 where, $\Cl[eps]{eps:4.3} \mbox{ and } \Cl[eps]{eps:4.4}>0$ are positive constants.

 Thus, using all inequalities above in \eqref{eq:4.33}, we arrive at
 the estimate for $\frac{d^2F}{dt^2}[f](t)$,
 \begin{equation}
  \label{eq:4.49}
  \begin{split}
   &\qquad
   \frac{d^2F}{dt^2}[f](t)
   \\
   &\geq
   2\int_\Omega
   \biggl((\nabla^2\phi(x)\vec{u}\cdot\vec{u})
   +
   \pi_t(x,t)|\vec{u}|^2
   -
   \Bigl(
   (\Cr{eps:4.1}+\Cr{eps:4.2}+\Cr{eps:4.3}+\Cr{eps:4.4})D(x)
   \\
   &\qquad
   +
   2D(x)
   (|\log f|+1)
   |\nabla(\log\pi(x,t))|
   |\nabla (\log D(x))|
   +
   |\log f|
   |\nabla (\log D(x))|
   |\nabla \phi(x)|\Bigr) |\vec{u}|^2
   \biggr) f\,dx
   \\
   &\qquad
   +
   2\int_\Omega
   \biggl(
   1
   -
   \frac14
   \left(
   \frac{1}{\Cr{eps:3.1}}
   +
   \frac{1}{\Cr{eps:3.2}}
   \right)
   (|\log f|+1)^2
   |\nabla (\log D(x))|^2 
   \\
   &\qquad\qquad
   -
   \frac14
   \left(
   \frac{1}{\Cr{eps:4.3}}
   +
   \frac{1}{\Cr{eps:4.4}}
   \right)|\nabla(\log \pi(x,t))|^2
   \biggr)
   D(x)|\nabla \vec{u}|^2 f\,dx
   \\
   &\qquad
   +
   2
   \int_\Omega
   \frac{\pi(x,t)}{D(x)}|\vec{u}|^2
   \log f\left(\vec{u}\cdot\nabla D(x)\right) f\,dx
   +
   2
   \int_\Omega
   \frac{1}{D(x)}
   (\log f)^2\left(\vec{u}\cdot\nabla D(x)\right)^2 f\,dx
   \\
   &\qquad
   +
   \int_\Omega
   |\vec{u}|^2\vec{u}\cdot\nabla \pi(x,t) f
   \,dx.
  \end{split}
 \end{equation}

 Next,  using \eqref{eq:4.47},
 $D(x)/\Cr{const:1.3}\geq1$, and $\pi(x,t)/\Cr{const:1.2}\geq1$, we compute,
 \begin{equation}
  \label{eq:4.50}
   \begin{split}
   &\qquad
   \left|
   2
   \int_\Omega
   \frac{\pi(x,t)}{D(x)}|\vec{u}|^2
   \log f\left(\vec{u}\cdot\nabla D(x)\right) f\,dx
   \right| \\
   &\leq
   2
   \|\log f\|_{L^\infty(\Omega\times[0,\infty))}
   \|\pi\|_{L^\infty(\Omega\times[0,\infty))}
   \|\nabla \log D\|_{L^\infty(\Omega)}
   \int_\Omega|\vec{u}|^3f\,dx \\
   &\leq
   \frac{3\Cr{const:4.3}^{\frac{3}{2}}}{2}
   \|\log f\|_{L^\infty(\Omega\times[0,\infty))}
   \|\pi\|_{L^\infty(\Omega\times[0,\infty))}
   \|\nabla \log D\|_{L^\infty(\Omega)}
   \int_\Omega|\nabla\vec{u}|^2f\,dx
   \\
   &\qquad
   +
   \frac{\Cr{const:4.3}^{\frac{3}{2}}}{2}
   \|\log f\|_{L^\infty(\Omega\times[0,\infty))}
   \|\pi\|_{L^\infty(\Omega\times[0,\infty))}
   \|\nabla \log D\|_{L^\infty(\Omega)}
   \left(
   \int_\Omega |\vec{u}|^2f\,dx
   \right)^3
   \\
   &\leq
   \frac{3\Cr{const:4.3}^{\frac{3}{2}}}{2\Cr{const:1.3}}
   \|\log f\|_{L^\infty(\Omega\times[0,\infty))}
   \|\pi\|_{L^\infty(\Omega\times[0,\infty))}
   \|\nabla \log D\|_{L^\infty(\Omega)}
   \int_\Omega D(x)|\nabla\vec{u}|^2f\,dx
   \\
   &\qquad
   +
   \frac{\Cr{const:4.3}^{\frac{3}{2}}}{2\Cr{const:1.2}^3}
   \|\log f\|_{L^\infty(\Omega\times[0,\infty))}
   \|\pi\|_{L^\infty(\Omega\times[0,\infty))}
   \|\nabla \log D\|_{L^\infty(\Omega)}
   \left(
   \int_\Omega\pi(x,t)|\vec{u}|^2f\,dx
   \right)^3.
   \end{split}
 \end{equation}

 Next, again using \eqref{eq:4.47} and $D(x)/\Cr{const:1.3}\geq1$, we estimate,
 \begin{equation}
  \label{eq:4.51}
  \begin{split}
   &\qquad
   \left|
   \int_\Omega
   |\vec{u}|^2\vec{u}\cdot\nabla\pi(x,t)
   f\,dx
   \right|
   \\
   &\leq
   \|\nabla\pi\|_{L^\infty(\Omega\times[0,\infty))}
   \int_\Omega|\vec{u}|^3f\,dx
   \\
   &\leq
   \frac{3\Cr{const:4.3}^{\frac{3}{2}}}{4}
   \|\nabla\pi\|_{L^\infty(\Omega\times[0,\infty))}
   \int_\Omega|\nabla\vec{u}|^2f\,dx
   +
   \frac{\Cr{const:4.3}^{\frac{3}{2}}}{4}
   \|\nabla\pi\|_{L^\infty(\Omega\times[0,\infty))}
   \left(
   \int_\Omega |\vec{u}|^2f\,dx
   \right)^3
   \\
   &\leq
   \frac{3\Cr{const:4.3}^{\frac{3}{2}}}{4\Cr{const:1.3}}
   \|\nabla\pi\|_{L^\infty(\Omega\times[0,\infty))}
   \int_\Omega D(x)|\nabla\vec{u}|^2f\,dx
   +
   \frac{\Cr{const:4.3}^{\frac{3}{2}}}{4\Cr{const:1.2}^3}
   \|\nabla\pi\|_{L^\infty(\Omega\times[0,\infty))}
   \left(
   \int_\Omega \pi(x,t)|\vec{u}|^2f\,dx
   \right)^3.
  \end{split}
 \end{equation}

 Therefore, using \eqref{eq:4.49}, \eqref{eq:4.50}, and
 \eqref{eq:4.51}, we obtain that,
 \begin{equation}
  \label{eq:4.56}
  \begin{split}
   &\qquad
   \frac{d^2F}{dt^2}[f](t)
   \\
   &\geq
   2\int_\Omega
   \biggl((\nabla^2\phi(x)\vec{u}\cdot\vec{u})
   +
   \pi_t(x,t)|\vec{u}|^2
   -
   \Bigl(
   (\Cr{eps:4.1}+\Cr{eps:4.2}+\Cr{eps:4.3}+\Cr{eps:4.4})D(x)
   \\
   &\qquad
   -
   2D(x)
   (|\log f|+1)
   |\nabla(\log\pi(x,t))|
   |\nabla (\log D(x))|
   -
   |\log f|
   |\nabla (\log D(x))|
   |\nabla \phi(x)|\Bigr) |\vec{u}|^2
   \biggr) f\,dx
   \\
   &\qquad
   +
   2\int_\Omega
   \biggl(
   1
   -
   \frac14
   \left(
   \frac{1}{\Cr{eps:3.1}}
   +
   \frac{1}{\Cr{eps:3.2}}
   \right)
   (|\log f|+1)^2
   |\nabla (\log D(x))|^2 
   \\
   &\qquad\qquad
   -
   \frac14
   \left(
   \frac{1}{\Cr{eps:4.3}}
   +
   \frac{1}{\Cr{eps:4.4}}
   \right)|\nabla(\log \pi(x,t))|^2
   \biggr)
   D(x)|\nabla \vec{u}|^2 f\,dx
   \\
   &\qquad
   -
   \frac{3\Cr{const:4.3}^{\frac{3}{2}}}{2\Cr{const:1.3}}
   \|\log f\|_{L^\infty(\Omega\times[0,\infty))}
   \|\pi\|_{L^\infty(\Omega\times[0,\infty))}
   \|\nabla \log D\|_{L^\infty(\Omega)} 
   \int_\Omega
   D(x)|\nabla \vec{u}|^2 f\,dx
   \\
   &\qquad
   -
   \frac{3\Cr{const:4.3}^{\frac{3}{2}}}{4\Cr{const:1.3}}
   \|\nabla\pi\|_{L^\infty(\Omega\times[0,\infty))}
   \int_\Omega
   D(x)|\nabla \vec{u}|^2 f\,dx
   \\
   &\qquad
   -
   \frac{\Cr{const:4.3}^{\frac{3}{2}}}{2\Cr{const:1.2}^3}
   \|\log f\|_{L^\infty(\Omega\times[0,\infty))}
   \|\pi\|_{L^\infty(\Omega\times[0,\infty))}
   \|\nabla \log D\|_{L^\infty(\Omega)}
   \left(
   \int_\Omega\pi(x,t)|\vec{u}|^2f\,dx
   \right)^3
   \\
   &\qquad
   -
   \frac{\Cr{const:4.3}^{\frac{3}{2}}}{4\Cr{const:1.2}^3}
   \|\nabla\pi\|_{L^\infty(\Omega\times[0,\infty))}
   \left(
   \int_\Omega \pi(x,t)|\vec{u}|^2f\,dx
   \right)^3.
  \end{split}
 \end{equation}
 
 Due to the maximum principle, Proposition \ref{prop:1.5}, there is a
 positive constant $\Cl{const:4.8}>0$ which depends only on $f_0$,
 $f^{\mathrm{eq}}$, and $D$,  such that,  $|\log f(x,t)|\leq \Cr{const:4.8}$
 for $x\in\Omega$ and $t>0$. If we further assume that,  $\|\nabla D\|_{L^\infty(\Omega)}\leq
 \Cr{const:4.4}$, $\|\nabla
 \pi\|_{L^\infty(\Omega\times[0,\infty))}\leq \Cr{const:4.5}$ and
 $\pi_t(x,t)\geq -\Cr{const:4.9}$, then
 $|\nabla\log D(x,t)|\leq \Cr{const:4.4}/\Cr{const:1.3}$ and $|\nabla
 \log\pi(x,t)|\leq \Cr{const:4.5}/\Cr{const:1.2}$.  Hence,  we have
 the following estimate,
 \begin{equation}
  \begin{split}
   &\qquad
   -\pi_t(x,t)
   +
   (\Cr{eps:4.1}+\Cr{eps:4.2}+\Cr{eps:4.3}+\Cr{eps:4.4})D(x)
   \\
   &\qquad
   +
   2D(x)
   (|\log f|+1)
   |\nabla(\log\pi(x,t))|
   |\nabla (\log D(x))|
   +
   |\log f|
   |\nabla (\log D(x))|
   |\nabla \phi(x)| \\
   &\leq
   \Cr{const:4.9}
   +
   (\Cr{eps:4.1}+\Cr{eps:4.2}+\Cr{eps:4.3}+\Cr{eps:4.4})\|D\|_{L^\infty(\Omega)}
   +
   \frac{2\Cr{const:4.4}\Cr{const:4.5}}{\Cr{const:1.2}\Cr{const:1.3}}
   (\Cr{const:4.8}+1)
   \|D\|_{L^\infty(\Omega)}
   +
   \frac{\Cr{const:4.4}\Cr{const:4.8}}{\Cr{const:1.3}}
   \|\nabla\phi\|_{L^\infty(\Omega)},
  \end{split} 
 \end{equation}
 and
 \begin{equation}
  \begin{split}
   &\qquad
   \frac14
   \left(
    \frac{1}{\Cr{eps:3.1}}
    +
    \frac{1}{\Cr{eps:3.2}}
   \right)
   (|\log f|+1)^2
   |\nabla (\log D(x))|^2 
   +
   \frac14
   \left(
   \frac{1}{\Cr{eps:4.3}}
   +
   \frac{1}{\Cr{eps:4.4}}
   \right)|\nabla(\log \pi(x,t))|^2 
   \\
   &\qquad
   +
   \frac{3\Cr{const:4.3}^{\frac{3}{2}}}{4\Cr{const:1.3}}
   \|\log f\|_{L^\infty(\Omega\times[0,\infty))}
   \|\pi\|_{L^\infty(\Omega\times[0,\infty))}
   \|\nabla \log D\|_{L^\infty(\Omega)} 
   +
   \frac{3\Cr{const:4.3}^{\frac{3}{2}}}{8\Cr{const:1.3}}
   \|\nabla\pi\|_{L^\infty(\Omega\times[0,\infty))} 
   \\
   &\leq
   \frac14
   \left(
    \frac{1}{\Cr{eps:3.1}}
    +
    \frac{1}{\Cr{eps:3.2}}
   \right)
   (\Cr{const:4.8}+1)^2
   \frac{\Cr{const:4.4}^2}{\Cr{const:1.3}^2}
   +
   \frac14
   \left(
   \frac{1}{\Cr{eps:4.3}}
   +
   \frac{1}{\Cr{eps:4.4}}
   \right)
   \frac{\Cr{const:4.5}^2}{\Cr{const:1.2}^2}
   +
   \frac{3
   \Cr{const:4.3}^{\frac{3}{2}}\Cr{const:4.4}\Cr{const:4.8}}{4\Cr{const:1.3}^2}
   \|\pi\|_{L^\infty(\Omega\times[0,\infty))}
   +
   \frac{3
   \Cr{const:4.3}^{\frac{3}{2}}\Cr{const:4.5}}{8\Cr{const:1.3}}.
   \end{split} 
 \end{equation}
 Therefore, first take small positive constants,  $\Cr{eps:4.1},
 \Cr{eps:4.2}, \Cr{eps:4.3} \mbox{ and }
 \Cr{eps:4.4}$.  Next, take sufficiently small positive constants, $\Cr{const:4.4},
 \Cr{const:4.5} \mbox{ and }\Cr{const:4.9}$,  such that,
 \begin{multline}
  \label{eq:4.52}
  -\pi_t(x,t)
  +
  (\Cr{eps:4.1}+\Cr{eps:4.2}+\Cr{eps:4.3}+\Cr{eps:4.4})D(x)
  \\
   +
   2D(x)
   (|\log f|+1)
   |\nabla(\log\pi(x,t))|
   |\nabla (\log D(x))|
   +
   |\log f|
   |\nabla (\log D(x))|
   |\nabla \phi(x)|
  \leq
  \frac{\lambda}{2},
 \end{multline}
 and
 \begin{multline}
  \label{eq:4.53}
  \frac14
  \left(
  \frac{1}{\Cr{eps:3.1}}
  +
  \frac{1}{\Cr{eps:3.2}}
  \right)
  (|\log f|+1)^2
  |\nabla (\log D(x))|^2 
  +
  \frac14
  \left(
  \frac{1}{\Cr{eps:4.3}}
  +
  \frac{1}{\Cr{eps:4.4}}
  \right)|\nabla(\log \pi(x,t))|^2 
  \\
  +
  \frac{3\Cr{const:4.3}^{\frac{3}{2}}}{4\Cr{const:1.3}}
  \|\log f\|_{L^\infty(\Omega\times[0,\infty))}
  \|\pi\|_{L^\infty(\Omega\times[0,\infty))}
  \|\nabla \log D\|_{L^\infty(\Omega)} 
  +
  \frac{3\Cr{const:4.3}^{\frac{3}{2}}}{8\Cr{const:1.3}}
  \|\nabla\pi\|_{L^\infty(\Omega\times[0,\infty))} 
  \leq
  1. 
 \end{multline}
 Note that, 
 $\frac{3\Cr{const:4.3}^{\frac{3}{2}}}{4\Cr{const:1.3}}
 \|\log f\|_{L^\infty(\Omega\times[0,\infty))}
 \|\pi\|_{L^\infty(\Omega\times[0,\infty))}
 \|\nabla \log D\|_{L^\infty(\Omega)} 
 +
 \frac{3\Cr{const:4.3}^{\frac{3}{2}}}{8\Cr{const:1.3}}
 \|\nabla\pi\|_{L^\infty(\Omega\times[0,\infty))} 
 \leq
 1$, thus we have the estimate,
 \begin{multline}
  \label{eq:4.55}
 -
  \frac{\Cr{const:4.3}^{\frac{3}{2}}}{2\Cr{const:1.2}^3}
   \|\log f\|_{L^\infty(\Omega\times[0,\infty))}
  \|\pi\|_{L^\infty(\Omega\times[0,\infty))}
  \|\nabla \log D\|_{L^\infty(\Omega)}
  \left(
  \int_\Omega\pi(x,t)|\vec{u}|^2f\,dx
  \right)^3
  \\
  -
  \frac{\Cr{const:4.3}^{\frac{3}{2}}}{4\Cr{const:1.2}^3}
  \|\nabla\pi\|_{L^\infty(\Omega\times[0,\infty))}
  \left(
  \int_\Omega \pi(x,t)|\vec{u}|^2f\,dx
  \right)^3
  \geq
  -\frac{2\Cr{const:1.3}}{3\Cr{const:1.2}^3}
  \left(
  \int_\Omega \pi(x,t)|\vec{u}|^2f\,dx
  \right)^3.
 \end{multline}

 Note that, $\pi(x,t)/\|\pi\|_{L^\infty(\Omega\times[0,\infty))}\leq1$
 hence
 \begin{equation*}
  \int_\Omega |\vec{u}|^2f\,dx
     \geq
     \frac{1}{\|\pi\|_{L^\infty(\Omega\times[0,\infty))}}\int_\Omega \pi(x,t)|\vec{u}|^2f\,dx.
 \end{equation*}
 Combining the estimates \eqref{eq:4.52}, \eqref{eq:4.53},
 \eqref{eq:4.55}, and \eqref{eq:4.56}, we obtain the desired bound
 \eqref{eq:4.46} on $\frac{d^2F}{dt^2}[f](t)$.
\end{proof}

Therefore, the  energy estimate \eqref{eq:4.46} takes the form,
\begin{equation}
 \label{eq:4.54}
  \frac{d^2F}{dt^2}[f](t)
  \geq
  -\frac{\lambda}{\|\pi\|_{L^\infty(\Omega\times[0,\infty))}}
  \frac{dF}{dt}[f](t)
  +
  \frac{2\Cr{const:1.3}}{3\Cr{const:1.2}^3}
  \left(
   \frac{dF}{dt}[f](t)
  \right)^3.
\end{equation}

Finally,  we are in the position to show the main result of this Section, Theorem \ref{thm:4.1}.

\begin{proof}%
 [Proof of Theorem \ref{thm:4.1}]
 From the differential inequality \eqref{eq:4.54} and \eqref{eq:4.3},
 we obtain that,
\begin{equation}
 \frac{d}{dt}
  \left(
   \int_\Omega \pi(x,t)|\vec{u}|^2f\,dx
  \right)
  \leq
  -\frac{\lambda}{\|\pi\|_{L^\infty(\Omega\times[0,\infty))}}
  \int_\Omega \pi(x,t)|\vec{u}|^2f\,dx
  +
  \frac{2\Cr{const:1.3}}{3\Cr{const:1.2}^3}
  \left(
   \int_\Omega \pi(x,t)|\vec{u}|^2f\,dx
  \right)^3.
\end{equation}
 Utilizing the same argument as in the proof of the Theorem \ref{thm:3.13}, using similar
 version of the Gronwall's inequality as in the Section~\ref{sec:3}, we can
 show that there exists positive constants
 $\Cr{const:4.6}$, $\Cr{const:4.7}>0$,  such that,  if $\int_\Omega
 \pi(x,t)|\vec{u}|^2f\,dx|_{t=0}\leq \Cr{const:4.6}$, namely,
 $  \int_\Omega \frac{1}{\pi(x,0)}|\nabla(D(x)\log f_0+\phi(x))|^2f_0\,dx
   \leq\Cr{const:4.6},$
 then, we derive \eqref{eq:4.64}, that is, for $t>0$, 
 \begin{equation*}
  \int_\Omega
   \pi(x,t)
   |\vec{u}|^2f\,dx
   \leq
   \Cr{const:4.7}
   e^{-\tilde{\lambda} t},
 \end{equation*}
 where
 $\tilde{\lambda}=\lambda/\|\pi\|_{L^\infty(\Omega\times[0,\infty))}$.
\end{proof}



\section{Conclusion and Numerical Insights}\label{sec:6}

In this work,  we studied several nonlinear Fokker-Planck type
equations with inhomogeneous diffusion and with variable mobility parameters.
These systems appear as a part of grain
growth modeling in polycrystalline materials. Such models
satisfy  energy laws and exhibit special energetic
variational structures as described in the previous sections.

Followed our earlier work on the local
existence and uniqueness of the solution of the Fokker-Planck
system in \cite{epshteyn2022local}, here, we investigated the large
time asymptotic analysis, as well as numerical simulations of these
nonstandard Fokker-Planck systems.
In particular, we reformulated and generalized the classical entropy
method to the nonlinear
Fokker-Planck systems with inhomogeneous diffusion and with variable
mobility parameters (note, the classical entropy method has been previously developed only for the study of the
homogeneous linear Fokker-Planck equations). Due to the limitations of the existing analytical
techniques, our theory has been derived under assumption of the
convex potential and the periodic boundary conditions. However,  our numerical
tests presented below seem to indicate that the developed theoretical results
could be extended to a more general class of models, in particular, to
systems with the non-convex potential and
with no-flux boundary conditions. In addition, the global existence of solutions under various physically relevant boundary conditions was not addressed yet. These important points will be part of our future research
and will require the design of very different analytical
methods. We
will also further extend the study of such Fokker-Planck equations to
 the systems in higher dimensions than those studied in the current
paper. This is especially relevant to
the modeling of the
evolution of the
grain boundary network that undergoes disappearance/critical events, e.g. \cite{epshteyn2021stochastic,Katya-Chun-Mzn4}.
\par As we discussed, in this paper, we seek to show exponential decay of the free energy \eqref{eq:1.2}.
However, because of the nonlinearity, in the inhomogeneous diffusion $D$ case, we have only shown the weaker result of $	\frac{dF}{dt}[f](t) = -D_{\mathrm{dis}}[f](t)$ converges to 0 exponentially, 
as opposed to the stronger conclusions of exponential convergence of
free energy $\energy[f]$ (or the solution $f$ itself) such as in
\appref{sec:5} for the linear Cauchy problem, (see the discussion in \rmref{remark: 3.3}, for example).
We also restrict analysis to the \emph{periodic} \BC. 
However, in applications it is also common to consider the \emph{natural, no-flux} \BC.
We would like to show that, \emph{numerically}, we indeed observe
exponential decay of the free energy,
even in the more general case of inhomogeneous diffusion $D$, as well as variable mobility $\pi$, as in \secref{sec:4}. Moreover, 
there is no significant difference in the exponential decay rates of
the free energy, when the periodic \BC\ is changed to the no-flux \BC, \emph{numerically}.
We also note that in the numerical experiments we can impose much more
relaxed conditions in the parameters than those in our main
Theorem~\ref{thm:4.1}, while observing stronger, more robust
conclusions of the exponential decay than shown in the current theorems.

The numerical experiments are set up as follows.
Consider the domain $\dom = [-1,\ 1]^n$ in $n = 1,\ 2,\ 3$ space dimensions.
We use a uniform grid on $\dom$ of size $N$ in each space dimension, and a uniform time grid of size $N_t$ (with a total of $\Ntot = N^n\times N_t$ grid points).
We use a first-order accurate finite-volume scheme in space, with upwind numerical fluxes and discrete gradients; the time discretization is done using backward Euler method.
In the numerical results, the free energy \eqref{eq:1.2} is measured discretely using the cell-average values from the scheme.

To be consistent with the theoretical assumptions, we set the parameters to be smooth, bounded, and  periodic in space.
In $n = 1$ dimension, set the potential to be, 
\begin{equation}\label{phi}
	\phi(x) = 1 + \frac{1}{4} \sin^2 {\frac{k_p\pi}{2} x}.
\end{equation}
Note that  $\phi''(x) = \frac{(k_p\pi)^2}{8}\cos k_p \pi x,$ so \eqref{phi} is in general \emph{not} convex on $[-1,\ 1]$,
and in particular does not satisfy the strict convexity condition of Theorems~\ref{thm:2.9}, \ref{thm:3.13} and \ref{thm:4.1}.
For example, we choose $k_p = 2$, and the potential \eqref{phi} is
plotted in \figref{fig: par_1d}, top left.
We will present here numerical results for non-convex potentials,
since we do not observe numerically any dependence on the convexity of
$\phi$ (we also conducted numerical tests with the convex potential
$\phi$,  and obtained a very similar results to the results presented below).
Define a parameter $\gamma$ as,
\begin{equation}
	\gamma(x,\ t) = (1+\cos^2 {\pi x})(1+\frac{1}{2}\sin {10 t}),
\end{equation}
and set the mobility
\begin{equation}\label{PI}
	\pi(x,\ t) = \frac{1}{\gamma(x,\ t)},
\end{equation}
as plotted in \figref{fig: par_1d}, top right.
Note that \eqref{PI} is smooth and bounded, strongly positive, and both $\pi_x$ and $\pi_t$ are bounded. In particular, the mobility \eqref{PI} satisfies the conditions of Propositions~\ref{prop:1.1} and \ref{prop:1.5}. 
In the homogeneous case set the diffusion coefficient $D = 1,$ whereas
in the inhomogeneous case we consider,
\begin{equation}\label{D}
	D(x) = D_1(x) \equiv 1-\frac{1}{2}\sin^2 {2\pi x} = \frac{1}{4}(3 + \cos {4\pi x} ),
\end{equation} 
as plotted in \figref{fig: par_1d}, bottom left.
The function \eqref{D} is smooth and bounded, strongly positive, and $D'$ is also bounded. In particular, \eqref{D} satisfies the conditions of Propositions~\ref{prop:1.1} and \ref{prop:1.5}. 
Note that $D_1$ \eqref{D} only contains a single mode of frequency.
Thus,  for a more general and interesting results, we further 
consider a positive, smooth, even, periodic function, 
\begin{equation}\label{D_modes}
	D(x) = D_M(x) \equiv  1 + \sum_{m=1}^M A_n (\cos \frac{m\pi x}{2} + 1) ,
\end{equation}
where $A_m \geq 0$ is the coefficient of each mode of frequency. 
For instance, if we select $M = N/2 = 100$, where $N = 200$ is the size of the grid in space,
and the coefficients $A_m = 0.01$ for $m = 1,\ \cdots,\ M,$ then the function $D$ is plotted in \figref{fig: par_1d}, bottom right.
Note that the resulting $D_M$ is much more oscillatory, and its gradient $D'$ is generally not bounded by 
some given constant; that is,
the \IC\ 
of Theorem~\ref{thm:3.13} or \ref{thm:4.1} is generally not satisfied.
Finally, for simplicity, set the Gaussian \IC\
\begin{equation}\label{IC}
	f(x,\ 0) = f_0(x) = \frac{1}{\sqrt{2\pi \sigma^2}} e^{-\frac{x^2}{2\sigma^2}},\quad \sigma^2 = 0.01.
\end{equation}
Note that $f_0$ is very close to 0 near the boundaries, and does not satisfy the strong positivity condition of Proposition~\ref{prop:1.5}, and in turn that of Lemmas~\ref{lemma3.1} and \ref{lem:4.2}.
Also, since \eqref{IC} is defined independent of the parameters $\phi,\ \pi$ and $D$, $D_{\mathrm{dis}}[f_0]$ is generally not bounded by some given constant; that is,
the condition in \eqref{eq:3.58} or \eqref{cond_par:thm4}
of Theorem~\ref{thm:3.13} or \ref{thm:4.1} is generally not satisfied.

In higher $n = 2,\ 3$ dimensions, the parameters $\phi,\ \gamma,\ f_0$ are set to be the ``tensor-products" in $\MT{x}$ of their respective one-dimensional counterparts.
In $n = 2$ dimensions, take the potential,
\begin{equation}\label{phi_2d}
	\phi(x,\ y) = (1 + \frac{1}{4} \sin^2 {\frac{k_p^x\pi}{2} x} ) (1 + \frac{1}{4} \sin^2 {\frac{k_p^y\pi}{2} y}),
\end{equation}
and, in particular,  we choose $k_p^x = 1,\ k_p^y = 2$; 
and the mobility $\pi(x,\ y,\ t) = 1/\gamma(x,\ y,\ t),$ where, 
\begin{equation}
	\gamma(x,\ y,\ t) = (1+\cos^2 {\pi x})(1+\cos^2 {2\pi y})(1+\frac{1}{2}\sin {10 t}).
\end{equation}
Similarly, in $n = 3$ dimensions, 
\begin{equation}\label{phi_3d}
	\phi(x,\ y,\ z) = (1 + \frac{1}{4} \sin^2 {\frac{k_p^x\pi}{2} x} ) (1 + \frac{1}{4} \sin^2 {\frac{k_p^y\pi}{2} y})(1 + \frac{1}{4} \sin^2 {\frac{k_p^z\pi}{2} z}),
\end{equation}
and in particular we choose $k_p^x = 1,\ k_p^y = 2,\ k_p^z = 3$; 
and $\pi(x,\ y,\ z,\ t) = 1/\gamma(x,\ y,\ z,\ t),$ where, 
\begin{equation}
	\gamma(x,\ y,\ z,\ t) = (1+\cos^2 {\pi x})(1+\cos^2 {2\pi y})(1+\cos^2 {3\pi z})(1+\frac{1}{2}\sin {10 t}).
\end{equation}
We extend the single-mode inhomogeneous diffusion coefficient $D_1$ \eqref{D} in the same way.
In two dimensions, consider the separable,
\begin{equation}\label{D_2d}
	D_1(x,\ y) \equiv (1-\frac{1}{2}\sin^2 {\pi x}) (1-\frac{1}{2}\sin^2 {3\pi y}),
\end{equation} 
and similarly in three dimensions, 
\begin{equation}\label{D_3d}
	D_1(x,\ y,\ z) \equiv (1-\frac{1}{2}\sin^2 {\pi x}) (1-\frac{1}{2}\sin^2 {3\pi y}) (1-\frac{1}{2}\sin^2 {4\pi z}).
\end{equation} 
To extend the multi-mode diffusion coefficient $D_M$ \eqref{D_modes},
consider, in two dimensions, the more interesting non-separable function,
\begin{equation}\label{D_2d_modes}
	D_{M_1 M_2}(x,\ y) \equiv  1 + \sum_{m_1=1}^{M_1}\sum_{m_2=1}^{M_2} A_{m_1 m_2}\, (\cos \frac{m_1\pi x}{2}\, \cos \frac{m_2\pi y}{2} + 1) ,
\end{equation} 
where $A_{m_1 m_2} \geq 0$ is the coefficient of each frequency. 
In particular, we can choose $M_1 = N/2,\ M_2 = N/4$,
and $A_{m_1 m_2} = 0.01$ if $m_1 < m_2$, and 0 otherwise.
The resulting function $D_M$ is plotted in \figref{fig: D_2d_modes}, for 
$(M_1,\ M_2) = (20,\ 10)$ for $N = 40$ and $(M_1,\ M_2) = (40,\ 20)$ for $N = 80$, respectively.
Similarly, in three dimensions, consider 
\begin{equation}\label{D_3d_modes}
	D_{M_1 M_2 M_3}(x,\ y,\ z) \equiv  1 + \sum_{m_1=1}^{M_1}\sum_{m_2=1}^{M_1} \sum_{m_3=1}^{M_1} A_{m_1 m_2 m_3}\, 
	(\cos \frac{m_1\pi x}{2}\, \cos \frac{m_2\pi y}{2}\, \cos \frac{m_3\pi z}{2} + 1) ,
\end{equation} 
where $A_{m_1 m_2 m_3} \geq 0$. In particular, for $N = 20$,
we can choose 
$M_1 = N/2=10,\ M_2 = N/2-2 = 8$  
$M_3 = 4$,
and
$A_{m_1 m_2 m_3} = 0.01$ if $m_1 \geq m_2 \geq m_3$,
and 0 otherwise.

The numerical decays of free energy are presented in \figref{fig: FE_1d}, \figref{fig: FE_2d} and \figref{fig: FE_3d}, for one, two, and three dimensions, respectively. We can draw several important observations from the numerical results:
\begin{itemize}[nosep]
	\item For both the homogeneous $D = 1$ (linear) and the inhomogeneous $D(\MT{x})$ (nonlinear) cases, the \emph{free energy decays exponentially with either periodic or no-flux \BC}, until the numerical results hit round-off errors.
	\item We can observe some discrepancy in the exponential decay
          rates of the free energy between periodic and no-flux \BC s, but this is due to numerical errors, as seen to be greatly reduced when the mesh is refined (for example, from \figref{fig: FE_2d_C} to \figref{fig: FE_2d} in two dimensions, and from \figref{fig: FE_3d_C} to \figref{fig: FE_3d} in three dimensions).
	\item As with the theoretical results, the exponential decay
          of the free energy is observed in all tested \emph{space dimensions $n = 1,\ 2,\ 3$}.
	\item The numerical results do not seem to rely on the restricted conditions on the parameters as given in the main Theorems~\ref{thm:2.9}, \ref{thm:3.13} and \ref{thm:4.1}, such as the \emph{convexity of the potential $\phi$}, the strong positivity of the \IC\ $f_0$, and the restricted bound on the gradient of the diffusion coefficient $\grad{D}$.
	\item In particular, we compare the homogeneous diffusion
          coefficient $D=1$, the single-mode $D_1$, and the multi-mode
          $D_M$. The numerical free energy decays exponentially fast
          regardless of how oscillatory $D$ is. We observe that the
          free energy decays slower in the $D_1$ case than $D=1$,
          while much faster in the $D_M$ case (as expected due to the
          magnitude of $D$ in each case).		
\end{itemize}

\begin{figure}[h!]
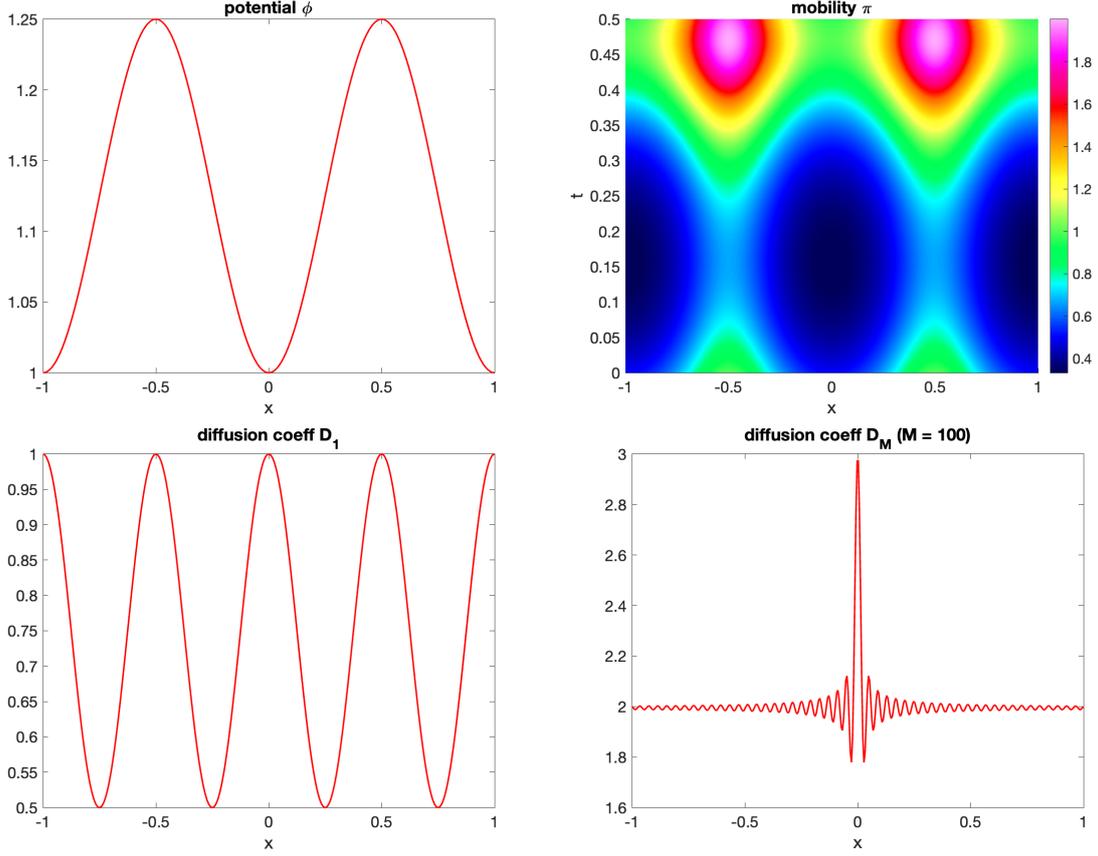

	\newcommand{\figWidth}{.45\linewidth}
	\begin{center}	
		\plotFig{phi_2}{\figWidth}
		\plotFig{PI}{\figWidth}	
		\plotFig{D1d1}{\figWidth}	
		\plotFig{D1d2_M100}{\figWidth}		
	\end{center}
	\caption{Periodic parameters in one dimension on $\dom = [-1,\ 1]$.
		Top left: potential $\phi(x)$ \eqref{phi} with $k_p = 2$.
		Top right: mobility $\pi(x,\ t)$ \eqref{PI}, $t \in [0,\ 0.5]$.
		Bottom: inhomogeneous diffusion coefficient $D(x)$; left: single-mode $D_1(x)$ \eqref{D}; 
		right: multi-mode $D_M(x)$ \eqref{D_modes} with $M = 100$ modes of oscillation, and $A_m = 0.01$.
	 }
	\label{fig: par_1d}
\end{figure}

\begin{figure}[h!]
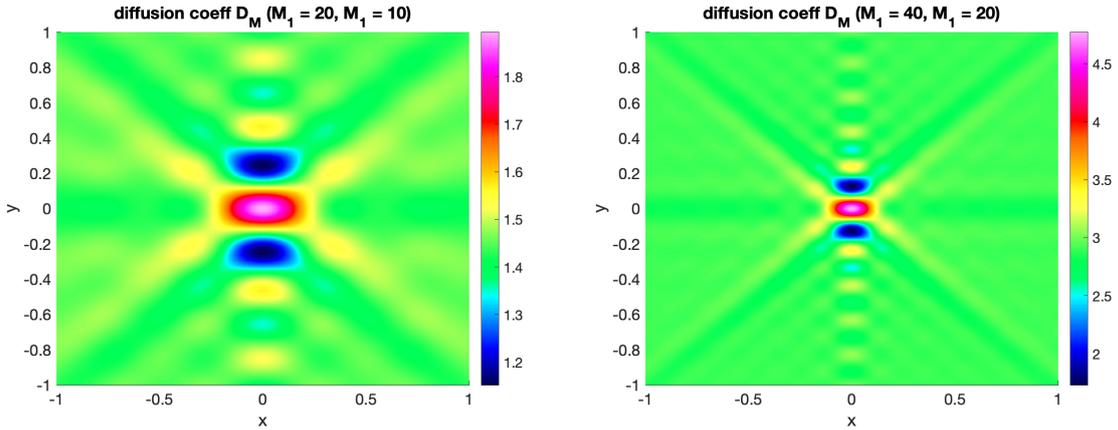

	\newcommand{\figWidth}{.45\linewidth}
	\begin{center}
		\plotFig{D2d2_M2010}{\figWidth}		
		\plotFig{D2d2_M4020}{\figWidth}	
	\end{center}
	\caption{Multi-mode inhomogeneous diffusion coefficient $D_{M_1,\ M_2}(x,\ y)$ \eqref{D_2d_modes}, with $(M_1,\ M_2)$ modes of oscillation, and $A_{m_1 m_2} = 10^{-2}$ if $m_1 < m_2$, and 0 otherwise. Left: $(M_1,\ M_2) = (20,\ 10)$; right: $(M_1,\ M_2) = (40,\ 20)$. }
	\label{fig: D_2d_modes}
\end{figure}

\begin{figure}[h!]
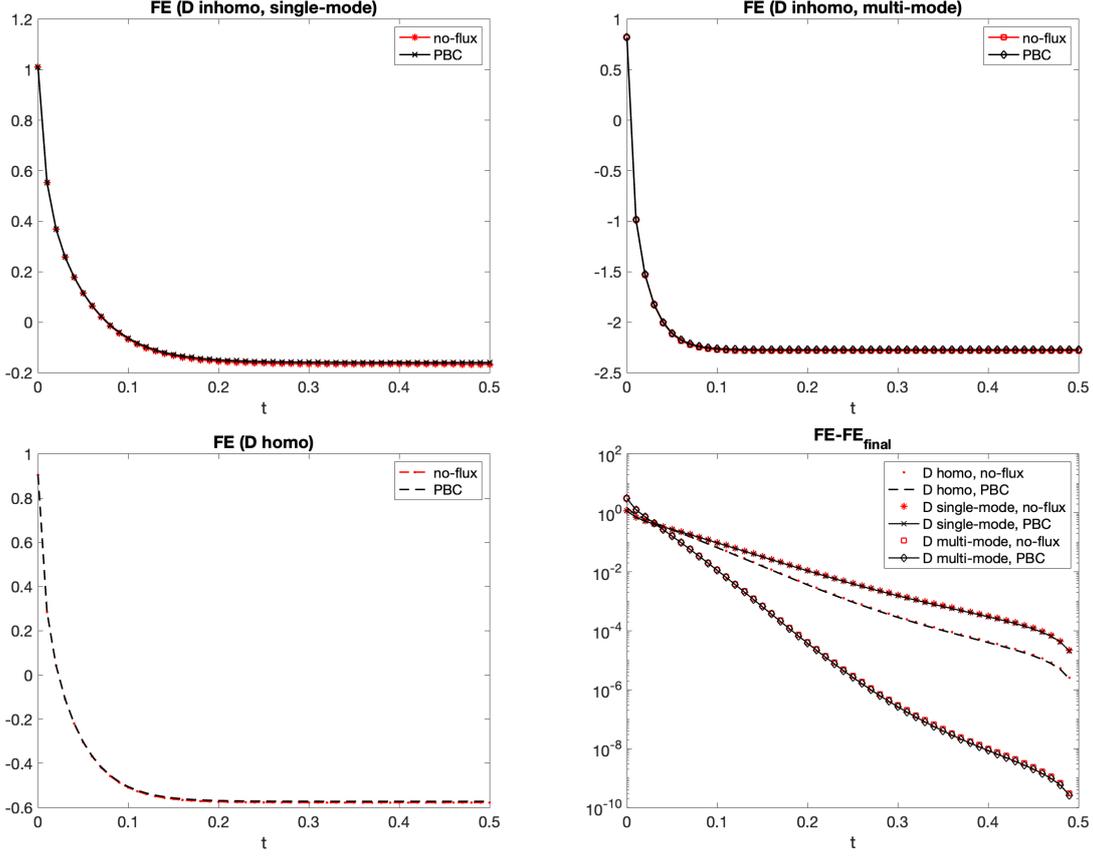

	\newcommand{\figWidth}{.45\linewidth}
	\begin{center}	
		\plotFig{FP1d_E_N200_Dinhomo1k4_phi2}{\figWidth}
		\plotFig{FP1d_E_N200_Dinhomo2M100_phi2}{\figWidth}
		\plotFig{FP1d_E_N200_Dhomo_phi2}{\figWidth}
		\plotFig{FP1d_logE_compareD3_N200_phi2}{\figWidth}
	\end{center}
	\caption{Exponential decay of free energy (FE), in one dimension, 
	comparing no-flux (red) against periodic (black) \BC.
	Top: inhomogeneous $D(x)$; 
	left: single-mode $D_1$ \eqref{D} (\figref{fig: par_1d}, bottom left); 
	right: multi-mode $D_M$ \eqref{D_modes}, with $M = N/2 = 100,\ A_m = 0.01$ (\figref{fig: par_1d}, bottom right).
	Bottom left: homogeneous $D=1$. 
	Bottom right: direct comparison of the exponential decay rates. ($\Ntot \approx 200\times 50.$)}
	\label{fig: FE_1d}
\end{figure}


\begin{figure}[h!]
	\newcommand{\figWidth}{.45\linewidth}
	\begin{center}	
		\plotFig{FP2d_E_N40_Dinhomo1k26_phi12_gamma24}{\figWidth}
		\plotFig{FP2d_E_N40_Dinhomo2M2010_phi12_gamma24}{\figWidth}
		\plotFig{FP2d_E_N40_Dhomo_phi12_gamma24}{\figWidth}
		\plotFig{FP2d_logE_compareD3_N40_phi12_gamma24}{\figWidth}
	\end{center}
	\caption{Exponential decay of free energy (FE), in two dimensions, 
		comparing no-flux (red) against periodic (black) \BC.
		Top: inhomogeneous $D(\MT{x})$; 
		left: single-mode $D_1$ \eqref{D_2d}; 
		right: multi-mode $D_{M_1,\ M_2}$ \eqref{D_2d_modes} with $(M_1,\ M_2) = (20,\ 10),\ A_{m_1 m_2} = 0.01$ if $m_1 < m_2$, and 0 otherwise (\figref{fig: D_2d_modes}, left).
		Bottom left: homogeneous $D=1$. 
		Bottom right: direct comparison of the exponential decay rates. ($\Ntot \approx 40^2\times 10.$)}
	\label{fig: FE_2d_C}
\end{figure}

\begin{figure}[h!]
	\newcommand{\figWidth}{.45\linewidth}
	\begin{center}	
		\plotFig{FP2d_E_N80_Dinhomo1k26_phi12_gamma24}{\figWidth}
		\plotFig{FP2d_E_N80_Dinhomo2M4020_phi12_gamma24}{\figWidth}
		\plotFig{FP2d_E_N80_Dhomo_phi12_gamma24}{\figWidth}
		\plotFig{FP2d_logE_compareD3_N80_phi12_gamma24}{\figWidth}
	\end{center}
	\caption{Exponential decay of free energy (FE), in two dimensions, 
		comparing no-flux (red) against periodic (black) \BC.
		Top: inhomogeneous $D(\MT{x})$; 
		left: single-mode $D_1$ \eqref{D_2d}; 
		right: multi-mode $D_{M_1,\ M_2}$ \eqref{D_2d_modes} with $(M_1,\ M_2) = (40,\ 20),\ A_{m_1 m_2} = 0.01$ if $m_1 < m_2$, and 0 otherwise (\figref{fig: D_2d_modes}, right).
		Bottom left: homogeneous $D=1$. 
		Bottom right: direct comparison of the exponential decay rates. ($\Ntot \approx 80^2\times 20.$)}
	\label{fig: FE_2d}
\end{figure}

\begin{figure}[h!]
	\newcommand{\figWidth}{.45\linewidth}
	\begin{center}	
		\plotFig{FP3d_E_N10_Dinhomo1k268_phi123_gamma246}{\figWidth}
		\plotFig{FP3d_E_N10_Dinhomo2M534_phi123_gamma246}{\figWidth}
		\plotFig{FP3d_E_N10_Dhomo_phi123_gamma246}{\figWidth}
		\plotFig{FP3d_logE_compareD3_N10_phi123_gamma246}{\figWidth}
	\end{center}
	\caption{Exponential decay of free energy (FE), in three dimensions, 
		comparing no-flux (red) against periodic (black) \BC.
		Top: inhomogeneous $D(\MT{x})$; 
		left: single-mode \eqref{D_3d}; 
		right: multi-mode \eqref{D_3d_modes} with $(M_1,\ M_2,\ M_3) = (5,\ 3,\ 4),\ 
		A_{m_1 m_2 m_3} = 0.04$ if $m_1 \geq m_2 \geq m_3$,
		and 0 otherwise.
		Bottom left: homogeneous $D=1$. 
		Bottom right: direct comparison of the exponential decay rates. ($\Ntot \approx 10^3\times 5.$) }
	\label{fig: FE_3d_C}
\end{figure}

\begin{figure}[h!]
	\newcommand{\figWidth}{.45\linewidth}
	\begin{center}	
		\plotFig{FP3d_E_N20_Dinhomo1k268_phi123_gamma246}{\figWidth}
		\plotFig{FP3d_E_N20_Dinhomo2M1084_phi123_gamma246}{\figWidth}
		\plotFig{FP3d_E_N20_Dhomo_phi123_gamma246}{\figWidth}
		\plotFig{FP3d_logE_compareD3_N20_phi123_gamma246}{\figWidth}
	\end{center}
	\caption{Exponential decay of free energy (FE), in three dimensions, 
		comparing no-flux (red) against periodic (black) \BC.
		Top: inhomogeneous $D(\MT{x})$; 
		left: single-mode \eqref{D_3d}; 
		right: multi-mode \eqref{D_3d_modes} with $(M_1,\ M_2,\ M_3) = (10,\ 8,\ 4),\ 
		A_{m_1 m_2 m_3} = 0.01$ if $m_1 \geq m_2 \geq m_3$,
		and 0 otherwise.
		Bottom left: homogeneous $D=1$. 
		Bottom right: direct comparison of the exponential decay rates. ($\Ntot \approx 20^3\times 10.$) }
	\label{fig: FE_3d}
\end{figure}

\appendix

\section{The Cauchy problem for linear homogeneous Fokker-Planck equation}
\label{sec:5}
In this appendix, as noticed in Remark \ref{rem:2.1}, we will
reformulate the entropy dissipation method \cite{MR3497125} for the Cauchy problem of the
linear homogeneous Fokker-Planck equation in the framework of the
general diffusion, in particular, the velocity field $\vec{u}$. 

Here,  we consider the following problem,
\begin{equation}
 \label{eq:A.1}
 \left\{
  \begin{aligned}
   \frac{\partial f}{\partial t}
   +
   \Div
   \left(
   f\vec{u}
   \right)
   &=
   0,
   &\quad
   &x\in\R^n,\quad
   t>0, \\
   \vec{u}
   &=
   -
   \nabla
   \left(
   D\log f
   +
   \phi(x)
   \right),
   &\quad
   &x\in\R^n,\quad
   t>0, \\
   f(x,0)&=f_0(x),&\quad
   &x\in\R^n,
  \end{aligned}
 \right.
\end{equation}
where $D>0$ is a positive constant and $\phi=\phi(x)$ is a smooth
function on $\R^n$. The free energy $F$ and the energy law
\eqref{eq:A.1} take the form,
\begin{equation}
 \label{eq:A.4}
  F[f]
  :=
  \int_{\R^n}
  \left(
   Df(\log f -1)+f\phi(x)
  \right)
  \,dx,
\end{equation}
and
\begin{equation}
 \label{eq:A.5}
  \frac{dF}{dt}[f](t)
  =
  -
  \int_{\R^n}
  |\vec{u}|^2f\,dx := - D_{\mathrm{dis}}[f](t).
\end{equation}
Following the same argument from Section \ref{sec:2},  as stated in Proposition
\ref{prop:2.8} and Theorem \ref{thm:2.9}, we can obtain the following
assertion.

\begin{proposition}
 \label{prop:A.2}
 Let $f$ be a solution of \eqref{eq:A.1} and let $\vec{u}$ be defined in
 \eqref{eq:A.1}. Then,
 \begin{equation}
  \label{eq:A.7}
   \frac{d^2F}{dt^2}[f](t)
   =
   2\int_{\R^n} ((\nabla^2\phi(x)) \vec{u}\cdot\vec{u}) f\,dx
   +
   2\int_{\R^n} D|\nabla \vec{u}|^2 f\,dx.
 \end{equation}
\end{proposition}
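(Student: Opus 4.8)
The plan is to transcribe, essentially line by line, the chain of lemmas of Section~\ref{sec:2} that culminates in Proposition~\ref{prop:2.8}, with the periodic domain $\Omega$ replaced by $\R^n$. The purely algebraic (pointwise) identities carry over unchanged: from $\vec u=-\nabla(D\log f+\phi(x))$ one obtains $f\vec u=-D\nabla f-f\nabla\phi(x)$ and $\rho\vec u=-D\nabla\rho$ (the analogues of \eqref{eq:2.8} and \eqref{eq:2.6}), where $\rho=f/\Feq$ and $\Feq$ is given by \eqref{eq:1.5}; and $\vec u_t=-\frac{D}{\rho}\nabla\rho_t-\frac{\rho_t}{\rho}\vec u$ (the analogue of \eqref{eq:2.4}). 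Differentiating the energy law \eqref{eq:A.5} once more in time gives $\frac{d^2F}{dt^2}[f](t)=-2\int_{\R^n}\vec u\cdot\vec u_t\,f\,dx-\int_{\R^n}|\vec u|^2 f_t\,dx$, and inserting the expression for $\vec u_t$ together with $f=\rho\Feq$ and $\rho_t\Feq=f_t$ reproduces the $\R^n$ analogue of \eqref{eq:2.7}, namely $\frac{d^2F}{dt^2}[f](t)=2\int_{\R^n}D\vec u\cdot\nabla\rho_t\,\Feq\,dx+\int_{\R^n}|\vec u|^2 f_t\,dx$.

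Next I would carry out the three integration-by-parts steps on $\R^n$. First, $\int_{\R^n}|\vec u|^2 f_t\,dx=\int_{\R^n}\vec u\cdot\nabla|\vec u|^2\,f\,dx$ (the analogue of Lemma~\ref{lem:2.4}), obtained from $f_t=-\Div(f\vec u)$ and one integration by parts. Second, taking gradients of the identity $Df_t=f\bigl(|\vec u|^2+\vec u\cdot\nabla\phi(x)-D\Div\vec u\bigr)$ and of $\frac{D}{\Feq}\nabla\Feq+\nabla\phi(x)=0$ (the gradient of \eqref{eq:1.6}) yields the $\R^n$ version of the identity \eqref{eq:2.11} for $D\Feq\nabla\rho_t$. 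Third, since $\vec u=-\nabla(D\log\rho)$ is a gradient, $\nabla\vec u=-\nabla^2(D\log\rho)$ is symmetric, so that $\sum_l u^k_{x_l}u^l=\frac{1}{2}(|\vec u|^2)_{x_k}$ and $\sum_{k,l}u^l_{x_k}u^k_{x_l}=|\nabla\vec u|^2$; feeding these together with the computations \eqref{eq:2.14}--\eqref{eq:2.22} and one further integration by parts into the $\R^n$ analogue of \eqref{eq:2.13} gives the $\R^n$ counterpart of \eqref{eq:2.21}. Substituting the $\R^n$ analogues of \eqref{eq:2.12} and \eqref{eq:2.21} into the $\R^n$ analogue of \eqref{eq:2.7}, the two terms $\int_{\R^n}\vec u\cdot\nabla|\vec u|^2\,f\,dx$ cancel and \eqref{eq:A.7} follows.

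The one point that is genuinely different from the torus case, and hence the main obstacle, is justifying that every boundary integral at $|x|\to\infty$ vanishes in the four integrations by parts above. On the torus this is automatic; on $\R^n$ it must be supplied by the confining nature of $\phi$: one uses that $\Feq$ (see \eqref{eq:1.5}) is an integrable probability density, that $f$ inherits Gaussian-type decay through maximum-principle bounds of the form $c_-\Feq\le f\le c_+\Feq$, and that $\vec u$, $\nabla\vec u$, and $\vec u_t$ grow at most polynomially, controlled by $\nabla\phi$ and $\nabla^2\phi$; then every integrand, and every flux term, is bounded by $f$ times a polynomial and decays at infinity. I would therefore state these decay and regularity hypotheses on $f$ (carried along from the framework of Section~\ref{sec:2}) and on $\phi$ explicitly --- in particular $\nabla^2\phi\ge\lambda I$, which also forces $\phi$ to grow at least quadratically and hence guarantees that $\Feq$ is well defined --- and under them the entire Section~\ref{sec:2} computation applies verbatim, giving \eqref{eq:A.7}. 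Everything else is a routine line-by-line transcription of the periodic argument.
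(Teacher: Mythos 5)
Your proposal follows exactly the route the paper takes: the paper proves Proposition~\ref{prop:A.2} simply by invoking the Section~\ref{sec:2} argument (the chain of lemmas leading to Proposition~\ref{prop:2.8}) with $\Omega$ replaced by $\R^n$, which is precisely your line-by-line transcription. Your additional explicit discussion of why the boundary terms at infinity vanish (Gaussian-type bounds $c_-\Feq\le f\le c_+\Feq$ and polynomial growth of $\vec{u}$, $\nabla\vec{u}$, $\vec{u}_t$ controlled by $\phi$) addresses a point the paper leaves implicit in its smoothness assumptions, and is a welcome clarification rather than a deviation.
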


\begin{proposition}
 \label{prop:A.1}
 Let $\phi=\phi(x)$ be a function on $\R^n$, and let $f_0=f_0(x)$ be a
 probability density function on $\R^n$, satisfying $F[f_0]<\infty$ and $D_{\mathrm{dis}}[f_0]<\infty$, where $F$ and $D_{\mathrm{dis}}[f_0]$ are defined by
 \eqref{eq:A.4} and
 \eqref{eq:A.5}. 
  Let $f$ be a solution of
 \eqref{eq:A.1}. Let $\vec{u}$ be defined as in \eqref{eq:A.1}. Assume further 
 that there is a positive constant $\lambda>0$, such that
 $\nabla^2\phi\geq \lambda I$, where $I$ is the identity matrix. Then,
 the following is true,
 \begin{equation}
  \label{eq:A.2}
   \int_{\R^n}
   |\vec{u}|^2f\,dx
   \leq
   e^{-2\lambda t}
   \int_{\R^n}
   |\nabla
   \left(
    D\log f_0
    +
    \phi(x)
   \right)|^2f_0\,dx.
 \end{equation}
 In particular, we have that,
 \begin{equation}
  \label{eq:A.3}
  \frac{dF}{dt}[f](t)
   =
   -
   \int_{\R^n}
   |\vec{u}|^2f\,dx
   \rightarrow 0,
   \qquad
   \text{as}
   \
   t\rightarrow\infty.
 \end{equation}
\end{proposition}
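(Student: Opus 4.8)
The plan is to mirror the proof of Theorem~\ref{thm:2.9} in Section~\ref{sec:2}, now using Proposition~\ref{prop:A.2} in place of Proposition~\ref{prop:2.8}. As a preliminary step I would record a lower bound for the free energy: the hypothesis $\nabla^2\phi\geq\lambda I$ forces $\phi$ to grow at least quadratically, so $\int_{\R^n}e^{-\phi(x)/D}\,dx<\infty$ and the equilibrium $\Feq$ of \eqref{eq:1.5} is a genuine probability density on $\R^n$. Writing $\phi=\Cr{const:1.1}-D\log\Feq$ via \eqref{eq:1.6} (here $D$ is constant) yields the decomposition $F[f]=F[\Feq]+D\int_{\R^n}f\log(f/\Feq)\,dx$, and since $f$ and $\Feq$ are both probability densities the relative-entropy term is nonnegative by Jensen's inequality. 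Hence $F[f](t)\geq F[\Feq]$ uniformly in $t$, which plays the role that Proposition~\ref{prop:1.7} played in the periodic case.

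Next, from Proposition~\ref{prop:A.2} together with $\nabla^2\phi\geq\lambda I$, I would deduce
\begin{equation*}
 \frac{d^2F}{dt^2}[f](t)
 \geq 2\lambda\int_{\R^n}|\vec{u}|^2f\,dx
 = -2\lambda\frac{dF}{dt}[f](t)
 = 2\lambda D_{\mathrm{dis}}[f](t)\geq 0 .
\end{equation*}
The first inequality shows $\frac{dF}{dt}[f]$ is nondecreasing. Integrating \eqref{eq:A.5} and using $F[f](t)\geq F[\Feq]$ gives $\int_0^t D_{\mathrm{dis}}[f](\tau)\,d\tau\leq F[f_0]-F[\Feq]<\infty$, so there is an increasing sequence $t_j\to\infty$ with $D_{\mathrm{dis}}[f](t_j)\to0$, exactly as in Lemma~\ref{lem:2.1}; combined with monotonicity of $\frac{dF}{dt}[f]$ this already forces $\frac{dF}{dt}[f](t)\to0$, i.e. \eqref{eq:A.3}.

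For the quantitative rate \eqref{eq:A.2}, I would rewrite the displayed inequality, using $D_{\mathrm{dis}}[f]=-\frac{dF}{dt}[f]$ and $\frac{d}{dt}D_{\mathrm{dis}}[f]=-\frac{d^2F}{dt^2}[f]$, as the differential inequality $\frac{d}{dt}D_{\mathrm{dis}}[f](t)\leq-2\lambda D_{\mathrm{dis}}[f](t)$, and apply Gronwall's inequality to the nonnegative function $D_{\mathrm{dis}}[f]$. This gives $D_{\mathrm{dis}}[f](t)\leq e^{-2\lambda t}D_{\mathrm{dis}}[f](0)$, and since $D_{\mathrm{dis}}[f](0)=\int_{\R^n}|\vec{u}(x,0)|^2f_0\,dx=\int_{\R^n}|\nabla(D\log f_0+\phi(x))|^2f_0\,dx$ by \eqref{eq:A.1} and \eqref{eq:A.5}, this is precisely \eqref{eq:A.2}; the hypothesis $D_{\mathrm{dis}}[f_0]<\infty$ guarantees the right-hand side is finite and $\to0$, which re-gives \eqref{eq:A.3}.

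The hard part will not be the algebra above but the analytic justification underlying Proposition~\ref{prop:A.2} and the two integrations by parts behind the energy laws \eqref{eq:A.5} and \eqref{eq:A.7}, since on the unbounded domain $\R^n$ one no longer has periodicity to kill boundary terms: one needs a priori decay of $f$ and of $\vec{u}=-\nabla(D\log f+\phi)$, and of the combinations $|\vec{u}|^2f$, $D\vec{u}\cdot\nabla\rho_t\,\Feq$, etc., fast enough at spatial infinity (controlled by the confining potential $\phi$). One would also want $t\mapsto D_{\mathrm{dis}}[f](t)$ to be $C^1$ so the Gronwall step is legitimate, which can be handled by working on $[\varepsilon,\infty)$ and letting $\varepsilon\to0$, or by passing to the integrated form of the inequality. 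Since Proposition~\ref{prop:A.2} is taken as given, the proof reduces to the three displayed steps above.
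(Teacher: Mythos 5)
Your proposal is correct and follows essentially the route the paper intends: the appendix simply invokes the argument of Theorem~\ref{thm:2.9}, namely Proposition~\ref{prop:A.2} plus $\nabla^2\phi\geq\lambda I$ give $\frac{d}{dt}D_{\mathrm{dis}}[f](t)\leq-2\lambda D_{\mathrm{dis}}[f](t)$, and Gronwall with $D_{\mathrm{dis}}[f_0]=\int_{\R^n}|\nabla(D\log f_0+\phi)|^2f_0\,dx<\infty$ yields \eqref{eq:A.2} and hence \eqref{eq:A.3}. Your preliminary lower bound $F[f](t)\geq F[\Feq]$ and the subsequence step are in fact superfluous for this proposition (the Gronwall estimate alone delivers both conclusions), though the relative-entropy decomposition and Jensen argument you use for it are exactly what the paper employs later in Lemma~\ref{lem:A.1}.
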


Again as we note in Remark \ref{rem:2.1}, from this
proposition \ref{prop:A.1}, we can obtain the exponential decay of
$\frac{dF}{dt}[f](t)=:-D_{\mathrm{dis}}[f](t)$, but not necessarily the
long-time asymptotic behavior of the free energy $F[f](t)$ or the
solution $f(t)$.  The next theorem \ref{thm:A.1} gives a 
stronger convergence result, namely, exponential convergence of $f$ to
$f^{\mathrm{eq}}$ in the $L^1$ space as $t\rightarrow\infty$.

\begin{theorem}
 \label{thm:A.1}
 Let $\phi=\phi(x)$ be a function on $\R^n$, and $f_0=f_0(x)$ be a
 probability density function on $\R^n$. Assume that $f$ is a solution of
 \eqref{eq:A.1}, $\vec{u}$ is defined as in
 \eqref{eq:A.1}, and  there is a positive constant $\lambda>0$,
 such that,  $\nabla^2\phi\geq \lambda I$, where $I$ is the identity
 matrix. Further,  assume that $F[f_0]<\infty$ and $D_{\mathrm{dis}}[f_0]<\infty$, where $F$ and $D_{\mathrm{dis}}[f_0]$ are defined by
 \eqref{eq:A.4} and
 \eqref{eq:A.5}. Then,  any smooth solution of \eqref{eq:A.1} converges
 exponentially fast to the equilibrium state, that is, there is a
 positive constant $\Cl{const:A.1}>0$ which depends only on $D$, $F[\Feq]$
 and $F[f_0]$ such that,
 \begin{equation}
  \label{eq:A.6}
   \|f-\Feq\|_{L^1(\R^n)}\leq
   \Cr{const:A.1}e^{-\lambda t}, \qquad
   t>0.
 \end{equation}
\end{theorem}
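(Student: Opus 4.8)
The plan is to run the classical entropy--entropy-dissipation scheme, with the logarithmic Sobolev inequality (available on $\R^n$ under the Bakry--\'Emery condition $\nabla^2\phi\geq\lambda I$) playing the role that the ad hoc algebra of Section~\ref{sec:2} plays in the periodic case. First I would identify the relative free energy with the relative entropy: writing $\rho=f/\Feq$ and $d\mu=\Feq\,dx$, and using $D\log\Feq+\phi=\Cr{const:1.1}$ together with $\int_{\R^n}f\,dx=\int_{\R^n}\Feq\,dx=1$, a direct computation gives
\[
F[f]-F[\Feq]=D\int_{\R^n}\rho\log\rho\,d\mu=:D\,H(f\mid\Feq)\geq0 .
\]
Note that $\nabla^2\phi\geq\lambda I$ forces $\phi$ to grow at least quadratically, so $\Feq=e^{-(\phi-\Cr{const:1.1})/D}$ is integrable and $\Cr{const:1.1}$, hence $F[\Feq]$, is well defined and finite. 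Since $\pi\equiv1$ here, $\vec{u}=-\nabla(D\log\rho)=-D\nabla\rho/\rho$, so the dissipation is a (scaled) Fisher information,
\[
D_{\mathrm{dis}}[f]=\int_{\R^n}|\vec{u}|^2f\,dx
=D^2\int_{\R^n}\frac{|\nabla\rho|^2}{\rho}\,d\mu
=4D^2\int_{\R^n}|\nabla\sqrt\rho|^2\,d\mu .
\]

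Next I would invoke the logarithmic Sobolev inequality for $d\mu$. Since $\nabla^2\big((\phi-\Cr{const:1.1})/D\big)\geq(\lambda/D)I$, the Bakry--\'Emery criterion yields, for all admissible $g$,
\[
\int_{\R^n}g^2\log\frac{g^2}{\int_{\R^n}g^2\,d\mu}\,d\mu\leq\frac{2D}{\lambda}\int_{\R^n}|\nabla g|^2\,d\mu .
\]
Applying this to $g=\sqrt\rho$ (so that $\int g^2\,d\mu=\int f\,dx=1$) and combining with Step~1 gives $F[f]-F[\Feq]=D\,H(f\mid\Feq)\leq\frac{1}{2\lambda}D_{\mathrm{dis}}[f]$, i.e. $D_{\mathrm{dis}}[f]\geq 2\lambda\big(F[f]-F[\Feq]\big)$. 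Plugging this into the energy law \eqref{eq:A.5} yields the differential inequality $\frac{d}{dt}\big(F[f](t)-F[\Feq]\big)\leq-2\lambda\big(F[f](t)-F[\Feq]\big)$, so Gronwall's inequality gives $F[f](t)-F[\Feq]\leq e^{-2\lambda t}\big(F[f_0]-F[\Feq]\big)$.

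Finally, the Csisz\'ar--Kullback--Pinsker inequality $\|f-\Feq\|_{L^1(\R^n)}^2\leq 2H(f\mid\Feq)=\tfrac{2}{D}\big(F[f]-F[\Feq]\big)$ converts the exponential decay of relative entropy into
\[
\|f-\Feq\|_{L^1(\R^n)}\leq\sqrt{\frac{2\big(F[f_0]-F[\Feq]\big)}{D}}\;e^{-\lambda t},
\]
which is \eqref{eq:A.6} with $\Cr{const:A.1}=\sqrt{2(F[f_0]-F[\Feq])/D}$, depending only on $D$, $F[\Feq]$ and $F[f_0]$.

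The main obstacle is not the algebra but the analytic bookkeeping on the unbounded domain: one must ensure that the solution remains a probability density with enough decay at infinity to justify the integrations by parts behind \eqref{eq:A.5}, that $H(f\mid\Feq)$ and the Fisher information stay finite and are differentiable in $t$ along the flow — this is precisely where the hypotheses $F[f_0]<\infty$ and $D_{\mathrm{dis}}[f_0]<\infty$ enter — and that the logarithmic Sobolev inequality may legitimately be applied to $g=\sqrt\rho$ after a standard truncation and density argument. Once these points are secured, the chain ``relative entropy $\Rightarrow$ dissipation lower bound $\Rightarrow$ Gronwall $\Rightarrow$ Csisz\'ar--Kullback--Pinsker'' is immediate.
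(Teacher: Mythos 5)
Your argument is correct (at the same formal level of rigor as the paper's) and it reaches the same constant $\sqrt{2(F[f_0]-F[\Feq])/D}$, but it takes a genuinely different route to the key differential inequality \eqref{eq:A.13}. You obtain $D_{\mathrm{dis}}[f]\geq 2\lambda\,(F[f]-F[\Feq])$ \emph{directly} from the logarithmic Sobolev inequality with the Bakry--\'Emery constant $2D/\lambda$ for $d\mu=\Feq\,dx$ (using $\nabla^2((\phi-\mathrm{const})/D)\geq(\lambda/D)I$), and then apply Gronwall and Csisz\'ar--Kullback--Pinsker; in particular you never need Propositions \ref{prop:A.1} and \ref{prop:A.2}. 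The paper instead stays within its velocity-field reformulation of the entropy method: it uses the second-derivative identity \eqref{eq:A.7} and convexity to get $\frac{d^2F}{dt^2}\geq-2\lambda\frac{dF}{dt}$, integrates this on $[t,s]$ and lets $s\to\infty$, which requires the two qualitative limits $\frac{dF}{dt}[f](s)\to0$ (Proposition \ref{prop:A.1}) and $F[f](s)\to F[\Feq]$ (Lemma \ref{lem:A.1}); only in that last lemma does the LSI \eqref{eq:A.8} enter, and only qualitatively -- its constant never affects the decay rate, which comes from the $\Gamma_2$-type computation \eqref{eq:A.7}. What your route buys is brevity and independence from the dissipation-decay machinery of Section~\ref{sec:2}; what the paper's route buys is that it does not need the LSI with the sharp $\lambda$-dependent constant (consistent with its closing remark that \eqref{eq:A.13} in fact \emph{implies} the LSI), and it showcases the velocity-field formulation that the rest of the paper generalizes to variable $D$ and $\pi$. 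One small caution on your side: you invoke the Bakry--\'Emery LSI as a black box, whereas the paper cites only the Gross inequality; if you want your proof self-contained relative to the paper's toolkit, you should either cite Bakry--\'Emery explicitly or note that the needed entropy--entropy-production bound can be recovered exactly by the paper's integration of \eqref{eq:A.7}. Your closing remarks on integrability and the truncation/density argument needed to apply the LSI to $\sqrt{\rho}$ on $\R^n$ correctly identify the technical points that the paper also leaves implicit.
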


Key ideas to show Theorem \ref{thm:A.1} are two inequalities. One is the
Gross logarithmic Sobolev inequality \cite{MR0420249}. The logarithmic
Sobolev inequality and \eqref{eq:A.3} deduce the convergence of
$F[f](t)$ to $F[f^{\mathrm{eq}}]$ as $t\rightarrow\infty$. Using
$\nabla^2\phi\geq\lambda I$ and Proposition \ref{prop:A.2}, we can show
that the relative entropy convergences exponentially fast, $F[f](t)$ converges
exponentially to $F[f^{\mathrm{eq}}]$, as $t\rightarrow\infty$. The other
key inequality is the classical Csisz\'ar-Kullback-Pinsker inequality
\cite{MR3497125} which connects $L^1$ convergence of $f$
to $f^{\mathrm{eq}}$ and the relative entropy convergence. Thus, we
obtain the exponential convergence of $f$ in $L^1$ spaces.

%

Now,  we show that $F[f](t)$ converges to $F[\Feq]$ as
$t\rightarrow\infty$. The following Gross logarithmic Sobolev inequality,
\begin{equation}
 \label{eq:A.8}
 \int_{\R^n}
  g^2
  \log
  \left(
   \frac{g^2}{\int_{\R^n}g^2\,d\mu}
  \right)
  \,d\mu
  \leq
  2
  \int_{\R^n}
  |\nabla g|^2
  \,d\mu,
\end{equation}
helps to show that the relative entropy $F[f](t)-F[\Feq]\to 0$ as
$t\rightarrow\infty$, where $d\mu=\Feq\,dx$ and $g\in H^1(d\mu)$,  (see
\cite{MR0420249}).

\begin{lemma}
 \label{lem:A.1}
 Assume,  that there is a positive constant $\lambda>0$ such that
 $\nabla^2\phi\geq \lambda I$, where $I$ is the identity matirx. Let $f$
 be a solution of \eqref{eq:A.1}. Then,
 \begin{equation}
  \label{eq:A.9}
   F[f](t) \rightarrow F[\Feq],
   \qquad
   \text{as}
   \
   t\rightarrow\infty.
 \end{equation}
\end{lemma}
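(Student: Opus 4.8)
The plan is to recognise $F[f](t)-F[\Feq]$ as the relative entropy of $f$ with respect to the equilibrium measure $d\mu=\Feq\,dx$ and to control it by the dissipation $D_{\mathrm{dis}}[f](t)$, whose decay is already available from Proposition~\ref{prop:A.1}. First I would pass to the variable $\rho=f/\Feq$. Using the identity $D\log\Feq+\phi=\Cr{const:1.1}$ (the Cauchy-problem analogue of \eqref{eq:1.6}) together with $\int_{\R^n}f\,dx=\int_{\R^n}\Feq\,dx=1$, a direct computation gives
\begin{equation*}
 F[f](t)-F[\Feq]=D\int_{\R^n}\rho\log\rho\,d\mu\geq 0,
\end{equation*}
the nonnegativity being Jensen's inequality, since $\int_{\R^n}\rho\,d\mu=1$ and $s\mapsto s\log s$ is convex. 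By the energy law \eqref{eq:A.5} this quantity is nonincreasing in $t$, so $\lim_{t\to\infty}(F[f](t)-F[\Feq])$ exists and is nonnegative; it remains to prove that this limit is $0$.

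The key step is to apply the Gross logarithmic Sobolev inequality \eqref{eq:A.8} to $g=\sqrt{\rho}$; note that the standing hypothesis $\nabla^2\phi\geq\lambda I$ is precisely the curvature (Bakry--\'Emery) condition under which \eqref{eq:A.8} holds for the measure $d\mu=\Feq\,dx$. Since $\int_{\R^n}g^2\,d\mu=1$, the left-hand side of \eqref{eq:A.8} equals $\int_{\R^n}\rho\log\rho\,d\mu=\tfrac1D\bigl(F[f](t)-F[\Feq]\bigr)$. For the right-hand side I would use the relation $\rho\vec{u}=-D\nabla\rho$ (the Cauchy-problem analogue of \eqref{eq:2.6}), which gives $|\nabla\sqrt{\rho}|^2=\tfrac{|\nabla\rho|^2}{4\rho}=\tfrac{\rho|\vec{u}|^2}{4D^2}$, hence, using $\rho\,d\mu=f\,dx$,
\begin{equation*}
 2\int_{\R^n}|\nabla\sqrt{\rho}|^2\,d\mu
 =\frac{1}{2D^2}\int_{\R^n}|\vec{u}|^2\rho\,d\mu
 =\frac{1}{2D^2}\int_{\R^n}|\vec{u}|^2f\,dx
 =\frac{1}{2D^2}D_{\mathrm{dis}}[f](t).
\end{equation*}
Comparing the two sides of \eqref{eq:A.8} yields an estimate of the form $F[f](t)-F[\Feq]\leq C\,D_{\mathrm{dis}}[f](t)$ with $C>0$ depending only on $D$, and since Proposition~\ref{prop:A.1} (estimate \eqref{eq:A.2}) gives $D_{\mathrm{dis}}[f](t)\leq e^{-2\lambda t}D_{\mathrm{dis}}[f_0]\to 0$, we conclude $F[f](t)\to F[\Feq]$ (in fact exponentially fast). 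Equivalently, since $\tfrac{d}{dt}(F[f]-F[\Feq])=-D_{\mathrm{dis}}[f](t)$, the same estimate is a differential inequality $\tfrac{d}{dt}(F[f]-F[\Feq])\leq -c\,(F[f]-F[\Feq])$ for some $c>0$, from which exponential decay follows by Gr\"onwall without appealing to Proposition~\ref{prop:A.1}.

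The main technical obstacle is to justify that $g=\sqrt{\rho}(\cdot,t)$ lies in $H^1(d\mu)$ for each $t>0$, so that \eqref{eq:A.8} may legitimately be applied; this requires adequate decay and integrability at infinity of the smooth solution $f$ and of $\nabla f$, which one expects from the standing regularity hypotheses on solutions of \eqref{eq:A.1} together with $F[f_0]<\infty$ and $D_{\mathrm{dis}}[f_0]<\infty$. A secondary point is to make the integrations by parts and the identity $\rho\,d\mu=f\,dx$ rigorous on the unbounded domain $\R^n$; these are routine given the decay but should be recorded. All remaining steps are the elementary algebra of rewriting $F$ and $D_{\mathrm{dis}}$ in the variables $\rho$ and $\vec{u}$.
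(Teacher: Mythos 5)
Your proposal is correct and follows essentially the same route as the paper: rewrite $F[f](t)-F[\Feq]=D\int\rho\log\rho\,\Feq\,dx$, use Jensen for nonnegativity, apply the Gross logarithmic Sobolev inequality \eqref{eq:A.8} to $g=\sqrt{\rho}$, identify $2D\int|\nabla\sqrt{\rho}|^2\Feq\,dx=\tfrac{1}{2D}\int|\vec{u}|^2f\,dx$ via $\rho\vec{u}=-D\nabla\rho$, and conclude from the decay of the dissipation (Proposition \ref{prop:A.1}/\eqref{eq:A.3}). Your closing Gr\"onwall remark essentially anticipates the paper's Lemma \ref{lem:A.2}, and your caution about $\sqrt{\rho}\in H^1(d\mu)$ is a reasonable technical point the paper leaves implicit.
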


\begin{proof}
 From \eqref{eq:A.5}, $F[f](t)$ is monotone decreasing,  so we can give the
 estimate of $F[f](t)-F[\Feq]$.  By direct calculation of
 $F[f](t)-F[\Feq]$ together with $D\log \Feq+\phi=\Cr{const:1.1}$,
 \eqref{eq:1.5}, we obtain that,
 \begin{equation*}
  F[f](t)-F[\Feq]
   =
   \int_{\R^n}
   (Df\log f-Df+f\phi +D\Feq-\Cr{const:1.1}\Feq)
   \,dx.
 \end{equation*}
 Using $\phi=\Cr{const:1.1}-D\log\Feq$ and
 $\int_{\R^n} f\,dx=\int_{\R^n} \Feq\,dx=1$, we have that,
 \begin{equation}
  \label{eq:A.10}
   F[f](t)-F[\Feq]
   =
   \int_{\R^n}
   D(f\log f-f\log \Feq)
   \,dx.
 \end{equation}

 Recall that $\rho=f/f^{\mathrm{eq}}$ and,
 \begin{equation*}
   F[f](t)-F[\Feq]
   =
   D
   \int_{\R^n}
   \rho\log \rho \Feq
   \,dx. 
 \end{equation*}
 Since $s\log s$ is a convex function on $s>0$, we can apply the Jensen's
 inequality \cite{MR1817225} and \eqref{eq:1.10} to have,
 \begin{equation*}
  \int_{\R^n}
   \rho\log \rho \Feq
   \,dx
   \geq
   \left(
    \int_{\R^n}
    \rho \Feq
   \,dx
   \right)
   \log
   \left(
    \int_{\R^n}
    \rho \Feq
   \,dx
   \right)
   \geq
   0,
 \end{equation*}
 hence $f[f](t)-F[\Feq]\geq 0$.

 Put $g=\sqrt{\rho}$ to \eqref{eq:A.8}, where $\rho=f/\Feq$. Since
 $\int_{\R^n} \rho\,d\mu=\int_{\R^n} f\,dx=1$, we have,
 \begin{equation}
  \label{eq:A.18}
  \int_{\R^n}\rho\log\rho \Feq\,dx
   \leq
   2
   \int_{\R^n}|\nabla \sqrt{\rho}|^2\Feq\,dx.
 \end{equation}
 Using $\rho\log\rho f^{\mathrm{eq}}=f\log f-f\log f^{\mathrm{eq}}$,
 \eqref{eq:A.10}, and \eqref{eq:A.18}, we obtain that,
 \begin{equation}
  \label{eq:A.11}
   F[f](t)-F[\Feq]
   \leq
   2D
   \int_{\R^n}|\nabla \sqrt{\rho}|^2\Feq\,dx.
 \end{equation}
 By direct calculation of $|\vec{u}|^2f$ and
 $|\nabla\sqrt{\rho}|^2\Feq$, we obtain,
\begin{equation*}
 |\vec{u}|^2f
  =
  |\nabla(D\log\rho)|^2f
  =
  D^2\frac{|\nabla \rho|^2}{\rho^2}f,
\end{equation*}
 and
 \begin{equation*}
  |\nabla\sqrt{\rho}|^2\Feq
   =
   \left|
    \frac12
    \rho^{-\frac12}\nabla \rho
   \right|^2
   \Feq
   =
   \frac14
   \frac{|\nabla\rho|^2}{\rho}\Feq
   =
   \frac14
   \frac{|\nabla\rho|^2}{\rho^2}f
   =
   \frac1{4D^2}
   |\vec{u}|^2f.
 \end{equation*}
 Hence,  we have,
 \begin{equation}
  \label{eq:A.12}
   2D
   \int_{\R^n}|\nabla \sqrt{\rho}|^2\Feq\,dx
   =
   \frac{1}{2D}
   \int_{\R^n}|\vec{u}|^2f\,dx.
 \end{equation}
 Combining \eqref{eq:A.11}, \eqref{eq:A.12}, and \eqref{eq:A.3}, we have
 $F[f](t)\rightarrow F[\Feq]$ as $t\rightarrow\infty$.
\end{proof}

Note that, in the proof of Lemma \ref{lem:A.1}, we obtain due to
\eqref{eq:A.10} that,
\begin{equation}
 \label{eq:A.15}
  F[f](t)-F[\Feq]
  =
  \int_{\R^n}
  D(f\log f-f\log \Feq)
  \,dx.
\end{equation}

 We next derive exponential decay of the relative entropy
$F[f](t)-F[\Feq]$.

\begin{lemma}
 \label{lem:A.2}
 Assume,  that there is a positive constant $\lambda>0$ such that
 $\nabla^2\phi\geq \lambda I$, where $I$ is the identity matirx and
 $F[f_0]<0$. Let $f$ be a solution of \eqref{eq:A.1}. Then,  we obtain for
 $t>0$,
 \begin{equation}
  \label{eq:A.14}
   (F[f](t)-F[\Feq])
   \leq
   e^{-2\lambda t}
   (F[f_0]-F[\Feq]).
 \end{equation}

\end{lemma}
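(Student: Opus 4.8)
The plan is to combine the first and second energy identities \eqref{eq:A.5} and \eqref{eq:A.7} with the convexity bound $\nabla^2\phi\geq\lambda I$ and with the two limits already established, namely Proposition~\ref{prop:A.1} and Lemma~\ref{lem:A.1}. Write $e(t):=F[f](t)-F[\Feq]$ and $\delta(t):=D_{\mathrm{dis}}[f](t)=\int_{\R^n}|\vec u|^2f\,dx$, so that \eqref{eq:A.5} reads $e'(t)=-\delta(t)$ (note $F[\Feq]$ is a constant). From \eqref{eq:A.7} together with $\nabla^2\phi\geq\lambda I$ one obtains $-\delta'(t)=\frac{d^2F}{dt^2}[f](t)\geq 2\lambda\int_{\R^n}|\vec u|^2f\,dx=2\lambda\delta(t)$, i.e. $\delta'(t)\leq -2\lambda\delta(t)$. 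Gronwall's inequality applied on an arbitrary interval $[t,s]$ then gives $\delta(s)\leq e^{-2\lambda(s-t)}\delta(t)$ for $0\leq t\leq s$; the case $t=0$ is exactly \eqref{eq:A.2}.

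Next I would use that $e$ is non-increasing, since $e'=-\delta\leq 0$; that $e(t)\geq 0$, which is the nonnegativity of the relative entropy obtained via Jensen's inequality inside the proof of Lemma~\ref{lem:A.1}; and that $e(t)\to 0$ as $t\to\infty$ by Lemma~\ref{lem:A.1}. Integrating $e'=-\delta$ from $t$ to $\infty$ and inserting the exponential bound on $\delta$ just derived,
\[
e(t)=-\int_t^\infty e'(s)\,ds=\int_t^\infty\delta(s)\,ds\leq\int_t^\infty e^{-2\lambda(s-t)}\delta(t)\,ds=\frac{\delta(t)}{2\lambda}.
\]
Hence $-e'(t)=\delta(t)\geq 2\lambda e(t)$, that is $e'(t)\leq -2\lambda e(t)$, and a final application of Gronwall's inequality yields $e(t)\leq e^{-2\lambda t}e(0)=e^{-2\lambda t}(F[f_0]-F[\Feq])$, which is precisely \eqref{eq:A.14}. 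Equivalently one may observe that $K(t):=\delta(t)-2\lambda e(t)$ satisfies $K'(t)=\delta'(t)+2\lambda\delta(t)\leq 0$ and $K(t)\to 0$ as $t\to\infty$, so $K\geq 0$, which is the same differential inequality for $e$.

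The routine ingredients are the two Gronwall steps and differentiation under the integral sign, all legitimate for the smooth finite-energy solutions considered here ($F[f_0]<\infty$, $D_{\mathrm{dis}}[f_0]<\infty$), and they also make the statement non-vacuous since then $F[\Feq]$ is finite and $e(0)=F[f_0]-F[\Feq]\geq 0$. The only genuinely non-trivial input is the convergence $e(t)\to 0$, i.e. Lemma~\ref{lem:A.1}, whose proof rests on the Gross logarithmic Sobolev inequality \eqref{eq:A.8}; without it the monotone quantity $K=\delta-2\lambda e$ (equivalently $e^{2\lambda t}e$) would be controlled only from one side and the exponential decay could not be concluded. Since that lemma is already available, the remaining argument is short.
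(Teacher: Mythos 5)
Your argument is correct and is essentially the paper's proof: both rest on the second-derivative identity \eqref{eq:A.7} plus $\nabla^2\phi\geq\lambda I$ to get $\frac{d}{dt}D_{\mathrm{dis}}[f]\leq-2\lambda D_{\mathrm{dis}}[f]$, use $\frac{dF}{dt}[f](t)\to0$ and Lemma~\ref{lem:A.1} to convert this into $D_{\mathrm{dis}}[f](t)\geq 2\lambda\,(F[f](t)-F[\Feq])$, and finish with Gronwall. The only cosmetic difference is that the paper obtains this last inequality by integrating the second-order inequality over $[t,s]$ and letting $s\to\infty$, whereas you first decay $D_{\mathrm{dis}}$ exponentially and then integrate $\frac{dF}{dt}=-D_{\mathrm{dis}}$ over $[t,\infty)$; the ingredients and conclusion are identical.
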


\begin{proof}
 First, from \eqref{eq:A.5} and \eqref{eq:A.7}, due to the convexity assumption
 $\nabla^2\phi(x)\geq\lambda$, we have that,
 \begin{equation*}
  \frac{d^2F}{dt^2}[f](t)
   \geq
   2\lambda\int_{\R^n} |\vec{u}|^2 f\,dx
   =
   -
   2\lambda
   \frac{dF}{dt}[f](t).
 \end{equation*}
 Integrating on $[t,s]$, we obtain, 
 \begin{equation*}
  \frac{dF}{dt}[f](s)
   -
   \frac{dF}{dt}[f](t)
   \geq
   2\lambda
   (
   F[f](t)
   -
   F[f](s)
   ).
 \end{equation*}
 Taking $s\rightarrow\infty$ together with \eqref{eq:A.3},
 \eqref{eq:A.9}, and $\frac{d}{dt}F[\Feq]=0$, we arrive at,
 \begin{equation}
  \label{eq:A.13}
  \frac{d}{dt}(F[f](t)-F[\Feq])
   \leq
   -2\lambda
   (F[f](t)-F[\Feq]).
 \end{equation}
 Using the Gronwall's inequality in \eqref{eq:A.13} and $F[f_0]<\infty$,
 we obtain the result \eqref{eq:A.14}.
\end{proof}

Next, we state the classical Csisz\'ar-Kullback-Pinsker inequality, in
order to combine $L^1$ norm and the relative entropy $F[f](t)-F[\Feq]$.

\begin{proposition}[{Classical Csisz\'ar-Kullback-Pinsker inequality \cite{MR3497125}}]
 Let $\Omega\subset\R^n$ be a domain, let $f,g\in L^1(\Omega)$
 satisfy $f\geq0$, $g>0$, and $\int_\Omega f\,dx=\int_\Omega
 g\,dx=1$. Then,
 \begin{equation}
  \label{eq:A.17}
   \|f-g\|_{L^1(\Omega)}^2
   \leq
   2\int_\Omega(f\log f-f\log g)\,dx.
 \end{equation}
\end{proposition}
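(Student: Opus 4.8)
The plan is to reduce \eqref{eq:A.17} to an elementary scalar inequality through a pointwise estimate on the integrand, followed by a single application of the Cauchy--Schwarz inequality. Set $\psi(s):=s\log s-s+1$ for $s>0$ and $\psi(0):=1$; then $\psi\ge0$ with equality only at $s=1$, and, interpreting $0\log 0=0$, one has the pointwise identity $g\,\psi(f/g)=f\log f-f\log g-f+g$. Since $\int_\Omega f\,dx=\int_\Omega g\,dx=1$, integrating this identity gives $\int_\Omega g\,\psi(f/g)\,dx=\int_\Omega(f\log f-f\log g)\,dx$; in particular the right-hand side of \eqref{eq:A.17} is well defined in $[0,\infty]$ (the integrand $g\,\psi(f/g)\ge0$ is measurable), and if it equals $+\infty$ there is nothing to prove, so we may assume it finite.

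The key step is the scalar inequality
\begin{equation}
 \label{eq:CKP-scalar}
 \psi(s)\ge\frac{3(s-1)^2}{2(s+2)},\qquad s\ge0.
\end{equation}
To prove \eqref{eq:CKP-scalar} I would set $h(s):=(2s+4)\psi(s)-3(s-1)^2$ and establish $h\ge0$ on $[0,\infty)$ by elementary calculus: one computes $h(1)=h'(1)=h''(1)=0$ and $h'''(s)=4(s-1)/s^2$, so $h''$ attains its minimum value $0$ at $s=1$, hence $h''\ge0$, hence $h'$ is nondecreasing with $h'(1)=0$, hence $h$ attains its minimum value $0$ at $s=1$; the boundary value $h(0)=1>0$ is consistent. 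Dividing by $2(s+2)>0$ yields \eqref{eq:CKP-scalar}.

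Applying \eqref{eq:CKP-scalar} with $s=f(x)/g(x)$ and multiplying by $g(x)>0$ gives the pointwise bound $g\,\psi(f/g)\ge 3(f-g)^2/\bigl(2(f+2g)\bigr)$, so after integration
\begin{equation*}
 \int_\Omega(f\log f-f\log g)\,dx
 \ge\frac{3}{2}\int_\Omega\frac{(f-g)^2}{f+2g}\,dx .
\end{equation*}
By the Cauchy--Schwarz inequality, writing $|f-g|=\frac{|f-g|}{\sqrt{f+2g}}\,\sqrt{f+2g}$,
\begin{equation*}
 \left(\int_\Omega|f-g|\,dx\right)^2
 \le\left(\int_\Omega\frac{(f-g)^2}{f+2g}\,dx\right)\left(\int_\Omega(f+2g)\,dx\right)
 =3\int_\Omega\frac{(f-g)^2}{f+2g}\,dx,
\end{equation*}
since $\int_\Omega(f+2g)\,dx=1+2=3$. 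Combining the last two displays gives $\|f-g\|_{L^1(\Omega)}^2\le 2\int_\Omega(f\log f-f\log g)\,dx$, which is \eqref{eq:A.17}.

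The only genuine obstacle is the verification of the scalar inequality \eqref{eq:CKP-scalar}; the constants $3$ and $2$ there are chosen precisely so that the factor $\int_\Omega(f+2g)\,dx=3$ produced by Cauchy--Schwarz cancels the $3/2$ and leaves the sharp constant $2$. The remaining points --- that all integrals are meaningful (the nonnegative measurable integrand $g\,\psi(f/g)$ has a well-defined integral in $[0,\infty]$, and $f-g$ is integrable by hypothesis) and that the $f=0$ locus causes no trouble under the convention $0\log 0=0$ --- are routine and I would dispatch them in a single line.
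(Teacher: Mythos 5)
Your proof is correct. Note that the paper itself gives no argument for this proposition at all --- it is stated as a classical result and attributed to \cite{MR3497125} --- so there is nothing in the text to compare against; what you have supplied is a complete, self-contained proof where the paper relies on a citation. Your route is the standard Csisz\'ar--Kemperman argument for the sharp constant: the pointwise identity $g\,\psi(f/g)=f\log f-f\log g-f+g$ with $\psi(s)=s\log s-s+1$ correctly reduces everything to the scalar bound $\psi(s)\ge 3(s-1)^2/\bigl(2(s+2)\bigr)$, and your calculus verification of that bound checks out ($h(1)=h'(1)=h''(1)=0$ with $h'''(s)=4(s-1)/s^2$ forces $h\ge 0$). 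The constants are tuned exactly so that Cauchy--Schwarz against the weight $f+2g$, whose integral is $3$, returns the constant $2$. The measure-theoretic caveats (interpreting the relative entropy as $\int g\,\psi(f/g)\,dx\in[0,\infty]$, the convention $0\log 0=0$, and discarding the case where the right-hand side is infinite) are handled appropriately. No gaps.
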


Using the classical Csisz\'ar-Kullback-Pinsker inequality, we show
Theorem \ref{thm:A.1}.

\begin{proof}[Proof of Theorem \ref{thm:A.1}]
 Combining \eqref{eq:A.15} and \eqref{eq:A.14}, we have that,
 \begin{equation}
  \label{eq:A.16}
  \int_{\R^n}
  (f\log f-f\log \Feq)
  \,dx
  =
  \frac{1}{D}
  (F[f](t)-F[\Feq])
  \leq
  \frac{1}{D}
  (F[f_0]-F[\Feq])
  e^{-2\lambda t}.
 \end{equation}
 Combining \eqref{eq:A.16} and the classical Csisz\'ar-Kullback-Pinsker
 inequality \eqref{eq:A.17}, we have,
 \begin{equation*}
  \|f-\Feq\|_{L^1(\R^n)}^2
   \leq
   \frac{2}{D}
   (F[f_0]-F[\Feq])
   e^{-2\lambda t},
 \end{equation*}
 Hence,  we obtain \eqref{eq:A.6} by selecting
   $\Cr{const:A.1}=\sqrt{\frac{2}{D} (F[f_0]-F[\Feq])}$.
\end{proof}

\begin{remark}
The strict convexity for $\phi$ is essential for the
entropy dissipation method. On the other hand, if $\phi$ is not strictly
convex, we can proceed to study the long-time asymptotic behavior in the
weighted $L^2$ space $L^2(\Omega,e^{\phi/D}\,dx)$~\cite{MR1842428,
epshteyn2021stochastic}. Using the result of the asymptotics in the
weighted $L^2$ space, we may show Lemma \ref{lem:A.1} and Lemma
\ref{lem:A.2} without using the Gross logarithmic Sobolev
inequality. Furthermore, it is known that the logarithmic Sobolev
inequality can be deduced from the differential inequality
\eqref{eq:A.13} of the relative entropy~\cite{MR3497125}, Thus, there is
a close relationship among the long-time asymptotics in the weighted
$L^2$ space, the logarithmic Sobolev inequality, the differential
inequality \eqref{eq:A.13} and the exponential decay for the relative
entropy \eqref{eq:A.14}.
\end{remark}

\section*{Acknowledgments}
Yekaterina Epshteyn acknowledges partial support of NSF DMS-1905463 and of NSF DMS-2118172, Chun Liu acknowledges partial support of
NSF DMS-1950868 and NSF DMS-2118181, and Masashi Mizuno
acknowledges partial support of JSPS KAKENHI Grant No. JP18K13446, JP22K03376.

\bibliographystyle{plain}
\bibliography{references}

\end{document}